\numberwithin{equation}{section}
\newcommand{\R}{\mathbb{R}}
\newcommand{\T}{\mathbb{T}}
\newcommand{\Z}{\mathbb{Z}}
\numberwithin{equation}{section} 
\newtheorem{theorem}{Theorem}[section]
\newtheorem{lemma}[theorem]{Lemma}
\newtheorem{proposition}[theorem]{Proposition}
\newtheorem{remark}[theorem]{Remark}
\newtheorem{claim}[theorem]{Claim}
\newtheorem{assumption}[theorem]{Assumption}
\begin{document}

\title{Linear vortex symmetrization: the spectral density function}

\author{Alexandru D. Ionescu}
\address{Princeton University}
\email{aionescu@math.princeton.edu}

\author{Hao Jia}
\address{University of Minnesota}
\email{jia@umn.edu}


\thanks{The first author was supported in part by NSF grant DMS-2007008.  The second author was supported in part by NSF grant DMS-1945179.}

\begin{abstract}
{\small}
We investigate solutions of the $2d$ incompressible Euler equations, linearized around steady states which are radially decreasing vortices. Our main goal is to understand the smoothness of what we call the {\it{spectral density function}} associated with the linearized operator, which we hope will be a step towards proving full nonlinear asymptotic stability of radially decreasing vortices. 

The motivation for considering  the spectral density function is that it is not possible to describe the vorticity or the stream function in terms of one modulated profile. There are in fact two profiles, both at the level of the physical vorticity and at the level of the stream function. The spectral density function allows us to identify these profiles, and its smoothness leads to pointwise decay of the stream function which is consistent with the decay estimates first proved in Bedrossian-Coti Zelati-Vicol in \cite{Bed2}.
\end{abstract}

\maketitle

\setcounter{tocdepth}{1}

\tableofcontents

\section{Introduction and main results}
In this paper we study the two dimensional incompressible Euler equations on the plane, in the vorticity formulation. Consider solutions $\omega(t,x,y):[0,\infty)\times \R^2\to \R$ satisfying 
\begin{equation}\label{INT1}
\partial_t\omega+u\cdot\nabla\omega=0, \quad u=\nabla^\perp\psi, \quad \Delta\psi=\omega,
\end{equation}
for $(x,y,t)\in\R^2\times[0,\infty)$. 
The two dimensional incompressible Euler equation is globally well-posed for smooth initial data, by the classical result of Wolibner \cite{Wolibner}. (See also \cite{Yudovich1, Yudovich2} for global well-posedness results with rough initial data, such as $L^\infty$ vorticity.) The long time behavior of general solutions is however very difficult to understand, due to the lack of a global relaxation mechanism. 

There have been some attempts in building a theory of ``weak turbulence" for the two dimensional Euler equation, to explain the appearance of coherent structures, see e.g., chapter 34 of \cite{SverakNotes}. A proposed mathematical explanation is that, generically, the vorticity $\omega(t)$ converges {\it weakly} but not strongly as $t\to\infty$. A rigorous proof of such a conjecture is way beyond the reach of current PDE techniques. A more realistic approach is to consider the 2D Euler equations in physically relevant perturbative regimes, such as around shear flows and vortices. In this paper we study the case of vortices.

The presence of coherent vortices is a prominent feature in two dimensional fluid flows, such as viscous flows with high Reynolds number and  perfect fluid flows. These vortices are believed to play an important role in the 2D turbulence theory (see  for example \cite{Ba1,Ba2,Benzi,Brachet,McW,McW1}). Numerical and physical experiments and formal asymptotic analysis (see \cite{Ba1,Ba2,Schecter} and references therein) suggest that small perturbations of vortices form spirals around the center of the vortex and the angle-dependent modes of the vorticity vanish in the weak sense as $t\to\infty$, which leads to ``axi-symmetrization" of the vorticity. 

The stability analysis of fluid flows is among the oldest problems studied in hydrodynamics, starting with Kelvin \cite{Kelvin}, Orr \cite{Orr}, Rayleigh \cite{Ray}, and continuing to the present day, see for example \cite{Gallay, Gallay2, Grenier,Hall,Schecter,dongyi,Zillinger1,Zillinger2} and references therein. Arnold \cite{Arnold} proved an important criteria for nonlinear stability of some steady states using monotonicity formulas, but the precise dynamics near these solutions are not known. We also refer to \cite{Choi} for recent results on the nonlinear stability of vortices in this direction. The vortex symmetrization phenomenon has been studied rigorously at the linearized level around a strictly decreasing vortex profile by Bedrossian--Coti Zelati--Vicol \cite{Bed2}, who established axi-symmetrization of the vorticity and optimal rates of decay of the associated stream function. 

\subsection{Motivation and the spectral density function}

Extending the linearized stability analysis for inviscid fluid equations to the full nonlinear setting is a challenging problem. In the case of monotonic shear flows on the bounded channel $\T\times[0,1]$ the nonlinear stability problem has been recently solved by the authors \cite{IJacta}, and independently by Masmoudi-Zhao in \cite{NaZh}, following earlier important work of Bedrossian-Masmoudi \cite{BM}, and the authors \cite{IOJI} on the nonlinear stability of the Couette flow. 

In \cite{IOJI2} we proved nonlinear asymptotic stability near point vortices in the plane, which appears to be the only nonlinear asymptotic stability result for the two dimensional Euler equation in $\R^2$. The case of general vortices, though, presents several new difficulties. To explain these new features, let us recall the main perturbation equations.  

We work with the polar coordinate $(r,\theta)$.  The velocity field $U(r)e_{\theta}$ gives a steady state for $2d$ Euler equations, for any radial function $U(r)$ (with mild assumptions on regularity and decay at $\infty$). Let $\Omega(r)$ be the associated vorticity. Then from Biot-Savart law, $U(r)$ satisfies
\begin{equation}\label{Ur}
\partial_rU(r)+U(r)/r=\Omega(r),\,\, r\in(0,\infty), \quad{\rm with}\,\,\lim_{r\to0}U(r)=\lim_{r\to\infty}U(r)=0.
\end{equation}
The linearized two dimensional Euler equation around $U(r)e_{\theta}$ is
\begin{equation}\label{B1}
\partial_t\omega+\frac{U(r)}{r}\partial_{\theta}\omega-\frac{\Omega'(r)}{r}\partial_{\theta}\psi=0,\qquad (\theta,r)\in\mathbb{T}\times(0,\infty),
\end{equation}
with initial data $\omega_0(\theta,r)$. The stream function $\psi$ is determined through
\begin{equation}\label{B2}
\left(\partial_{rr}+r^{-1}\partial_r+r^{-2}\partial_{\theta}^2\right) \psi=\omega,\qquad (\theta,r)\in\mathbb{T}\times(0,\infty).
\end{equation}
We assume the orthogonality conditions
\begin{equation}\label{B3.1}
\int_{\R^2}\omega_0(\theta,r)(\cos{\theta},\sin{\theta})\,r^2drd\theta=0,\qquad {\rm and }\qquad \int_{\mathbb{T}}\omega_0(\theta,r)\,d\theta=0.
\end{equation}
The general case can be reduced to the case with the orthogonality conditions \eqref{B3.1} by re-centering a slightly modified $\Omega(r)$.

Taking Fourier transform in $\theta$ in the equation \eqref{B1}-\eqref{B2} for $\omega$, we obtain that
 \begin{equation}\label{B4}
 \partial_t\omega_k+ik\frac{U(r)}{r}\omega_k-ik\frac{\Omega'(r)}{r}\psi_k=0, \quad (\partial_r^2+r^{-1}\partial_r-k^2r^{-2})\psi_k=\omega_k,
 \end{equation}
 for $k\in\mathbb{Z}, t\ge0, r\in(0,\infty)$. Here $\omega_k$ and $\psi_k$ are the $k-$th Fourier coefficients of $\omega$ and $\psi$.

The equations \eqref{B1}-\eqref{B2} may be compared with the linearized equation around shear flows in a finite channel, where we have for $k\in\Z\backslash\{0\}$,
\begin{equation}\label{Shear1}
\partial_t\omega_k+ikb(y)\omega-ikb''(y)\psi_k=0,\quad (\partial_y^2-k^2)\psi_k=\omega_k,\quad \psi_k(t,0)=\psi_k(t,1)=0,
\end{equation}
for $(t,x,y)\in[0,\infty)\times\mathbb{T}\times[0,1]$. Under suitable assumptions on the function $b$ and assuming that $\omega|_{t=0}$ is smooth (or Gevrey smooth) and compactly supported in $\mathbb{T}\times(0,1)$, one can prove that the solution $\omega_k(t,y)$ can be represented essentially in the form
\begin{equation}\label{Shear2}
\omega_k(t,y)= f_k(t,y)e^{-ikb(y)t},
\end{equation}
for $y\in[0,1], t\in[0,\infty)$. The point is that the {\it{profile}} $f_k(t,y)$ is smooth (or Gevrey smooth) in $y$, uniformly over $t\in[0,\infty)$ (and stays compactly supported in $(0,1)$ as well). Such formulas and quantitative bounds on $f$ are very important for passing to the nonlinear problem, since the main point of the nonlinear analysis is to identify suitable analogous nonlinear profiles and establish their (sliding) smoothness uniformly in time. See \cite{JiaG} and the recent papers \cite{IJacta,NaZh}.

In our case, the situation is different. The main issue that the linearized flow corresponding to \eqref{B4} has no ``uniformly smooth profile" even after taking off oscillatory factors. Instead, for smooth (or Gevrey smooth) initial data, we should decompose for $r>0$,
\begin{equation}\label{Intro4}
\omega_k(t,r)= f_{k1}(t,r) e^{-ik(U(r)/r)t}+f_{k2}(t,r),
\end{equation}
where $f_{k1}$ and $f_{k2}$ are both uniformly smooth in $r$ over $t\in(0,\infty)$. See Theorem \ref{MTH2'} for the details. This decomposition has natural physical meaning, since the first term comes from the interior of the fluid itself, while the second term is generated by the ``boundary" corresponding to $r=0$ (in the case of shear flows, there is no boundary contribution in \eqref{Shear2} due to the support assumption on $\omega(t)$ and $b''$). This structure presents a new and possibly significant difficulty for the nonlinear problem, since the method in the shear flow case relies crucially on obtaining control in smooth norms on a well defined profile for the vorticity. 

At a more technical level, the presence of the term $ikU(r)/r$, which generates the oscillatory part of the vorticity, has a degeneracy when $r$ approaches $r=0$, that is, $\partial_r(U(r)/r)$ approaches $0$ as $r\to0+$.  This degeneracy coupled with the nonlocal term $ik(\Omega'/r)\psi_k$ in \eqref{B4} leads to a new dynamical phenomenon, namely the {\it{depletion of vorticity}} from the origin, which refers to the fact that the vorticity enjoys better than expected decay near the critical point of the background flow. The vorticity depletion phenomenon was first observed by Bouchet-Morita \cite{Bouchet} and proved by Wei--Zhang--Zhao \cite{Dongyi2} for shear flows with critical points. In the context of vortices, the vortex depletion phenomenon was proved by Bedrossian--Coti Zelati--Vicol \cite{Bed2}. The phenomenon is important in our proof here as well.

In this paper, we take a first step towards understanding the nonlinear vortex symmetrization pro\-blem, and prove linear vortex symmetrization in Gevrey spaces. See Theorems \ref{MTH2}-\ref{MTH2'} below for the main results. More importantly, we propose a different strategy: instead of studying the profile for the vorticity function which is difficult to even define in view of \eqref{Intro4}, we focus on what we call {\it{the spectral density function}}. The spectral density function (which plays the role of a profile) is naturally associated with the linearized equation, and in our setting can be used to track both the vanishing of the various quantities in $r$ (including the vorticity depletion phenomenon), and optimal regularity. Once we have the bounds on this spectral density functions, the estimates on the stream functions, velocity fields and the vorticity can be obtained by simple calculations.

The degeneracy at $r=0$ makes it difficult to use the global change of coordinate $V=U(r)/r$, which is the natural analogue of what was used in the case of shear flows to be able to accurately define  ``resonant times". In our problem it is simpler to work with the variable $v:=\log r$ for $r\in(0,\infty)$. We also work here with Gevrey smoothness instead of Sobolev smoothness, as this is the expected framework of the nonlinear problem.

The use of spectral density functions resolves several conceptual difficulties in the study of the nonlinear axi-symmetrization problem, since we can prove optimal regularity bounds on them, and they capture both physical space decay and regularity. However, it remains open how to define the correct spectral density function in the nonlinear setting, and to derive the right evolution equations and prove bounds. We hope to address these issues in the future.

\subsection{Main equations and assumptions on the background flow}
We assume that the background radial vorticity profile $\Omega(r)$ satisfy the following natural conditions.
\begin{assumption}\label{MainAs}
There exist constants $C_\ast\in(0,\infty)$ and $c_\ast\in(-\infty,0)$ such that for all $r\in(0,\infty)$ and $j\in\Z\cap[0,\infty)$ we have
\begin{equation}\label{Back}
\begin{split}
&0<\Omega(r)\leq \frac{C_\ast}{\langle r\rangle^6}, \quad \partial_r\Omega(r)<0,\quad\Big|(r\partial_r)^j \Big(\frac{\Omega'(r)}{r}-\frac{c_\ast}{\langle r\rangle^8}\Big)\Big|\leq \frac{C_\ast^{j+1}(j!)^{2}}{\langle r\rangle^{10}}r^2.
\end{split}
\end{equation}
In the above, we used the notation $\langle x\rangle:=\sqrt{x^2+2}$ for $x\in\R$.\\
\end{assumption}
Define for $r\in\R^+:=(0,\infty)$,
\begin{equation}\label{B8}
b(r):=U(r)/r,\qquad d(r)=\Omega'(r)/r.
\end{equation}
It follows from \eqref{Back}-\eqref{B8} that for $r\in\R^+$,
\begin{equation}\label{Int0.1}
b(r)=U(r)/r=\int_0^1s\,\Omega(rs)\,ds\approx \frac{1}{\langle r\rangle^2},\qquad b'(r)\approx -\frac{r}{\langle r\rangle^4}, \qquad |b''(r)|\lesssim \frac{1}{\langle r\rangle^4}.
\end{equation}
In the above the implied constants depend only on the constants $c_\ast$ and $C_\ast$ in \eqref{Back}.

Define for $k\in\Z\backslash\{0\}, r,\rho\in (0,\infty)$, the function
\begin{equation}\label{B5}
G_k(r,\rho):=\left\{\begin{array}{ll}
                        \frac{\rho}{2|k|}\Big(\frac{r}{\rho}\Big)^{|k|}&\qquad{\rm for}\,\,r<\rho,\\
                         \frac{\rho}{2|k|}\Big(\frac{\rho}{r}\Big)^{|k|}&\qquad{\rm for}\,\,r\ge \rho.
 \end{array}\right.
\end{equation}
$G_k(r,\rho)$ is the Green's function for the differential operator $-\partial_r^2-r^{-1}\partial_r+k^2r^{-2}$ on $(0,\infty)$ with vanishing boundary conditions.

 For each $k\in\mathbb{Z}\backslash\{0\}$, we set for any $f\in L^2\big(\R^+,r^2|\Omega'(r)|^{-1}dr\big)$,
 \begin{equation}\label{B6}
 L_kf(r)=\frac{U(r)}{r}f(r)+\frac{\Omega'(r)}{r}\int_0^{\infty}G_k(r,\rho)f(\rho)\,d\rho,
 \end{equation}
  The equation \eqref{B4} can be reformulated as
 \begin{equation}\label{B7}
 \partial_t\omega_k+ikL_k\omega_k=0.
 \end{equation}
 
Define the space 
 \begin{equation}\label{spec1}
 X_k:=L^2\Big(\R^+, \frac{r^2}{|\Omega'(r)|}\,dr\Big),
 \end{equation}
 with the natural norm that for any $g\in X_k$,
 \begin{equation}\label{sped1.1}
 \|g\|_{X_k}^2:=\int_0^\infty|g(r)|^2\frac{r^2}{|\Omega'(r)|}\,dr.
 \end{equation}
 It is clear that $L_k: X_k\to X_k$ is bounded and self adjoint. Since $L_k$ is a compact perturbation of the simple multiplication operator $f\to b(r)f$, by general spectral theory, we can conclude that the spectrum of $L_k$ consists of the continuous spectrum $[0, b(0)]$ and possibly some discrete eigenvalues in $\R$. Our main assumptions \eqref{Back}-\eqref{B3} imply that there are no discrete eigenvalues, see section \ref{sec:spec} for a simple proof. 
 
 The space $X_k$ imposes strong conditions on the decay of functions inside $X_k$ if $\Omega'$ decays fast in $r$. For our purposes, we can work with initial data with milder decay properties, see the assumption \ref{MARr1}, since we only use the space $X_k$ in a qualitative way,  see e.g. \eqref{B9}, and our bounds are quantitative, by a standard limiting argument. 
 

By standard theory of spectral projections, we then have
 \begin{equation}\label{B9}
 \begin{split}
 &\omega_k(t,r)=\frac{1}{2\pi i}\lim_{\epsilon\to0+}\int_{\R}e^{-ik\lambda t}\Big\{\left[(\lambda-i\epsilon-L_k)^{-1}-(\lambda+i\epsilon-L_k)^{-1}\right]\omega^k_0\Big\}(r)\,d\lambda\\
 &=\frac{-1}{2\pi i}\lim_{\epsilon\to0+}\int_{0}^{\infty}e^{-ikb(r_0) t}b'(r_0)\Big\{\big[(-b(r_0)-i\epsilon+L_k)^{-1}-(-b(r_0)+i\epsilon+L_k)^{-1}\big]\omega^k_0\Big\}(r)\,dr_0.
 \end{split}
 \end{equation}
We then obtain from \eqref{B4} that
 \begin{equation}\label{B10}
 \begin{split}
 \psi_k(t,r)&=\frac{1}{2\pi i}\lim_{\epsilon\to0+}\int_{0}^{\infty}e^{-ikb(r_0) t}b'(r_0)\int_0^{\infty}G_k(r,\rho)\\
 &\qquad\qquad\times\bigg\{\Big[(-b(r_0)-i\epsilon+L_k)^{-1}-(-b(r_0)+i\epsilon+L_k)^{-1}\Big]\omega^k_0\bigg\}(\rho)\,d\rho\, dr_0\\
 &=\frac{1}{2\pi i}\lim_{\epsilon\to0+}\int_{0}^{\infty}e^{-ikb(r_0) t}b'(r_0)\left[\psi_{k,\epsilon}^{-}(r,r_0)-\psi_{k,\epsilon}^{+}(r,r_0)\right]dr_0.
 \end{split}
 \end{equation}
 In the above, 
 \begin{equation}\label{B11}
 \begin{split}
 &\psi_{k,\epsilon}^{+}(r,r_0):=\int_0^{\infty}G_k(r,\rho)\Big[(-b(r_0)+i\epsilon+L_k)^{-1}\omega^k_0\Big](\rho)\,d\rho,\\
 &\psi_{k,\epsilon}^{-}(r,r_0):=\int_0^{\infty}G_k(r,\rho)\Big[(-b(r_0)-i\epsilon+L_k)^{-1}\omega^k_0\Big](\rho)\,d\rho.
 \end{split}
 \end{equation}
 We note that $\psi_{k,\epsilon}^{+}(r,r_0), \psi_{k,\epsilon}^{-}(r,r_0)$ satisfy for $\iota\in\{+,-\}$ and $r,r_0\in\R^+$,
 \begin{equation}\label{F7}
 \Big[k^2/r^2-r^{-1}\partial_r-\partial_r^2\Big]\psi_{k,\epsilon}^{\iota}(r,r_0)+\frac{d(r)}{b(r)-b(r_0)+i\iota\epsilon}\psi_{k,\epsilon}^{\iota}(r,r_0)=\frac{\omega^k_0(r)}{b(r)-b(r_0)+i\iota\epsilon}.
 \end{equation}

 It is more convenient to work in the variable $v$ with $r=e^v$ for $r\in(0,\infty)$. We therefore introduce for $k\in\mathbb{Z}\backslash\{0\}, \epsilon\in[-1/4,1/4]\backslash\{0\}$ and $\iota\in\{+,-\}$,
 \begin{equation}\label{B12}
\Pi^{\iota}_{k,\epsilon}(v,w):= \psi_{k,\epsilon}^{\iota}(r,r_0),\qquad{\rm where}\,\,r=e^v,\,\,r_0=e^w\,\,{\rm and}\,\,r,r_0\in(0,\infty),
 \end{equation}
 \begin{equation}\label{B12'}
 B(v):=b(r),\qquad D(v):=d(r),\qquad{\rm where}\,\,r=e^v,\,\,r\in(0,\infty).
 \end{equation}
 We also define 
 \begin{equation}\label{B12''}
 f_0^k(v):=\omega_0^k(r),\quad  \phi_k(t,v):=\psi_k(t,r), \qquad{\rm where}\,\,r=e^v,\,\,r\in(0,\infty).
 \end{equation}

 It follows from \eqref{B10}-\eqref{B12''} that for $\iota\in\{+,-\},v,w\in\R$, $k\in\Z\backslash\{0\}$, $\epsilon\in[-1/4,1/4]$,
 \begin{equation}\label{B13}
 k^2\Pi^{\iota}_{k,\epsilon}(v,w)-\partial_v^2\Pi^{\iota}_{k,\epsilon}(v,w)+e^{2v}D(v)\frac{\Pi^{\iota}_{k,\epsilon}(v,w)}{B(v)-B(w)+i\iota\epsilon}=e^{2v}\frac{f_0^k(v)}{B(v)-B(w)+i\iota\epsilon},
 \end{equation}
 and the normalized stream function has the representation formula
 \begin{equation}\label{B14}
 \phi_k(t,v)=\frac{1}{2\pi i}\lim_{\epsilon\to0+}\int_{\mathbb{R}}e^{-ikB(w)t}\partial_wB(w)\left[\Pi_{k,\epsilon}^{-}(v,w)-\Pi_{k,\epsilon}^+(v,w)\right]dw.
 \end{equation}
 The key is to study the regularity properties of the associated spectral density functions $\Pi_{k,\epsilon}^{-}(v,w)$ and $\Pi_{k,\epsilon}^{+}(v,w)$ for $v,w\in\R$, and $\epsilon\in[-1/4,1/4]\backslash\{0\}$ small.
 
 We summarize our calculations in the following proposition.
 \begin{proposition}\label{MPro} 
 Suppose $\Omega(r), r\in\R^+$, is a radial function satisfying the assumption \eqref{Back}. Let $U(r), r\in\R^+$, be given through
 \begin{equation}\label{MPro1}
\partial_rU(r)+U(r)/r=\Omega(r),\,\, r\in(0,\infty), \quad{\rm with}\,\,\lim_{r\to0}U(r)=\lim_{r\to\infty}U(r)=0.
 \end{equation}
 Consider $\Omega(r)$ as the steady vorticity profile for the two dimensional incompressible Euler equation with the associated velocity field $U(r)e_\theta$. The linearized equations around $\Omega$ for the perturbation vorticity $\omega(t,\theta,r)\in C^1([0,\infty), H^1(\R^+, r\langle r\rangle^8 d\theta dr))$ and the associated stream function $\psi(t,\theta,r)$, $t\ge0, \theta\in\mathbb{T}, r\in\R^+$ are given by
 \begin{equation}\label{MPro2}
 \begin{split}
& \partial_t\omega+\frac{U(r)}{r}\partial_{\theta}\omega-\frac{\Omega'(r)}{r}\partial_{\theta}\psi=0,\quad\left(\partial_{rr}+r^{-1}\partial_r+r^{-2}\partial_{\theta}^2\right) \psi=\omega. \end{split}
\end{equation}
We assume that the initial vorticity deviation $\omega_0(\theta,r)$ satisfies the orthogonality conditions
\begin{equation}\label{B3}
\int_{\R^2}\omega_0(\theta,r)(\cos{\theta},\sin{\theta})\,r^2drd\theta=0,\qquad {\rm and }\qquad \int_{\mathbb{T}}\omega_0(\theta,r)\,d\theta=0.
\end{equation}
For $k\in\Z\backslash\{0\}$ and $t\ge0, r\in\R^+$, letting 
\begin{equation}\label{MPro2.1}
\omega_k(t,r):=\frac{1}{2\pi}\int_{\T}\omega(t,\theta,r) e^{-ik\theta}\,d\theta,\quad \psi_k(t,r):=\frac{1}{2\pi}\int_{\T}\psi(t,\theta,r) e^{-ik\theta}\,d\theta, 
\end{equation} 
then $\omega_k(t,r), \psi_k(t,r)$ satisfy for $t\ge0, r\in\R^+$
 \begin{equation}\label{MPro2}
 \begin{split}
& \partial_t\omega_k+ik\frac{U(r)}{r}\omega_k-ik\frac{\Omega'(r)}{r}\psi_k=0,\quad\left(\partial_{rr}+r^{-1}\partial_r-k^2r^{-2}\right) \psi_k=\omega_k. \end{split}
\end{equation}
 Define for $t\ge0, r\in\R^+, v\in\R$ with $r=e^v$, the functions
 \begin{equation}\label{MPro3}
 \begin{split}
& b(r):=U(r)/r, \quad d(r):=\Omega'(r)/r, \quad B(v):=b(r),\quad D(v):=d(r),\\
& f_k(t,v):=\omega_k(t,r),\quad f_{0}^k(v):=\omega_0^k(r),\quad \phi_k(t,v):=\psi_k(t,r).
 \end{split}
 \end{equation}
 We have the representation formula for $t\ge0, v\in\R$,
 \begin{equation}\label{MPro4}
  \phi_k(t,v)=\frac{1}{2\pi i}\lim_{\epsilon\to0+}\int_{\mathbb{R}}e^{-ikB(w)t}\partial_wB(w)\left[\Pi_{k,\epsilon}^{-}(v,w)-\Pi_{k,\epsilon}^+(v,w)\right]dw,
 \end{equation}
 where the spectral density functions $\Pi_{k,\epsilon}^{\iota}(v,w)$ satisfy for $\iota\in\{+,-\}, v,w\in\R$, and $\epsilon\in[-1/8,1/8]\backslash\{0\}$ 
\begin{equation}\label{MPro5}
 k^2\Pi^{\iota}_{k,\epsilon}(v,w)-\partial_v^2\Pi^{\iota}_{k,\epsilon}(v,w)+e^{2v}D(v)\frac{\Pi^{\iota}_{k,\epsilon}(v,w)}{B(v)-B(w)+i\iota\epsilon}=\frac{e^{2v}f_0^k(v)}{B(v)-B(w)+i\iota\epsilon}.
 \end{equation}
 We also record the following bounds for later applications. There exists $C^{\ast}\in(0,\infty)$, depending on $C_\ast, c_\ast$ in \eqref{Back} such that for $v\in\R$ and $j\in\Z\cap[1,\infty)$, 
\begin{equation}\label{LAP100}
\begin{split}
&\Big|D(v)-\frac{c_\ast}{(1+e^{2v})^4}\Big|\leq C^\ast \frac{e^{2v}}{(1+e^{2v})^{5}}, \quad \big|\partial^j_vD(v)\big|\leq (C^\ast)^j(j!)^2 \frac{e^{2v}}{(1+e^{2v})^{5}},\\
&B(v)\approx \frac{1}{1+e^{2v}}, \quad \partial_vB(v)\approx -\frac{e^{2v}}{(1+e^{2v})^2}, \quad \big|\partial^j_vB(v)\big|\lesssim (C^\ast)^j(j!)^2\frac{e^{2v}}{(1+e^{2v})^2},\end{split}
\end{equation}
\begin{equation}\label{LAP101}
\partial_vB(v)=(c_\ast/4)e^{2v}+O\big(e^{4v}\big) \qquad{\rm for}\,\,v<0.
\end{equation}
 \end{proposition}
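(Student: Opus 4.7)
The plan is to assemble previously-derived identities, most of which already appear in the excerpt. \emph{Step 1 (Fourier reduction).} Taking the $k$-th Fourier coefficient in $\theta$ of \eqref{B1}--\eqref{B2} yields \eqref{MPro2} at each $k \neq 0$; the orthogonality conditions \eqref{B3} are used to guarantee sufficient decay of $\omega_k, \psi_k$ at $r = 0$ and $r = \infty$, so that $\psi_k = -\int_0^\infty G_k(r,\rho)\omega_k(\rho)\,d\rho$ with $G_k$ from \eqref{B5}. Substituting this into the vorticity equation reduces the system to $\partial_t\omega_k + ikL_k\omega_k = 0$ with $L_k$ as in \eqref{B6}.

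\emph{Step 2 (Spectral representation and change of variables).} I would first verify that $L_k$ is bounded and self-adjoint on $X_k$: self-adjointness follows from the symmetry $G_k(r,\rho) = G_k(\rho,r)$ together with the choice of weight $r^2/|\Omega'(r)|$, which precisely cancels the asymmetric prefactor $\Omega'(r)/r$ when one pairs $\langle L_k f, g\rangle_{X_k}$ against $\langle f, L_k g\rangle_{X_k}$. Since $L_k$ differs from multiplication by $b(r)$ by a compact (indeed Hilbert--Schmidt) operator, Weyl's theorem yields essential spectrum $[0, b(0)]$; the absence of discrete eigenvalues is postponed to Section \ref{sec:spec}. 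Granting this, Stone's formula applied to $\omega_k(t) = e^{-ikL_k t}\omega_0^k$ produces \eqref{B9} after the change of variable $\lambda = b(r_0)$ (legitimate because $b$ is strictly decreasing by \eqref{Int0.1}), and convolving against $G_k$ gives \eqref{B10}. Passing to logarithmic coordinates $v = \log r$, $w = \log r_0$, so that $\partial_r = e^{-v}\partial_v$ and $-\partial_r^2 - r^{-1}\partial_r = -e^{-2v}\partial_v^2$, and multiplying \eqref{F7} through by $e^{2v}$, produces \eqref{MPro5}; the Jacobian $dr_0 = e^w dw$ is absorbed into $\partial_w B(w) = e^w b'(e^w)$ to give \eqref{MPro4}.

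\emph{Step 3 (Asymptotics of $B$ and $D$).} Integrating \eqref{MPro1} produces the explicit formula $b(r) = r^{-2}\int_0^r s\,\Omega(s)\,ds = \int_0^1 \sigma\,\Omega(r\sigma)\,d\sigma$. Differentiating under the integral and applying Assumption \ref{MainAs} gives both the size bounds and the factorial-squared derivative bounds for $B$ in \eqref{LAP100}; the analogous bounds for $D$ follow from the Assumption directly, since $(r\partial_r)^j$ transforms to $\partial_v^j$ in the $v$ variable. The leading-order expansion \eqref{LAP101} comes from Taylor-expanding $\Omega(r\sigma)$ at $r = 0$, using that \eqref{Back} forces $\Omega'(\rho)/\rho \to c_\ast/\langle 0\rangle^8$ as $\rho \to 0^+$, so that $r b'(r) = \int_0^1 \sigma^2\, r\,\Omega'(r\sigma)\,d\sigma$ has leading order proportional to $c_\ast r^2$.

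\emph{Main obstacle.} Nothing in Steps 1--3 is genuinely subtle; the only nontrivial input is the exclusion of discrete eigenvalues of $L_k$, deferred to Section \ref{sec:spec}. That exclusion requires a Rayleigh-type argument on the associated second-order ODE for $\psi_k$, exploiting strict monotonicity of $\Omega$ in \eqref{Back} to rule out $L^2$-solutions of $L_k f = \lambda f$ for $\lambda \in \R \setminus [0, b(0)]$, together with an endpoint analysis at $\lambda \in \{0, b(0)\}$. Everything else here is bookkeeping: combining Fourier decomposition, Stone's formula, the Green's-function representation, and the change of variables $v = \log r$.
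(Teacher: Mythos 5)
Your proposal is correct and follows essentially the same route as the paper: Proposition \ref{MPro} is a collation of the derivations \eqref{B1}--\eqref{F7} and the change of variables \eqref{B12}--\eqref{B14}, and you assemble exactly those pieces (Fourier reduction, Green's-function representation of $\psi_k$, Stone's formula, the substitution $\lambda = b(r_0)$ using strict monotonicity of $b$, the logarithmic change of coordinates turning $\partial_r^2 + r^{-1}\partial_r$ into $e^{-2v}\partial_v^2$, and the integral formula $b(r)=\int_0^1 s\,\Omega(rs)\,ds$ for the bounds \eqref{LAP100}--\eqref{LAP101}) while correctly isolating the absence of discrete eigenvalues as the one nontrivial input deferred to Proposition \ref{SPEC}.
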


 \subsection{Main results} 
Fix $k^\dagger\in\Z\cap[5,\infty)$.  Denote $\alpha\wedge\beta:=\min\{\alpha,\beta\}$ for $\alpha, \beta\in\R$.  Define for $k\in\Z\backslash\{0\}$, the numbers $\varkappa_k:=|k|\wedge k^\dagger$, $\mu_k:=\sqrt{k^2+8}$, $\mu_k^\ast:=\frac{9\mu_k+|k|+2}{10}$,  and for $v,\rho, w_\ast\in\R$ the function $d_{w_\ast}(v,\rho)$ as
 \begin{equation}\label{MWF1In}
d_{w_\ast}(v,\rho):=\big|\big[\min\{v,\rho\}, \max\{v,\rho\}\big]\cap\big[\min\{w_\ast,0\}, 0\big]\big|.
\end{equation}
Define also the main weight functions $\varpi_{k,w_\ast}(v,\rho)$ and $\zeta_{k,w_\ast}(v)$ for $v,\rho\in\R$, which are useful in characterizing decay property of the spectral density function and Green's functions below, as the following: for $v,\rho, w_\ast\in\R$,
\begin{equation}\label{MWF2In}
\varpi_{k,w_\ast}(v,\rho):=e^{-|k||v-\rho|-(\mu_k-|k|)d_{w_\ast}(v,\rho)},\quad \zeta_{k,w_\ast}(v,\rho):=\frac{1}{\varpi_{k,w_\ast}(v,\rho)}.
\end{equation}
We take two small constants $0<\delta_1\ll\delta_0$ which will be used to quantify the Gevrey regularity below. $\delta_0$ is determined by the regularity of the background flow $B(v)$, and $\delta_1$ is related to the regularity of the initial data. For our purposes, we assume that the background flow is much smoother than the solution to the linearized equation we consider. (Hence the condition that $\delta_0\gg\delta_1>0$.)
 
 Fix $\Phi_0(v)\in C^\infty(-\infty, -1)$ such that $\Phi_0\equiv 1$ on $(-\infty, -2]$ and 
 $$\sup_{\xi\in\R}\big|e^{\langle\xi\rangle^{8/9}}\widehat{\partial_v\Phi_0}(\xi)\big|\lesssim1.$$ 
 We choose also smooth cutoff functions $\Phi^\ast, \Phi^{\ast\ast}:\R\to[0,1]$ satisfying $\Phi^\ast\in C_0^{\infty}(-4,4)$ and $\Phi^\ast\equiv1$ on $[-2,2]$, $\Phi^{\ast\ast}\in C_0^{\infty}(-5,5)$ and $\Phi^{\ast\ast}\equiv1$ on $[-4,4]$, and 
 $$\sup_{\xi\in\R}e^{\langle\xi\rangle^{8/9}}\Big[\big|\widehat{\,\,\Phi^\ast}(\xi)\big|+\big|\widehat{\,\,\,\Phi^{\ast\ast}}(\xi)\big|\Big]\lesssim1.$$

We make the following assumptions on the initial data.
 \begin{assumption}\label{MARr1}
There exist coefficients $\sigma_k\in\R$ for each $k\in\Z\backslash\{0\}$ with $\sigma_k=0$ for $|k|\ge k^\dagger+1$, and constants $M^\dagger_k\in(0,\infty)$, such that the following statement holds. Define for $k\in\Z\backslash\{0\}$, $j\in\Z$ and $v\in\R$,
 \begin{equation}\label{MARr2}
 F_{0k}(v):=f_0^k(v)-(\sigma_k/c_\ast) D(v)e^{|k|v}\Phi_0(v),\qquad {\rm and }\qquad F^j_{0k}(v):=F_{0k}(v)\Phi^\ast(v-j),
 \end{equation}
 then $F_{0k}^j$ satisfies the bounds for all $k\in\Z\backslash\{0\}, j\in\Z$,
 \begin{equation}\label{MARr3}
 \left\|e^{\delta_1\langle k,\xi\rangle^{1/2}}\widehat{\,F^j_{0k}}(\xi)\right\|_{L^2(\xi\in\R)}\leq M^\dagger_k  \frac{e^{\mu^\ast_{\varkappa_k}j}}{1+e^{(\mu^\ast_{\varkappa_k}+\varkappa_k+8)j}}.\\
 \end{equation}
\end{assumption}

We briefly comment on the assumption \eqref{MARr3}. Notice that \eqref{MARr3} contains conditions both on the regularity and decay of $F_{0k}(v)$. The decay of $F_{0k}(v)$ is assumed to be of the order $e^{-(\varkappa_k+8)v}$ as $v\to +\infty$, and $e^{\mu^\ast_{\varkappa_k}v}$ as $v\to-\infty$. These decay conditions are compatible with the expected decay of smooth, fast decaying initial data $\omega_0(x,y)$. $\mu_{\varkappa_k}^\ast$ needs to be slightly bigger than $\mu_{\varkappa_k}$ (when $|k|\ge2$) which is the index of decay for the solution, see \eqref{MTH23.3}. This does not seem to be a problem since the nonlinear interactions will only produce terms of faster decay. On the other hand, the decay as $v\to +\infty$ is not a concern for us thanks to the fast decay of $D(v)$ as $v\to+\infty$.

Decompose for $k\in\Z\backslash\{0\}$,
\begin{equation}\label{MAR1}
\Pi_{k,\epsilon}^\iota(v,w):=(\sigma_k/c_\ast)e^{|k|v}\Phi_0(v)+\Gamma_{k,\epsilon}^{\iota}(v,w).
\end{equation}
It follows from \eqref{MPro5} that $\Gamma_{k,\epsilon}^{\iota}(v,w)$ satisfies the equation for $v,w\in\R, \epsilon\in[-1/8, 1/8]\backslash\{0\},$ $\iota\in\{+,-\}$, 
\begin{equation}\label{MAR2}
\begin{split}
&(k^2-\partial_v^2)\Gamma_{k,\epsilon}^{\iota}(v,w)+\frac{e^{2v}D(v)}{B(v)-B(w)+i\iota\epsilon}\Gamma_{k,\epsilon}^{\iota}(v,w)\\
&=\frac{e^{2v}F_{0k}(v)}{B(v)-B(w)+i\iota \epsilon}+(\sigma_k/c_\ast)\big(2|k|e^{|k|v}\partial_v\Phi_{0k}(v)+e^{kv}\partial_v^2\Phi_{0k}(v)\big).
\end{split}
\end{equation}

Our main result is the bounds on the profile of the spectral density function. We allow the implied constants to depend on $k^\dagger\in\Z\cap[5,\infty)$ and the background flow (more precisely the constants $c_\ast, C_\ast\in(0,\infty)$ appearing in \eqref{Back} and the structural constant $\kappa\in(0,1)$ coming from the limiting absorption principle, see section \ref{sec:lap}).
\begin{theorem}\label{MTH2}
Assume that $k\in\Z\backslash\{0\}$, $f_k(t,v), \phi_k(t,v)$ for $t\ge0, v\in\R$ are as in proposition \ref{MPro}, and that the assumption \ref{MARr1} holds. Then we have the following conclusions. For some sufficiently small $\epsilon_\ast\in(0,1)$,
 $\iota\in\{+,-\}$, and for all $w\in\R$ and $0<\epsilon<\epsilon_\ast e^{-2|w|}$, we have
  \begin{equation}\label{MTH2.0}
\big\|\Gamma_{k,\epsilon}^{\iota}(v,w)\big\|_{L^2(v\in\R)}\lesssim (M_k^{\dagger}+|\sigma_k|)/|k|.
\end{equation}

 The limiting spectral density function 
\begin{equation}\label{MTH2.1}
\Gamma_k(v,w):=(-i)\lim_{\epsilon\to0+}\big[\Gamma_{k,\epsilon}^{+}(v,w)-\Gamma_{k,\epsilon}^{-}(v,w)\big]=2\lim_{\epsilon\to0+}\Im \,\Gamma_{k,\epsilon}^{+}(v,w)
\end{equation}
exists, as limit of functions in $L^2_{\rm loc}(\R^2)$. Define for $v, w\in\R$, the ``profile" $\Theta_k(v,w)$ for $\Gamma_k(v,w)$,
\begin{equation}\label{MTH2.10}
\Theta_k(v,w):=\Gamma_k(v+w,w).
\end{equation}
Then $\Theta_k(v,w)$ satisfies the following properties.

{\it (i) Bounds when $v$ is away from $0$.} For $w_\ast\in\R, v_\ast\in\R$, define for $v,w\in\R$
\begin{equation}\label{MTH2.2}
\Theta_{k,v_\ast}^{w_\ast}(v,w):=\Theta_k(v,w)\Phi^\ast(v-v_\ast)\Phi^\ast(w-w_\ast)(1-\Phi^\ast(v)),
\end{equation}
then we have
\begin{equation}\label{MTH2.3}
\begin{split}
&\Big\|(|k|+|\xi|)\Big[e^{\delta_0\langle k,\xi\rangle^{1/2}}+e^{\delta_1\langle k,\eta\rangle^{1/2}}\Big]\widehat{\,\,\Theta_{k,v_\ast}^{w_\ast}}(\xi,\eta)\Big\|_{L^2(\xi,\eta\in\R)}\\
&\lesssim\big(M_k^{\dagger}+|\sigma_k|\big)\varpi_{\varkappa_k,w_\ast}(w_\ast,0)\varpi_{\varkappa_k,w_\ast}(v_\ast+w_\ast,w_\ast).
\end{split}
\end{equation}

{\it (ii) Bounds when $v$ is close to $0$.} For $w_\ast\in\R$, define for $v,w\in\R$,
\begin{equation}\label{MTH2.4}
\Theta_{k}^{w_\ast}(v,w):=\Theta_k(v,w)\Phi^\ast(w-w_\ast)\Phi^\ast(v),
\end{equation}
 we have
\begin{equation}\label{MTH2.5}
\begin{split}
&\Big\|\big(|k|+|\xi|\big)e^{\delta_1\langle k,\eta\rangle^{1/2}}\widehat{\,\,\,\Theta_k^{w_\ast}}(\xi,\eta)\Big\|_{L^2(\xi,\eta\in\R)}\lesssim(M_k^{\dagger}+|\sigma_k|)\varpi_{\varkappa_k,w_\ast}(w_\ast,0).
\end{split}
\end{equation}

{\it (iii) Equation for $\Theta_k$.} In addition, $\Theta_k(v,w)$ satisfies for $v,w\in\R$ the equation
\begin{equation}\label{MTH2.6}
(k^2-\partial_v^2)\Theta_k(v,w)+{\rm P.V.} \frac{e^{2v+2w}D(v+w)\Theta_k(v,w)}{B(v+w)-B(w)}=-2\pi\frac{e^{2w}\big(D(w)\digamma_k(w)-F_{0k}(w)\big)}{B'(w)}\delta(v),
\end{equation}
where ${\rm P.V.}$ represents principal value and $\digamma_k\in C^\infty(\R)$ satisfies the bound, for $w_\ast\in\R$ and $\digamma_k^{w_\ast}(w):=\digamma_k(w)\Phi^\ast(w-w_\ast)$,
\begin{equation}\label{MTH2.7}
\Big\|e^{\delta_1\langle k,\xi\rangle^{1/2}}\widehat{\,\,\digamma_k^{w_\ast}}(\xi)\Big\|_{L^2}\lesssim(M_k^{\dagger}+|\sigma_k|)\varpi_{\varkappa_k,w_\ast}(w_\ast,0).
\end{equation}

{\it (iv) Refined regularity property of $\Theta_k(v,w)$ in $v$.} Moreover, for any $\zeta:[-10,10]\to \R$ with Gevrey-2 regularity and $|\zeta'|\gtrsim1$ on $[-10,10]$, more precisely $\zeta\in \widetilde{G}_M^{1/2}(-10,10)$ for some $M\in(0,\infty)$ (see \eqref{Gevr2} for the precise definition of the Gevrey space $\widetilde{G}_M^{1/2}$) then for $v, \rho\in[-4,4]$, the function $\Theta(\zeta(v+\rho)-\zeta(\rho), w)$ is Gevrey regular in $\rho, w$. More precisely, for any $w_\ast\in\R$, define
\begin{equation}\label{MTH2.8}
\mathcal{H}_{w_\ast}(v, \rho, w):=\Theta(\zeta(v+\rho)-\zeta(\rho), w)\Phi^\ast(w-w_\ast)\Phi_k^\ast(v)\Phi^\ast(\rho),
\end{equation}
then for some $\delta_1'\in(0,1)$ depending on $M$ and $\delta_1$ we have
\begin{equation}\label{MTH2.9}
\begin{split}
&\Big\|\big(|k|+|\alpha|\big) \Big[e^{\delta_1'\langle k,\xi\rangle^{1/2}}+e^{\delta_1\langle k,\eta\rangle^{1/2}}\Big]\widehat{\,\,\mathcal{H}_{w_\ast}}(\alpha,\xi,\eta)\Big\|_{L^2(\R^3)}\lesssim(M_k^{\dagger}+|\sigma_k|)\varpi_{\varkappa_k,w_\ast}(w_\ast,0).
\end{split}
\end{equation}

{\it (v) Representation formula for the stream function and vorticity function.} Finally, we have the representation formula 
\begin{equation}\label{MTH2.91}
\begin{split}
&\phi_k(t,v)=-\frac{1}{2\pi }\int_\R e^{-ikB(w)t}\Theta_k(v-w,w) B'(w)\,dw, \qquad f_k(t,v)=-e^{-2v}(k^2-\partial_v^2)\phi_k(t,v).\\
 \end{split}
\end{equation}

\end{theorem}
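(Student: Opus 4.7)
My plan is to handle the five conclusions essentially in order, with the uniform resolvent bound supplied by the limiting absorption principle (LAP) of section \ref{sec:lap} as the workhorse. For the $L^2$ bound \eqref{MTH2.0}, I view \eqref{MAR2} as a perturbed resolvent equation for the self-adjoint operator $L_k$ at spectral value $B(w)$; LAP then gives $\epsilon$-uniform bounds on $(L_k - B(w) - i\iota\epsilon)^{-1}$ on an interval of size $\sim e^{-2|w|}$ about $B(w)$, which is exactly the scale enforced by the hypothesis $\epsilon < \epsilon_\ast e^{-2|w|}$ (since $|B'(w)|\asymp e^{-2|w|}$). Combined with Assumption \ref{MARr1} this produces \eqref{MTH2.0}, and the $\epsilon$-uniform bounds also yield the existence of $\Gamma_k$ in \eqref{MTH2.1} in $L^2_{\mathrm{loc}}$ by a standard compactness-plus-uniqueness argument.

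Second, to derive \eqref{MTH2.6}, I would pass to the limit $\epsilon\to 0^+$ in the imaginary part of \eqref{MAR2} using the Sokhotski--Plemelj identity
\[
\frac{1}{B(v)-B(w)+i\iota\epsilon}\ \longrightarrow\ \mathrm{P.V.}\,\frac{1}{B(v)-B(w)} - i\iota\pi\,\frac{\delta(v-w)}{|B'(w)|}\qquad (\epsilon\to 0^+),
\]
which is valid because $B$ is strictly monotone by \eqref{LAP100}. Translating to the profile variable $v \mapsto v+w$ concentrates the delta at $v=0$ and produces \eqref{MTH2.6}, with the coefficient involving $\digamma_k(w) := \lim_{\epsilon\to 0^+}\operatorname{Re}\Gamma^+_{k,\epsilon}(w,w)$. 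The Gevrey-$\delta_1$ bound \eqref{MTH2.7} for $\digamma_k$ then follows from trace-type inequalities applied to the $\epsilon$-uniform $H^1$-bounds on $\Gamma^\iota_{k,\epsilon}$.

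Third, for the Gevrey estimates (i), (ii), (iv), I treat \eqref{MTH2.6} as an elliptic equation in $v$ with a singular but structured coefficient, localized in $w$ near $w_\ast$. The Green's function of $k^2 - \partial_v^2$ already supplies the $e^{-|k||v-\rho|}$ part of $\varpi_{\varkappa_k,w_\ast}$; the extra $(\mu_k - |k|)d_{w_\ast}$ decay arises once the potential $e^{2(v+w)}D(v+w)/(B(v+w)-B(w))$ is inverted across intervals crossing $v+w=0$, which is the precise analytic manifestation of the vortex-depletion phenomenon discussed in the introduction. Away from $v=0$ (part (i)) both the $v$-Gevrey-$\delta_0$ regularity (inherited from the background flow via $B,D$) and the $w$-Gevrey-$\delta_1$ regularity (inherited from the data via \eqref{MARr3}) propagate; near $v=0$ (part (ii)) the delta source in \eqref{MTH2.6} limits $v$-regularity to the $H^1$-type factor $(|k|+|\xi|)$, while $w$-Gevrey-$\delta_1$ regularity survives. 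Part (iv) follows from standard composition estimates in the Gevrey-$1/2$ scale: the only obstruction to $v$-Gevrey regularity of $\Theta_k$ is the kink at $v=0$, and composing with the Gevrey-$2$ map $\zeta(v+\rho)-\zeta(\rho)$ with nondegenerate derivative flattens the singularity and, at the cost of a small loss $\delta_1 \to \delta_1'$, transfers $w$-Gevrey regularity to full $(\rho,w)$-Gevrey regularity.

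The representation (v) is the easy step: substituting \eqref{MAR1} and \eqref{MTH2.1} into \eqref{MPro4}, the boundary piece $(\sigma_k/c_\ast)e^{|k|v}\Phi_0(v)$ is $\iota$- and $\epsilon$-independent and cancels in $\Pi^- - \Pi^+$, and changing variable $w \mapsto v - w$ under the integral produces the announced formula, the vorticity formula being just Biot--Savart. The main obstacle is the Gevrey inversion hidden in the third step: producing both the correct weight $\varpi_{\varkappa_k,w_\ast}$ and the exact Gevrey radius $\delta_0$ requires carefully tracking how commutators with the principal-value singular coefficient interact with Gevrey-type multipliers, and this is where the hypothesis $\delta_0 \gg \delta_1$ and the structural LAP constant $\kappa$ become indispensable --- the gap $\delta_0 - \delta_1$ must absorb the Gevrey loss per commutator, while $\kappa$ quantifies non-degeneracy of the resolvent in the delicate region where $B(v+w)\approx B(w)$.
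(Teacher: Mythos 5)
Your overall architecture — uniform resolvent bounds via the limiting absorption principle, Sokhotski--Plemelj for the limiting equation \eqref{MTH2.6}, elliptic $v$-regularity away from $v=0$, and the cancellation argument for the representation formula (v) — is the correct scaffolding, and your sketches of \eqref{MTH2.0}, \eqref{MTH2.1}, \eqref{MTH2.6} and (v) are essentially the paper's. (Your $\digamma_k=\lim\operatorname{Re}\Gamma^+_{k,\epsilon}(w,w)$ is in fact the quantity dictated by the Plemelj computation.)

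Two genuine gaps remain. First, for the weight $\varpi_{\varkappa_k,w_\ast}$ in (i)--(ii), saying that the $(\mu_k-|k|)d_{w_\ast}$ factor ``arises once the potential is inverted across $v+w=0$'' glosses over the mechanism that makes this possible. When $w\ll -1$, the effective potential $e^{2(v+w)}D(v+w)/(B(v+w)-B(w))\approx 8$ on the entire interval $v+w\in[w,0]$ is long-range, so the free kernel $\tfrac{1}{2|k|}e^{-|k||v-\rho|}$ of $k^2-\partial_v^2$ does \emph{not} perturbatively invert the equation; one is forced to absorb a fixed cutoff $V_w$ of this potential into the main operator, define a new Green's function $\mathcal{G}_k^w$ for $k^2-\partial_v^2+V_w$, and establish separately (Lemma \ref{Enh1} and Proposition \ref{ENHQ1}) that $\mathcal{G}_k^w$ has the decay $\varpi_{k,w}$ and the right Gevrey regularity in the parameter $w$. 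This is the content behind the two-case split $w\gtrless -10$ in section \ref{sec:lap} and is precisely the analytic residue of vortex depletion; without it, the commutator/LAP scheme only returns the naive $e^{-|k||v-\rho|}$ weight.

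Second, and more seriously, part (iv) does \emph{not} reduce to ``standard composition estimates in the Gevrey-$1/2$ scale.'' Since $\zeta$ is injective, $\zeta(v+\rho)-\zeta(\rho)=0$ exactly when $v=0$: the composition does not move the singularity of $\Theta_k$ off the slice $v=0$, so there is no hope of upgrading $v$-regularity this way. The content of (iv) is Gevrey regularity in the \emph{new} variable $\rho$, and it rests on the cancellation $\partial_\rho\big[\zeta(v+\rho)-\zeta(\rho)\big]=\zeta'(v+\rho)-\zeta'(\rho)=O(v)$ near $v=0$ offsetting the derivative singularity of $\Theta_k$. Turning that cancellation into \eqref{MTH2.9} requires a dedicated commutator argument on the P.V. equation after the change of variable, and — crucially — coercivity does not come from the LAP but from localizing to $|v|\lesssim\sigma$ and using the $\sigma^{2/3}$ smallness estimate of Lemma \ref{PMT12.6} on the singular $\mathrm{P.V.}\,a^\ast(v,\rho,w)/v$ term; this is where the free parameter $\sigma$ earns its keep. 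A composition-theorem argument cannot substitute for this step.
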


Using the bounds on the spectral density function, we can prove the following result on the evolution of $f_k(t,v)$.
\begin{theorem}\label{MTH2'}
Decompose
\begin{equation}\label{MTH23.1}
\begin{split}
f_k(t,v)&=f_k^1(t,v)+f_k^2(t,v)\\
&:=\frac{e^{-2v}}{2\pi}\bigg[\int_\R e^{-iktB(w)}(k^2-\partial_v^2)\Theta_k(v-w,w)\Phi^\ast(v-w)B'(w)\,dw\\
&+\int_\R e^{-iktB(w)}(k^2-\partial_v^2)\Theta_k(v-w,w)(1-\Phi^\ast(v-w))B'(w)\,dw\bigg],
\end{split}
\end{equation}
then $f_k^1(t,v)e^{ikB(v)t}$ is Gevrey smooth in $v$ uniformly over $t\in[0,\infty)$, and $f_k^2(t,v)$ is Gevrey smooth in $v$ and decays in $t$. 

More precisely, there exist $\delta_0''\in(0,\infty)$ depending on $\delta_0$, and $\delta_1''\in(0,1)$ depending on $\delta_1$, such that for any $v_\ast\in\R$, defining
\begin{equation}\label{MTH23.2}
F_{k,v_\ast}^1(t,v):=f_k^1(t,v)e^{ikB(v)t} \Phi^\ast(v-v_\ast), \qquad F_{k,v_\ast}^2(t,v):=f_k^2(t,v)\Phi^\ast(v-v_\ast),
\end{equation}
we have the bounds for all $t\ge0$,
\begin{equation}\label{MTH23.3}
\begin{split}
&\Big\|e^{\delta_1''\langle k,\xi\rangle^{1/2}}\widehat{\,\,F_{k,v_\ast}^1}(t,\xi)\Big\|_{L^2(\R)}\lesssim (M_k^{\dagger}+|\sigma_k|)\Big[e^{-\mu_{\varkappa_k}|v_\ast|}{\bf 1}_{v_\ast<0}+e^{-(\varkappa_k+8)|v_\ast|}{\bf 1}_{v_\ast>0}\Big],\\
&\big|B'(v_\ast)\big|\Big\|e^{\delta_0''\langle k,\xi\rangle^{1/2}}\widehat{\,\,F_{k,v_\ast}^2}(t,\xi)\Big\|_{L^2(\R)}\lesssim \frac{M_k^{\dagger}+|\sigma_k|}{\langle t\rangle}\Big[e^{-\mu_{\varkappa_k}|v_\ast|}{\bf 1}_{v_\ast<0}+e^{-(\varkappa_k+8)|v_\ast|}{\bf 1}_{v_\ast>0}\Big].
\end{split}
\end{equation}

\end{theorem}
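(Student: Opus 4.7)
The strategy is to first rewrite $f_k$ in a form that exhibits the delta singularity at the diagonal $v=w$ coming from equation \eqref{MTH2.6}. Inserting the identity
\[
(k^2-\partial_v^2)\Theta_k(v-w,w) = -\mathrm{P.V.}\frac{e^{2v}D(v)\,\Theta_k(v-w,w)}{B(v)-B(w)} - 2\pi\frac{e^{2w}(D(w)\digamma_k(w)-F_{0k}(w))}{B'(w)}\,\delta(v-w)
\]
into the representation \eqref{MTH2.91} and carrying out the $w$-integration against the delta yields
\[
f_k(t,v) = e^{-ikB(v)t}\bigl(F_{0k}(v) - D(v)\digamma_k(v)\bigr) - \frac{D(v)}{2\pi}\,\mathrm{P.V.}\!\int_{\R} e^{-ikB(w)t}\,\frac{\Theta_k(v-w,w)}{B(v)-B(w)}\,B'(w)\,dw.
\]
The first (modulated) summand is Gevrey-smooth in $v$ by Assumption \ref{MARr1} and part (iii) of Theorem \ref{MTH2}, and it sits entirely inside $f_k^1$ because the delta is located at $v=w$, inside the support of $\Phi^\ast(v-w)$. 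Splitting the P.V.\ integral according to $1 = \Phi^\ast(v-w) + (1-\Phi^\ast(v-w))$ matches the decomposition \eqref{MTH23.1}.

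For the near-diagonal term $f_k^1$, multiplying by $e^{ikB(v)t}$ turns the phase into $e^{-ikt[B(w)-B(v)]}$. Since $B$ is strictly monotone with $|B'|\asymp 1$ on any bounded $v$-interval, I change variables $u := B(v)-B(w)$, i.e.\ $w=\beta(v,u):=B^{-1}(B(v)-u)$, so that
\[
f_k^1(t,v)\,e^{ikB(v)t} = \bigl(F_{0k}(v)-D(v)\digamma_k(v)\bigr) + \frac{D(v)}{2\pi}\,\mathrm{P.V.}\!\int_{\R} e^{iktu}\,\frac{\Psi(v,u)}{u}\,du,
\]
where $\Psi(v,u) := \Theta_k(v-\beta(v,u),\beta(v,u))\,\Phi^\ast(v-\beta(v,u))$. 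The key point is to establish that $\Psi(v,u)$ is jointly Gevrey-$1/2$ in $(v,u)$; this is where part (iv) of Theorem \ref{MTH2} enters, applied with $\zeta$ chosen as a Gevrey-$1/2$ localization of $B^{-1}$ on a bounded interval containing the image under $B$ of the support of $\Phi^\ast(v-v_\ast)$ shifted by $[-4,4]$ (so that $|\zeta'|\gtrsim 1$). Once joint regularity is in hand, the decomposition $\Psi(v,u) = \Psi(v,0) + u\widetilde\Psi(v,u)$ reduces the P.V.\ integral to a bounded Hilbert-type transform of a $u$-cutoff (uniformly in $t$) plus an ordinary Fourier transform in $u$ of a Gevrey function; both pieces are bounded uniformly in $t$ and Gevrey in $v$. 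Localizing by $\Phi^\ast(v-v_\ast)$ and tracking the weights $\varpi_{\varkappa_k,v_\ast}(v_\ast,0)$ inherited from part (ii) of Theorem \ref{MTH2} then produces the first estimate in \eqref{MTH23.3}, with the loss $\delta_1\to\delta_1''$ coming from both the change of variables and the P.V.\ step.

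For the far-diagonal term $f_k^2$, the cutoff $1-\Phi^\ast(v-w)$ enforces $|v-w|\geq 2$, so $|B(v)-B(w)|$ is bounded below and no principal value is required. I integrate by parts once in $w$ using
\[
e^{-ikB(w)t} = \frac{1}{-ikt\,B'(w)}\,\partial_w\bigl[e^{-ikB(w)t}\bigr],
\]
which extracts a factor $(kt)^{-1}$ and replaces the integrand by its $w$-derivative divided by $B'(w)$. The Gevrey regularity of $\Theta_k$ from parts (i)--(ii) of Theorem \ref{MTH2}, together with the decay of $D(v)$ from Assumption \ref{MainAs}, then gives Gevrey control in $v$ with the claimed $\langle t\rangle^{-1}$ decay. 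The factor $1/B'(w)$ that appeared during the integration by parts is precisely what forces the prefactor $|B'(v_\ast)|$ on the left-hand side of the second line of \eqref{MTH23.3} after localization by $\Phi^\ast(v-v_\ast)$.

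The hardest step will be the near-diagonal analysis of $f_k^1$: constructing a good Gevrey-$1/2$ extension $\zeta$ of $B^{-1}$, applying part (iv) of Theorem \ref{MTH2}, and propagating the two Gevrey losses through the P.V.\ Fourier integral while preserving the sharp weights $\varpi_{\varkappa_k,v_\ast}(v_\ast,0)$. A secondary subtlety is the behavior as $|v_\ast|\to\infty$, where $|B'(v_\ast)|\to 0$: the two decay rates $e^{-\mu_{\varkappa_k}|v_\ast|}\mathbf{1}_{v_\ast<0}$ and $e^{-(\varkappa_k+8)|v_\ast|}\mathbf{1}_{v_\ast>0}$ must be extracted consistently, the first from the weight $\varpi$ and the second from the decay $D(v)\sim e^{-8v}$ as $v\to+\infty$.
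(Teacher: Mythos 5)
Your overall strategy mirrors the paper's proof closely: both start from equation \eqref{MTH2.6} to rewrite $(k^2-\partial_v^2)\Theta_k(v-w,w)$ as a delta term on the diagonal plus a P.V.\ quotient; both treat $f_k^1$ by changing variables along $B$ and invoking Theorem \ref{MTH2}(iv); and both treat $f_k^2$ by integrating by parts in $w$ against the oscillatory phase. The identification of $F_{k,v_\ast}^1$ with $-C_1\big[D(v)\digamma_k(v)-F_{0k}(v)\big]\Phi^\ast(v-v_\ast)$ plus a localized P.V.\ integral, and the use of a $\operatorname{sgn}$-convolution bound (your $\Psi(v,u)=\Psi(v,0)+u\widetilde\Psi(v,u)$ split) to handle the P.V.\ Fourier integral uniformly in $t$, are both in the spirit of the paper's $\aleph_k$ manipulation and the final estimate \eqref{PMTM18}.

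However, there is a genuine gap in the near-diagonal change of variables. You set $u := B(v)-B(w)$ and take $\zeta$ to be (an extension of) $B^{-1}$ on the image $B\big([v_\ast-8,v_\ast+8]\big)$, noting $|\zeta'|\gtrsim 1$. But Theorem~\ref{MTH2}(iv) requires $\zeta\in\widetilde{\mathcal{G}}^{1/2}_M(-10,10)$, and the constant $\delta_1'$ in \eqref{MTH2.9} depends on $M$. With your unnormalized $u$, the domain $B\big([v_\ast-8,v_\ast+8]\big)$ has length $\sim|B'(v_\ast)|$, which shrinks to $0$ as $v_\ast\to\pm\infty$ (by \eqref{LAP100}, $|B'(v)|\approx e^{2v}/(1+e^{2v})^2$); after rescaling this tiny domain to $(-10,10)$, the derivatives of $\zeta=B^{-1}$ blow up like powers of $1/|B'(v_\ast)|$, so $M=M(v_\ast)$ is unbounded and $\delta_1'=\delta_1'(v_\ast)\to 0$. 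This destroys the uniformity of the exponent $\delta_1''$ claimed in \eqref{MTH23.3}. The paper avoids this precisely by normalizing $\nu:=\big(B(\rho)-B(v_\ast)\big)/B'(v_\ast)$ in \eqref{PMT16}; then $\nu$ ranges over an $O(1)$ interval, $\zeta'(\nu)=B'(v_\ast)/B'(\rho)\asymp 1$ uniformly in $v_\ast$, and $\zeta$ has Gevrey constants independent of $v_\ast$. You would discover the need for this rescaling when carrying out "the hardest step" you flag, but as written the argument does not yield constants independent of $v_\ast$. A secondary, smaller point: your attribution of the $|B'(v_\ast)|$ prefactor in the $F^2$-bound to "the $1/B'(w)$ from integration by parts" is not quite right, since the $B'(w)$ Jacobian factor already present in \eqref{MTH23.1} cancels the $1/B'(w)$ arising from $\partial_w e^{-iktB(w)}=-iktB'(w)e^{-iktB(w)}$; the loss instead comes from controlling the $\partial_w$-derivative of $\big(B(v)-B(w)\big)^{-1}$ and the weights in \eqref{MTH2.3}.
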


\subsection{Remarks on the main theorems} We briefly discuss some of the conclusions in our main theorems  \ref{MTH2}-\ref{MTH2'}. 

(i) As discussed earlier, our main objective is to study the spectral density function $\Gamma_k(v,w)$ and more precisely its ``profile" $\Theta_k(v,w)$ which is smooth in $w\in\R$ (but not in $v\in\R$). The crucial points in the bounds \eqref{MTH2.3} and \eqref{MTH2.5} are that  $\Theta_k(v,w)$ is as smooth in $w$ as the initial data $F_{0k}$, in Gevrey spaces. The exact weight $e^{\delta_1\langle k,\xi\rangle^{1/2}}$ is at the right regularity required for nonlinear inviscid damping, see \cite{BM,Deng,IJacta,NaZh}. The bounds \eqref{MTH2.9} provide more refined information on the regularity of $\Theta_k(v,w)$ in $v$. Notice that $\Theta_k(v,w)$ is not smooth in $v$. \eqref{MTH2.9} shows that, however, $\Theta(\zeta(v+\rho)-\zeta(\rho), w)$ is smooth in $\rho$, if $\zeta$ satisfies the natural assumptions in (iv) of Theorem \ref{MTH2}. 

(ii) The decomposition \eqref{MTH23.1}-\eqref{MTH23.2} shows that in general we do not have a uniform profile that we can control over all times. However, as it can be seen from the bounds \eqref{MTH23.3} the ``nonlocal profile" $F_{k,v_\ast}^2(t,v)$ is much smoother than the initial data and its regularity depends only on the smoothness of the background flow. In addition, the nonlocal profile $F_{k,v_\ast}^2(t,v)$ decays over time. This remarkable property may be important for the nonlinear analysis. 

(iii) In the formulation of the bounds \eqref{MTH2.3} and \eqref{MTH2.5}, we included suitable decay in space. Such physical space decay can be improved at the expense of sacrificing the amount of regularity we can prove. For instance we can capture the decay $e^{-|k||v|}$ precisely if we work with the weight $e^{C\langle\xi/k\rangle^{1/2}}$. However, our belief is that, for the nonlinear analysis, it is more important to work with the right regularity space characterized by the weight $e^{\delta_1\langle k,\xi\rangle^{1/2}}$, than with the best physical space decay, especially for large $k$. 

(iv) It follows from \eqref{MTH23.1}-\eqref{MTH23.3} that $f_k(t,v)\rightharpoonup 0$ as $t\to\infty$ for all $k\in\Z\backslash\{0\}$, which is at the heart of the inviscid damping phenomenon. 

(v) The order of decay $e^{-\mu_{\varkappa_k}|v_\ast|}=e^{-\sqrt{k^2+8}\,|v_\ast|}$ for $|k|\leq k^\dagger$ as $v_\ast\to-\infty$, in \eqref{MTH23.3} for $F_{k,v_\ast}^1$, is faster than the expected rate $e^{-|k||v_\ast|}$, and is a manifestation of the vorticity depletion phenomenon. We note that the index for decay $\sqrt{k^2+8}$ we obtained is slightly smaller than $|k|+2 $ obtained in \cite{Bed2}. Incidentally in the explicitly solvable case $|k|=1$, we have $\sqrt{k^2+8}=|k|+2$. It is not clear to us if the optimal rate of enhanced decay for $F_{k,v_\ast}^1$ is important for the nonlinear analysis at this time.

\subsection{Main ideas of proof for Theorem \ref{MTH2}} The basic idea is to study the equation \eqref{MAR2} and obtain coercive bounds using the the limiting absorption principle. This has been done in many works by now, see e.g. \cite{JiaL,dongyi}.  A new feature in the case of vortices is that for $w\ll-1$, the potential $\frac{e^{2v}D(v)}{B(v)-B(w)+i\iota\epsilon}\approx 8$ for $v\in[w,0]$ which is a long interval, as can be seen from simple calculations using \eqref{LAP100}-\eqref{LAP101}. Therefore we need to absorb part of the potential into the main term when solving \eqref{MAR2}. As a result, we have to study a new Green's function adapted to the nonlocal potential, see section \ref{sec:green}. 

Once we obtained the necessary bounds in weighted Sobolev spaces, we can use the commutator argument to obtain the desired Gevrey regularity, as in \cite{JiaG}. The implementation here is, however, much more complicated since we need to work with weighted space and the Green's function is not explicitly given. The key is to obtain refined bounds in Gevrey space for the Green's function associated with the long range potential for $w\leq-10$, see proposition \ref{ENHQ1}, which captures both the crucial Gevrey regularity property and optimal physical space decay of the Green's function simultaneously. 

Another interesting feature is that the limit density function $\Gamma_k(v,w)$ enjoys better decay property than $\Gamma^{\iota}_{k,\epsilon}(v,w)$. The main reason is that in equation \eqref{MAR2} the right hand side is mostly ``real valued" and that implies $\Gamma^{\iota}_{k,\epsilon}(v,w)$ has ``small" imaginary part, which can also be seen from equation \eqref{MTH2.6}.

\subsection{Organization of the paper} The rest of the paper is organized as follows. In section \ref{sec:preliminaries} we review some technical results on Gevrey spaces and elliptic regularity theory in Gevrey spaces. In section \ref{sec:green} we study the Green's function associated with a long range potential that plays an important role in the analysis of the spectral density function $\Gamma_k(v,w)$ for $w\leq0$. In section \ref{sec:spec}, we study the spectrum of the linearized operator which is essential for proving the limiting absorption principle. In section \ref{sec:lap}, we prove the limiting absorption principle that will be used to prove bounds on $\Gamma_{k,\epsilon}^{\iota}(v,w)$ in weighted Sobolev spaces. In section \ref{sec:k=1} we consider the special case when $|k|=1$, which remarkably is explicitly solvable. In section \ref{sec:sdf1} and section \ref{sec:sdf2}, we apply the limiting absorption and use the commutator argument to bound $\Gamma_{k,\epsilon}^\iota(v,w)$ and $\Gamma_k(v,w)$. In section \ref{sec:mainth} we assemble all the bounds and prove theorem \ref{MTH2} and theorem \ref{MTH2'}.

 
 \section{Preliminaries on Gevrey spaces and elliptic Gevrey regularity theory}\label{sec:preliminaries}

\subsection{Notations and conventions} We summarize here some of our main notation. We use the weight $e^{C\delta_0\langle\xi\rangle^{1/2}}$ to characterize the regularity of functions that depend only on the background flow and smoother Gevrey cutoff functions, and $\delta_0$ is chosen sufficiently small depending on the regularity of the background flow. We also allow the implied constants to depend on the background flow which is fixed. We fix $k^\dagger\in\Z\cap[5,\infty)$ throughout the paper, and set for $k\in\Z\backslash\{0\}$, $\varkappa_k:=\min\{|k|, k^\dagger\}$. We also set $\mu_k:=\sqrt{k^2+8}$ and $\mu_k^\ast:=\frac{9\mu_k+|k|+2}{10}$.  Notice that $\mu_k<\mu_k^\ast\leq |k|+2$ for $|k|\ge2$, and $\mu_k=\mu_1^\ast=3$.

\subsubsection{Fourier transforms}\label{sec:FourierTransform} In this paper, we use $\widehat{h}$ to denote the Fourier transform of $h$. If $h$ is a function of many variables and when we need to take Fourier transform in some but not all of the variables, we shall use $\alpha, \beta,\gamma,\xi, \eta$ to indicate the variable after Fourier transform, and $\rho, v, w$ to indicate that we do not take Fourier transform in these variables. For example, suppose $h\in L^2(\R^2)$, then
\begin{equation}\label{Fourier1}
\widehat{h}(\rho,\xi)=\int_\R h(\rho,w) e^{-iw\xi}\,dw.
\end{equation}

\subsubsection{Commutator arguments}
In this paper we use commutator arguments extensively to obtain higher regularity (mostly Gevrey regularity) estimates, starting from a low regularity (mostly Sobolev regularity) estimate. Suppose the Fourier multiplier we use to characterize the high regularity is $A(\xi):=e^{C\langle m,\xi\rangle^{1/2}}$, $\xi\in\R$ for some $C\in(0,\infty), m\in\R$, we often need to assume qualitatively 
\begin{equation}\label{Commut1}
\big\|A(\xi)\widehat{h}(\xi)\big\|_{L^2(\R)}<\infty,
\end{equation}
to obtain quantitatively 
\begin{equation}\label{Commut2}
\big\|A(\xi)\widehat{h}(\xi)\big\|_{L^2(\R)}\lesssim1.
\end{equation}
To remove the qualitative assumption \eqref{Commut1}, we can for example follow the technique in the appendix of \cite{IOJI} and introduce for $\rho\gg1$,
\begin{equation}\label{P1}
\sigma_{\rho}(r):=\left\{\begin{array}{ll}
\frac{1}{2}r^{-1/2}-\frac{1}{2}\rho^{-1/2}& {\rm if} \,\,r\in(0,\rho]\\
0&{\rm if}\,\,r\ge \rho,
\end{array}\right.,\qquad \Pi_{\rho}(r):=\int_0^r\sigma_{\rho}(x)\,dx,
\end{equation}
and define for $\xi\in\R$,
\begin{equation}\label{P2}
A_{\rho}(\xi):=e^{C \Pi_{\rho}(\langle m,\xi\rangle))}.
\end{equation}
Clearly $A_{\rho}(\xi)$ is a bounded function (with a bound that depends on $\rho>1$), $A_{\rho}(\xi)\leq A(\xi)$ and $A_{\rho}(\xi)\to A(\xi)$ as $\rho\to\infty$ for any $\xi\in\R$. The idea is to use $A_{\rho}(\eta)$ in the proof of \eqref{Commut2} and then send $\rho\to\infty$.  We shall use this convention many times in our proof without going through it every time.

 \subsection{Gevrey spaces}\label{appendix} 
 We summarize here some general properties of the Gevrey spaces of functions. See \cite{Rodino,Yamanaka} for more discussion and further references on Gevrey spaces. 
To perform certain algebraic operations, it is very useful to have a related definition in the physical space. For any domain $D\subseteq\T\times\R$ (or $D\subseteq\R$) and parameters $s\in(0,1)$ and $M\geq 1$ we define the spaces
\begin{equation}\label{Gevr2}
\widetilde{\mathcal{G}}^{s}_M(D):=\big\{f:D\to\mathbb{C}:\,\|f\|_{\widetilde{\mathcal{G}}^{s}_M(D)}:=\sup_{x\in D,\,m\geq 0,\,|\alpha|\leq m}|D^\alpha f(x)|M^{-m}(m+1)^{-m/s}<\infty\big\}.
\end{equation}
We start with a lemma connecting the space $\widetilde{\mathcal{G}}^s_M$ with the characterization of Gevrey spaces using Fourier transforms. 

\begin{lemma}{\cite[Subsection A.1]{IOJI}}\label{lm:Gevrey}
(i) Suppose that $s\in(0,1)$, $K>1$, and $f\in C^{\infty}(\mathbb{T}\times \mathbb{R})$ with ${\rm supp}\,f\subseteq \mathbb{T}\times[-L,L]$ satisfies the bounds $\|f\|_{\widetilde{\mathcal{G}}^{s}_K(\mathbb{T}\times \mathbb{R})}\leq 1$.
Then there is $\mu=\mu(K,s)>0$ such that 
\begin{equation}\label{gevreyP}
\big|\widehat{f}(k,\xi)\big|\lesssim_{K,s} Le^{-\mu|k,\xi|^s}\qquad \text{ for all } k\in\mathbb{Z},\,\xi\in \R.
\end{equation}

(ii) Conversely, if $\mu>0$ and $s\in(0,1)$, then there is $K=K(s,\mu)>1$ such that
\begin{equation}\label{eq:four}
\big\|f\big\|_{\widetilde{\mathcal{G}}^{s}_K(\mathbb{T}\times \mathbb{R})}\lesssim_{\mu,s} \big\|e^{\mu \langle k,\xi\rangle^{s}}\widehat{f}(k,\xi)\big\|_{L^2(\Z\times\R)}.
\end{equation}
\end{lemma}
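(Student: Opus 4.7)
The plan is to prove both directions as a standard Paley--Wiener style duality between polynomial growth under differentiation and exponential Fourier decay, each direction reducing to a scalar optimization over the derivative order $m$ that balances the Gevrey factor $(m+1)^{m/s}$ against the frequency factor $|k,\xi|^m$.

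For part (i), I would begin from $|D^\alpha f(x)|\le K^m(m+1)^{m/s}$ for $|\alpha|\le m$. The support hypothesis ${\rm supp}\,f\subseteq\mathbb{T}\times[-L,L]$ upgrades this to $\|D^\alpha f\|_{L^1(\mathbb{T}\times\mathbb{R})}\lesssim L\,K^m(m+1)^{m/s}$. Using the Fourier identity $(ik)^{\alpha_1}(i\xi)^{\alpha_2}\widehat{f}(k,\xi)=\widehat{D^\alpha f}(k,\xi)$ with $\alpha=(m,0)$ or $(0,m)$ (depending on whether $|k|$ or $|\xi|$ dominates) together with $\|\widehat{D^\alpha f}\|_{L^\infty}\le\|D^\alpha f\|_{L^1}$, one obtains
\begin{equation*}
|k,\xi|^m\,|\widehat{f}(k,\xi)|\lesssim L\,(2K)^m(m+1)^{m/s}.
\end{equation*}
I then optimize by choosing $m=\lfloor c\langle k,\xi\rangle^s\rfloor$ for $c=c(K,s)$ small. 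Via Stirling, $(m+1)^{m/s}\approx e^{(m/s)\log m}$, and substituting $m\approx c\langle k,\xi\rangle^s$ produces an exponent of the form $c\langle k,\xi\rangle^s\big[\log(2K)+s^{-1}\log c+O(1)\big]$. For $c$ sufficiently small the bracket is a fixed negative number, yielding $|\widehat{f}(k,\xi)|\lesssim L\,e^{-\mu|k,\xi|^s}$ for some $\mu=\mu(K,s)>0$. Low frequencies $|k,\xi|\lesssim 1$ are handled trivially since the weight is comparable to $1$.

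For part (ii), I would go in the reverse direction via Fourier inversion
\begin{equation*}
D^\alpha f(x,y)=\sum_{k\in\mathbb{Z}}\int_{\mathbb{R}}(ik)^{\alpha_1}(i\xi)^{\alpha_2}\widehat{f}(k,\xi)\,e^{i(kx+\xi y)}\,d\xi,
\end{equation*}
splitting $1=e^{\mu\langle k,\xi\rangle^s}\cdot e^{-\mu\langle k,\xi\rangle^s}$ and applying Cauchy--Schwarz to obtain
\begin{equation*}
|D^\alpha f(x,y)|\le\big\|e^{\mu\langle k,\xi\rangle^s}\widehat{f}\big\|_{L^2}\cdot\Big(\sum_{k}\int_{\mathbb{R}}|k,\xi|^{2m}\,e^{-2\mu\langle k,\xi\rangle^s}\,d\xi\Big)^{1/2}.
\end{equation*}
The second factor is estimated by the scalar maximum $\sup_{t\ge 0}t^{2m}e^{-\mu t^s}=(2m/(\mu s e))^{2m/s}$ at the peak, retaining a residual factor $e^{-\mu\langle k,\xi\rangle^s}$ to render the $(k,\xi)$-integral finite. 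After extracting the peak and absorbing the integrable tail, the second factor is bounded by $K_0^m(m+1)^{m/s}$ for some $K_0=K_0(\mu,s)$. This gives $\|D^\alpha f\|_{L^\infty}\lesssim K^m(m+1)^{m/s}\,\|e^{\mu\langle k,\xi\rangle^s}\widehat{f}\|_{L^2}$ for some $K=K(\mu,s)$, which is precisely the physical-space Gevrey bound.

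The main delicate point, in both parts, is the careful bookkeeping of constants to show that (i) the small parameter $c$ yields a strictly positive $\mu$ depending only on $K,s$, and (ii) the peak and tail contributions combine into a finite $K$ depending only on $\mu,s$; beyond this the proof is essentially a Stirling plus Laplace-method argument, and I do not expect substantive analytic obstacles.
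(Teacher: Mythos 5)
Your argument is correct, and it reproduces the standard Paley--Wiener-style duality that underlies this equivalence: for (i), bound $\|\widehat{D^\alpha f}\|_{L^\infty}$ by $\|D^\alpha f\|_{L^1}\lesssim L\,K^m(m+1)^{m/s}$ and optimize in $m\approx c\langle k,\xi\rangle^s$, and for (ii), apply Cauchy--Schwarz after splitting off the weight and control the moment integral $\sum_k\int|k,\xi|^{2m}e^{-2\mu\langle k,\xi\rangle^s}d\xi$ via the peak $\sup_t t^{2m}e^{-\mu t^s}=(2m/(\mu se))^{2m/s}$ plus a residual exponential for integrability. The paper does not prove this lemma itself but cites the appendix of \cite{IOJI}, where the same elementary Stirling-plus-Laplace argument is carried out; your write-up matches that approach in all essential steps, including the handling of low frequencies in (i) and the absorption of $(2/(\mu se))^{1/s}$ into the choice of $K$ in (ii).
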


Using this lemma one can construct cutoff functions in Gevrey spaces: for any points $a'<a\leq b<b'\in\mathbb{R}$ and any $s\in (0,1)$ there are functions $\Psi$ supported in $[a',b']$, equal to $1$ in $[a,b]$, and satisfying $\big|\widetilde{\Psi}(\xi)\big|\lesssim e^{-\langle\xi\rangle^s}$ for any $\xi\in\mathbb{R}$. See \cite[Subsection A.1]{IOJI} for an explicit construction of such functions, as well as an elementary proof of Lemma \ref{lm:Gevrey}. We use several functions of this type in the proof of our main theorem.

The physical space characterization of Gevrey functions is useful when studying compositions and algebraic operations of functions.

\begin{lemma}\label{GPF} (i) Assume  $s\in(0,1)$, $M\geq 1$, and $f_1,f_2\in \widetilde{\mathcal{G}}^{s}_M(D)$. Then $f_1f_2\in\widetilde{\mathcal{G}}^{s}_{M'}(D)$ and
\begin{equation*}
\|f_1f_2\|_{\widetilde{\mathcal{G}}^{s}_{M'}(D)}\lesssim \|f_1\|_{\widetilde{\mathcal{G}}^{s}_{M}(D)}\|f_2\|_{\widetilde{\mathcal{G}}^{s}_{M}(D)}
\end{equation*}
for some $M'=M'(s,M)\geq M$. Similarly, if $f_1\geq 1$ in $D$ then $\|(1/f_1)\|_{\widetilde{\mathcal{G}}^{s}_{M'}(D)}\lesssim 1$.

(ii) Suppose $s\in(0,1)$, $M\geq 1$, $I_1\subseteq \R$ is an interval, and $g:\mathbb{T}\times I_1\to \mathbb{T}\times I_2$ satisfies
\begin{equation}\label{gbo1}
|D^\alpha g(x)|\leq M^m(m+1)^{m/s}\qquad \text{ for any }x\in\T\times I_1,\,m\geq 1,\text{ and }|\alpha|\in [1,m].
\end{equation}
If $K\geq 1$ and $f\in \widetilde{G}^s_K(\T\times I_2)$ then $f\circ g\in \widetilde{G}^s_L(\T\times I_1)$ for some $L=L(s,K,M)\geq 1$ and
\begin{equation}\label{Ffgc}
\left\|f\circ g\right\|_{\widetilde{G}^s_L(\T\times I_1)}\lesssim_{s,K,M} \left\|f\right\|_{\widetilde{G}^s_K(\T\times I_2)}.
\end{equation}

(iii) Assume $s\in(0,1)$, $L\in[1,\infty)$, $I,J\subseteq\mathbb{R}$ are open intervals, and $g:I\to J$ is a smooth bijective map satisfying, for any $m\geq 1$,
\begin{equation}\label{gbo2}
|D^\alpha g(x)|\leq L^m(m+1)^{m/s}\qquad \text{ for any }x\in I\text{ and }|\alpha|\in [1,m].
\end{equation}
If $|g'(x)|\geq \rho>0$ for any $x\in I$ then the inverse function $g^{-1}:J\to I$ satisfies the bounds
\begin{equation}\label{gbo2.1}
|D^\alpha (g^{-1})(x)|\leq M^m(m+1)^{m/s}\qquad \text{ for any }x\in J\text{ and }|\alpha|\in [1,m],
\end{equation}
for some constant $M=M(s,L,\rho)\geq L$.
\end{lemma}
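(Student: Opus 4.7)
The three parts of the lemma are standard combinatorial consequences of the pointwise definition \eqref{Gevr2}, and the plan is to prove each in turn via Leibniz, Fa\`a di Bruno, and the inverse function identity, enlarging the radius constant at each step. The uniform tool throughout is the elementary Stirling-type inequality
\begin{equation*}
(a+1)^{a/s}(b+1)^{b/s} \leq C_s^{\,a+b}(a+b+1)^{(a+b)/s} \qquad \text{for } a,b\geq 0,\; s\in(0,1),
\end{equation*}
which lets us collapse products of Gevrey weights and pay only a geometric price in $M$.

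For (i), Leibniz gives $D^\alpha(f_1 f_2) = \sum_{\beta\leq\alpha}\binom{\alpha}{\beta} D^\beta f_1 \cdot D^{\alpha-\beta} f_2$. Applying the Gevrey bound for each factor (with $m=|\alpha|$ admissible) and summing via the Stirling inequality and $\sum_\beta \binom{\alpha}{\beta} = 2^{|\alpha|}$, the product estimate follows with $M'=C(s)M$. For the reciprocal, differentiating $f_1\cdot(1/f_1)\equiv 1$ yields the recursion
\begin{equation*}
D^\alpha(1/f_1) = -(1/f_1)\sum_{0<\beta\leq\alpha}\binom{\alpha}{\beta} D^\beta f_1 \cdot D^{\alpha-\beta}(1/f_1),
\end{equation*}
which I would induct on $|\alpha|$, using the hypothesis $f_1\geq 1$ to control the leading $1/f_1$ factor and the product estimate on each term of the sum.

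For (ii), I would apply the multivariate Fa\`a di Bruno formula, which expresses $D^\alpha(f\circ g)$ as a sum over set partitions $\pi$ of the index set underlying $\alpha$ of terms of the form $(D^{|\pi|} f)(g(x)) \prod_{B\in\pi}(D^{\alpha_B} g)(x)$. Substituting the Gevrey bound $K^{|\pi|}(|\pi|+1)^{|\pi|/s}$ for $f$ and the hypothesis \eqref{gbo1} for the partition pieces of $g$, the proof reduces to the combinatorial inequality
\begin{equation*}
\sum_{\pi\vdash\alpha} (|\pi|+1)^{|\pi|/s}\prod_{B\in\pi}(|\alpha_B|+1)^{|\alpha_B|/s} \leq C^{|\alpha|}(|\alpha|+1)^{|\alpha|/s},
\end{equation*}
which is a repeated application of the Stirling inequality above, together with the fact that the Bell-number-many partitions produce only an exponential (in $|\alpha|$) combinatorial factor; crucially, $1/s>1$ is what swallows this factor into an enlarged constant $L=L(s,K,M)$.

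For (iii), I would start from $(g^{-1})'(y)=1/g'(g^{-1}(y))$. Iteratively differentiating the identity $g(g^{-1}(y))=y$ shows that $D^\alpha g^{-1}$ is a polynomial in the functions $(D^k g)\circ g^{-1}$ for $1\leq k\leq|\alpha|$ and in $(1/g')\circ g^{-1}$. Since $|g'|\geq\rho$, part (i) gives Gevrey control on $1/g'$, and part (ii) (in the one-variable setting, which is a straightforward restriction of the statement) transfers the Gevrey bounds through the composition with $g^{-1}$; reapplying (i) to the resulting algebraic combinations gives \eqref{gbo2.1} with some $M=M(s,L,\rho)$, obtained by induction on $|\alpha|$. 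The main obstacle in the whole proof is the combinatorial bookkeeping in Fa\`a di Bruno of step (ii): one must control simultaneously the partition structure and the super-polynomial weights $(|\alpha|+1)^{|\alpha|/s}$ without blowing up the radius, which is exactly what forces the restriction $s\in(0,1)$ and the enlargement from $M$ to $L$.
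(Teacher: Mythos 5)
The paper itself offers no proof of this lemma; it simply asserts that the bounds follow ``by elementary means'' from the definition and cites Yamanaka (Theorems 6.1 and 3.2). Your outline---Leibniz for (i), Fa\`a di Bruno for (ii), differentiating the identity $g\circ g^{-1}=\mathrm{id}$ for (iii)---is the standard route and is the same one Yamanaka's theorems implement, so there is nothing different in approach. Parts (i) and (iii) are fine at the level of a sketch: the Stirling-type inequality you quote is correct (indeed $(a+1)^a(b+1)^b\le(a+b+1)^{a+b}$ holds with $C_s=1$), and the reciprocal recursion and the inverse-function induction close without surprises.

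The one genuine gap is the central combinatorial estimate in (ii). If you bound the summand naively by iterating the Stirling inequality, you get $\prod_{B\in\pi}(|\alpha_B|+1)^{|\alpha_B|/s}\le(|\alpha|+1)^{|\alpha|/s}$ and separately $(|\pi|+1)^{|\pi|/s}\le(|\alpha|+1)^{|\alpha|/s}$, so each term is only bounded by $(|\alpha|+1)^{2|\alpha|/s}$, which is the wrong power. Multiplying by the Bell number $B_{|\alpha|}$ of set partitions then makes matters worse, and your appeal to ``Bell-number-many partitions produce only an exponential combinatorial factor'' is simply false as stated: $B_n$ grows super-exponentially, like $(n/\log n)^n$. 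The reason the sum nonetheless closes is the trade-off between $|\pi|$ and the block sizes, and the clean way to capture it is via the factorial weights $(m!)^{1/s}$ (comparable to $(m+1)^{m/s}$ up to geometric factors) together with the combinatorial inequality
\begin{equation*}
|\pi|!\,\prod_{B\in\pi}|\alpha_B|!\ \le\ |\alpha|!\qquad\text{whenever each }|\alpha_B|\ge 1,\ \textstyle\sum_B|\alpha_B|=|\alpha|,
\end{equation*}
which follows from the fact that the multinomial coefficient $|\alpha|!/\prod_B|\alpha_B|!$ is at least $|\pi|!$. With this, the factor $(|\pi|!)^{1/s}\prod_B(|\alpha_B|!)^{1/s}$ collapses to $(|\alpha|!)^{1/s}$ term by term, and the residual count is not the Bell number but the number of compositions of $|\alpha|$ into $|\pi|$ positive parts, namely $\binom{|\alpha|-1}{|\pi|-1}$, summing over $|\pi|$ to $2^{|\alpha|-1}$---a genuinely geometric factor, which $K^{|\pi|}$ and the enlargement $M\mapsto L$ absorb. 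Without this step your proof of (ii), and hence the portion of (iii) that leans on it, is incomplete.
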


Lemma \ref{GPF} can be proved by elementary means using just the definition \eqref{Gevr2}. See also \cite[Theorems 6.1 and 3.2]{Yamanaka} for more general estimates on functions in Gevrey spaces. 

For applications below, we prove the following bound on a Fourier transform.  
\begin{lemma}\label{gbo3}
Assume that $w_\ast\in\R$, $k\in\Z\backslash\{0\}$, $\epsilon\in\R$ with $0<|\epsilon|<e^{-2|w_\ast|}$, and $\Psi_1, \Psi_2\in C_0^{\infty}(-10,10)$ with 
$$\sup_{\xi\in\R, \sigma\in\{1,2\}}\big|e^{\langle\xi\rangle^{5/6}}\widehat{\,\Psi_\sigma}(\xi)\big|\lesssim1.$$

(i) For $\eta,\xi\in\R$, define
\begin{equation}\label{gbo4}
h_1(\eta,\xi):=\int_{\R^2}\Psi_1(\rho)\frac{1}{B(\rho+w)-B(w)+i\epsilon}\Psi_2(w-w_\ast) e^{-i\rho\eta-iw\xi}\,d\rho dw.
\end{equation}
Then we have the bounds
\begin{equation}\label{gbo5}
\sup_{\xi,\eta\in\R} \big|e^{\delta_0\langle\xi\rangle^{1/2}}h(\eta,\xi)\big|\lesssim e^{2|w_\ast|}. 
\end{equation}

(ii) Define for $|\rho|\ge1/4$ and $\xi\in\R$,
\begin{equation}\label{gbo6}
h_2(\rho,\xi):=\int_{\R}\frac{1}{B(\rho+w)-B(w)+i\epsilon}\Psi_{1}(w-w_\ast) e^{-iw\xi}\, dw.
\end{equation}
Then we have the bounds
\begin{equation}\label{gbo7}
\sup_{\xi\in\R}\big|e^{\delta_0\langle\xi\rangle^{1/2}}h_2(\rho,\xi)\big|\lesssim \Big|\frac{1}{B(\rho+w_\ast)-B(w_\ast)+i\epsilon}\Big|.
\end{equation}
\end{lemma}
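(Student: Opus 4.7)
For both parts the plan is to establish Gevrey--$1/2$ regularity in the variable $w$ of the relevant integrand (with appropriate control on the Gevrey constants in terms of $\rho, \epsilon, w_\ast$), then invoke Lemma~\ref{lm:Gevrey}(i) to turn this physical-space regularity into pointwise-in-$\xi$ exponential decay with weight $e^{-\delta_0\langle\xi\rangle^{1/2}}$. The input is the Gevrey--$1/2$ bound $|\partial_v^j B(v)|\lesssim (C^\ast)^j(j!)^2 e^{2v}/(1+e^{2v})^2$ from \eqref{LAP100}, together with the monotonicity $B'<0$; the cutoffs $\Psi_\sigma$ are in an even smoother Gevrey class (index $5/6$) by hypothesis, so they do not degrade the bounds.

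\textbf{Part (ii).} Since $|\rho|\ge 1/4$, the denominator is bounded below. Writing $B(\rho+w)-B(w)=\rho\int_0^1 B'(w+s\rho)\,ds$ and using the quasi-constancy of $|B'(v)|$ on any bounded neighborhood of $w_\ast$ (with constants depending only on the support size of $\Psi_1$) gives
\begin{equation*}
|B(\rho+w)-B(w)+i\epsilon|\gtrsim |B(\rho+w_\ast)-B(w_\ast)+i\epsilon|
\end{equation*}
uniformly for $w\in\mathrm{supp}\,\Psi_1(\cdot-w_\ast)$. The composition and reciprocal rules of Lemma~\ref{GPF} then imply
$\|\Psi_1(\cdot-w_\ast)/(B(\rho+\cdot)-B(\cdot)+i\epsilon)\|_{\widetilde{\mathcal G}^{1/2}_{M'}}\lesssim 1/|B(\rho+w_\ast)-B(w_\ast)+i\epsilon|$
for some $M'$ depending only on background constants, and Lemma~\ref{lm:Gevrey}(i) (with $\delta_0$ small compared to $1/M'$) yields \eqref{gbo7}.

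\textbf{Part (i).} Here $\mathrm{supp}\,\Psi_1$ contains $\rho=0$, so the denominator can be as small as $\epsilon$. The plan is to integrate in $\rho$ \emph{first} in order to exploit the Sokhotski--Plemelj-type cancellation built into the singular kernel, and only afterwards to prove Gevrey regularity in $w$. Setting
\begin{equation*}
J(w,\eta):=\int\Psi_1(\rho)\,e^{-i\rho\eta}/(B(\rho+w)-B(w)+i\epsilon)\,d\rho,
\end{equation*}
the change of variable $u=B(\rho+w)-B(w)$ (a diffeomorphism of $\mathrm{supp}\,\Psi_1$ onto an interval $U(w)\ni 0$ of length $\sim |B'(w_\ast)|$, since $B$ is strictly monotone) rewrites $J$ as $\int_{U(w)} \Phi(u,w,\eta)/(u+i\epsilon)\,du$ with
\begin{equation*}
\Phi(u,w,\eta):=\Psi_1(R(u,w))\,e^{-iR(u,w)\eta}/B'(R(u,w)+w),
\end{equation*}
where $R=R(u,w)$ is the inverse map. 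Splitting $\Phi(u,w,\eta)=\Phi(0,w,\eta)+[\Phi(u,w,\eta)-\Phi(0,w,\eta)]$ and using the bounded limit $\int_{U(w)}du/(u+i\epsilon)=\log\bigl((U^++i\epsilon)/(-U^-+i\epsilon)\bigr)\to -i\pi$ as $\epsilon\to 0^+$, one obtains $|J(w,\eta)|\lesssim 1/|B'(w_\ast)|\approx e^{2|w_\ast|}$ uniformly in $\eta$ and $\epsilon$. Gevrey--$1/2$ regularity of $w\mapsto \Psi_2(w-w_\ast)J(w,\eta)$ with norm $\lesssim e^{2|w_\ast|}$ then follows by differentiating $J$ in $w$ under the integral and iterating the same change-of-variable analysis on each derivative; Gevrey constants propagate via Faà~di~Bruno and the composition rule of Lemma~\ref{GPF} applied to $B^{-1}$, which is Gevrey--$1/2$ because $|B'|$ is bounded away from $0$ on bounded neighborhoods. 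Lemma~\ref{lm:Gevrey}(i) then yields \eqref{gbo5}.

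\textbf{Main obstacle.} The principal difficulty is in part (i): a naive Gevrey estimate on the singular kernel followed by $\rho$-integration produces a logarithmic loss of order $\log(|B'(w_\ast)|/\epsilon)$, which becomes unbounded as $\epsilon\to 0^+$. Circumventing this loss is the reason for integrating in $\rho$ first via the change of variable $u=B(\rho+w)-B(w)$, which concentrates the singular contribution into the explicit logarithm $\int du/(u+i\epsilon)$ that remains uniformly bounded in $\epsilon$. A second, related subtlety is the uniformity in $\eta$: the factor $e^{-iR(u,w)\eta}$ in $\Phi$ naively produces $\eta$-dependent derivative bounds, but the cancellation in the $u$-integral near $u=0$ (together with splitting into low- and high-frequency-in-$u$ regimes as $|\eta|$ grows) kills this growth. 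Propagating both cancellations through every $w$-differentiation, rather than taking $w$-derivatives first and integrating afterward, is the structural core of the argument.
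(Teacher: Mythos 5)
Your proof of part (ii) is essentially the same as the paper's: bound the denominator from below using $|\rho|\ge 1/4$, observe that the ratio $(B(\rho+w_\ast)-B(w_\ast)+i\epsilon)/(B(\rho+w)-B(w)+i\epsilon)$ has modulus $\approx 1$ and Gevrey-2 regularity in $w$, and invoke Lemma~\ref{GPF} and Lemma~\ref{lm:Gevrey}(i). No comment needed there.

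For part (i), your overall diagnosis of the obstacle (avoid a $\log(1/\epsilon)$ loss by handling the singularity before the Gevrey analysis) is correct, but the mechanism you propose does not close, and the actual gap is in the $\eta$-uniformity of the Gevrey-in-$w$ estimate, which you flag but do not resolve. After integrating in $\rho$ you are left with $J(w,\eta)$, and you claim $w\mapsto\Psi_2(w-w_\ast)J(w,\eta)$ is Gevrey-$1/2$ with norm $\lesssim e^{2|w_\ast|}$ uniformly in $\eta$ and $\epsilon$. However, each $w$-derivative of $\Phi(u,w,\eta)=\Psi_1(R)\,e^{-iR\eta}/B'(R+w)$ brings down a factor $-i\eta\,\partial_w R$ from the phase, so $\partial_w^n\Phi$ contains terms of size $|\eta|^n$. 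The cancellation $\partial_w R(0,w)=0$ (equivalently $\partial_w R=O(u)$ near $u=0$) removes the singularity of $1/(u+i\epsilon)$, but it does not remove the $|\eta|^n$ growth; one would additionally need $n$ rounds of integration by parts in $u$ using the oscillation $e^{-iR\eta}$, and it is not at all clear that this preserves the $(n!)^2$ Gevrey structure with an $\eta$-independent constant $M$. Even the uniform-in-$\eta$ boundedness of $J(w,\eta)$ itself — which you claim follows from the naive split $\Phi=\Phi(0)+[\Phi-\Phi(0)]$ — is more delicate than stated: the naive pointwise bound $|\Phi(u)-\Phi(0)|\lesssim\min(1,|u\eta|)$ gives a $\log(1+|\eta|)$ loss, and recovering uniform boundedness requires either integration by parts or a Fourier-side argument. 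In short, the ``propagating both cancellations through every $w$-differentiation'' step that you identify as the structural core is exactly what is missing.

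The paper avoids this problem entirely by never forming the intermediate object $J(w,\eta)$. Instead, with $\delta:=\epsilon/B'(w_\ast)>0$ and $\psi(\rho,w):=1/\int_0^1 B'(w+s\rho)/B'(w_\ast)\,ds$, it factors the kernel as $B'(w_\ast)/(B(\rho+w)-B(w)+i\epsilon)=\psi(\rho,w)/(\rho+i\delta\psi(\rho,w))$ and represents the singular factor as a one-sided Fourier integral $1/(\rho+i\delta\psi)=\frac{1}{i}\int_0^\infty e^{-\delta\psi\gamma+i\rho\gamma}\,d\gamma$. The smooth amplitude $\psi_\gamma(\rho,w):=\psi\Psi_1\Psi_2\,e^{-\delta\psi\gamma}$ is Gevrey-$1/2$ in $(\rho,w)$ jointly, uniformly in $\gamma>0$ (this is \eqref{gbo7.031}--\eqref{gbo7.033}), and then $h_1(\eta,\xi)$ becomes $e^{2|w_\ast|}$ times $\int_0^\infty\widehat{\psi_\gamma}(\eta-\gamma,\xi)\,d\gamma$. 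The $\eta$-uniformity now comes for free: the shift $\eta\mapsto\eta-\gamma$ is integrated over $\gamma\in(0,\infty)$ against the rapidly decaying $\widehat{\psi_\gamma}$, so the $\eta$-dependence is absorbed without any loss. This joint Fourier transform in $(\rho,w)$, mediated by the auxiliary one-sided $\gamma$-variable, is precisely what your ``integrate in $\rho$ first, then prove Gevrey-in-$w$'' ordering forgoes, and it is the key device you would need to make part (i) rigorous.
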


\begin{proof}
Let us assume $\epsilon<0$ without loss of generality. Note the identity for $\rho\in\R, w\in\R$,
\begin{equation}\label{gbo7.01}
B(\rho+w)-B(w)=\rho\int_0^1B'(w+s\rho)\,ds.
\end{equation}
Setting  for $|\rho|\leq 20$, $|w-w_\ast|\leq 20$,
\begin{equation}\label{gbo7.02}
\delta:=\epsilon/B'(w_\ast)>0, \qquad \psi(\rho,w):=\frac{1}{\int_0^1\big[B'(w+s\rho)/B'(w_\ast)\big]\,ds}\gtrsim1,
\end{equation}
 we can write
\begin{equation}\label{gbo7.03}
\begin{split}
\frac{B'(w_\ast)}{B(\rho)-B(w)+i\epsilon}=\psi(\rho,w) \frac{1}{\rho+i\delta\psi(\rho,w)}=\frac{1}{i}\psi(\rho,w)\int_\R e^{-\delta\psi(\rho,w)\gamma+i\rho\gamma}{\bf 1}_{\gamma>0}\,d\gamma.
\end{split}
\end{equation}
We note that $\psi(\rho,w)$ is Gevrey-2 regular for $|\rho|\leq 20$, $|w-w_\ast|\leq 20$ with bounds depending only on the background flow $B$. 
By Lemma \ref{GPF}, there exist small constants $c,C\in(0,\infty)$ depending only on the background flow, such that for $|\rho|\leq 20$, $|w-w_\ast|\leq 20$, $\gamma>0$, and all $m_1,m_2\in\Z\cap[0,\infty)$,
\begin{equation}\label{gbo7.031}
\big|\partial_{\rho}^{m_1}\partial_w^{m_2}\big[\psi(\rho,w)e^{-\delta\psi(\rho,w)\gamma}\big]\big|\lesssim e^{-c\delta\gamma} C^{m_1+m_2}((m_1+m_2)!)^2.
\end{equation}
Denote for $\rho,w\in\R$ and $\gamma>0$,
\begin{equation}\label{gbo7.032}
\psi_{\gamma}(\rho,w):=\psi(\rho,w)\Psi_1(\rho)\Psi_2(w-w_\ast)e^{-\delta\psi(\rho,w)\gamma}.
\end{equation}
In view of \eqref{gbo7.031}, for $\delta_0\in(0,1)$ sufficiently small depending on the background flow $B$, we can bound for $\gamma>0$,
\begin{equation}\label{gbo7.033}
\sup_{\alpha,\beta\in\R}\big|e^{2\delta_0\langle\alpha,\beta\rangle}\widehat{\,\,\psi_\gamma}(\alpha,\beta)\big|\lesssim1.
\end{equation}
Using \eqref{gbo7.03} and \eqref{gbo7.033} we can now bound for $\xi,\eta\in\R$
\begin{equation}\label{gbo7.04}
\begin{split}
 \big|e^{\delta_0\langle\xi\rangle^{1/2}}h(\eta,\xi)\big| &\lesssim e^{2|w_\ast|}\Big|\int_{\R^3}{\bf 1}_{\gamma>0}\, e^{\delta_0\langle\xi\rangle^{1/2}}\psi_\gamma(\rho,w)e^{i\rho\gamma-i\rho\eta-iw\xi} d\rho dw d\gamma\Big|\\
&\lesssim e^{2|w_\ast|}\Big|\int_{\R}{\bf 1}_{\gamma>0}\, e^{\delta_0\langle\xi\rangle^{1/2}}\widehat{\,\,\psi_\gamma}(\eta-\gamma,\xi)d\gamma\Big|\lesssim e^{2|w_\ast|}.
\end{split}
\end{equation}
This completes the proof of \eqref{gbo5}. The proof of \eqref{gbo7} is simpler, and follows from the observation that 
\begin{equation}\label{gbo7.05}
\frac{B(\rho+w_\ast)-B(w_\ast)+i\iota\epsilon}{B(\rho+w)-B(w)+i\iota\epsilon}
\end{equation}
is Gevrey-2 for $|\rho|\ge 1/4, |w-w_\ast|\leq 20$ with bounds depending only on the background flow, has modulus $\approx1$, and Lemma \ref{GPF}. We omit the standard details. The lemma is then proved.
\end{proof}

\subsection{Gevrey regularity for elliptic equations}
We now prove the following elliptic regularity estimates in Gevrey spaces.
\begin{proposition}\label{GevElli}
Fix $s\in(0,1), m\in\R, v_0\in\R, L>0$. Assume that $a\in C^{\infty}(v_0-2L,v_0+2L)$ belongs to $\widetilde{\mathcal{G}}^s_{M_0}$, more precisely for some $C_0, M_0\in(0,\infty)$, and all $\alpha\in\Z\cap[0,\infty)$, 
\begin{equation}\label{Enh10}
\sup_{v\in(v_0-2L,v_0+2L)}\Big|(L\partial_v)^\alpha a(v)\Big|\leq C_0L^{-2}M_0^\alpha (\alpha!)^{1/s}.
\end{equation}
Suppose that the function $h\in L^2(v_0-2L,v_0+2L)$ satisfies
\begin{equation}\label{Enh11}
(m^2-\partial_v^2)h(v)+a(v)h(v)=0,\quad{\rm for}\,\,v\in(v_0-2L,v_0+2L).
\end{equation}
Then $v\in C^\infty(v_0-2L,v_0+2L)$. Moreover, for suitable $M_1\in(0,\infty)$ (depending on $C_0, M_0,s$),  $h$ satisfies the bounds for all $\alpha\in\Z\cap[0,\infty)$,
\begin{equation}\label{Enh12}
\Big\|(L\partial_v)^\alpha h(v)\Big\|_{L^2(v_0-L/2,v_0+L/2)}\leq M_1^\alpha \big(\alpha!\big)^{1/s} \|h\|_{L^2(v_0-2L,v_0+2L)}e^{-|m|L/20}.
\end{equation}

\end{proposition}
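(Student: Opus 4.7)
The plan is to reduce to a dimensionless setting, establish a base-case exponential decay estimate via an ODE fundamental-solution argument, and then propagate Gevrey regularity to derivatives via a nested-interval iteration. First, translate so $v_0 = 0$ and rescale via $u = v/L$, $H(u) = h(Lu)$, $\widetilde{a}(u) = L^2 a(Lu)$, $M = mL$; the equation becomes $(M^2 - \partial_u^2)H + \widetilde{a}(u) H = 0$ on $(-2, 2)$ with $|\partial_u^\alpha \widetilde{a}| \le C_0 M_0^\alpha(\alpha!)^{1/s}$, and the target reduces to $\|\partial_u^\alpha H\|_{L^2(-1/2, 1/2)} \le M_1^\alpha(\alpha!)^{1/s} e^{-|M|/20}\|H\|_{L^2(-2, 2)}$. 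Standard interior elliptic regularity (or direct ODE bootstrap, since this is a scalar second-order ODE with smooth coefficient) gives $H \in C^\infty(-2, 2)$.

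The core step is the base $L^2$ decay
$$\|H\|_{L^2(-3/2, 3/2)} \lesssim e^{-|M|/10} \|H\|_{L^2(-2, 2)}.$$
For $|M|$ bounded in terms of $C_0$ the right-hand side is comparable to $\|H\|_{L^2(-2, 2)}$ and there is nothing to prove. For $|M|$ large, I would construct fundamental solutions $\phi_\pm$ of $H'' = (M^2 + \widetilde{a})H$ via Picard iteration on the Volterra equation
$$\phi_\pm(u) = e^{\pm|M|u} + \int_0^u \frac{\sinh(|M|(u-s))}{|M|}\, \widetilde{a}(s)\, \phi_\pm(s)\, ds,$$
which converges and yields $\phi_\pm(u) = e^{\pm|M|u}(1 + O(|M|^{-1}))$ uniformly on $(-2, 2)$. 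Writing $H = c_+ \phi_+ + c_- \phi_-$, the leading contribution to $\|H\|_{L^2(-2, 2)}^2$ for large $|M|$ is $(c_+^2 + c_-^2)\, e^{4|M|}/(2|M|)$ (the cross term is only $O(|c_+ c_-|)$ and subleading), forcing $|c_\pm| \lesssim \sqrt{|M|}\, e^{-2|M|} \|H\|_{L^2(-2,2)}$ and hence $\|H\|_{L^2(-3/2, 3/2)} \lesssim e^{-|M|/2}\|H\|_{L^2(-2,2)}$, much stronger than needed.

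For higher derivatives, apply a standard Gevrey elliptic iteration on the fixed pair $(-3/2, 3/2) \supset (-1/2, 1/2)$. Choose nested intervals $I_k$ shrinking from $(-3/2, 3/2)$ to $(-1/2, 1/2)$ with $|I_k \setminus I_{k+1}| \sim 1/(k+1)^2$ and Gevrey cutoffs $\chi_k$ adapted to these (built as in Lemma \ref{lm:Gevrey}) satisfying $|\chi_k^{(j)}| \le (Ck)^j (j!)^{1/s}$. Differentiating the equation gives $(M^2 - \partial_u^2)\partial_u^\alpha H = -\sum_{j=0}^\alpha \binom{\alpha}{j} \widetilde{a}^{(j)}\partial_u^{\alpha-j}H$, and a weighted $L^2$ energy estimate for $\chi_\alpha \partial_u^\alpha H$ together with the Gevrey bounds on $\widetilde{a}$ yields a closed recursion for $N_\alpha := \|\partial_u^\alpha H\|_{L^2(I_\alpha)}$ bounded by induction as $N_\alpha \lesssim M_1^\alpha (\alpha!)^{1/s} N_0$. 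Combining with the base case and unscaling yields the desired estimate with $e^{-|m|L/20}$.

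The main obstacle is preserving the exponential factor $e^{-|M|/20}$ uniformly in $\alpha$. Running the base $L^2$ decay on $(-3/2, 3/2)$ rather than directly on $(-1/2, 1/2)$ ensures the Gevrey iteration operates on a fixed pair of intervals, so the exponential prefactor travels through as a multiplicative constant that never mixes with the $(\alpha!)^{1/s}$ factor. A secondary concern is the $M^2$ term, which generates $|M|^\alpha$ growth at each iteration; this is harmless because $|M|^\alpha e^{-|M|/20}$ is uniformly bounded by $(20\alpha/e)^\alpha$ (maximized at $|M| \sim \alpha$), which is absorbed into $(\alpha!)^{1/s}$ via Stirling for any $s \le 1$ after adjusting $M_1$.
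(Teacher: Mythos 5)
Your overall structure matches the paper's: translate/rescale to $v_0=0$, $L=1$; get smoothness by standard elliptic theory; prove a base exponential $L^2$ decay on an intermediate interval; then run a nested-interval Gevrey induction on $\|\partial_v^\alpha h\|_{L^2}$. The Gevrey induction step is essentially identical to the paper's (polynomially shrinking intervals, cutoffs, differentiate the equation $\alpha$ times, energy estimate and the combinatorial bound \eqref{Enh17}-type inequality), so no issue there.

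Where you genuinely diverge is the base decay estimate $\|h\|_{L^2(-3/2,3/2)}\lesssim e^{-c|m|}\|h\|_{L^2(-2,2)}$. The paper obtains it by a maximum-principle comparison: pick points $c_1\in(-2,-31/16)$, $c_2\in(31/16,2)$ where $|h(c_i)|\leq 5\|h\|_{L^2(-2,2)}$ (Chebyshev), observe $m^2-|a|\geq m^2/2$ for $|m|$ large, and compare $|h|$ with the explicit solution of $(m^2/2-\partial_v^2)h^\ast=0$ with matching boundary data. You instead build WKB fundamental solutions $\phi_\pm\approx e^{\pm|M|u}$ by solving a Volterra integral equation and argue from the near-orthogonality of $\phi_\pm$ on $L^2(-2,2)$. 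Both yield the needed decay (yours is in fact quantitatively sharper, $e^{-|M|/2}$ vs.\ $e^{-|m|/20}$). The paper's comparison argument is more elementary (no fundamental-solution construction) and avoids the technical point that your Picard iteration for $e^{-|M|u}\phi_+(u)$, as written with base point $u=0$, converges uniformly only on $u\geq 0$; on $u<0$ the conjugated kernel $(1-e^{-2|M|(u-s)})/(2|M|)$ is exponentially large. The standard fix is to anchor the Volterra iteration at the endpoint $u=-2$ (resp.\ $u=2$ for $\phi_-$) rather than at $0$; with that modification your construction is correct.

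One further minor point: your concern about $|M|^\alpha$ growth coming from the $M^2$ term in the iteration is unnecessary. In the energy estimate the $m^2\|\varphi_{\alpha}\partial_v^\alpha h\|_{L^2}^2$ term appears with a favorable sign on the left side and is simply discarded (see \eqref{Enh16}); no $m$-dependent factor propagates through the induction. Your Stirling absorption argument is not wrong, it is just not needed.
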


\begin{proof}
By translation invariance and a rescaling, we can assume $v_0=0$ and $L=1$. Standard elliptic regularity theory shows that $h\in C^\infty(-2,2)$, so we only have to prove the quantitative bounds \eqref{Enh12} with $L=1$. Assume that $\|h\|_{L^2(-2,2)}=\sigma$. We first prove the easier bounds 
\begin{equation}\label{Enh12.5}
\|h(v)\|_{L^2(-1,1)}\leq C_ 0'M_0'\,\sigma\, e^{-|m|/20},
\end{equation}
for suitable $C_0', M_0'\in(0,\infty)$ depending on $C_0, M_0$. The bounds \eqref{Enh12.5} follow from a simple comparison argument. Indeed, we can find $c_1\in(-2, -31/16), c_2\in(31/16, 2)$ such that $|h(c_1)|\leq 5\sigma, |h(c_2)|\leq 5\sigma$. Since \eqref{Enh12.5} is nontrivial only for large $|m|$, can we assume that $m^2-|a(v)|\ge m^2/2$ for $v\in[-2,2]$. The desired bounds \eqref{Enh12.5} then follow from comparing $h$ with the solution $h^\ast$ to 
$$(m^2/2-\partial_v^2)h^\ast(v)=0, \quad {\rm for}\,\,v\in[c_1,c_2], \quad{\rm with}\,\,h^\ast(c_1)=5\sigma, \,\,h^\ast(c_2)=5\sigma.$$
It then suffices to prove  for suitable $M_1\in(0,\infty)$ (depending on $C_0, M_0,s$),  $h$ satisfies the bounds for all $j\in\Z\cap[0,\infty)$,
\begin{equation}\label{Enh12.51}
\Big\|\partial_v^j h(v)\Big\|_{L^2(-1/2,1/2)}\leq M_1^j \big(j!\big)^{1/s} \|h\|_{L^2(-1,1)}.
\end{equation}
We assume $\|h\|_{L^2(-1,1)}=1$ and use an induction argument.  Fix $\delta\in(0, (1/s)-1)$. Set for $j\ge 0$,
\begin{equation}\label{Enh13}
r_j=\frac{1}{2}+\frac{1}{2(9+j)^{\delta}}.
\end{equation}
Choose smooth cutoff function $\varphi_j(v)\in C_c^\infty(-r_j,r_j)$, $j\ge 0$ such that $\varphi_j\equiv 1$ on $[-r_{j+1}, r_{j+1}]$, $|\nabla\varphi_j|\leq 4\delta^{-1}(10+j)^{1+\delta}$. We shall prove by induction that for suitable $ M_1\in(1,\infty)$ depending on $C_0, M_0, s$, it holds that for all $j\ge0$,
\begin{equation}\label{Enh14}
\left\|\partial_v^jh\right\|_{L^2(-r_{j},r_{j})}\leq M_1^j(j!)^{1/s}.
\end{equation}
It is clear that \eqref{Enh14} holds for $j=0$.  Suppose that \eqref{Enh14} holds for $j\leq j_0$, for some $j_0\in\mathbb{Z}\cap[0,\infty)$, we need to prove that \eqref{Enh14} holds also for $j=j_0+1$. Taking $\partial_v^{j_0}$ to \eqref{Enh11} we obtain that for $v\in(-1,1)$,
\begin{equation}\label{Enh15}
(m^2-\partial_v^2)\partial_v^{j_0}h+\partial_v^{j_0}\big(a(v)h\big)=0.
\end{equation}
Therefore by integrating against $\partial_v^{j_0}h\,\varphi_{j_0}^2$ and using induction assumption, we obtain that
\begin{equation}\label{Enh16}
\begin{split}
&\int_{-1}^1m^2\big|\partial_v^{j_0}h\big|^2\varphi_{j_0}^2+\big|\partial_v^{j_0+1}h\big|^2\varphi_{j_0}^2\,dv\\
&\leq \left|2\int_{-1}^1\partial_v^{j_0+1}h\,\partial_v\varphi_{j_0}\varphi_{j_0}\partial_v^{j_0}h\,dv\right|+\sum_{j=0}^{j_0}\frac{j_0!}{j!(j_0-j)!} \int_{-1}^1\left|\partial_v^ja(v)\partial_v^{j_0-j}h\,\partial_v^{j_0}h\right|\varphi_{j_0}^2\,dv\\
&\leq\frac{1}{4}\int_{-1}^1\left|\partial_v^{j_0+1}h\right|^2\varphi_{j_0}^2\,dv+64\delta^{-2}(10+j_0)^{2+2\delta} M_1^{2j_0}(j_0!)^{2/s}\\
&\quad+\sum_{j=0}^{j_0}\frac{j_0!}{j!(j_0-j)!}C_0M_0^j(j!)^{1/s}M_1^{j_0-j}\big((j_0-j)!\big)^{1/s}M_1^{j_0}(j_0!)^{1/s}.
\end{split}
\end{equation}
Observe that if we choose $M_1>M_0$, then for $j_0\ge0,  j\in\Z\cap[0,j_0]$, 
\begin{equation}\label{Enh17}
\frac{j_0!}{j!(j_0-j)!}M_0^j(j!)^{1/s}M_1^{j_0-j}\big((j_0-j)!\big)^{1/s}\leq M_1^{j_0}(j_0!)^{1/s}.
\end{equation}
It follows from \eqref{Enh16}-\eqref{Enh17} that
\begin{equation}\label{Enh18}
\int_{-1}^1\left|\partial_v^{j_0+1}h\right|^2\varphi_{j_0}^2\,dv\leq 100\delta^{-2}(10+j_0)^{2+2\delta} M_1^{2j_0}(j_0!)^{2/s}+2C_0M_1^{2j_0}(j_0!)^{2/s}(j_0+1).
\end{equation}
To conclude the proof, recalling that $\delta\in(0,1/s-1)$, we only need to choose $M_1>M_0$ sufficiently large such that for all $j_0\ge0$
\begin{equation}\label{Enh19}
2C_0(j_0+1)+100\delta^{-2}(10+j_0)^{2+2\delta}\leq M_1^2(j_0+1)^{2/s}.
\end{equation}
The proof is then complete.
\end{proof}

We shall also need the following lemma on elliptic estimates for later applications. \begin{lemma}\label{EllipReg}
Let $\alpha\in[1/100,\infty)$. Suppose that $L>0$ and the potential $a(v)$ satisfies for some $C_2, M_2\in(0,\infty)$,
\begin{equation}\label{Enh31.1}
\sup_{v\in[-2L,2L]}\big|(L\partial_v)^ja(v)\big|\leq C_2L^{-2} M_2^j(j!)^2.
\end{equation}
Then for sufficiently small $\delta\in(0,\infty)$ depending on $C_2, M_2$ the following statement holds.
 Fix smooth cutoff functions $\Psi, \Psi^\ast$ with 
 \begin{equation}\label{Enh31.2}
 \begin{split}
& {\rm supp}\,\Psi\subset(-3/4, 3/4), \quad \Psi\equiv 1\,\,{\rm for}\,\,v\in[-1/2, 1/2], \quad \sup_{\xi\in\R}\big|e^{\langle\xi\rangle^{3/4}}\widehat{\,\,\Psi\,\,}(\xi)\big|\lesssim1,\\
 & {\rm supp}\,\Psi^\ast\subset(-1, 1), \quad \Psi^\ast\equiv 1\,\,{\rm for}\,\,v\in[-7/8, 7/8], \quad \sup_{\xi\in\R}\big|e^{\langle\xi\rangle^{3/4}}\widehat{\,\,\Psi^\ast}(\xi)\big|\lesssim1.
 \end{split}
 \end{equation}
 Set  $\Psi_L(v):=\Psi(v/L), \Psi^\ast_L(v):=\Psi(v/L)$ for $v\in\R$.
 Assume that $\phi\in L^2(-2L,2L)$ satisfies the elliptic equation for $v\in(-2L,2L)$,
\begin{equation}\label{Enh31}
(\alpha^2/L^2-\partial_v^2)\phi(v)+a(v)\phi(v)=h(v),
\end{equation}
where non-homogenous term $h$ satisfies 
$\int_{\R}e^{\delta\langle L\xi\rangle^{1/2}}\big|\widehat{\big(\Psi_L^\ast h\big)}(\xi)\big|^2\,d\xi<\infty.$
Then we have the following bounds
\begin{equation}\label{Enh33}
\begin{split}
&\int_{\R}(\alpha^2/L^2+\xi^2)^2e^{2\delta\langle L\xi\rangle^{1/2}} \Big|\widehat{\big(\Psi_L \phi\big)}(\xi)\Big|^2\,d\xi\\
&\lesssim_{C_2,M_2} \int_{\R}e^{2\delta\langle L\xi\rangle^{1/2}} \Big|\widehat{\big(\Psi^\ast_L h\big)}(\xi)\Big|^2\,d\xi+e^{-\alpha /200}\bigg[L^{-4}\int_{-2L}^{2L}|\phi(v)|^2dv+\int_{-2L}^{2L}|h(v)|^2\,dv\bigg].
\end{split}
\end{equation}
\end{lemma}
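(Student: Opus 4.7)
The plan is to derive an equation for $u := \Psi_L\phi$ on $\R$, take Fourier transform, apply the Gevrey multiplier $A(\xi) := e^{\delta\langle L\xi\rangle^{1/2}}$, and control each resulting term via the Gevrey algebra of Lemma \ref{GPF} together with an elliptic decay estimate in the spirit of Proposition \ref{GevElli} that is responsible for the $e^{-\alpha/200}$ loss. By the rescaling $v \mapsto L v$, I reduce immediately to $L = 1$, so $\Psi_L = \Psi$, $\Psi_L^\ast = \Psi^\ast$, and the coefficient obeys $|\partial_v^j a| \le C_2 M_2^j (j!)^2$ on $[-2,2]$.

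A direct commutator computation yields
\[
(\alpha^2 - \partial_v^2) u + a u = \Psi h - 2\Psi'\partial_v\phi - \Psi''\phi,
\]
with the right-hand side supported in $[-3/4, 3/4]$. Taking Fourier transform and multiplying by $A$, the source term $\|A\widehat{\Psi h}\|_{L^2}$ is controlled by $\|A\widehat{\Psi^\ast h}\|_{L^2}$ via the identity $\Psi = \Psi \Psi^\ast$ together with Young's inequality for the convolution $\widehat{\Psi \Psi^\ast h} = \widehat\Psi * \widehat{\Psi^\ast h}$, using the Gevrey decay of $\widehat\Psi$ from Lemma \ref{lm:Gevrey}. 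The variable-coefficient term $\|A\widehat{au}\|_{L^2}$, by the Gevrey algebra property in Lemma \ref{GPF} (for $\delta$ small depending on $C_2, M_2$), is bounded by $C \|A\widehat u\|_{L^2}$, which carries strictly lower order in $(\alpha^2 + \xi^2)$ than the target norm $\|A(\alpha^2 + \xi^2)\widehat u\|_{L^2}$; this contribution is absorbed into the LHS via the truncated-multiplier commutator trick described in Section \ref{sec:FourierTransform}.

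The remaining and most delicate piece is the commutator $E := -2\Psi'\partial_v\phi - \Psi''\phi$, which is supported in the annulus $\{1/2 \le |v| \le 3/4\}$ and whose Gevrey-weighted norm cannot a priori be controlled by $h$ alone. Here I decompose $\phi = \tilde\phi + \psi^{\mathrm{hom}}$ on $(-7/8, 7/8)$, where $\tilde\phi$ is a particular solution of $(\alpha^2 - \partial_v^2 + a)\tilde\phi = \Psi^\ast h$ on $\R$; the Gevrey bound on $\tilde\phi$ follows by the same Fourier analysis applied directly to $\tilde\phi$ and is absorbed into the main RHS. The difference $\psi^{\mathrm{hom}}$ then satisfies the homogeneous equation on $(-7/8, 7/8)$, and Proposition \ref{GevElli}, applied on overlapping subintervals of length $\sim 1/8$ covering the annulus, yields
\[
\|\partial_v^j \psi^{\mathrm{hom}}\|_{L^2(\{1/2 \le |v| \le 3/4\})} \lesssim M_1^j(j!)^2 \big(\|\phi\|_{L^2(-2,2)} + \|h\|_{L^2(-2,2)}\big)\, e^{-\alpha/40}.
\]
Summing these derivatives against the weight $e^{\delta\langle\xi\rangle^{1/2}}$ (i.e., converting Gevrey-2 bounds into Fourier-weighted $L^2$ bounds via Lemma \ref{lm:Gevrey}) produces the claimed $e^{-\alpha/200}$ error factor times $\|\phi\|^2 + \|h\|^2$. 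The main obstacle will be the orchestration of the cutoffs and subintervals so that Proposition \ref{GevElli} applies with sufficient slack, while the Gevrey algebra estimates never see a stronger weight than $e^{\delta\langle L\xi\rangle^{1/2}}$; the non-sharp constant $1/200$ reflects the losses accumulated in this covering argument.
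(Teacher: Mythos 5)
Your high-level strategy is the same as the paper's: decompose the solution near each point into a particular piece (controlled via Fourier/commutator estimates with the Gevrey multiplier) and a homogeneous piece (controlled via Proposition \ref{GevElli}, which supplies the $e^{-c\alpha}$ factor). The substantive difference is where you put the Green's function. You write $\phi = \tilde\phi + \psi^{\mathrm{hom}}$ where $\tilde\phi$ is ``a particular solution of $(\alpha^2-\partial_v^2+a)\tilde\phi = \Psi^\ast h$ on $\R$,'' whereas the paper never constructs a global particular solution: it localizes the \emph{potential}, replacing $a$ by $a_c := a\,\Psi_c(\cdot - v_0)$ supported on a short interval $[v_0-c,v_0+c]$, and uses the local Green's function $g_I$ of $\alpha^2 - \partial_v^2 + a_c$.

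This difference is not cosmetic; it hides a genuine gap. The operator $\alpha^2 - \partial_v^2 + a$ on $\R$ (or on $(-2,2)$ with Dirichlet data) is in general \emph{not} invertible for the allowed range $\alpha \ge 1/100$: nothing in \eqref{Enh31.1} prevents $a$ from being large and negative, in which case $\alpha^2 - \partial_v^2 + a$ can have zero (or a small eigenvalue) in its spectrum. Then your $\tilde\phi$ either fails to exist, or exists only with an uncontrollable constant, and the entire bootstrap you describe (``the Gevrey bound on $\tilde\phi$ follows by the same Fourier analysis applied directly to $\tilde\phi$'') cannot even get off the ground, since it needs a starting $L^2$ bound on $\tilde\phi$ that you don't have. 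The paper flags exactly this obstruction in its proof (``the Green's function for the differential operator $\alpha^2-\partial_v^2+a(v)$ \ldots unfortunately may not be well defined over $(-2,2)$'') and circumvents it by choosing $c$ small enough, depending only on $\|a\|_{L^\infty}$, that $\alpha^2/2 - \partial_v^2 - 2\|a\|_{L^\infty}\mathbf{1}_{(-c,c)}$ is a positive operator; this guarantees existence and good energy bounds \eqref{Enh35} for the local Green's function uniformly in $\alpha\ge 1/100$, and then a partition-of-unity over such short intervals assembles the global estimate. To repair your argument you would have to either (i) adopt the same potential truncation and work with a local Green's function and covering of $(-1,1)$ by intervals of length $\sim c$, which is essentially the paper's proof, or (ii) shift the operator to $(\alpha^2 + \Lambda) - \partial_v^2 + a$ with $\Lambda \gtrsim \|a\|_{L^\infty}$ and set up a contraction that returns $\Lambda\tilde\phi$ to the right-hand side, which you have not done and which requires additional smallness bookkeeping in the Gevrey norm. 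As written, the step defining $\tilde\phi$ is not justified.
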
 

\begin{remark}
The main point of the estimate is that we can bound the solution using the same Gevrey norms as the nonhomogeneous term, without any loss of regularity. We remark also that generally the coefficients of the equation need to be more regular than the solutions.
\end{remark}


\begin{proof}
By a rescaling we can assume that $L=1$. The main idea is to represent $\phi$ using the integral of Green's function against the non-homogeneous term $h$ which can be estimated precisely using regularity properties of the Green's function, and a part which solves a homogeneous equation and enjoys better regularity property thanks to the smoothness of the coefficients of the equation. 

For this purpose, we need to study the Green's function for the differential operator $\alpha^2-\partial_v^2+a(v)$, which unfortunately may not be well defined over $(-2,2)$. To fix the issue, we choose for $c\in(0,1)$ a smooth cutoff function $\Psi_c: \R\to [0,1]$ with $\Psi_c\in C_0^{\infty}(-c,c)$, $\Psi_c\equiv1$ on $[-7c/8, 7c/8]$ and $\sup_{\xi\in\R}\big|e^{\langle\xi\rangle^{4/5}}\widehat{\,\Psi_c}(\xi)\big|\lesssim_c1.$ Recall that $\alpha\in[1/100,\infty)$. We can find $c\in(0,1/4)$ such that $$\alpha^2/2-\partial_v^2-2\|a\|_{L^\infty(-2,\,2)}\mathbf{1}_{(-c,\, c)}(v)$$ is a positive operator on $H^1(\R)$. The size of $c$ depends only on $\|a\|_{L^\infty(-2,2)}$.  Fix any $v_0\in\R$ with $[v_0-c,v_0+c]\subseteq [-2,2]$. Define for $v\in\R$,
\begin{equation}\label{GeL0.1}
a_c(v):=a(v)\Psi_c(v-v_0).
\end{equation}
Again for simplicity of notations we suppressed the dependence of $a_c$ on $v_0$ which is fixed.
Then we can define for the interval $I:=[v_0-c, c_0+c]$ the Green's function $g_I(v,\rho)$ which satisfies
\begin{equation}\label{Enh34}
\begin{split}
&\big(\alpha^2-\partial_v^2\big)g_I(v,\rho)+a_c(v)g_I(v,\rho)=\delta(v-\rho),\qquad{\rm for}\,\,v\in\R,\,\rho\in I.
\end{split}
\end{equation}
Standard energy estimates imply that for $\rho\in I$
\begin{equation}\label{Enh35}
|\alpha|\big\|g_I(v,\rho)\big\|_{L^2(\R)}+\big\|\partial_vg_I(v,\rho)\big\|_{L^2(\R)}\lesssim 1. 
\end{equation}
 Set for $v,\rho\in\R$,
\begin{equation}\label{Enh36}
g_I^\ast(v,\rho):=g_I(v,\rho)\Psi_c(\rho-v_0).
\end{equation}

We claim the following bounds.
\begin{claim}\label{claim1}
For sufficiently small $\delta'\in(0,1)$ depending only on $C_2, M_2$, such that we have the bounds for all $\xi,\eta\in\R$,
\begin{equation}\label{Enh37}
\big|\widehat{\,\,g_I^\ast\,}(\xi,\eta)\big|\lesssim \frac{e^{-\delta'\langle \xi+\eta\rangle^{1/2}}}{\alpha^2+\xi^2}.
\end{equation}
\end{claim}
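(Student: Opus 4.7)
The plan is to subtract off the free Green's function $G_0(u):=(2|\alpha|)^{-1}e^{-|\alpha||u|}$ of $\alpha^2-\partial_v^2$ on $\R$, and then propagate Gevrey regularity in $\rho$ through the elliptic equation for the remainder. I will write $g_I(v,\rho) = G_0(v-\rho) + R(v,\rho)$ so that $R$ solves $(\alpha^2-\partial_v^2+a_c(v))R = -a_c(v)G_0(v-\rho)$, and correspondingly $g_I^\ast = G_0^\ast+R^\ast$ with $G_0^\ast := G_0(v-\rho)\Psi_c(\rho-v_0)$. The change of variable $u=v-\rho$ factorizes the Fourier transform of the free piece:
\[
\widehat{G_0^\ast}(\xi,\eta) = \frac{e^{-iv_0(\xi+\eta)}\widehat{\Psi_c}(\xi+\eta)}{\alpha^2+\xi^2},
\]
which satisfies the claim immediately, with the much stronger Gevrey-$4/5$ decay rate in $\xi+\eta$. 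It therefore remains to handle $R^\ast$.

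For the remainder I will pass to coordinates $(u,\rho)$ with $u=v-\rho$, so that $\widetilde{R}(u,\rho):=R(u+\rho,\rho)$ satisfies
\[
(\alpha^2-\partial_u^2+a_c(u+\rho))\widetilde{R}(u,\rho) = -a_c(u+\rho)G_0(u),
\]
with the left-hand-side operator uniformly coercive on $H^1(\R_u)$ by the choice of $c$. Differentiating in $\rho$ and applying Leibniz yields
\[
(\alpha^2-\partial_u^2+a_c(u+\rho))\partial_\rho^m \widetilde{R} = -a_c^{(m)}(u+\rho)G_0(u) - \sum_{j=1}^m\binom{m}{j}a_c^{(j)}(u+\rho)\partial_\rho^{m-j}\widetilde{R},
\]
and I plan to run an induction on $m$ using the elliptic $L^2(u)$-estimate, the Gevrey-$1/2$ bound $|a_c^{(j)}|\leq C_2M_2^j(j!)^2$ inherited from \eqref{Enh31.1}, and the combinatorial inequality $\binom{m}{j}(j!)^2((m-j)!)^2\leq (m!)^2$, to arrive at
\[
\|\partial_\rho^m\widetilde{R}(\cdot,\rho)\|_{L^2(u)}\leq B^{m+1}(m!)^2
\]
uniformly in $\rho$ on the support of $\Psi_c(\cdot-v_0)$, for $B$ large enough depending on $C_2, M_2, \alpha$.

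To convert this into the pointwise Fourier bound, I will set $S(u,\rho) := -a_c(u+\rho)\Psi_c(\rho-v_0)(\widetilde{R}(u,\rho)+G_0(u))$. Since $\Psi_c(\rho-v_0)$ is independent of $u$, a direct check gives $(\alpha^2-\partial_u^2)(\widetilde{R}\,\Psi_c(\cdot-v_0))=S$, so in Fourier $(\alpha^2+\xi^2)\widehat{\widetilde{R}\,\Psi_c}(\xi,\sigma) = \widehat{S}(\xi,\sigma)$ with $\sigma=\xi+\eta$. The function $S$ is compactly supported in both $u$ and $\rho$ (since $a_c(u+\rho)\Psi_c(\rho-v_0)\neq 0$ forces $|u|,|\rho-v_0|\lesssim c$), and it is Gevrey-$1/2$ in $\rho$ in $L^2(u)$ by the previous step together with Lemma \ref{GPF}(i). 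Integrating by parts $m$ times in $\rho$ and optimizing over $m$ then gives $\|\widehat{S}^{(\rho)}(u,\sigma)\|_{L^2(u)}\lesssim e^{-\delta'\langle\sigma\rangle^{1/2}}$ for some $\delta'=\delta'(C_2,M_2)>0$, and Cauchy--Schwarz over the bounded $u$-support upgrades this to $|\widehat{S}(\xi,\sigma)|\lesssim e^{-\delta'\langle\sigma\rangle^{1/2}}$ pointwise; equivalently, $|\widehat{R^\ast}(\xi,\eta)|\lesssim e^{-\delta'\langle\xi+\eta\rangle^{1/2}}/(\alpha^2+\xi^2)$, which combined with the free part yields the claim. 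The main technical difficulty is closing the induction in the second step so that the $\alpha$-dependent elliptic constant is absorbed into $B$; the uniform coercivity of $\alpha^2-\partial_v^2+a_c$ guaranteed by the choice of $c$ is the essential structural ingredient.
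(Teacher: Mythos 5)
Your proof is correct, but it takes a genuinely different route from the paper's. You subtract the free Green's function $G_0$ of $\alpha^2-\partial_v^2$, handle the explicit free piece by Fourier inversion, and then propagate Gevrey regularity of the remainder $\widetilde R(u,\rho)=R(u+\rho,\rho)$ by a physical-space induction on $\partial_\rho^m\widetilde R$, using the Leibniz rule, the combinatorial bound $\binom{m}{j}(j!)^2((m-j)!)^2\le (m!)^2$, and the coercivity of $\alpha^2-\partial_u^2+a_c(u+\rho)$. The final conversion from $L^2(u)$ Gevrey bounds to the pointwise Fourier estimate is then carried out by optimizing the $|\sigma|^{-m}(m!)^2C^m$ bound in $m$ over the compact $(u,\rho)$-support of $S$. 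By contrast, the paper works with $h_I(v,\rho)=g_I^\ast(v+\rho,\rho)$ directly and, after establishing the low-frequency energy bound in Step~1, applies the Fourier multiplier $A=e^{\delta'\langle\eta\rangle^{1/2}}$ in the $\rho$ variable to the equation and closes a commutator estimate (Step~2), using the sublinear gain $\langle\eta\rangle^{-1/2}$ that the $A$-commutator against a Gevrey coefficient produces; the pointwise bound then falls out of the equation (Step~3). Your approach is more elementary and self-contained (no multiplier or commutator lemma needed), and it isolates the free Green's function explicitly, which makes the $(\alpha^2+\xi^2)^{-1}$ factor transparent; the paper's commutator argument is closer to the technology used throughout the rest of the paper and generalizes more smoothly to the weighted and shifted settings of Sections 7--8. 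One point you should make explicit: you write ``for $B$ large enough depending on $C_2,M_2,\alpha$,'' but the claim requires $\delta'$ to depend only on $C_2,M_2$. This is fine because the coercivity constant of $\alpha^2-\partial_u^2+a_c$ is bounded below uniformly for $\alpha\ge 1/100$ by the choice of $c$, and $\|G_0\|_{L^2}\sim |\alpha|^{-3/2}$ is bounded for the same range, so $B$ (hence $\delta'$) can be taken uniform in $\alpha$; you should state this rather than leave it implicit in the final sentence.
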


Assume the claim for a moment. We decompose $h$ on the interval $[v_0-7c/8,v_0+7c/8]$  in the following way
\begin{equation}\label{Enh38}
\phi(v)=\phi_1(v)+\phi_2(v):=\int_{\R}g^\ast_I(v,\rho)h(\rho)\,d\rho+\phi_2(v),
\end{equation}
where $\phi_2$ satisfies on $[v_0-3c/4,v_0+3c/4]$
\begin{equation}\label{Enh39}
(\alpha^2-\partial_v^2)\phi_2(v)+a(v)\phi_2(v)=0.
\end{equation}
The desired bounds \eqref{Enh33} then follow from the bounds \eqref{Enh37}, proposition \ref{GevElli}, and a standard partition of unity argument, except for the exponential factor $e^{-\alpha/200}$. To see the exponential factor in \eqref{Enh33}, we note that it is important only for large $\alpha$. In that case the Green's function $g$ can be defined globally on $(-2,2)$ and the desired bounds \eqref{Enh33} follows from proposition \eqref{GevElli}.
\end{proof}

It remains to give
\begin{proof}[Proof of Claim \ref{claim1}]
It follows from \eqref{Enh34} that $g_I^\ast(v,\rho)$ satisfies for $v,\rho\in\R$ the equation
\begin{equation}\label{Enh24}
\begin{split}
&(\alpha^2-\partial_v^2+a_c(v))g_I^\ast(v,\rho)=\delta(v-\rho) \Psi_c(\rho-v_0).
\end{split}
\end{equation}
Define for $v, \rho\in\R$,
\begin{equation}\label{GeL1}
h_I(v,\rho):=g_I^\ast(v+\rho,\rho),
\end{equation}
then by \eqref{Enh24} we have for $v,\rho\in\R$,
\begin{equation}\label{GeL2}
\begin{split}
&(\alpha^2-\partial_v^2+a_c(v+\rho))h_I(v,\rho)=\delta(v) \Psi_c(\rho-v_0)
\end{split}
\end{equation}

It is clear from a change of variable that \eqref{Enh37} follows from the inequality that for $\xi,\eta\in\R$ 
\begin{equation}\label{GeL3}
\big|\widehat{\,\,h_I}(\xi,\eta)\big|\lesssim \frac{e^{-\delta'\langle\eta\rangle^{1/2}}}{\alpha^2+\xi^2}.
\end{equation}


We divide the proof of the bounds \eqref{GeL3} into several steps.

{\bf Step 1: low frequency bounds.}  \eqref{Enh35} implies that 
\begin{equation}\label{Ge5}
\big\|(|\alpha|+|\xi|)\widehat{\,h_I}(\xi,\eta)\big\|_{L^2(\R^2)}\lesssim1.
\end{equation}

{\bf Step 2: high frequency integral bounds.} In this step we use a commutator argument to prove
\begin{equation}\label{Ge6}
\big\|(|\alpha|+|\xi|)e^{\delta'\langle\eta\rangle^{1/2}}\widehat{\,h_I}(\xi,\eta)\big\|_{L^2(\R^2)}\lesssim1,
\end{equation}
for a suitable $\delta'\in(0,1)$ depending only on $C_2,M_2$.
Define the Fourier multiplier operator $A$ as follows. For any $\varphi\in L^2(\R)$, 
\begin{equation}\label{Enh25}
\widehat{\,A\varphi\,}(\eta):=e^{\delta'\langle \eta\rangle^{1/2}}\widehat{\varphi}(\eta), \quad {\rm for}\,\,\eta\in\R.
\end{equation}
Applying $A$ to \eqref{Enh24} (acting on the variable $\rho$) we obtain 
\begin{equation}\label{Enh26}
\begin{split}
&(\alpha^2-\partial_v^2)A\big[h_I(v,\cdot)\big](\rho)+a_c(v+\rho)A\big[\,h_I(v,\cdot)\big](\rho)\\
&=\delta(v)A\big[\Psi(\cdot-v_0)\big](\rho)+a_c(v+\rho)A\big[\,h_I(v,\cdot)\big](\rho)-A\big[a_c(v+\cdot)h_I(v,\cdot)\big](\rho).
\end{split}
\end{equation}
Standard energy estimates (by multiplying $A\big[h_I(v,\cdot)\big](\rho)$ to \eqref{Enh26} and integrating in $v,\rho\in\R$) and Sobolev inequality imply
\begin{equation}\label{Enh27}
\begin{split}
&\left\|(|\alpha|+|\xi|)A(\eta)\widehat{\,h_I}(\xi,\eta)\right\|_{L^2(\R^2)}\\
&\lesssim 1+\Big\|a_c(v+\rho) A\big[\,h_I(v,\cdot)\big](\rho)-A\big[a_c(v+\cdot)h_I(v,\cdot)\big](\rho)\Big\|_{L^2(\R^2)}.
\end{split}
\end{equation}
We have the following commutator estimates using \eqref{Ge5}, for any $\gamma\in(0,1)$ and suitable $C_\gamma\in(0,\infty)$,
\begin{equation}\label{Enh30}
\begin{split}
&\Big\|a_c(v+\rho) A\big[\,h_I(v,\cdot)\big](\rho)-A\big[a_c(v+\cdot)h_I(v,\cdot)\big](\rho)\Big\|_{L^2(\R^2)}\\
&\lesssim \Big\|\int_{\R^2} \widehat{\,a_c}(\beta) \big[A(\eta-\beta)-A(\eta)\big]\widehat{\,h_I}(\xi-\beta,\eta-\beta)\,d\gamma\Big\|_{L^2(\R^2)}\\
&\lesssim \Big\|\langle \eta\rangle^{-1/2}A(\eta)\widehat{\,h_I}(\xi,\eta)\Big\|_{L^2(\R^2)}\lesssim \gamma \big\|A\big[\,h_I(v,\cdot)\big](\rho)\big\|_{L^2(\R^2)}+C_\gamma.
\end{split}
\end{equation}
Combining \eqref{Ge5}, \eqref{Enh27} and \eqref{Enh30}, we obtain the desired bounds \eqref{Ge6}.

{\bf Step 3: the pointwise bound.} The desired bounds \eqref{GeL3} follow from \eqref{Ge6} and equation \eqref{Enh24}.
\end{proof}

 \section{Bounds on the Green's function associated with a long range potential}\label{sec:green}
 In this section we define and study the property of a Green's function associated with a long range potential which is important in proving the ``vortex depletion" phenomenon. Define for $w\in(-\infty,-5]$ the potential 
 \begin{equation}\label{Pot1}
 V_w(v):=\frac{e^{2v}D(v)}{B(v)-B(w)}\Big[\mathbf{1}_{[w+2, \infty)}\ast \Psi^\dagger\Big](v),
 \end{equation}
 where $\Psi^\dagger$ is a nonnegative Gevrey regular cutoff function satisfying 
 $${\rm supp}\,\Psi^\dagger\subseteq [-1,1],\quad \int_\R\Psi^\dagger=1, \quad {\rm and}\quad \sup_{\xi\in\R}\Big|e^{\langle\xi\rangle^{4/5}}\widehat{\,\Psi\,}(\xi)\Big|\lesssim 1.$$
 We note that $V_w\ge0$ on $\R$, which is important in applying the maximum principle below.
We begin with a simple result which will be useful for later applications.
\begin{lemma}\label{POTT1}
Assume that $k\in\mathbb{Z}\backslash\{0\}$ and $A, A'\in\R$ with $A<A'$. For $\rho\in\R$, suppose $g_k(\cdot,\rho)\in H^1(\R)$ is the solution to 
\begin{equation}\label{POTT2}
(k^2-\partial_v^2)g_k(v,\rho)+8{\bf 1}_{[A,\,A']}(v)g_k(v,\rho)=\delta(v-\rho), \qquad {\rm for}\,\,v\in\R.
\end{equation}
Set $m:=\min(\rho,v), M:=\max(\rho,v)$, $\mu_k:=\sqrt{k^2+8}$ and $d:=|[m,M]\cap[A,\,A']|$, then we have the following bounds for $v, \rho\in\R$,
\begin{equation}\label{POTT3}
|g_k(v,\rho)|\lesssim\frac{1}{|k|} e^{-|k||v-\rho|} e^{-(\mu_k-|k|) d}.
\end{equation}
In the above, the implied constant is independent of $k\in\mathbb{Z}\backslash\{0\}, A, A', \rho$ and $v\in\R$. 
\end{lemma}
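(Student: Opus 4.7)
The plan is to build $g_k$ via fundamental solutions and exploit the fact that the operator $L := -\partial_v^2 + k^2 + 8\mathbf{1}_{[A,A']}$ acts as $-\partial_v^2 + \mu_k^2$ inside $[A,A']$ and as $-\partial_v^2 + k^2$ outside. Since $L$ is a positive, self-adjoint operator on $H^1(\R)$ with a nonnegative potential, the Green's function $g_k(v,\rho)$ exists, is symmetric, and is nonnegative by the maximum principle. Writing
\[
g_k(v,\rho) \;=\; \frac{u_-(v\wedge\rho)\,u_+(v\vee\rho)}{-W},\qquad W := u_- u_+' - u_-' u_+ \;\;(\text{constant}),
\]
where $u_\pm$ are the positive solutions of $Lu=0$ decaying at $\mp\infty$, normalized by $u_-(v)=e^{|k|v}$ for $v<A$ and $u_+(v)=e^{-|k|v}$ for $v>A'$, reduces the problem to pointwise bounds on $u_\pm$ and a lower bound on $-W$.

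The heart of the argument is to establish, with universal constants $c,C>0$,
\[
u_-(v)\;\leq\;C\,e^{|k|v\,+\,(\mu_k-|k|)\,|(-\infty,v]\cap[A,A']|},\qquad u_+(v)\;\leq\;C\,e^{-|k|v\,+\,(\mu_k-|k|)\,|[v,\infty)\cap[A,A']|},
\]
and $-W \geq c\,|k|\,e^{(\mu_k-|k|)(A'-A)}$. I would solve $Lu=0$ piecewise and match $u$ and $u'$ at $v=A,A'$: in $[A,A']$ the solutions are combinations of $e^{\pm\mu_k v}$, and outside of $e^{\pm|k|v}$. The matching yields, for $v\in[A,A']$,
\[
u_-(v) \;=\; e^{|k|A}\Bigl[\cosh(\mu_k(v-A)) + \tfrac{|k|}{\mu_k}\sinh(\mu_k(v-A))\Bigr],
\]
and a mirror-image formula for $u_+$; the elementary inequality $\cosh(\mu_k\ell)+(|k|/\mu_k)\sinh(\mu_k\ell)\leq e^{\mu_k\ell}$ for $\ell\geq 0$ (which follows from the coefficients $\tfrac{1}{2}(1\pm|k|/\mu_k)$ being positive and summing to $1$) gives the bounds on $u_\pm$ with $C=O(1)$ uniformly in $k$. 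The Wronskian is cleanest to compute in the region $v<\min(A,\rho)$, where $u_-(v)=e^{|k|v}$, yielding
\[
-W \;=\; \frac{e^{-|k|(A'-A)}}{2\mu_k}\Bigl[(\mu_k+|k|)^2 e^{\mu_k(A'-A)} - (\mu_k-|k|)^2 e^{-\mu_k(A'-A)}\Bigr];
\]
factoring out $e^{(\mu_k-|k|)(A'-A)}$ and using $(\mu_k+|k|)^2-(\mu_k-|k|)^2 = 4|k|\mu_k$ gives $-W\geq 2|k|\,e^{(\mu_k-|k|)(A'-A)}$.

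Setting $m:=v\wedge\rho$ and $M:=v\vee\rho$ and combining the three bounds,
\[
g_k(v,\rho) \;\leq\; \frac{C^2}{c|k|}\,e^{-|k|(M-m)}\,\exp\!\Bigl\{(\mu_k-|k|)\bigl[|(-\infty,m]\cap[A,A']|+|[M,\infty)\cap[A,A']|-(A'-A)\bigr]\Bigr\}.
\]
The structural point that closes the argument is that the three intervals $(-\infty,m]\cap[A,A']$, $[m,M]\cap[A,A']$, and $[M,\infty)\cap[A,A']$ partition $[A,A']$, so their lengths sum to $A'-A$; hence the bracket collapses to $-|[m,M]\cap[A,A']|=-d$, producing exactly $|g_k(v,\rho)|\lesssim\tfrac{1}{|k|}\,e^{-|k||v-\rho|}\,e^{-(\mu_k-|k|)d}$. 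The main anticipated obstacle is bookkeeping rather than conceptual: some of the matching coefficients (e.g.\ the coefficient of $e^{|k|v}$ in $u_+$ on $(-\infty,A)$) are negative, so one must verify that $u_\pm$ stay positive (which they do, by Sturm-type reasoning since $Lu=0$ with $L$ positive admits at most one sign change) and that the $\cosh/\sinh$ bounds combine with universal constants. A direct maximum-principle comparison with the naive upper-envelope fails because that envelope carries a negative delta at the exit point $v=A'$; this is exactly what the explicit fundamental-solutions approach circumvents.
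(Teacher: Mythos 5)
Your proposal is correct and takes a genuinely different route from the paper's proof. The paper (Lemma~\ref{POTT1}) normalizes $A'=0$ by translation and then argues by brute-force case analysis on the position of $\rho$ relative to $[A,0]$: for each of three cases it writes the full piecewise form of $g_k$ across four regions with six matching coefficients $c_1,\dots,c_6$, solves the resulting linear system, and reads off the decay. Your approach instead constructs the two one-sided decaying solutions $u_\pm$ of $(k^2-\partial_v^2+8\mathbf 1_{[A,A']})u=0$, writes $g_k(v,\rho)=u_-(v\wedge\rho)\,u_+(v\vee\rho)/(-W)$, and isolates three cleanly separated inputs: pointwise upper bounds on $u_\pm$ (via the elementary $\cosh(\mu_k\ell)+\tfrac{|k|}{\mu_k}\sinh(\mu_k\ell)\le e^{\mu_k\ell}$), a lower bound on the constant Wronskian via $(\mu_k+|k|)^2-(\mu_k-|k|)^2=4|k|\mu_k$, and the partition $|(-\infty,m]\cap[A,A']|+|[m,M]\cap[A,A']|+|[M,\infty)\cap[A,A']|=A'-A$ which makes the exponent collapse to $-d$. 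I checked your computations: the expression for $u_-$ on $[A,A']$, the Wronskian formula $-W=\frac{e^{-|k|L}}{2\mu_k}[(\mu_k+|k|)^2e^{\mu_k L}-(\mu_k-|k|)^2e^{-\mu_k L}]$, and the lower bound $-W\ge 2|k|e^{(\mu_k-|k|)L}$ (using $e^{-\mu_kL}\le e^{\mu_kL}$) are all correct, and your positivity observation for $u_\pm$ (convexity where positive, since the potential is nonnegative) closes the one genuine gap you flagged. The upper bound $u_-(v)\le 4e^{|k|v+(\mu_k-|k|)|(-\infty,v]\cap[A,A']|}$ for $v>A'$ also goes through, using $\mu_k/|k|\le 3$ uniformly in $k$. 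What your method buys is that all three of the paper's cases are absorbed into a single formula, and the source of the $e^{-(\mu_k-|k|)d}$ gain — the three-way partition of $[A,A']$ — is structurally visible rather than emerging from coefficient bookkeeping; this would also generalize readily to a potential of the form $c^2\mathbf 1_{[A,A']}$ with $\mu_k^2=k^2+c^2$. What the paper's approach buys is that the explicit coefficients $c_j$ are immediately available if one later needs derivative bounds or more refined asymptotics (as the paper in fact does in Steps 2--3, though via a different energy argument).
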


\begin{proof}
The proof follows from slightly complicated but explicit calculations. By translation symmetry, we can assume that $A<A'=0$. We consider several cases, depending on the range of $\rho$. We can assume $k>0$, without loss of generality.

{\bf Case 1: $\rho\leq A$.}  Direct calculations show that for $v\in\R$,
\begin{equation}\label{Enh6}
\begin{split}
g_k(v,\rho)=&c_1e^{k(v-\rho)}{\bf 1}_{(-\infty,\rho]}(v)+\Big[c_2e^{k(v-\rho)}+c_3e^{-k(v-\rho)}\Big]{\bf 1}_{(\rho,A)}(v)\\
&+\Big[c_4e^{\mu_k v}+c_5e^{-\mu_k v}\Big]{\bf 1}_{[A,0]}(v)+c_6e^{-kv}{\bf 1}_{(0,\infty)}(v),
\end{split}
\end{equation}
where the coefficients $c_j, j\in\Z\cap[1,6]$ are determined by the requirement that $g_k(v,\rho)$ is $C^1$ except when $v=\rho$ where the derivative in $v$ has a jump of unit size.  More precise, we have
\begin{equation}\label{Enh6.1}
\begin{split}
&c_1=c_2+c_3, \quad k c_1-kc_2+kc_3=1, c_2\,e^{k(A-\rho)}+c_3\,e^{-k(A-\rho)}=c_4\,e^{\mu_kA}+c_5\,e^{-\mu_kA},\\
&c_2k\,e^{k(A-\rho)}-c_3k\,e^{-k(A-\rho)}=c_4\mu_k\,e^{\mu_k A}-c_5\mu_k\,e^{-\mu_kA},\\
&c_4+c_5=c_6,\quad c_4\mu_k-c_5\mu_k=-c_6k.
\end{split}
\end{equation}
Routine calculations then show that
\begin{equation}\label{Enh7}
\begin{split}
&|c_1|\lesssim \frac{1}{2k}, \quad c_2\approx -\frac{1}{k^3}e^{-2k(A-\rho)}, \quad c_3=\frac{1}{2k}, \quad c_4=\frac{\mu_k-k}{2\mu_k}c_6\\
&c_5=\frac{\mu_k+k}{2\mu_k}c_6,\quad c_6\approx\frac{1}{k}e^{k\rho}e^{(\mu_k-k)A}.
\end{split}
\end{equation}
The desired bounds \eqref{Enh3} follow from \eqref{Enh5} and \eqref{Enh6}-\eqref{Enh7} in this case.

{\bf Case 2: $\rho\in[A,0]$.}  We have by direct computation that for $v\in\R$,
\begin{equation}\label{Enh8}
\begin{split}
g_k(v,\rho)=&c_1e^{k(v-A)}{\bf 1}_{(-\infty,A]}(v)+\Big[c_2e^{\mu_k(v-\rho)}+c_3e^{-\mu_k(v-\rho)}\Big]{\bf 1}_{(A,\rho)}(v)\\
&+\Big[c_4e^{\mu_k(v-\rho)}+c_5e^{-\mu_k(v-\rho)}\Big]{\bf 1}_{[\rho, 0]}(v)+c_6e^{-kv}{\bf 1}_{(0,\infty)}(v),
\end{split}
\end{equation}
where similar to \eqref{Enh6.1} we have
\begin{equation}\label{Enh8.1}
\begin{split}
&c_1=c_2\,e^{\mu_k(A-\rho)}+c_3\,e^{-\mu_k(A-\rho)},\quad c_1k=c_2\mu_k\,e^{\mu_k(A-\rho)}-c_3\mu_k\,e^{-\mu_k(A-\rho)},\\
&c_2+c_3=c_4+c_5,\quad c_2\mu_k-c_3\mu_k-(c_4\mu_k-c_5\mu_k)=1, \\
&c_4e^{-\mu_k\rho}+c_5e^{\mu_k\rho}=c_6,\quad c_4\mu_ke^{-\mu_k\rho}-c_5\mu_ke^{\mu_k\rho}=-c_6k.
\end{split}
\end{equation}
Routine calculations show
\begin{equation}\label{Enh9}
\begin{split}
&c_1\approx e^{-2\mu_k\rho+\mu_kA}c_6,\quad c_2=\frac{\mu_k+k}{2\mu_k}e^{-\mu_k(A-\rho)}c_1\approx \frac{1}{\mu_k},\quad c_3=\frac{\mu_k-k}{2\mu_k}e^{\mu_k(A-\rho)}c_1,\\
&c_4=\frac{\mu_k-k}{2\mu_k}e^{\mu_k\rho}c_6,\quad c_5=\frac{\mu_k+k}{2\mu_k}e^{-\mu_k\rho}c_6\approx \frac{1}{\mu_k},\quad c_6\approx \frac{1}{k}e^{\mu_k\rho}.
\end{split}
\end{equation}
The desired bounds \eqref{Enh3} follow from \eqref{Enh5} and \eqref{Enh8}-\eqref{Enh9} in this case.

{\bf Case 3: $\rho\ge0$.} \eqref{Enh3} can be proved using similar calculations as in Case 1, or using symmetry of the bounds.

\end{proof}

We are now ready to prove a key estimate on the Green's function associated with the nonlocal potential $V_w$.
\begin{lemma}\label{Enh1}
Assume that $w\leq-5$. For $k\in\mathbb{Z}\backslash\{0\}$, let $\mathcal{G}_k^w(v,\rho)$ be the Green's function to the differential operator $k^2-\partial_v^2+V_w$ on $\mathbb{R}$, i.e., 
\begin{equation}\label{Enh2}
(k^2-\partial_v^2+V_w)\mathcal{G}_k^w(v,\rho)=\delta(v-\rho),
\end{equation}
for $v\in\R, \rho\in\R$. Set $m:=\min(\rho,v), M:=\max(\rho,v)$, $\mu_k:=\sqrt{k^2+8}$ and $d:=|[m,M]\cap[w,0]|$, then we have the following bounds
\begin{equation}\label{Enh3}
|\mathcal{G}_k^w(v,\rho)|\lesssim\frac{1}{|k|} e^{-|k||v-\rho|} e^{-(\mu_k-|k|) d}=\varpi_{k,w}(v,\rho)/|k|.
\end{equation}
In addition, we have the following bounds for the derivatives for $v,\rho\in\R$,
\begin{equation}\label{Enh3.5}
|\partial_v\mathcal{G}_k^w(v,\rho)|\lesssim \varpi_{k,w}(v,\rho),
\end{equation}
\begin{equation}\label{Enh3.51}
|\partial_{v\rho}\mathcal{G}_k^w(v,\rho)-\delta(v-\rho)|\lesssim |k|\varpi_{k,w}(v,\rho).
\end{equation}
\end{lemma}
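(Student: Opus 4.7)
The strategy is to represent $\mathcal{G}_k^w$ via the Wronskian of two privileged homogeneous solutions of $(k^2-\partial_v^2+V_w)u=0$ and to bound each of them by comparison with the exactly solvable step-potential case of Lemma \ref{POTT1}. The basic observation, obtained from \eqref{LAP100}--\eqref{LAP101}, is that the potential \eqref{Pot1} is a smoothed and slightly perturbed version of $8\cdot\mathbf{1}_{[w+2,\,0]}$: a direct computation using $\partial_vB(v)=(c_\ast/4)e^{2v}+O(e^{4v})$ and $D(v)=c_\ast+O(e^{2v})$ for $v<0$ gives
\begin{equation*}
V_w(v)=\frac{8}{1-e^{2(w-v)}}+O(e^{2v})\qquad\text{for }v\in[w+2,-1],
\end{equation*}
so that $V_w(v)=8+O(e^{2v}+e^{2(w-v)})$ throughout the bulk of $[w+3,-3]$, while $V_w\equiv 0$ on $(-\infty,w+1]$, $V_w(v)\lesssim e^{-6v}$ on $[0,\infty)$, and $V_w\geq 0$ everywhere.

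Next I would construct $u_-$ (resp.\ $u_+$) as the unique (up to scaling) solution decaying at $v\to-\infty$ (resp.\ $v\to+\infty$), normalized by $u_-(v)=e^{|k|v}$ on $(-\infty,w+1]$ and $u_+(v)=e^{-|k|v}$ past a large positive threshold. The key quantitative task is to show piece-by-piece, in the regions $(-\infty,w+1]$, $[w+1,0]$, and $[0,\infty)$, that $u_\pm$ track the corresponding branches of the exact solution from Lemma \ref{POTT1} with step potential $8\cdot\mathbf{1}_{[w+1,0]}$ up to multiplicative constants. Inside the bulk $[w+1,0]$, where $u_-''=(k^2+V_w)u_-$ with $k^2+V_w$ close to $\mu_k^2$, I would pass to the diagonal variables $u_-=A\,e^{\mu_k v}+B\,e^{-\mu_k v}$, derive first-order ODEs for $A$ and $B$ whose right-hand sides are controlled by $V_w-8$, and close via Gronwall. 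The positivity $V_w\geq 0$ together with an ODE maximum principle controls the growing mode and yields $u_-(v)\lesssim e^{|k|(w+1)+\mu_k(v-w-1)}$ on $[w+1,0]$, matching the decay structure in Lemma \ref{POTT1}; $u_+$ is treated analogously.

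With these bounds, write
\begin{equation*}
\mathcal{G}_k^w(v,\rho)=W^{-1}\bigl[u_-(v)u_+(\rho)\mathbf{1}_{v\leq \rho}+u_-(\rho)u_+(v)\mathbf{1}_{v>\rho}\bigr],\qquad W:=u_-u_+'-u_-'u_+,
\end{equation*}
and use $|W|\gtrsim|k|$ (verified at $v\to-\infty$, where the asymptotics of $u_\pm$ are explicit) to conclude \eqref{Enh3}. Differentiating in $v$ and using $|u_\pm'|\lesssim\mu_k|u_\pm|\lesssim|k||u_\pm|$ piecewise yields \eqref{Enh3.5}. For \eqref{Enh3.51}, noting that $\partial_v\mathcal{G}_k^w$ has a jump of $-1$ at $v=\rho$, differentiation in $\rho$ of the representation above produces a Dirac mass with coefficient $+1$ and a smooth remainder of the form $W^{-1}\bigl[u_-'(v)u_+'(\rho)\mathbf{1}_{v<\rho}+u_-'(\rho)u_+'(v)\mathbf{1}_{v>\rho}\bigr]$, which is bounded by $|k|\varpi_{k,w}(v,\rho)$.

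The principal technical obstacle is producing the pointwise bounds on $u_\pm$ with the sharp exponent $\mu_k-|k|$ rather than merely $|k|$. A naive Neumann series using the Green's function of Lemma \ref{POTT1} as the free resolvent does not converge uniformly in $|w|$, because $V_w-8\cdot\mathbf{1}_{[w+1,0]}$ is not small in $L^\infty$ near the endpoints $v=w+1$ and $v=0$. The correct approach is a matched asymptotic analysis in the diagonal variables $(A,B)$ combined with a Riccati/monotonicity argument for $u_-'/u_-$ exploiting the sign of $V_w$, so that on each transition layer the decaying mode can be controlled in terms of the growing mode and the sharp exponential rates are preserved.
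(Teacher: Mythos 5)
Your proof follows a genuinely different route from the paper's. The paper uses a two-step scheme: first, the pointwise inequality $V_w\ge V_A^{\ast\ast}:=\frac{e^{2v}D(v)}{B(v)-B(w)}\mathbf 1_{[w+A,-A]}\ge 0$ together with the maximum/comparison principle gives $\mathcal G_k^w\le G_A^{\ast\ast}$, which eliminates the transition layers near $v=w$ and $v=0$ from the start; second, $G_A^{\ast\ast}$ is compared to the \emph{exact} step-potential Green's function $G_A^\ast$ of Lemma \ref{POTT1} via the integral equation \eqref{Enh5.03}, and the fixed point closes because $|V_A^\ast-V_A^{\ast\ast}|\lesssim e^{-A/2}[e^{-|\rho-w|}+e^{-|\rho|}]$ can be made uniformly small in the weighted norm $\alpha=\sup\zeta_A|g_A|$ by taking $A$ large. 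You instead construct Jost-type solutions $u_\pm$ and go through the Wronskian representation, pushing the fundamental solutions directly through the transition layers. Both routes are sound; the paper's choice of inflating the cutoff to $[w+A,-A]$ is precisely what lets it avoid the layer analysis that you have to confront head on, and in exchange yours would (if completed) give slightly more information about the oscillation structure of the solution.

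Three places where your sketch needs to be tightened. First, the Wronskian lower bound $|W|\gtrsim|k|$ is asserted but not argued: since $W=2|k|\,c_1$ where $c_1=\lim_{v\to-\infty}e^{|k|v}u_+(v)$, one must show $c_1\gtrsim1$; this follows from running a Riccati/monotonicity argument on $-u_+'/u_+$ (using $V_w\ge0$ and integrating backward from a large positive endpoint one gets $u_+(v)\ge e^{-|k|v}$ as $v\to-\infty$), but this step is part of the content, not free. Second, your claim that ``positivity of $V_w$ together with an ODE maximum principle'' gives $u_-(v)\lesssim e^{|k|(w+1)+\mu_k(v-w-1)}$ is too quick: $V_w$ overshoots the value $8$ near both endpoints (indeed $V_w(v)=\frac{8}{1-e^{2(w-v)}}>8$ in the bulk), so the Riccati variable $\sigma=u_-'/u_-$ can exceed $\mu_k$; the upper bound relies on the additional fact that $\int(V_w-8)_+\,dv\lesssim1$ uniformly in $w$, which caps the excess of $\int\sigma$ over $\mu_k\cdot(\text{length})$. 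Third, the Gronwall in the raw diagonal variables $(A,B)$ with $u_-=Ae^{\mu_k v}+Be^{-\mu_k v}$ does not close because the source term $(V_w-8)e^{-2\mu_k v}B$ is exponentially large near $v=w+1$; one must renormalize, e.g.\ to $a=A$, $b=Be^{-2\mu_k v}$, so that $b$ acquires a damping $-2\mu_k b$ and both $a,b$ are comparable at the entry point $v=w+1$, which is essentially the matched-asymptotics normalization you allude to but do not carry out. With these points filled in, the argument would go through.
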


\begin{remark}\label{Enh4}
The improved decay given by the additional factor $e^{-(\mu_k-|k|)d}$ is at the heart of the vortex depletion phenomenon.
\end{remark}

\begin{proof}
We can assume that $k\ge1$ without loss of generality. Since $k\in\mathbb{Z}\backslash\{0\}$ and $w\leq-5$ are fixed throughout the proof, for the simplicity of notations, we suppress the dependence on $k, w$ of various quantities, when there is no possibility of confusion. 

{\bf Step 1: proof of bounds \eqref{Enh3}.} We can assume that $w\leq-5$ satisfies $|w|\gg1$, since otherwise the desired bounds \eqref{Enh3} follow from the fact that $V_w\ge0$ on $\R$ and the comparison principle.   Fix $A>10$ sufficiently large, to be determined later. Set for $w\leq-2A$,  $V_A^\ast(v):=8\mathbf{1}_{[w+A,-A]}(v)$ for $v\in\R$, and let $G^\ast_A(v,\rho)$ be the Green's function for $k^2-\partial_v^2+V_A^\ast$ on $\R$. Set also $V_A^{\ast\ast}(v):=\frac{e^{2v}D(v)}{B(v)-B(w)}\mathbf{1}_{[w+A,-A]}(v)$ for $v\in\R$, and let $G^{\ast\ast}_A(v,\rho)$ be the Green's function for $k^2-\partial_v^2+V_A^{\ast\ast}$ on $\R$. Then by the comparison principle, for $v, \rho\in\R$, 
\begin{equation}\label{Enh5}
\mathcal{G}_k^w(v,\rho)\leq G_A^{\ast\ast}(v,\rho).
\end{equation}
To prove the desired bounds \eqref{Enh3}, it suffices to prove for $v, \rho\in\R$ and $w\leq-2A$,
 \begin{equation}\label{Enh5.01}
G_A^{\ast\ast}(v,\rho)\lesssim_A\frac{1}{|k|} e^{-|k||v-\rho|} e^{-(\mu_k-|k|) d}.
\end{equation}
The main idea to prove \eqref{Enh5.01} is to compare $G_A^{\ast\ast}$ and $G_A^{\ast}$. Writing for $v,\rho\in\R$,
\begin{equation}\label{Enh5.02}
G_A^{\ast\ast}(v,\rho):=G_A^{\ast}(v,\rho)+g_A(v,\rho).
\end{equation}
Then $g_A$ satisfies for $v,\rho\in\R$,
\begin{equation}\label{Enh5.03}
\big(k^2-\partial_v^2+V_A^\ast(v)\big)g_A(v,\rho)=\big(V_A^\ast(v)-V_A^{\ast\ast}(v)\big)g_A(v,\rho)+\big(V_A^\ast(v)-V_A^{\ast\ast}(v)\big)G_A^{\ast}(v,\rho).
\end{equation}
We use the bounds for $\rho\in \R$, which follows from simple calculations in view of \eqref{LAP100}-\eqref{LAP101},
\begin{equation}\label{Enh5.04}
\big|V_A^\ast(\rho)-V_A^{\ast\ast}(\rho)\big|\lesssim e^{-A/2}\big[e^{-|\rho-w|}+e^{-|\rho|}\big].
\end{equation}
Denote for $v,\rho\in\R$ and $L_A:=\big|\big[\min\{\rho,v\}, \max\{v,\rho\}\big]\cap\big[w+A,-A\big]\big|$, the weight function
\begin{equation}\label{Enh5.05}
\zeta_{A}(v,\rho):=e^{k|v-\rho|}e^{(\mu_k-k)L_A}. 
\end{equation}
Then for $\sigma\in\R$, it is easy to check that
\begin{equation}\label{Enh5.06}
\zeta_A(v,\rho)\leq \zeta_A(v,\sigma)\zeta_A(\sigma,\rho).
\end{equation}
Let 
\begin{equation}\label{Enh5.07}
\alpha:=\sup_{v,\rho\in\R} \big[\zeta_A(v,\rho) |g_A(v,\rho)|\Big], 
\end{equation}
which is finite since $|g_A(v,\rho)|\lesssim (1/k)e^{-k|v-\rho|}$.
By Lemma \ref{POTT1}, we have for $v,\rho\in\R$,
\begin{equation}\label{Enh5.071}
k\zeta_A(v,\rho)G_A^{\ast}(v,\rho)\lesssim1.
\end{equation}
Using \eqref{Enh5.03}, \eqref{Enh5.04} and \eqref{Enh5.071} we can bound
\begin{equation}\label{Enh5.08}
\begin{split}
&k\zeta_A(v,\rho) |g_A(v,\rho)|\lesssim \int_\R k\zeta_A(v,\rho)G_A^{\ast}(v,\sigma)e^{-A/2}\big[e^{-|\sigma-w|/2}+e^{-|\sigma|/2}\big]\big(|g_A(\sigma,\rho)|+G_A^{\ast}(\sigma,\rho)\big)\,d\sigma\\
&\lesssim e^{-A/2}\alpha+e^{-A/2},
\end{split}
\end{equation}
which implies that
\begin{equation}\label{Enh5.09}
\alpha\lesssim e^{-A/2}\alpha+e^{-A/2}.
\end{equation}
Choosing $A$ large we obtain the desired bounds \eqref{Enh5.01} from \eqref{Enh5.02}, \eqref{Enh5.071} and \eqref{Enh5.09}.

{\bf Step 2: proof of the bounds \eqref{Enh3.5}.}
We now turn to the proof of \eqref{Enh3.5}. For $w\leq-10$, $v,\rho\in\R$, we note that the quantity
 $$\frac{1}{k} e^{-k|v-\rho|} e^{-(\mu_k-k) d}$$ varies at most by $O(1)$ over an interval of $v$ of size $O(1/k)$. Denote for fixed $v, \rho\in\R$,
$$I_v:=[v-2/k, v+2/k]\qquad {\rm and} \qquad\gamma:=\frac{1}{k} e^{-k|v-\rho|} e^{-(\mu_k-k) d}.$$
Fix a smooth cutoff function $\varphi\in C_0^\infty(I_v)$ such that $\varphi\equiv 1$ on $[v-1/k, v+1/k]$ and $|\partial_{v'}\varphi(v')|\leq 2k$ for all $v'\in\R$. We notice the bounds
\begin{equation}\label{Enh9.01}
\sup_{v'\in I_v}\mathcal{G}_k^w(v',\rho)\lesssim\gamma.
\end{equation}
Using the equation \eqref{Enh2} and the bounds \eqref{Enh3} we obtain for some $C_0\in(0,\infty)$ that
\begin{equation}\label{Enh9.1}
\begin{split}
&\int_\R k^2|\mathcal{G}_k^w(v',\rho)|^2\varphi^2(v')+|\partial_{v'}\mathcal{G}_k^w(v',\rho)|^2\varphi^2(v')\,dv'\\
&\lesssim 2\int_{\R}|\partial_{v'}\mathcal{G}_k^w(v',\rho)||\mathcal{G}_k^w(v',\rho)||\varphi(v')\partial_{v'}\varphi(v')|\,dv'+\varphi^2(\rho)/k\\
&\leq C_0 |k|\gamma^2+\frac{1}{2}\int_\R |\partial_{v'}\mathcal{G}_k^w(v',\rho)|\varphi^2(v')\,dv',
\end{split}
\end{equation}
which implies that 
\begin{equation}\label{Enh9.11}
\int_{v-1/k}^{v+1/k}\big|\partial_{v'}\mathcal{G}_k^w(v',\rho)\big|^2\,dv'\lesssim k\gamma^2.
\end{equation}
Therefore there exists $v'\in [v-1/|k|, v+1/|k|]$ such that 
\begin{equation}\label{Enh9.2}
|\partial_{v'}\mathcal{G}_k^w(v',\rho)|\lesssim |k|\gamma.
\end{equation}
 The equation \eqref{Enh2} and the bounds \eqref{Enh3} imply that  
 \begin{equation}\label{Enh9.3}
\sup_{v''\in I_v, v''\neq \rho}|\partial^2_{v''}\mathcal{G}_k^w(v'',\rho)|\lesssim k^2\gamma.
\end{equation}
The desired bounds \eqref{Enh3.5} follow from \eqref{Enh9.2}-\eqref{Enh9.3}, together with the fact that at $v''=\rho$ the jump in the derivative of $\partial_v\mathcal{G}_k^w(v,\rho)$ is of unit size.

{\bf Step 3: proof of \eqref{Enh3.51}.} Recall that for $v,\rho\in\R$,
\begin{equation}\label{Green1}
(k^2-\partial_v^2)\mathcal{G}_k^w(v,\rho)+V_w(v)\mathcal{G}_k^w(v,\rho)=\delta(v-\rho).
\end{equation}
Thus for $v,\rho\in\R$,
\begin{equation}\label{Green2}
\begin{split}
&(k^2-\partial_v^2)\partial_v\mathcal{G}_k^w(v,\rho)+V_w(v)\partial_{v}\mathcal{G}_k^w(v,\rho)+\partial_vV_w(v)\mathcal{G}_k^w(v,\rho)=\partial_v\delta(v-\rho),\\
&(k^2-\partial_v^2)\partial_\rho\mathcal{G}_k^w(v,\rho)+V_w(v)\partial_{\rho}\mathcal{G}_k^w(v,\rho)=-\partial_v\delta(v-\rho).
\end{split}
\end{equation}
If we write
\begin{equation}\label{Green3}
\partial_v\mathcal{G}_k^w(v,\rho)=-\partial_\rho\mathcal{G}_k^w(v,\rho)+\mathcal{H}_k^w(v,\rho),
\end{equation}
then 
\begin{equation}\label{Green4}
(k^2-\partial_v^2)\mathcal{H}_k^w(v,\rho)+V_w(v)\mathcal{H}_k^w(v,\rho)=-\partial_vV_w(v)\mathcal{G}_k^w(v,\rho).
\end{equation}
Therefore 
\begin{equation}\label{Green5}
\mathcal{H}_k^w(v,\rho)=-\int_\R \mathcal{G}_k^w(v,\sigma)\partial_\sigma V_w(\sigma) \mathcal{G}_k^w(\sigma,\rho)\,d\sigma,
\end{equation}
and 
\begin{equation}\label{Green6}
\big|\partial_v\mathcal{H}_k^w(v,\rho)\big|=\Big|\int_\R \partial_v\mathcal{G}_k^w(v,\sigma)\partial_\sigma V_w(\sigma) \mathcal{G}_k^w(\sigma,\rho)\,d\sigma\Big|\lesssim k^{-1}e^{-k|v-\rho|-(\mu_k-k)d}.
\end{equation}
The desired bound \eqref{Enh3.51} follows from \eqref{Green1}, \eqref{Green3} and \eqref{Green6}.

\end{proof}



For later applications, we prove the following refined property for the Green's function $\mathcal{G}_k^w(v,\rho)$. 
\begin{proposition}\label{ENHQ1}
Assume that $k\in\Z\backslash\{0\}$ and $\mu_k:=\sqrt{k^2+8}$. Fix $\Psi_1\in C_0^\infty(-8,8)$ with $\Psi_1\equiv1$ on $[-1, 1]$ and $\sup_{\xi\in\R}\big|e^{\langle\xi\rangle^{7/8}}\widehat{\,\,\Psi_1\,\,}(\xi)\big|\lesssim1$. Let $w_{\ast}\in(-\infty,-15]$. Define for $v,\rho\in\R$ and $w\leq-5$,
\begin{equation}\label{ENHQ2}
\mathcal{F}_k(v,\rho,w):=\mathcal{G}_k^w(v+w,\rho+w), 
\end{equation}
and for notational convenience (for this proposition only) define also for $v,\rho,w\in\R$,
\begin{equation}\label{ENH12.1}
\qquad Q_k^{w_\ast}(v,\rho,w):=\mathcal{F}_k(v,\rho,w)\Psi_1(w-w_\ast).
\end{equation}
Let
\begin{equation}\label{ENHQ3}
\widehat{\,\,Q_k^{w_\ast}}(v,\rho,\xi):=\int_\R Q_k^{w_\ast}(v,\rho,w) e^{-iw\xi}\,dw, \qquad{\rm for}\,\,\xi\in\R.
\end{equation}
Then we have the following bounds for all $k\in\Z\backslash\{0\}$, $v,\rho\in\R$, 
\begin{equation}\label{ENHQ4}
\sup_{\xi\in\R}\Big[e^{2\delta_0\langle\xi\rangle^{1/2}}\big|\widehat{\,\,Q_k^{w_\ast}}(v,\rho,\xi)\big|\Big]\lesssim |k|^{-1}\varpi_{\varkappa_k,w_\ast}(v+w_\ast,\rho+w_\ast)
\end{equation}
In addition, we also have the bounds on the derivatives for $v,\rho\in\R$,
\begin{equation}\label{ENHQ5}
\sup_{\xi\in\R}\Big[e^{2\delta_0\langle\xi\rangle^{1/2}}\big|\partial_v\widehat{\,\,Q_k^{w_\ast}}(v,\rho,\xi)\big|\Big]\lesssim \varpi_{\varkappa_k,w_\ast}(v+w_\ast,\rho+w_\ast).
\end{equation}
and
\begin{equation}\label{ENHQ6}
\sup_{\xi\in\R}\Big[e^{2\delta_0\langle\xi\rangle^{1/2}}\Big|\partial_{v\rho}\widehat{\,\,Q_k^{w_\ast}}(v,\rho,\xi)-\delta(v-\rho)\widehat{\,\,\Psi_1}(\xi)\Big|\Big]\lesssim |k|\varpi_{\varkappa_k,w_\ast}(v+w_\ast,\rho+w_\ast).
\end{equation}
\end{proposition}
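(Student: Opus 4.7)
The first step is to rewrite the equation in coordinates where the potential has $w$-independent support. Setting $\tilde V_w(v) := V_w(v+w)$, the function $\mathcal F_k(v,\rho,w)$ satisfies
\begin{equation*}
(k^2 - \partial_v^2 + \tilde V_w(v))\,\mathcal F_k(v,\rho,w) = \delta(v - \rho), \qquad v,\rho \in \R.
\end{equation*}
Because $\mathbf 1_{[w+2,\infty)}(v+w) = \mathbf 1_{[2,\infty)}(v)$, the cutoff in $\tilde V_w$ is $w$-independent, so $\tilde V_w(v) = U(v+w,w)\,\chi(v)$ with $U(a,b) := e^{2a}D(a)/(B(a)-B(b))$ and $\chi := \mathbf 1_{[2,\infty)}\ast\Psi^\dagger$. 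In particular $\mathrm{supp}_v\,\tilde V_w \subseteq [1,\infty)$ uniformly in $w$. For $w \in \mathrm{supp}\,\Psi_1(\cdot - w_\ast)$ one has $d_w(v+w,\rho+w) = d_{w_\ast}(v+w_\ast,\rho+w_\ast) + O(1)$, so the pointwise estimates \eqref{Enh3}--\eqref{Enh3.51} already give the desired bounds on $|Q_k^{w_\ast}|$, $|\partial_v Q_k^{w_\ast}|$, and $|\partial_{v\rho}Q_k^{w_\ast} - \delta(v-\rho)\Psi_1(w-w_\ast)|$ at the pointwise $w$-level. The entire content of the proposition is the upgrade to the corresponding pointwise estimates on the $w$-Fourier transform weighted by $e^{2\delta_0\langle\xi\rangle^{1/2}}$.

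\textbf{Gevrey regularity of the shifted potential in $w$.} The next step is to show that for each fixed $v$, the map $w \mapsto \tilde V_w(v)$ lies in $\widetilde{\mathcal G}^{1/2}_{M_0}(\{|w-w_\ast|\le 10\})$ with $M_0$ depending only on the background flow (and in particular independent of $w_\ast$). Since $\chi$ is $w$-independent, this reduces to Gevrey-$1/2$ control of $w \mapsto U(v+w,w)$. The bounds \eqref{LAP100}--\eqref{LAP101} express that $B$ and $D$ are Gevrey-$1/2$ functions, and for $v \ge 1$ the strict monotonicity of $B$ gives a positive lower bound for $|B(v+w)-B(w)|$ uniform in $w$ near $w_\ast$. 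Hence Lemma \ref{GPF}(i)(ii) yields joint Gevrey-$1/2$ regularity of $U(a,b)$ on the relevant domain, and composition with the linear map $w\mapsto(v+w,w)$ preserves it. The uniformity in $|w_\ast|$ is the delicate point; it rests on exploiting the cancellation of the factors of $e^{2w}$ in the numerator $e^{2(v+w)}D(v+w)$ and the denominator $B(v+w)-B(w)$, which is visible from the asymptotic expansion \eqref{LAP101}.

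\textbf{Commutator argument for the $w$-Gevrey bound.} Now apply the strategy of Section~\ref{sec:preliminaries}. Multiply the shifted equation by $\Psi_1(w-w_\ast)$, and in the potential term insert a wider cutoff $\Psi_2(w-w_\ast)$ equal to $1$ on $\mathrm{supp}\,\Psi_1$, producing
\begin{equation*}
(k^2 - \partial_v^2)\,Q_k^{w_\ast}(v,\rho,w) + \bigl[\Psi_2(w-w_\ast)\tilde V_w(v)\bigr] Q_k^{w_\ast}(v,\rho,w) = \delta(v-\rho)\,\Psi_1(w-w_\ast).
\end{equation*}
Take the Fourier transform in $w$ and apply the multiplier $A(\xi) = e^{2\delta_0\langle\xi\rangle^{1/2}}$, regularised as in \eqref{P1}--\eqref{P2}. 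Multiplication by $\Psi_2(w-w_\ast)\tilde V_w(v)$ becomes convolution in $\xi$, and the commutator with $A$ is controlled via the standard inequality $|A(\xi)-A(\beta)| \lesssim \langle\xi-\beta\rangle^{1/2}\bigl[A(\xi)+A(\beta)\bigr]$ together with the Gevrey decay $|\widehat{\Psi_2\tilde V_\cdot(v)}(\xi)| \lesssim e^{-\mu\langle\xi\rangle^{1/2}}$ supplied by Step~2 (via Lemma \ref{lm:Gevrey}(i)), provided $\delta_0 \ll \mu$. Using the nonnegativity of $\tilde V_w$, an $L^2(v)$ energy estimate gives the weighted bound
\begin{equation*}
\bigl\|(k^2 + \xi_v^2)\,A(\xi)\,\widehat{Q_k^{w_\ast}}(\cdot,\rho,\xi)\bigr\|_{L^2_v} \lesssim \varpi_{\varkappa_k,w_\ast}(\rho+w_\ast,0),
\end{equation*}
and the elliptic regularisation of Lemma \ref{EllipReg} together with the $w$-pointwise bounds of Step~1 then upgrade this to the pointwise estimate \eqref{ENHQ4}. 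The derivative bounds \eqref{ENHQ5}--\eqref{ENHQ6} follow by differentiating the shifted equation in $v$ (and in $\rho$) and repeating the commutator argument on $\partial_v\mathcal F_k$ and on $\mathcal H_k^w$ as in \eqref{Green3}--\eqref{Green6}; the delta source $\delta(v-\rho)\widehat{\Psi_1}(\xi)$ in \eqref{ENHQ6} then appears precisely from the identity $\partial_{v\rho}\mathcal F_k = \delta(v-\rho) + (\text{regular part})$ implicit in \eqref{Enh3.51}.

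\textbf{Main obstacle.} The central difficulty is Step~2: establishing the Gevrey-$1/2$ control of $\tilde V_w(v)$ in $w$ with constants \emph{uniform} in the large parameter $|w_\ast|$, while retaining enough structure to let the commutator bootstrap of Step~3 preserve the enhanced decay factor $e^{-(\mu_{\varkappa_k}-\varkappa_k)\,d_{w_\ast}}$ inherited from Lemma \ref{Enh1}. Since $\tilde V_w$ is essentially equal to $8$ on the long interval $[2,|w_\ast|]$, naive estimates would lose exponential factors in $|w_\ast|$; it is the asymptotic cancellation captured by \eqref{LAP101} that makes the Gevrey bound uniform, and it is this uniformity that encodes the vortex-depletion phenomenon at the level of the Gevrey estimate.
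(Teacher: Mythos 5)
Your overall framework is aligned with the paper's: shift to the stationary-support coordinates $(v,\rho,w)\mapsto(v+w,\rho+w,w)$, apply the Gevrey multiplier $A(\xi)=e^{2\delta_0\langle\xi\rangle^{1/2}}$ in $w$, and control the commutator $[\widetilde V_w,A]$ to close a bootstrap. However, there are two genuine gaps, one structural and one that is acknowledged but not resolved.

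\textbf{The energy-estimate route does not produce the weighted decay.} You propose to obtain the bound from ``an $L^2(v)$ energy estimate.'' An energy pairing in $v$ (multiply by $A[Q_k^{w_\ast}]$ and integrate) yields an unweighted bound $\|A[Q_k^{w_\ast}(\cdot,\rho,\cdot)](w)\|_{L^2_v}\lesssim\cdots$, and even after a Sobolev upgrade this gives no exponential decay in $|v-\rho|$ or through the long interval $[w,0]$. But \eqref{ENHQ4} is a \emph{pointwise} estimate carrying the full $\varpi_{\varkappa_k,w_\ast}(v+w_\ast,\rho+w_\ast)$ weight, whose decay rate $e^{-|k||v-\rho|-(\mu_k-|k|)d_{w_\ast}}$ is precisely the content of vortex depletion and cannot be read off an energy pairing. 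The paper instead writes $A[Q_k^{w_\ast}]$ through the Green's function representation
\[
A[Q_k^{w_\ast}(v,\rho,\cdot)](w)=\mathcal{G}_k^w(v+w,\rho+w)A[\Psi_1(\cdot-w_\ast)](w)+\int_\R\mathcal{G}_k^w(v+w,\sigma+w)\{\cdots\}\,d\sigma,
\]
so that the $\varpi$-decay of $\mathcal{G}_k^w$ from Lemma~\ref{Enh1} is transported term by term. That structure is essential; an $L^2$-energy estimate would have to be reweighted, and the commutator of $\partial_v^2$ with the weight $\varpi$ then reintroduces the very term you were trying to control.

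\textbf{The commutator estimate on the long interval is not closed.} You claim in Step 2 that $w\mapsto\widetilde V_w(v)$ is Gevrey-$1/2$ uniformly and, in your ``Main obstacle'' paragraph, correctly identify that naively the commutator contribution over the interval $\sigma\in[2,|w_\ast|]$ where $\widetilde V_w\approx 8$ would pick up a factor $|w_\ast|$. But you leave this unresolved, gesturing only at ``the asymptotic cancellation captured by \eqref{LAP101}.'' The paper's actual mechanism is the decomposition in \eqref{ENHQ11.1}--\eqref{ENHQ11.2}: for $\sigma$ on the long interval one must replace $V_w(\sigma+w)$ by $V_w(\sigma+w)-8$ before estimating (the constant 8 commutes exactly with $A$), and the resulting bound \eqref{ENHQ13} gives $\sup_\xi|e^{3\delta_0\langle\xi\rangle^{1/2}}\widehat{V_{\sigma,w_\ast}}(\xi)|\lesssim e^{-|\sigma|}+e^{-|\sigma+w_\ast|}$. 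It is this \emph{exponential decay in $\sigma$} of the commutator kernel—not merely uniform boundedness of the Gevrey norm—that makes the $\sigma$-integral in \eqref{ENHQ12} $\lesssim\gamma M$ and closes the bootstrap. Without the explicit subtraction of the constant, the Gevrey norm of $w\mapsto\widetilde V_w(\sigma)$ stays $O(1)$ across the whole long interval and the bound degenerates by a factor of $|w_\ast|$. (Similarly, the lower bound on $|B(v+w)-B(w)|$ that you invoke is not uniform in $v$ near the support boundary; only the ratio $e^{2(v+w)}D(v+w)/(B(v+w)-B(w))$ is uniformly controlled, as you note parenthetically.)

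In short, the high-level commutator strategy is right, but the two pieces that make the argument work—the Green's function propagation of the $\varpi$-weight, and the exponentially $\sigma$-decaying decomposition of the potential obtained by subtracting the constant 8 on the long interval—are either replaced by a weaker mechanism or left as an acknowledged gap.
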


\begin{proof}
The main idea is to use the pointwise bounds in lemma \ref{Enh1} and the commutator argument. Since the parameter $w_\ast$ is fixed throughout the proof, for the simplicity of notations, we set $Q_k(v,\rho, w):=Q_k^{w_\ast}(v,\rho,w)$. By \eqref{Enh3} we have
\begin{equation}\label{ENHQ6.1}
\big\|Q_k(v,\rho,w)\big\|_{L^2(w\in\R)}\lesssim |k|^{-1}\varpi_{\varkappa_k,w_\ast}(v+w_\ast,\rho+w_\ast).
\end{equation}
Let $A$ be the Fourier multiplier operator such that for any $h\in L^2(\R)$,
\begin{equation}\label{ENHQ7}
\widehat{\,\,Ah\,\,}(\xi):=e^{2\delta_0\langle\xi\rangle^{1/2}}\widehat{\,\,h\,\,}(\xi), \qquad{\rm for}\,\,\xi\in\R.
\end{equation}
It follows from the definition \eqref{Enh3} that for $v,\rho, w\in\R$, 
\begin{equation}\label{ENHQ8}
(k^2-\partial_v^2)Q_k(v,\rho,w)+V_{w}(v+w)Q_k(v,\rho,w)=\delta(v-\rho)\Psi_1(w-w_\ast).
\end{equation}
Applying the operator $A$ (in the variable $w$) to \eqref{ENHQ9}, we obtain that
\begin{equation}\label{ENHQ9}
\begin{split}
&(k^2-\partial_v^2)A\big[Q_k(v,\rho,\cdot)\big](w)+V_{w}(v+w)A\big[Q_k(v,\rho,\cdot)\big](w)\\
&=\delta(v-\rho)A\big[\Psi_1(\cdot-w_\ast)\big](w)+V_{w}(v+w)A\big[ Q_k(v,\rho,\cdot)\big](w)-A\big[V_{\cdot}(v+\cdot)Q_k(v,\rho,\cdot)\big](w).
\end{split}\end{equation}
We can reformulate \eqref{ENHQ9} in the integral form
\begin{equation}\label{ENHQ10}
\begin{split}
&A\big[Q_k(v,\rho,\cdot)\big](w)=\mathcal{G}_k^w(v+w,\rho+w)A\big[\Psi_1(\cdot-w_\ast)\big](w)\\
&+\int_\R \mathcal{G}_k^w(v+w,\sigma+w) \bigg\{V_{w}(\sigma+w)A\big[Q_k(\sigma,\rho,\cdot)\big](w)-A\big[V_{\cdot}(\sigma+\cdot)Q_k(\sigma,\rho,\cdot)\big](w)\bigg\}\,d\sigma.
\end{split}
\end{equation}
Denote
\begin{equation}\label{ENHQ11}
M:=|k|\sup_{v,\rho\in\R}\Big[\left\|A\big[Q_k(v,\rho,\cdot)\big](w)\right\|_{L^2(w\in\R)}/(\varpi_{k,w_\ast}(v+w_\ast,\rho+w_\ast))\Big].
\end{equation}
Fix smooth cutoff functions $\Psi_2\in C_0^\infty(-17/2,17/2)$ with $\Psi_2\equiv1$ on $[-33/4, 33/4]$ and $$\sup_{\xi\in\R}\big|e^{\langle\xi\rangle^{7/8}}\widehat{\Psi_2}(\xi)\big|\lesssim1,$$ and $\Psi_3\in C_0^\infty(-9,9)$ with $\Psi_3\equiv[-35/4,35/4]$ and $\sup_{\xi\in\R}\big|e^{\langle\xi\rangle^{7/8}}\widehat{\,\Psi_3}(\xi)\big|\lesssim1$. We define 
\begin{equation}\label{ENHQ11.1}
V_{\sigma,w_\ast}(w):=V_{w}(\sigma+w)\Psi_3(w-w_\ast), \qquad {\rm if}\,\,\sigma\in\R\backslash[-1,-w_\ast+1]
\end{equation}
and 
\begin{equation}\label{ENHQ11.2}
V_{\sigma,w_\ast}(w):=\big[V_{w}(\sigma+w)-8\big]\Psi_3(w-w_\ast), \qquad {\rm if}\,\,\sigma\in[-1,-w_\ast+1].
\end{equation}
We have the bound (which follows from simple calculations in view of \eqref{LAP100}-\eqref{LAP101}) for $\sigma\in\R$,
\begin{equation}\label{ENHQ13}
\sup_{\xi\in\R}\big|e^{3\delta_0\langle\xi\rangle^{1/2}}\widehat{\,V_{\sigma,w_\ast}}(\xi)\big|\lesssim e^{-|\sigma|}+e^{-|\sigma+w_\ast|}.
\end{equation}
We observe that for $\sigma, \rho, w\in\R$,
\begin{equation}\label{ENHQ13.1}
\begin{split}
&\Psi_2(w-w_\ast)V_{w}(\sigma+w)A\big[Q_k(\sigma,\rho,\cdot)\big](w)-\Psi_2(w-w_\ast)A\big[V_{\cdot}(\sigma+\cdot)Q_k(\sigma,\rho,\cdot)\big](w)\\
&=\Psi_2(w-w_\ast)V_{\sigma,w_\ast}(w)A\big[Q_k(\sigma,\rho,\cdot)\big](w)-\Psi_2(w-w_\ast)A\big[V_{\sigma,w_\ast}(\cdot)Q_k(\sigma,\rho,\cdot)\big](w).
\end{split}
\end{equation}
It follows from \eqref{ENHQ13}-\eqref{ENHQ13.1} and the bound \eqref{ENHQ6.1} that for any $\gamma\in(0,1)$ and suitable $C_\gamma\in(0,\infty)$,
\begin{equation}\label{ENHQ12}
\begin{split}
&\left\|\Psi_2(w-w_\ast)V_{w}(\sigma+w)A\big[Q_k(\sigma,\rho,\cdot)\big](w)-\Psi_2(w-w_\ast)A\big[V_{\cdot}(\sigma+\cdot)Q_k(\sigma,\rho,\cdot)\big](w)\right\|_{L^2(w\in\R)}^2\\
&\lesssim\int_{\R}\bigg[\int_{\R^2}\big|\widehat{\,\,\,V_{\sigma,w_\ast}}(\alpha)\big|\big|\widehat{\,\,\Psi_2}(\beta)\big|\big|A(\xi-\alpha-\beta)-A(\xi-\beta)\big|\big|\widehat{\,\,Q_k}(\sigma,\rho,\xi-\alpha-\beta)\big|\,d\alpha d\beta\bigg]^2d\xi\\
&\lesssim \big[e^{-|\sigma|}+e^{-|\sigma+w_\ast|}\big] \int_{\R}\bigg[\int_{\R^2}e^{-\delta_0\langle\alpha\rangle^{1/2}-\langle\beta\rangle^{4/5}}\langle\xi-\alpha-\beta\rangle^{-1/2}\big|\widehat{\,\,Q_k}(v,\rho,\xi-\alpha-\beta)\big|\,d\alpha d\beta\bigg]^2d\xi \\
&\lesssim \big[e^{-|\sigma|}+e^{-|\sigma+w_\ast|}\big]\Big[C_{\gamma} \left\|Q_k(\sigma,\rho,w)\right\|_{L^2(w\in\R)}^2+\gamma\left\|A\big[Q_k(\sigma,\rho,\cdot)\big](w)\right\|_{L^2(w\in\R)}^2\Big]\\
&\lesssim \big[e^{-|\sigma|}+e^{-|\sigma+w_\ast|}\big]\Big[C_{\gamma} \big(\varpi_{\varkappa_k,w_\ast}(\sigma+w_\ast,\rho+w_\ast)/|k|\big)^2+\gamma\big(M\varpi_{\varkappa_k,w_\ast}(\sigma+w_\ast,\rho+w_\ast)/|k|\big)^2\Big].
\end{split}
\end{equation}
In the above, the parameter $\gamma$ is small and is used to divide the frequency to the cases $|\xi|>\gamma^{-1}$ and $|\xi|\leq\gamma^{-1}$, to obtain the crucial gain of the factor $\gamma$. It follows from \eqref{ENHQ12} that
\begin{equation}\label{ENHQQ1}
\begin{split}
&\Big\|\Psi_2(w-w_\ast)\int_\R \mathcal{G}_k^w(v+w,\sigma+w) \bigg\{V_{w}(\sigma+w)A\big[Q_k(\sigma,\rho,\cdot)\big](w)\\
&\hspace{2.3in}-A\big[V_{w}(\sigma+\cdot)Q_k(\sigma,\rho,\cdot)\big](w)\bigg\}\,d\sigma\Big\|_{L^2(w\in\R)}\\
&\lesssim C_{\gamma} \varpi_{\varkappa_k,w_\ast}(v+w_\ast,\rho+w_\ast)/|k|+\gamma^{1/2}M\varpi_{\varkappa_k,w_\ast}(v+w_\ast,\rho+w_\ast)/|k|.
\end{split}
\end{equation}

 We also have for all $\gamma\in(0,1)$ and suitable $C_\gamma\in(0,\infty)$,
\begin{equation}\label{ENHQ12.1}
\begin{split}
&\left\|A\big[Q_k(v,\rho,\cdot)\big](w)-\Psi_2(w-w_\ast)A\big[Q_k(v,\rho,\cdot)\big](w)\right\|_{L^2(w\in\R)}^2\\
&=\left\|A\big[\Psi_2(\cdot-w_\ast)Q_k(v,\rho,\cdot)\big](w)-\Psi_2(w-w_\ast)A\big[Q_k(v,\rho,\cdot)\big](w)\right\|_{L^2(w\in\R)}^2\\
&\lesssim \int_{\R} \bigg[\int_\R \big|\widehat{\,\,\Psi_2}(\alpha)\big| \big|A(\xi-\alpha)-A(\xi)\big| \big|\widehat{\,\,Q_k}(\sigma,\rho,\xi-\alpha)\big|\,d\alpha\bigg]^2\,d\xi\\
&\lesssim C_{\gamma} \big(\varpi_{\varkappa_k,w_\ast}(v+w_\ast,\rho+w_\ast)/|k|\big)^2+\gamma\left\|A\big[Q_k(v,\rho,\cdot)\big](w)\right\|^2_{L^2(w\in\R)},
\end{split}
\end{equation}
which together with \eqref{ENHQQ1}, upon choosing $\gamma\in(0,1)$ sufficiently small, implies that
\begin{equation}\label{ENHQ12.2}
\left\|A\big[Q_k(v,\rho,\cdot)\big](w)\right\|_{L^2(w\in\R)}\lesssim\left\|\Psi_2(w-w_\ast)A\big[Q_k(v,\rho,\cdot)\big](w)\right\|_{L^2(w\in\R)} +\frac{C_{\gamma}}{|k|} \varpi_{\varkappa_k,w_\ast}(v+w_\ast,\rho+w_\ast).
\end{equation}
 It follows from \eqref{ENHQ10}-\eqref{ENHQ12} and \eqref{ENHQ12.2} that for sufficiently small $\gamma\in(0,1)$ and suitable $C_\gamma\in(0,\infty)$,
\begin{equation}\label{ENHQ14}
\begin{split}
&\left\|A\big[Q_k(v,\rho,\cdot)\big](w)\right\|_{L^2(w\in\R)}\lesssim C_\gamma \varpi_{k,w_\ast}(v+w_\ast,\rho+w_\ast)/|k|+\gamma^{1/2} M \varpi_{\varkappa_k,w_\ast}(v+w_\ast,\rho+w_\ast)/|k|,
\end{split}
\end{equation}
which implies that
\begin{equation}\label{ENHQ15}
M\lesssim C_\gamma+\gamma^{1/2} M.
\end{equation}
Choosing $\gamma\in(0,1)$ sufficiently small, we conclude that 
\begin{equation}\label{ENHQ16}
M\lesssim 1.
\end{equation}
The desired bounds \eqref{ENHQ4} follow from \eqref{ENHQ16} by noting that $Q_k(v,\rho,w)=Q_k(v,\rho,w)\Psi_2(w-w_\ast)$. The bounds \eqref{ENHQ5} follow from taking derivative in \eqref{ENHQ10} and estimating the resulting expression. The bounds \eqref{ENHQ6} then follow from \eqref{Green3}-\eqref{Green5}, the equation \eqref{ENHQ8}, and the bounds \eqref{ENHQ4}. We omit the routine details.

\end{proof}

 \section{Spectrum of the linearized operator}\label{sec:spec}
 In this section we study the spectrum of $L_k$ for $k\in\Z\backslash\{0\}$.
 Our main result is the following characterization of the spectrum of $L_k$ for $k\in\Z\backslash\{0\}$. The result is not new, see for example \cite{Bed2} and references therein for more discussions and other aspects of the operator $L_k$. The simple proof below is based on the argument in \cite{Briggs}. 
\begin{proposition}\label{SPEC}
Assume that $k\in\Z\backslash\{0\}$. Recall the definition \eqref{spec1}-\eqref{sped1.1} for the space $X_k$. The spectrum of $L_k: X_k\to X_k$ is $[0, b(0)]$. For $|k|=1$, $\lambda=0$ is an embedded eigenvalue for $L_k$ with the corresponding eigenfunction $\Omega'(r), r\in\R^+$, and there are no other discrete eigenvalues. For $k\in\Z\backslash\{0\}$ and $|k|\neq1$, there are no discrete eigenvalues.
\end{proposition}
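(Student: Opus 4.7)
The plan is to combine Weyl's theorem for the essential spectrum with a sharp ground-state comparison for eigenvalues, pivoting on the explicit zero mode $\Omega'$ at $|k|=1$.

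\textbf{Step 1 (Essential spectrum).} I would first write $L_k = M_b + K_k$ with $M_b f = bf$ and $K_k f(r) = d(r)\int_0^\infty G_k(r,\rho)f(\rho)\,d\rho$. Self-adjointness of $L_k$ on $X_k$ reduces to the kernel symmetry $rG_k(r,\rho) = \rho G_k(\rho,r)$, which is manifest from the explicit formula \eqref{B5}. The rapid decay $d(r) = O(\langle r\rangle^{-8})$ from Assumption \ref{MainAs} together with the smoothing of $(-\Delta_k)^{-1}$ makes $K_k$ compact on $X_k$, so Weyl's theorem yields $\sigma_{\mathrm{ess}}(L_k) = \sigma_{\mathrm{ess}}(M_b) = \overline{b(\R^+)} = [0,b(0)]$ (using that $b$ is continuous with $b(0) > 0$ and $b(r) \to 0$ as $r \to \infty$).

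\textbf{Step 2 (Rayleigh reformulation).} Given $L_k\omega = \lambda\omega$ with $\omega \in X_k \setminus\{0\}$ (so $\lambda\in\R$ by self-adjointness), set $\psi = -\int G_k\omega\,d\rho$ and $\Psi(v) = \psi(e^v)$. Then $\Psi \in H^1(\R)$ solves
\begin{equation*}
(k^2 - \partial_v^2)\Psi + V_\lambda(v)\Psi = 0, \qquad V_\lambda(v) := \frac{e^{2v}D(v)}{B(v)-\lambda}.
\end{equation*}
Testing against $\bar\Psi$ gives the energy identity
\begin{equation*}
k^2\|\Psi\|_{L^2}^2 + \|\partial_v\Psi\|_{L^2}^2 + \int_\R V_\lambda|\Psi|^2\,dv = 0.
\end{equation*}

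\textbf{Step 3 (Eigenvalues outside $[0,b(0)]$).} For $\lambda > b(0)$, $D<0$ and $B-\lambda<0$ give $V_\lambda > 0$, so the energy identity has all non-negative terms summing to zero, forcing $\Psi\equiv 0$. For $\lambda \le 0$ I would exploit the explicit zero mode: using $(rU(r))' = r\Omega(r)$, a direct computation yields $\int_0^\infty G_1(r,\rho)\Omega'(\rho)\,d\rho = -U(r)$, whence $L_1\Omega'\equiv 0$; equivalently, $\Psi_0(v) := U(e^v) > 0$ satisfies $(-\partial_v^2 + V_0)\Psi_0 = -\Psi_0$ and lies in $L^2(\R)$ (from $U(r)\sim r$ near $0$ and $U(r)\sim C/r$ at infinity). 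Since $V_0$ is bounded and vanishes as $v\to\pm\infty$, the essential spectrum of $-\partial_v^2 + V_0$ is $[0,\infty)$; the existence of the positive $L^2$ eigenfunction $\Psi_0$ then identifies $-1$ as its simple ground-state eigenvalue. The elementary pointwise identity
\begin{equation*}
V_\lambda(v) - V_0(v) = \frac{\lambda\, e^{2v}D(v)}{B(v)(B(v)-\lambda)} > 0 \qquad \text{for } \lambda < 0
\end{equation*}
(using $D<0$, $B>0$, $B-\lambda>0$, $\lambda<0$) gives, for any $\Psi\not\equiv 0$,
\begin{equation*}
\langle\Psi,(k^2-\partial_v^2+V_\lambda)\Psi\rangle > k^2\|\Psi\|^2 + \langle\Psi,(-\partial_v^2+V_0)\Psi\rangle \ge (k^2-1)\|\Psi\|^2 \ge 0,
\end{equation*}
contradicting the energy identity for $\lambda < 0$ and $|k|\ge 1$.

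\textbf{Step 4 (Embedded eigenvalue at $\lambda=0$).} At $\lambda = 0$, the inequality $\langle\Psi,(k^2-\partial_v^2+V_0)\Psi\rangle \ge (k^2-1)\|\Psi\|^2$ is saturated exactly on $\mathrm{span}(\Psi_0)$. For $|k|\ge 2$ it gives $\ge 3\|\Psi\|^2>0$, ruling out any $\lambda=0$ eigenfunction; for $|k|=1$ the kernel is spanned by $\Psi_0$, giving the unique eigenfunction $\omega = D\Psi_0/B = \Omega'$.

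The main obstacle is recognizing that the explicit zero mode $\Psi_0 = U\circ\exp$ is the \emph{ground state} of $-\partial_v^2 + V_0$ with eigenvalue exactly $-1$; this sharp identification is what enables the subsequent min--max comparison. Once it is in place, the pointwise sign calculation $V_\lambda > V_0$ for $\lambda < 0$ gives all remaining cases almost automatically. Step~1 is routine given the explicit form of $G_k$ and the weight $r^2/|\Omega'|$ defining $X_k$.
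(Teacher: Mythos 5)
Your Steps 1--4 are correct as far as they go, and they take a route that is genuinely different from the paper's. The paper's proof is a Sturm--Liouville reformulation: given an eigenfunction $g$ with stream function $\varphi$, it substitutes $h = \varphi/(r(\lambda - U/r))$ and derives the divergence-form identity
\begin{equation*}
\frac{d}{dr}\bigl[r(\lambda r - U)^2 h'\bigr] + (1-k^2)\, r(\lambda - U/r)^2 h = 0,
\end{equation*}
which upon pairing with $\bar h$ gives an energy identity in which every term is sign-definite once $k^2 \geq 1$, uniformly in $\lambda$. Your approach instead works with $\Psi$ directly and replaces the sign-definiteness by a comparison $V_\lambda > V_0$ plus the identification of $-1$ as the ground-state eigenvalue of $-\partial_v^2 + V_0$, anchored on the explicit positive zero mode $\Psi_0 = U\circ\exp$. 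For $\lambda < 0$ and $\lambda > b(0)$ this is a clean and rather elegant argument, and the observation that $\Psi_0$ is the ground state is the nontrivial input.

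However, there is a genuine gap: you never treat $\lambda \in (0,b(0)]$, and your mechanism cannot be pushed there. For $\lambda \in (0,b(0))$ the potential $V_\lambda(v) = e^{2v}D(v)/(B(v)-\lambda)$ has a first-order pole at $v_0 := B^{-1}(\lambda)$. Even after using the vanishing $\Psi(v_0)=0$ (which follows from the imaginary part of the energy identity, as you note at the start of Step 2 implicitly), the quantity $V_\lambda|\Psi|^2 \sim c\,(v-v_0)$ near $v_0$ is bounded but changes sign, so the energy identity no longer produces a sign-definite integrand and the ground-state comparison $V_\lambda \geq V_0$ fails outright ($V_\lambda - V_0$ has a pole of indefinite sign at $v_0$). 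The endpoint $\lambda = b(0)$ is also untreated: there $B(v)-\lambda \to 0$ as $v\to -\infty$, and the paper handles it by a separate maximum-principle argument using that $\Omega'/(r(b-b(0))) > 0$. This is not just a cosmetic omission about the word ``discrete'': the limiting absorption principle (Lemma \ref{LAP10}, Case 1) passes to a limit and produces a putative eigenfunction with eigenvalue $b(r_0) \in (0,b(0))$, and the contradiction is exactly that Proposition \ref{SPEC} forbids eigenvalues there. The paper's Sturm--Liouville substitution is precisely what lets the same algebraic identity work across $\lambda \in (0,b(0))$ (Case 2) with $\varphi(r_0)=0$ ensuring $h$ is smooth; your reformulation would need an analogous regularization of $\Psi/(B-\lambda)$ to cover these values, which is essentially re-deriving the paper's change of dependent variable.
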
 

\begin{remark}
The discrete eigenvalue $\lambda=0$ for $L_k$ with $|k|=1$ is connected to the translation symmetry of the background flow, and can be treated by re-centering the radial vorticity profile, see the orthogonality condition \eqref{B3}.
\end{remark}
 
 \begin{proof}
 We can assume without loss of generality $k\in\Z\cap[1,\infty)$. Suppose $g\in X_k$ with $\|g\|_{X_k}=1$ is an eigenfunction corresponding to the eigenvalue $\lambda\in\R$. Writing for $r\in\R^+$,
 \begin{equation}\label{spec2}
 g(r)=\partial_r^2\varphi(r)+\frac{\partial_r\varphi(r)}{r}-k^2r^{-2}\varphi(r),
 \end{equation}
we obtain that for $r\in\R^+$,
 \begin{equation}\label{spec3}
 \Big(\frac{U(r)}{r}-\lambda\Big)\big(\partial_r^2\varphi+\frac{\partial_r\varphi}{r}-k^2r^{-2}\varphi\big)-\frac{\Omega'(r)}{r}\varphi=0.
 \end{equation}
 Note that 
 \begin{equation}\label{spec4}
 \begin{split}
 \varphi(r)=-\int_0^\infty G_k(r,\rho)g(\rho)\,d\rho=\int_0^\infty\frac{\rho}{2k}\min\Big\{\big(r/\rho\big)^k, (\rho/r)^k\Big\}g(\rho)\,d\rho.
 \end{split}
 \end{equation}
 It follows from \eqref{Back} and \eqref{spec4}, $\|g\|_{X_k}=1$ and Cauchy-Schwarz inequality that for $r\in\R^+$,
 \begin{equation}\label{spec5}
 |\varphi(r)|+|r\partial_r\varphi(r)|=O\Big(\frac{r\langle \log{r}\rangle}{\langle r\rangle^2}\Big).
 \end{equation}
 Define for $r\in\R^+$,
 \begin{equation}\label{spec6}
 h(r):=\frac{\varphi(r)}{r(\lambda-U(r)/r)}.
 \end{equation}

 We distinguish several cases.
 
 {\bf Case 1: $\lambda\not\in[0,b(0)]$.}
  Then from \eqref{spec3} and \eqref{spec5} we see that $h\in C^{\infty}(\R^+)$ and for $r\in\R^+$,
 \begin{equation}\label{spec6.1}
 |h(r)|\lesssim_\lambda \frac{\langle\log{r}\rangle}{\langle r\rangle^2}, \quad |h'(r)|\lesssim_\lambda \frac{\langle\log{r}\rangle}{r\langle r\rangle^2}.
 \end{equation}
 Direct calculations show
 \begin{equation}\label{spec7}
 \begin{split}
 \varphi(r)&=r(\lambda-U(r)/r)h(r)=(\lambda r-U(r))h(r),\\
\frac{\varphi'(r)}{r}&=\frac{\lambda-U'(r)}{r}h(r)+\big(\lambda-\frac{U(r)}{r}\big)h'(r),\\
\varphi''(r)&=-U''(r)h(r)+2(\lambda-U'(r))h'(r)+(\lambda r-U(r))h''(r).
 \end{split}
 \end{equation}
 Thus from \eqref{spec3} we obtain that
 \begin{equation}\label{spec8}
 \begin{split}
& (\lambda r-U(r))h''(r)+\Big[2(\lambda-U'(r))+\lambda-\frac{U(r)}{r}\Big]h'(r)\\
&\qquad+\Big[-U''(r)+\frac{\lambda-U'(r)}{r}-\frac{k^2(\lambda-U(r)/r)}{r}+\Omega'(r)\Big]h(r)=0.
 \end{split}
 \end{equation}
 Using the identity
 \begin{equation}\label{spec9}
 U'(r)+\frac{U(r)}{r}=\Omega(r),
 \end{equation}
 we obtain that
 \begin{equation}\label{spec10}
 U''(r)+\frac{U'(r)}{r}-\frac{U(r)}{r^2}=\Omega'(r).
 \end{equation}
 We can use \eqref{spec10} to simplify \eqref{spec8} as
 \begin{equation}\label{spec11}
 (\lambda r-U(r))h''(r)+\Big[2(\lambda -U'(r))+(\lambda-U(r)/r)\Big]h'(r)+\frac{1-k^2}{r}\big(\lambda-U(r)/r\big)h(r)=0,
 \end{equation}
 or equivalently
 \begin{equation}\label{spec12}
 \begin{split}
& r(\lambda r-U(r))^2h''(r)+\Big[(\lambda r-U(r))^2+2r(\lambda r-U(r))(\lambda-U'(r))\Big]h'(r)\\
&\qquad+(1-k^2) r(\lambda-U(r)/r)^2h(r)=0.
 \end{split}
 \end{equation}
 \eqref{spec12} can be reformulated as
 \begin{equation}\label{spec13}
 \frac{d}{dr}\Big[r(\lambda r-U(r))^2h'(r)\Big]+(1-k^2)r(\lambda-U(r)/r)^2h(r)=0.
 \end{equation}
 Multiplying $\overline{h}$ and integrating over $(0,\infty)$, using also \eqref{spec6.1} to treat the boundary terms, we obtain that
 \begin{equation}\label{spec14}
 \int_0^\infty r(\lambda r-U(r))^2|h'(r)|^2+(k^2-1)r(\lambda-U(r)/r)^2|h(r)|^2\,dr=0.
 \end{equation}
 Therefore, $h\equiv 0$ if $k\neq1$, a contradiction with the assumption that $\|g\|_{X_k}=1$. \eqref{spec14} also implies that $h\equiv C$ for some nonzero constant $C$ if $k=1$,  and thus $\varphi(r)=C(\lambda r-U)$. Consequently from \eqref{spec5} we have $\lambda=0$, a contradiction with $\lambda\not\in[0,b(0)]$. In summary, we conclude that if $\lambda\not\in[0,b(0)]$, then $\lambda$ cannot be a discrete eigenvalue of $L_k$.
 
 {\bf Case 2: $\lambda\in(0,b(0))$.} We can assume that $\lambda=b(r_0)=U(r_0)/r_0$ for some $r_0\in(0,\infty)$. From \eqref{spec3} and \eqref{Back}, it follows that $\varphi(r_0)=0$. Using \eqref{spec5} and \eqref{spec3}, we see also that $\varphi\in C^{\infty}(\R^+)$. It follows that $h\in C^\infty(\R^+)$ and the following bounds hold for $r\in\R^+$,
 \begin{equation}\label{sped15}
 |h(r)|\lesssim_\lambda \frac{\langle\log{r}\rangle}{\langle r\rangle^2}, \quad |h'(r)|\lesssim_\lambda\frac{\langle\log{r}\rangle}{r\langle r\rangle^2}.
 \end{equation}
 We can then use the same argument as in Case 1 to prove that $\lambda\in(0,b(0))$ can not be a discrete eigenvalue for $L_k$.
 
 {\bf Case 3: $\lambda=b(0)$.} In this case, we note from \eqref{spec3}, using \eqref{Back} and \eqref{Int0.1}, that for $r\in\R^+$
 \begin{equation}\label{spec16}
  \big(\partial_r^2\varphi+\frac{\partial_r\varphi}{r}-k^2r^{-2}\varphi\big)-\frac{\Omega'(r)}{r(b(r)-b(0))}\varphi=0,
  \end{equation}
 and 
 \begin{equation}\label{spec17}
 \frac{\Omega'(r)}{r(b(r)-b(0))}>0.
 \end{equation}
 By the maximum principle and the bounds \eqref{spec5}, we must have $\varphi\equiv0$, a contradiction with $\|g\|_{X_k}=1$. Therefore $\lambda=b(0)$ cannot be a discrete eigenvalue.
 
 {\bf Case 4: $\lambda=0$.} In this case, using \eqref{spec3} and \eqref{spec5} we obtain $h\in C^\infty(\R^+)$ and that for $r\in\R^+$,
 \begin{equation}\label{sped18}
 |h(r)|\lesssim_\lambda \langle\log{r}\rangle, \quad |h'(r)|\lesssim_\lambda\frac{\langle\log{r}\rangle}{r}.
 \end{equation}
 We can then repeat the argument in Case 1, and conclude that $k=1$. The corresponding eigenfunction $g$ for $\lambda=0$ can be computed from $\varphi=-U(r)$, using \eqref{Ur} and \eqref{spec2} as
 $$g=-\partial_r^2U-\frac{\partial_rU}{r}+k^2r^{-2}U=-\Omega'(r).$$
 
 Combining cases 1-4, we then complete the proof.
  \end{proof}

\section{The limiting absorption principle}\label{sec:lap}
In this section we study the spectral density functions $\Pi^+_{k,\epsilon}(v,w)$ and $\Pi^-_{k,\epsilon}(v,w), v, w\in\R$ for $k\in\Z\backslash\{0\}$, $\epsilon>0$ and sufficiently small. Recall from \eqref{B13} that $\Pi^\iota_{k,\epsilon}, \iota\in\{+,-\}$ satisfy the equation that for $v,w\in\R$,
\begin{equation}\label{LAP1}
(k^2-\partial_v^2)\Pi^\iota_{k,\epsilon}(v,w)+\frac{e^{2v}D(v)\Pi^\iota_{k,\epsilon}(v,w)}{B(v)-B(w)+i\iota\epsilon}=\frac{e^{2v}f_0^k(v)}{B(v)-B(w)+i\iota\epsilon}.
\end{equation}
To study \eqref{LAP1}, we distinguish two cases: $w\ge-20$ and $w\leq-10$. 

For $w\ge-20$, we can reformulate \eqref{LAP1} as
\begin{equation}\label{LAP1.1}
\Pi^\iota_{k,\epsilon}(v,w)+\frac{1}{2|k|}\int_\R e^{-|k||v-\rho|} \frac{e^{2\rho}D(\rho)\Pi^\iota_{k,\epsilon}(\rho,w)}{B(\rho)-B(w)+i\iota\epsilon}\,d\rho=\frac{1}{2|k|}\int_\R e^{-|k||v-\rho|}\frac{e^{2\rho}f_0^k(\rho)}{B(\rho)-B(w)+i\iota\epsilon}\,d\rho,
\end{equation}
and we can bound $\Pi_{k,\epsilon}^\iota$ by solving the integral equation using the spectral property proved in Proposition \ref{SPEC}.  

For $w\leq-10$, the situation is trickier, and it is important to notice that the potential 
$$\frac{e^{2v}D(v)}{B(v)-B(w)+i\iota\epsilon}\approx 8$$
for $v\in[w,0]$ which is a large interval for $|w|\gg1$. Therefore we need to incorporate part of the potential into the main term $k^2-\partial_v^2$, and, instead of using \eqref{LAP1.1}, we reformulate \eqref{LAP1} as
\begin{equation}\label{LAP1.2}
\begin{split}
&\Pi^\iota_{k,\epsilon}(v,w)+\int_\R \mathcal{G}_{k}^w(v,\rho) \Big[\frac{e^{2\rho}D(\rho)}{B(\rho)-B(w)+i\iota\epsilon}-V_w(\rho)\Big]\Pi_{k,\epsilon}^{\iota}(\rho,w)\,d\rho\\
&=\int_\R  \mathcal{G}_{k}^w(v,\rho)\frac{e^{2\rho}f_0^k(\rho)}{B(\rho)-B(w)+i\iota\epsilon}\,d\rho.
\end{split}
\end{equation}
 In the above we recall the definitions \eqref{Pot1} and \eqref{Enh2} for $V_w$ and $\mathcal{G}_k^w$. \eqref{LAP1.2} can be analyzed using the spectral property of $L_k$, see Proposition \ref{SPEC}, similar to the above, although the calculations are slightly more complicated.
 
\subsection{Limiting absorption principle for $w\ge-20$.} 
Define for $k\in\Z\backslash\{0\}$, and $k_\ast\in\Z$ with $1\leq|k_\ast|\leq|k|$,
\begin{equation}\label{LAP7}
Y_{k,k_\ast}:=\bigg\{h\in L^2(\R):\,\sup_{j\in\Z}\Big[\big\|e^{|k_\ast||v|}h\big\|_{L^2(j,j+2)}+|k|^{-1}\big\|e^{|k_\ast||v|}\partial_vh\big\|_{L^2(j,j+2)}\Big]<\infty\bigg\},
\end{equation}
with the natural norm
\begin{equation}\label{LAP8}
\|h\|_{Y_{k,k_\ast}}:=\sup_{j\in\Z}\Big[\big\|e^{|k_\ast||v|}h\big\|_{L^2(j,j+2)}+|k|^{-1}\big\|e^{|k_\ast||v|}\partial_vh\big\|_{L^2(j,j+2)}\Big].
\end{equation}
For $j\in\Z$, we also introduce for $h\in Y_{k,k_\ast}$, the norm
\begin{equation}\label{LAP8.1}
\|h\|_{Y_{k,k_\ast}^j}:=\big\|e^{|k_\ast||v|}h\big\|_{L^2(j,j+2)}+|k|^{-1}\big\|e^{|k_\ast||v|}\partial_vh\big\|_{L^2(j,j+2)}.
\end{equation}
Define for $k\in\Z\backslash\{0\}$, $k_\ast\in\Z$ with $1\leq|k_\ast|\leq|k|$ and $\epsilon\in[-1/8,1/8]\backslash\{0\}$ the operator $T_{k,\epsilon}^w: Y_{k,k_\ast}\to Y_{k,k_\ast}$ as follows. For any $h\in Y_{k,k_\ast}$,
\begin{equation}\label{LAP9}
T_{k,\epsilon}^wh(v):=\frac{1}{2|k|}\int_{\R} e^{-|k||v-\rho|}\frac{e^{2\rho}D(\rho)h(\rho)}{B(\rho)-B(w)+i\epsilon}\,d\rho.
\end{equation}

\begin{lemma}\label{LAP9.1}
Assume that $w\ge-20$, $k\in\Z\backslash\{0\}$ and $k_\ast\in\Z$ with $1\leq|k_\ast|\leq|k|$. There exists $C\in(0,\infty)$ which can be chosen independent of $w,k, k_\ast$, such that for $\epsilon\in[-e^{-2|w|},e^{-2|w|}]\backslash\{0\}$ we have the following bounds for all $h\in Y_{k,k_\ast}$,
\begin{equation}\label{LAP11.0}
\left\|T_{k,\epsilon}^wh\right\|_{Y_{k,k_\ast}}\leq C|k|^{-1/5}\sum_{l\in\Z}e^{-|\ell|}\|h\|_{Y_{k,k_\ast}^\ell}\leq C|k|^{-1/5}\|h\|_{Y_{k,k_\ast}};
\end{equation}
similarly, 
\begin{equation}\label{LAP11.0'}
\begin{split}
&\sup_{j\in\Z}\Big[\big\|e^{|k_\ast||v-w|}T_{k,\epsilon}^wh(v)\big\|_{L^2(j,j+2)}+|k|^{-1}\big\|e^{|k_\ast||v-w|}\partial_vT_{k,\epsilon}^wh(v)\big\|_{L^2(j,j+2)}\Big]\\
&\leq C|k|^{-1/5}\sum_{j\in\Z}e^{-|j|}\Big[\big\|e^{|k_\ast||v-w|}h\big\|_{L^2(j,j+2)}+|k|^{-1}\big\|e^{|k_\ast||v-w|}\partial_vh\big\|_{L^2(j,j+2)}\Big],
\end{split}
\end{equation}
and in addition, for $w\ge0$, $v_\ast\in[0,w]$, we have
\begin{equation}\label{LAP11.0''}
\begin{split}
&\big\|e^{|k_\ast||v-w|}T_{k,\epsilon}^wh(v)\big\|_{L^2(v_\ast,v_\ast+2)}+|k|^{-1}\big\|e^{|k_\ast||v-w|}\partial_vT_{k,\epsilon}^wh(v)\big\|_{L^2(v_\ast,v_\ast+2)}\Big]\\
&\leq C|k|^{-1/5}e^{-|v_\ast|}\sum_{j\in\Z}e^{-|j|}\Big[\big\|e^{|k_\ast||v-w|}h\big\|_{L^2(j,j+2)}+|k|^{-1}\big\|e^{|k_\ast||v-w|}\partial_vh\big\|_{L^2(j,j+2)}\Big].
\end{split}
\end{equation}

\end{lemma}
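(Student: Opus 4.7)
The operator $T_{k,\epsilon}^w$ is a convolution-like integral against the kernel $e^{-|k||v-\rho|}/(2|k|)$ (the Green's function of $k^2-\partial_v^2$) weighted by the multiplier $M_w(\rho):=e^{2\rho}D(\rho)/(B(\rho)-B(w)+i\epsilon)$. My plan is to decompose the $\rho$-integral over unit intervals $I_\ell:=(\ell,\ell+2)$ and establish a bound of the form $C|k|^{-1/5}e^{-|\ell|}\|h\|_{Y_{k,k_\ast}^\ell}$ for each strip's contribution to $\|T_{k,\epsilon}^w h\|_{Y_{k,k_\ast}^j}$, uniformly in $j$.

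The first step is a weight-transfer inequality. Since $|k_\ast|\leq|k|$ and $||v|-|\rho||\leq|v-\rho|$, we have $e^{|k_\ast||v|}e^{-|k||v-\rho|}\leq e^{|k_\ast||\rho|}e^{-(|k|-|k_\ast|)|v-\rho|}$, which moves the output weight onto $h$ and produces exactly the weighted $L^2$ that appears in $\|h\|_{Y^\ell}$. An analogous inequality using $|v-w|\leq|v-\rho|+|\rho-w|$ handles the weights in \eqref{LAP11.0'} and \eqref{LAP11.0''}. The spatial decay factor $e^{-|\ell|}$ comes directly from the estimate $\sup_{\rho\in I_\ell}|e^{2\rho}D(\rho)|\lesssim e^{-2|\ell|}$, which follows from the $(1+e^{2\rho})^{-4}$-behavior of $D$ recorded in \eqref{LAP100}; the factor $e^{-|\ell|}$ suffices after giving up an exponent to the summability in $\ell$.

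The heart of the proof is controlling the singularity of $M_w$ at $\rho=w$. For strips $I_\ell$ not meeting a fixed neighborhood of $w$, one has $|B(\rho)-B(w)|\gtrsim c$ (using $|B'|\gtrsim e^{-C}$ on $w\geq-20$), and Young's inequality exploiting the $L^1$-mass $|k|^{-1}$ of the kernel gives bounds stronger than $|k|^{-1/5}$. For the singular strip, introduce a splitting parameter $\lambda=|k|^{-a}$ and decompose $I_\ell=A\cup B$ with $A=\{|\rho-w|\leq\lambda\}$. On $A$, use Sobolev embedding $\|h\|_{L^\infty(I_\ell)}\lesssim|k|^{1/2}\|h\|_{Y^\ell}$ (combining both terms of the $Y^\ell$-norm) together with the logarithmic bound $\int_A d\rho/|B(\rho)-B(w)+i\epsilon|\lesssim|B'(w)|^{-1}\log(\lambda/|\epsilon|)$; on $B$, use $|B(\rho)-B(w)|\gtrsim|B'(w)|\lambda$ and apply Cauchy--Schwarz in $L^2$. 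Balancing these two contributions against the kernel-thinning by $|k|^{-1}$ and the Sobolev loss $|k|^{1/2}$, the optimal choice $a=1/5$ produces $|k|^{-1/5}$. The hypothesis $|\epsilon|\leq e^{-2|w|}$ combined with either the boundedness of $|w|$ (when $w\in[-20,0]$) or the rapid decay of $e^{2w}|D(w)|\lesssim e^{-6w}$ (when $w\gg 1$) absorbs the $w$-dependence coming from the logarithmic factor.

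For the derivative estimates, differentiating the exponential kernel in $v$ introduces a factor of $|k|$ that is exactly cancelled by the $|k|^{-1}$ in the $Y_{k,k_\ast}$-norm, so the derivative bound follows from the function bound. The $v-w$-weighted versions \eqref{LAP11.0'} and \eqref{LAP11.0''} follow by the same argument after applying the $|v-w|$-version of weight transfer; in \eqref{LAP11.0''}, the extra factor $e^{-|v_\ast|}$ for $v_\ast\in[0,w]$ is supplied by the fact that the kernel concentrates the $\rho$-integral near $v\approx v_\ast\geq 0$, where the coefficient $e^{2\rho}|D(\rho)|$ decays as $e^{-6v_\ast}\leq e^{-|v_\ast|}$. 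The main technical obstacle is obtaining the sharp $|k|^{-1/5}$ factor uniformly in the parameters $\epsilon$ and $w$: in particular, controlling the near-singularity logarithmic divergence requires simultaneously exploiting the $H^1$-information in $\|h\|_Y$ through Sobolev embedding and the smallness of the effective singular coefficient $e^{2w}|D(w)|$ in the large-$|w|$ regime, and then optimizing the near/far split in $\lambda$.
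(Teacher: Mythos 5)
Your high-level architecture—decompose near/far from the singularity $\rho=w$, use the $L^1$-mass $|k|^{-1}$ of the exponential kernel, transfer weights via $e^{|k_\ast||v|}e^{-|k||v-\rho|}\leq e^{|k_\ast||\rho|}e^{-(|k|-|k_\ast|)|v-\rho|}$, and extract the spatial factor $e^{-|\ell|}$ from the decay of $e^{2\rho}D(\rho)$—is sound and indeed parallels the paper's decomposition into $\mathcal{J}_1$ (near $w$, scale $k^{-1/2}$) and $\mathcal{J}_2$ (far). The derivative bounds also follow as you say.

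However, there is a genuine gap in your treatment of the near-singular set $A=\{|\rho-w|\leq\lambda\}$. You propose to bound the contribution there by $\|h\|_{L^\infty}\int_A d\rho/|B(\rho)-B(w)+i\epsilon|$ and invoke $\int_A d\rho/|B(\rho)-B(w)+i\epsilon|\lesssim |B'(w)|^{-1}\log(\lambda/|\epsilon|)$. This logarithm diverges as $\epsilon\to0+$, and the hypothesis $|\epsilon|\leq e^{-2|w|}$ provides no lower bound on $|\epsilon|$: for each fixed $w$, the bound must hold uniformly for all $\epsilon\neq 0$ in that window. (The constant $C$ in \eqref{LAP11.0} is exactly what is used downstream, e.g.\ in \eqref{B9}--\eqref{B14} and in section \ref{sec:sdf2}, when passing to the limit $\epsilon\to0+$.) The $w$-dependent decay $e^{-6w}$ you invoke is $\epsilon$-independent and cannot compensate for an $\epsilon$-dependent divergence; absorbing it would require $|\epsilon|\gtrsim$ some power of $e^{-w}$ from below, which is not assumed.

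The missing ingredient is cancellation across the singularity, which an $L^\infty$-bound with absolute values inside the integral cannot see. The paper's proof resolves this by integrating by parts: it writes $(B(\rho)-B(w)+i\epsilon)^{-1}=(B'(\rho))^{-1}\partial_\rho\log\frac{B(\rho)-B(w)+i\epsilon}{B'(w)}$, moves $\partial_\rho$ onto the (smooth) kernel times $h$, and is left with $\log\frac{B(\rho)-B(w)+i\epsilon}{B'(w)}$, which is bounded \emph{uniformly in $\epsilon$} by $\langle\log|\rho-w|\rangle$, locally in $L^p$ for every $p<\infty$; Cauchy--Schwarz over the shrinking interval $|\rho-w|\lesssim k^{-1/2}$ then yields a power of $k^{-1}$. (An equivalent fix in your framework: split $h(\rho)=h(w)+(h(\rho)-h(w))$, use Hölder continuity of $h$ supplied by the $H^1$ information in the $Y$-norm on the difference, and use the $\epsilon$-uniform bound on the \emph{signed} integral $\int_A(B(\rho)-B(w)+i\epsilon)^{-1}d\rho$ for the $h(w)$-term; but some exploitation of cancellation is essential, and your proposal as written omits it.)
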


\begin{proof}
We focus on \eqref{LAP11.0}, as the proof of \eqref{LAP11.0'}-\eqref{LAP11.0''} is similar and we will indicate at the end of the proof the additional details that are needed. 
We can assume that $k\ge1$ and $1\leq k_\ast\leq k$, without loss of generality. Choose a smooth cutoff function $\varphi\in C_0^\infty(-2,2)$ with $\varphi\equiv1$ on $[-1,1]$. Set $\varphi_k(v):=\varphi(\sqrt{k}\,v), v\in\R$. We can write 
\begin{equation}\label{LA1}
\begin{split}
T_{k,\epsilon}^wh(v)=\mathcal{J}_1h(v)+\mathcal{J}_2h(v)&:=\frac{1}{2k}\int_\R e^{-k|v-\rho|}\varphi_k(\rho-w)\frac{e^{2\rho}D(\rho)}{B(\rho)-B(w)+i\epsilon}h(\rho)\,d\rho\\
&+\frac{1}{2k}\int_\R e^{-k|v-\rho|}(1-\varphi_k(\rho-w))\frac{e^{2\rho}D(\rho)}{B(\rho)-B(w)+i\epsilon}h(\rho)\,d\rho
\end{split}
\end{equation}
In the above we have suppressed the dependence of $\mathcal{J}_1$ and $\mathcal{J}_2$ on $k, \epsilon$, for the simplicity of notations. We shall use this convention often, when there is no possibility of confusion. Setting $h^\ast(v)=e^{k_\ast|v|}h(v), v\in\R$, then for $j\in\Z$,
\begin{equation}\label{LA2}
\|h^\ast\|_{L^2(j,j+2)}+k^{-1}\|\partial_vh^\ast\|_{L^2(j,j+2)}\lesssim\|h\|_{Y_{k,k_\ast}^j}.
\end{equation}
Using 
\begin{equation}\label{LA2.1}
\begin{split}
&|B(\rho)-B(w)|\gtrsim \frac{1}{\sqrt{k}}\frac{1}{1+e^{2\rho}}, \,\,{\rm for}\,\,\rho<w-\frac{1}{\sqrt{k}}, \\
& |B(\rho)-B(w)|\gtrsim e^{-2w}/\sqrt{k},\,\,{\rm for}\,\,\rho>w+\frac{1}{\sqrt{k}},
\end{split}
\end{equation}
we can bound 
\begin{equation}\label{LA3}
\begin{split}
e^{|k_\ast||v|}\big|\mathcal{J}_2h(v)\big|&=\frac{1}{2k}\Big|\int_\R e^{-k|v-\rho|}e^{k_\ast|v|}(1-\varphi_k(\rho-w))\frac{e^{2\rho}D(\rho)}{B(\rho)-B(w)+i\epsilon}h(\rho)\,d\rho\Big|\\
&\lesssim\frac{1}{\sqrt{k}}\int_{-\infty}^{w-\frac{1}{\sqrt{k}}}\frac{e^{2\rho}}{1+e^{6\rho}}h^\ast(\rho)\,d\rho+\frac{1}{\sqrt{k}}\int_{w+\frac{1}{\sqrt{k}}}^{\infty}\frac{e^{2(\rho+w)}}{1+e^{8\rho}}h^\ast(\rho)\,d\rho.\end{split}
\end{equation}
Similarly we can bound 
\begin{equation}\label{LA3.5}
k^{-1}e^{|k_\ast||v|}\big|\partial_v\big[\mathcal{J}_2h(v)\big]\big|\lesssim \frac{1}{\sqrt{k}}\int_{-\infty}^{w-\frac{1}{\sqrt{k}}}\frac{e^{2\rho}}{1+e^{6\rho}}h^\ast(\rho)\,d\rho+\frac{1}{\sqrt{k}}\int_{w+\frac{1}{\sqrt{k}}}^{\infty}\frac{e^{2(\rho+w)}}{1+e^{8\rho}}h^\ast(\rho)\,d\rho.
\end{equation}
It follows from \eqref{LA3} and \eqref{LA3.5} that
\begin{equation}\label{LA3.6}
\big\|\mathcal{J}_2h\big\|_{Y_{k,k_\ast}}\lesssim \frac{1}{\sqrt{k}}\sum_{j\in\Z}e^{-|j|}\|h\|_{Y_{k,k_\ast}^j}.
\end{equation}
We can bound $\mathcal{J}_1h(v)$ as follows,
\begin{equation}\label{LA4}
\begin{split}
&e^{k_\ast|v|}\big|\mathcal{J}_1h(v)\big|=\frac{1}{2k}\Big|\int_\R e^{-k|v-\rho|}e^{k_\ast|v|}\varphi_k(\rho-w)\frac{e^{2\rho}D(\rho)}{B(\rho)-B(w)+i\epsilon}h(\rho)\,d\rho\Big|\\
&\lesssim \frac{1}{2k}\Big|\int_\R e^{-k|v-\rho|}e^{k_\ast|v|}\varphi_k(\rho-w)\frac{e^{2\rho}D(\rho)}{B'(\rho)}h(\rho)\partial_\rho \log{\frac{B(\rho)-B(w)+i\epsilon}{B'(w)}}\,d\rho\Big|\\
&\lesssim \frac{1}{2k}\Big|\int_\R \partial_\rho\Big[e^{-k|v-\rho|}e^{k_\ast|v|}\varphi_k(\rho-w)\frac{e^{2\rho}D(\rho)}{B'(\rho)}\Big]h(\rho)\log{\frac{B(\rho)-B(w)+i\epsilon}{B'(w)}}\,d\rho\Big|\\
&\quad+\frac{1}{2k}\Big|\int_\R e^{-k|v-\rho|}e^{k_\ast|v|}\varphi_k(\rho-w)\frac{e^{2\rho}D(\rho)}{B'(\rho)}h'(\rho)\log{\frac{B(\rho)-B(w)+i\epsilon}{B'(w)}}\,d\rho\Big|\\
&\lesssim\int_{|\rho-w|<2/\sqrt{k}}\frac{e^{4\rho}}{1+e^{8\rho}}\Big[|h^\ast(\rho)|+e^{k_\ast|\rho|}|h'(\rho)|/k \Big]\,\big\langle\log{|\rho-w|}\big\rangle\,d\rho.
\end{split}
\end{equation}
Similarly, using integration by parts, we have
\begin{equation}\label{LA7}
\begin{split}
&k^{-1}e^{k_\ast|v|}\big|\partial_v\big[\mathcal{J}_1h(v)\big]\big|=\frac{1}{2k^2}\Big|\int_\R \partial_ve^{-k|v-\rho|}e^{k_\ast|v|}\varphi_k(\rho-w)\frac{e^{2\rho}D(\rho)}{B(\rho)-B(w)+i\epsilon}h(\rho)\,d\rho\Big|\\
&\lesssim  \frac{1}{2k^2}\Big|\int_\R \partial_ve^{-k|v-\rho|}e^{k_\ast|v|}\varphi_k(\rho-w)\frac{e^{2\rho}D(\rho)}{B'(\rho)}h(\rho)\partial_\rho \log{\frac{B(\rho)-B(w)+i\epsilon}{B'(w)}}\,d\rho\Big|\\
&\lesssim k^{-1}\big|D(v)e^{4v}\varphi_k(v-w)h^\ast(v)\big|\big\langle\log{|v-w|}\big\rangle+\int_{|\rho-w|<2/\sqrt{k}}\frac{e^{4\rho}}{1+e^{8\rho}}|h^\ast(\rho)|\big\langle\log|\rho-w|\big\rangle\,d\rho\\
&\qquad+k^{-1}\int_{|\rho-w|<2/\sqrt{k}}\frac{e^{4\rho}}{1+e^{8\rho}}e^{k_\ast|\rho|}|\partial_\rho h(\rho)|\big\langle\log|\rho-w|\big\rangle\,d\rho.
\end{split}
\end{equation}
It follows from \eqref{LA4} and \eqref{LA7}, and Cauchy-Schwarz inequality that 
\begin{equation}\label{LA8}
\big\|\mathcal{J}_1h\big\|_{Y_{k,k_\ast}}\lesssim k^{-1/5} \sum_{j\in\Z}e^{-|j|}\|h\|_{Y_{k,k_\ast}^j}.
\end{equation}
Combining \eqref{LA3.6} and \eqref{LA8}, the proof of \eqref{LAP11.0} is then complete.

The proof of \eqref{LAP11.0'} is similar and we only need the point-wise inequalities for $v,\rho\in\R$ and $w\in[-20,\infty)$,
\begin{equation}\label{LA7.01}
e^{k_\ast|v-w|}e^{-k|v-\rho|}\leq e^{k_\ast|\rho-w|}.
\end{equation}
For \eqref{LAP11.0''} we only need the inequality for $w\ge0, v\in[0,w]$ and $\rho\in\R$,
\begin{equation}\label{LA7.02}
e^{k_\ast|v-w|}e^{-k|v-\rho|}\lesssim e^{-|v|}e^{|\rho|} e^{k_\ast|\rho-w|}.
\end{equation}
The proof is now complete.
\end{proof}

The following limiting absorption for $w\ge-20$ plays an essential role in the study of the spectral density functions.
\begin{lemma}\label{LAP10}
Assume that $w\ge-20$, $k\in\Z\backslash\{0\}$ and $k_\ast\in\Z$ with $1\leq|k_\ast|\leq|k|$. There exist $\epsilon_\ast>0$ sufficiently small, and $\kappa\in(0,\infty)$ which can be chosen independent of $w, k, k_\ast$, such that the following statement holds. Suppose that $\epsilon\in\R$ with $0<|\epsilon|<\epsilon_\ast e^{-2|w|}$, and $h\in Y_{k,k_\ast}$. If $|k|=1$, we assume in addition that $h$ satisfies
\begin{equation}\label{LAP11.1}
\lim_{j\to\infty}\Big[\big\|e^{v}h\big\|_{L^2(j,j+1)}+\big\|e^{v}\partial_vh\big\|_{L^2(j,j+1)}\Big]=0.
\end{equation}
Then we have
\begin{equation}\label{LAP11.5}
\left\|h+T_{k,\epsilon}^wh\right\|_{Y_{k,k_\ast}}\ge \kappa\|h\|_{Y_{k,k_\ast}},
\end{equation}
and similarly
\begin{equation}\label{LAP11.5'}
\begin{split}
&\sup_{j\in\Z}\Big[\big\|e^{|k_\ast||v-w|}\big(h+T_{k,\epsilon}^wh\big)(v)\big\|_{L^2(j,j+2)}+|k|^{-1}\big\|e^{|k_\ast||v-w|}\partial_v\big(h+T_{k,\epsilon}^wh\big)(v)\big\|_{L^2(j,j+2)}\Big]\\
&\ge \kappa \,\sup_{j\in\Z}\Big[\big\|e^{|k_\ast||v-w|}h\big\|_{L^2(j,j+2)}+|k|^{-1}\big\|e^{|k_\ast||v-w|}\partial_vh\big\|_{L^2(j,j+2)}\Big].
\end{split}
\end{equation}
\end{lemma}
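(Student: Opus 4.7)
The plan is to prove \eqref{LAP11.5} by a contradiction argument that couples the quasi-compactness provided by Lemma \ref{LAP9.1} with the absence of embedded eigenvalues from Proposition \ref{SPEC}; the second inequality \eqref{LAP11.5'} will follow by running exactly the same argument with the weight $e^{|k_*||v-w|}$ in place of $e^{|k_*||v|}$, using the companion bounds \eqref{LAP11.0'}--\eqref{LAP11.0''} in place of \eqref{LAP11.0}.

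The first step is to dispose of large $|k|$. From the operator bound $\|T_{k,\epsilon}^w\|_{Y_{k,k_*} \to Y_{k,k_*}} \leq C|k|^{-1/5}$ in \eqref{LAP11.0}, once $|k|$ exceeds some $k_0$ we have operator norm at most $1/2$, so the triangle inequality yields \eqref{LAP11.5} directly with $\kappa = 1/2$. It therefore remains to treat each of the finitely many small values $|k| \leq k_0$ separately. Fix such a $k$ and the associated $k_*$, and suppose, for contradiction, that there exist sequences $w_n \in [-20,\infty)$, $\epsilon_n$ with $0 < |\epsilon_n| \leq \epsilon_* e^{-2|w_n|}$, and $h_n \in Y_{k,k_*}$ of unit $Y$-norm such that $g_n := h_n + T_{k,\epsilon_n}^{w_n} h_n$ satisfies $\|g_n\|_Y \to 0$. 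Applying $k^2 - \partial_v^2$ to this identity yields the pointwise equation (away from $v = w_n$)
\begin{equation*}
(k^2 - \partial_v^2) h_n + \frac{e^{2v} D(v)}{B(v) - B(w_n) + i\epsilon_n}\, h_n = (k^2 - \partial_v^2) g_n.
\end{equation*}
Using the uniform $Y$-bound, interior elliptic regularity on any window bounded away from $w_n$ gives uniform $H^2_{\mathrm{loc}}$ control; after extracting subsequences, $h_n \to h_\infty$ strongly in $L^2_{\mathrm{loc}}$, $w_n \to w_\infty \in [-20,\infty]$, $\epsilon_n \to 0$, and $\operatorname{sgn}(\epsilon_n) \to \iota \in \{+,-\}$.

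Passing to the limit in the integral identity and invoking the Plemelj formula $\tfrac{1}{B(v)-B(w_\infty)\pm i0} = \mathrm{P.V.}\tfrac{1}{B(v)-B(w_\infty)} \mp i\pi\,\delta(B(v)-B(w_\infty))$, the limit $h_\infty$ solves
\begin{equation*}
(k^2-\partial_v^2)h_\infty + \mathrm{P.V.}\,\frac{e^{2v} D(v) h_\infty(v)}{B(v) - B(w_\infty)} = \pm\, i\pi\,\frac{e^{2w_\infty} D(w_\infty) h_\infty(w_\infty)}{|B'(w_\infty)|}\,\delta(v-w_\infty)
\end{equation*}
(with the $\delta$-term absent if $w_\infty = +\infty$). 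Taking imaginary parts forces $h_\infty(w_\infty) = 0$ whenever $w_\infty < \infty$, so $h_\infty$ is then a genuine eigenfunction of $L_k$ in $X_k$ at eigenvalue $\lambda = B(w_\infty)$, which is forbidden by Proposition \ref{SPEC} except in the limit case $|k|=1$ and $w_\infty = +\infty$. In that remaining case the limit would have to be a multiple of the function corresponding to $\Omega'(r)$, but the tail assumption \eqref{LAP11.1} is precisely designed to exclude that eigenfunction. This contradicts $h_\infty \not\equiv 0$, closing the argument.

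The main obstacle will be verifying that the extracted limit is actually nonzero, a delicate point because $\|\cdot\|_Y$ is a supremum over $j \in \Z$ and the maximizing window $j_n$ for $h_n$ could in principle drift to $\pm\infty$ or track $w_n$ to infinity. To handle this I would first pick $j_n$ with $\|h_n\|_{Y^{j_n}} \geq 1/2$ and split into cases: if $j_n$ stays bounded, a standard Rellich extraction on a fixed window gives the nontrivial $L^2_{\mathrm{loc}}$ limit immediately; if $|j_n| \to \infty$, one shifts $\tilde h_n(v) := h_n(v + j_n)$ and exploits the fact that the coefficient $e^{2\rho}D(\rho)$ in the kernel decays like $e^{2\rho}$ as $\rho \to -\infty$ and like $e^{-6\rho}$ as $\rho \to +\infty$, so the equation degenerates to $(k^2 - \partial_v^2)\tilde h_\infty = 0$, forcing the bounded decaying limit to vanish and contradicting the concentration. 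The subcase $w_n \to +\infty$ is the most delicate, as it is exactly the regime where the endpoint $\lambda = 0$ from Proposition \ref{SPEC} becomes relevant for $|k|=1$, and is where the hypothesis \eqref{LAP11.1} plays its essential role.
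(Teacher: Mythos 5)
Your overall strategy — contradiction argument, extract a limit by local compactness, pass to the limit via Plemelj, take imaginary parts to force $h_\infty(w_\infty)=0$, invoke Proposition \ref{SPEC}, and use \eqref{LAP11.1} to kill the $|k|=1$, $w_\infty=+\infty$ case — is exactly the paper's approach. But there are two genuine gaps in the way you propose to handle the compactness.

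First, your shift-and-degenerate argument for the case $|j_n|\to\infty$ does not work. If you shift without renormalizing, the weighted bound $\|h_n\|_{L^2(\ell,\ell+2)}\le e^{-k_\ast|\ell|}$ forces the shifted functions to go to zero on compacta, but so does the concentration mass $\|h_n\|_{Y^{j_n}}\gtrsim 1$, so you extract the trivial limit and there is nothing to contradict. If instead you renormalize by $e^{k_\ast|j_n|}$, the limit $\tilde h_\infty$ only inherits the bound $\|\tilde h_\infty\|_{L^2(m,m+2)}\lesssim e^{-k_\ast m}$, and the degenerate ODE $(k^2-\partial_v^2)\tilde h_\infty=0$ then admits the nontrivial solution $c\,e^{-kv}$, which is perfectly consistent with that bound whenever $k_\ast=|k|$ (the case $k_\ast=\varkappa_k=|k|$ does occur for small $|k|$). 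Moreover, after renormalization, the shifted operator $T$ picks up an $O(e^{(k_\ast-|k|)j_n})$ contribution from the region where $\rho$ is bounded, which is $O(1)$ for $k_\ast=|k|$, so the equation does not even obviously degenerate to the free one. The paper avoids all of this by reading off the summable kernel structure from \eqref{LAP11.0}: if all the $Y^\ell$-mass of $h_n$ were supported in $|\ell|>K$, then $\|T_{k,\epsilon}^{w}h_n\|_{Y_{k,k_\ast}}\lesssim \sum_{|\ell|>K}e^{-|\ell|}\lesssim e^{-K}$, hence $\|h_n\|_{Y_{k,k_\ast}}\le\|h_n+Th_n\|_{Y_{k,k_\ast}}+Ce^{-K}<1$ for $K$ large, a contradiction. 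This pins the concentration in a fixed bounded window and removes the need for any shift argument; you should argue this way instead.

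Second, the claim that \eqref{LAP11.5'} follows "by running exactly the same argument" with the weight $e^{|k_\ast||v-w|}$ overlooks a genuine additional difficulty: when $w_j\to+\infty$, the center of the weight drifts to infinity with $j$, so one cannot extract a nontrivial limit of $h_j$ directly. The paper explicitly renormalizes by setting $g_j(v):=e^{|k_\ast||w_j|}h_j(v)$, establishes the auxiliary bounds \eqref{LAPL4}, \eqref{LAP6}, and then applies a comparison-principle argument (using that $e^{2v}D(v)/B(v)\to 0$ as $v\to\infty$) to upgrade the growth control on the limit $g$ before concluding. That renormalization and the subsequent comparison argument are essential and must be supplied; they are not a cosmetic replacement of the weight in your argument for \eqref{LAP11.5}.
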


\begin{proof}
We first give the proof of \eqref{LAP11.5}. We can assume that $k\ge1$, $1\leq k_\ast\leq k$ and $\|h\|_{Y_{k,k_\ast}}=1$. 
 By \eqref{LAP11.0} we can assume that $1\leq k\leq L$ for some $L\ge1$, as the other case follows directly from \eqref{LAP11.0}. Suppose the bound does not hold, then there exist $k_0\in\Z\cap[1,L]$, $k_\ast\in\Z\cap[1,k_0]$, $h_j\in Y_{k_0, k_\ast}$, $\|h_j\|_{Y_{k_0, k_\ast}}=1$, $w_j\ge-20$, and $0\neq\epsilon_j\to0$, for $j\ge1$, such that 
\begin{equation}\label{LA9}
h_j(v)+\frac{1}{2k_0}\int_\R e^{-k_0|v-\rho|}\frac{e^{2\rho}D(\rho)}{B(\rho)-B(w_j)+i\epsilon_j}h_j(\rho)\,d\rho\to0,\end{equation}
in $Y_{k_0, k_\ast}$, as $j\to\infty$. We first note that for sufficiently large $K>1$,
\begin{equation}\label{LA10}
\limsup_{j\to\infty}\|h_j\|_{H^1(-K,K)}>0,
\end{equation}
which follows from \eqref{LAP11.0} and \eqref{LA9}. Using 
\begin{equation}\label{LA11}
\begin{split}
\partial_v\big[T_{k_0,\epsilon}^wh_j(v)\big]=&\frac{1}{2k_0}\int_\R\partial_ve^{-k_0|v-\rho|}\big(1-\varphi_{k_0}(\rho-w)\big)\frac{e^{2\rho}D(\rho)}{B(\rho)-B(w_j)+i\epsilon_j}h_j(\rho)\,d\rho\\
&+ \frac{1}{2k_0}\int_\R \partial_ve^{-k_0|v-\rho|}\varphi_{k_0}(\rho-w_j)\frac{e^{2\rho}D(\rho)}{B(\rho)-B(w_j)+i\epsilon_j}h_j(\rho)\,d\rho\\
:=&\mathcal{D}_1h_{j}(v)+\mathcal{D}_2h_{j}(v).
\end{split}
\end{equation}
For $\mathcal{D}_1h_{j}(v)$, we have
\begin{equation}\label{LA12}
\begin{split}
\partial_v \big[\mathcal{D}_1h_{j}(v)\big]=&-\big(1-\varphi_{k_0}(v-w_j)\big)\frac{e^{2v}D(v)h_j(v)}{B(v)-B(w_j)+i\epsilon}\\
&+\frac{k_0}{2}\int_\R e^{-k_0|v-\rho|}(1-\varphi_{k_0}(\rho-w_j))\frac{e^{2\rho}D(\rho)}{B(\rho)-B(w_j)+i\epsilon_j}h_j(\rho)\,d\rho,
\end{split}
\end{equation}
which can be bounded in $L^2(-K,K)$ uniformly in $j\ge1$, for any $K>1$. For the term $\mathcal{D}_2h_{j}(v)$, we have
\begin{equation}\label{LA13}
\begin{split}
\partial_v \big[&\mathcal{D}_2h_{j}(v)\big]= \frac{1}{2k_0}\partial_v\int_\R \partial_ve^{-k_0|v-\rho|}\varphi_{k_0}(\rho-w_j)\frac{e^{2\rho}D(\rho)}{B'(\rho)}h_j(\rho)\partial_\rho \log{\frac{B(\rho)-B(w_j)+i\epsilon_j}{B'(w_j)}}\,d\rho\\
&=-\varphi_{k_0}(v-w_j)\frac{e^{2v}D(v)}{B'(v)}h_j(v) \partial_v\Big[\log{\frac{B(v)-B(w_j)+i\epsilon_j}{B'(w_j)}}\Big]\\
&\quad-\frac{k_0}{2}\int_\R \partial_\rho\Big[e^{-k_0|v-\rho|}\varphi_{k_0}(\rho-w_j)\frac{e^{2\rho}D(\rho)}{B'(\rho)}h_j(\rho)\Big]  \log{\frac{B(\rho)-B(w_j)+i\epsilon_j}{B'(w_j)}}\,d\rho,
\end{split}
\end{equation}
and as a consequence 
$$\partial_v \Big[\mathcal{D}_2h_{j}(v)+\varphi_{k_0}(v-w_j)\frac{e^{2v}D(v)}{B'(v)}h_j(v) \log{\frac{B(v)-B(w_j)+i\epsilon_j}{B'(w_j)}}\Big]$$ 
can be bounded  $L^2(-K,K)$ uniformly in $j\ge1$, for any $K>1$. Therefore, we can pass to a subsequence and assume that $h_j \to h$ in $H^1_{{\rm loc}}(\R)$ for some $h\in Y_{k_0, k_\ast}(\R)$ with $h\not\equiv0$.

To reach a contradiction, we consider two cases.

{\bf Case 1: $w_j\to w_0\in[-20,\infty)$.} Letting $j\to\infty$, we obtain from \eqref{LA9} that
\begin{equation}\label{LA14}
h(v)+\frac{1}{2k_0}\lim_{j\to\infty}\int_\R e^{-k_0|v-\rho|}\frac{e^{2\rho}D(\rho)}{B(\rho)-B(w_0)+i\epsilon_j}h(\rho)\,d\rho=0,
\end{equation}
and hence
\begin{equation}\label{LA15}
(k_0^2-\partial_v^2)h+\lim_{j\to\infty}\frac{e^{2v}D(v)h(v)}{B(v)-B(w_0)+i\epsilon_j}=0.
\end{equation}
Multiplying $\overline{h}$ and integrating over $\R$, and taking the imaginary part, we get that $h(w_0)=0$. Therefore
\begin{equation}\label{LA16}
(k_0^2-\partial_v^2)h+\frac{e^{2v}D(v)h(v)}{B(v)-B(w_0)}=0.
\end{equation}
Set
\begin{equation}\label{LA17}
\phi(r)=h(v), \quad r=e^v \quad r_0=e^w, \qquad{\rm for}\,\,v, w\in\R. 
\end{equation}
\eqref{LA17} can be reformulated as 
\begin{equation}\label{LA18}
\big(-\partial_r^2-\frac{\partial_r}{r}+k_0^2\,r^{-2}\big)\phi+\frac{d(r)}{b(r)-b(r_0)}\phi=0, \quad{\rm for}\,\,r\in(0,\infty).
\end{equation}
By $\phi(r_0)=0$, standard elliptic regularity theory shows that $\phi\in C^\infty(0,\infty)$. Denote for $r\in(0,\infty)$
\begin{equation}\label{LA19}
g(r):=\big(-\partial_r^2-\frac{\partial_r}{r}+k_0^2\,r^{-2}\big)\phi(r)=-\frac{d(r)}{b(r)-b(r_0)}\phi(r).
\end{equation}
it follows from $h\in Y_{k_0,k_\ast}$ and $h(w_0)=0$ that 
\begin{equation}\label{LA20}
\int_0^\infty \frac{r^2}{|\Omega'(r)|}g^2(r)\,dr=\int_0^\infty \frac{r}{|d(r)|}g^2(r)\,dr=\int_0^\infty \frac{r |d(r)|}{|b(r)-b(r_0)|^2}\phi^2(r)\,dr<\infty.
\end{equation}
Therefore $g$ is an eigenfunction of $L_{k_0}$ with eigenvalue $\lambda=b(r_0)$, a contradiction.

{\bf Case 2: $w_j\to\infty$.} Letting $j\to\infty$, we obtain that for $v\in\R$,
\begin{equation}\label{LA21}
h(v)+\frac{1}{2k_0}\int_\R e^{-k_0|v-\rho|}\frac{D(\rho)}{B(\rho)}e^{2\rho}h(\rho)\,d\rho=0,
\end{equation}
or equivalently for $v\in\R$,
\begin{equation}\label{LA22}
(k_0^2-\partial_v^2)h(v)+\frac{e^{2v}D(v)h(v)}{B(v)}=0.
\end{equation}
Setting for $r=e^v, v\in\R$,
\begin{equation}\label{LA23}
\phi(r):=h(v),\quad g(r):=\big(-\partial_r^2-\frac{\partial_r}{r}+k_0^2r^{-2}\big)\phi(r)=-\frac{d(r)}{b(r)}\phi(r),
\end{equation}
then for $r\in\R^+$
\begin{equation}\label{LA24}
b(r)g(r)+d(r)\int_0^\infty G_{k_0}(r,\rho)g(\rho)\,d\rho=0.
\end{equation}
It follows from $h\in Y_{k_0, k_\ast}$ that $g\in X_{k_0}$. Therefore $g$ is an eigenfunction of $L_{k_0}$ corresponding to the eigenvalue $\lambda=0$. By Proposition \ref{SPEC} $k_0=1$ and $g=-\Omega'(r)$, which implies that $\phi(r)=U(r)\approx r^{-1}$ as $r\to\infty$. So $h(v)\approx e^{-v}$ as $v\to\infty$, a contradiction with \eqref{LAP11.1}.  The proof of \eqref{LAP11.5} is then complete.

We now turn to the proof of \eqref{LAP11.5'}. The general idea is the same as in the proof of \eqref{LAP11.5}, using a contradiction argument. However, we need more refined estimates to extract a nontrivial limit since the weight used in this case requires us to renormalize the solution by multiplying a large constant. We can assume that $k\ge1$, $1\leq k_\ast\leq k$ and 
$$\sup_{l\in\Z}\Big[\big\|e^{|k_\ast||v-w|}h\big\|_{L^2(l,l+2)}+|k|^{-1}\big\|e^{|k_\ast||v-w|}\partial_vh\big\|_{L^2(l,l+2)}\Big]=1.$$
 By \eqref{LAP11.0'} we can assume that $1\leq k\leq L$ for some $L\ge1$, as the other case follows directly from \eqref{LAP11.0'}. Suppose  \eqref{LAP11.5'} does not hold, then there exist $k_0\in\Z\cap[1,L]$, $k_\ast\in\Z\cap[1,k_0]$, and for $j\ge1$, $0\neq\epsilon_j\to0$, $w_j\ge-20$, and $h_j\in L^2(\R)$ satisfying
 $$\sup_{l\in\Z}\Big[\big\|e^{|k_\ast||v-w_j|}h_j\big\|_{L^2(l,l+2)}+|k_0|^{-1}\big\|e^{|k_\ast||v-w_j|}\partial_vh_j\big\|_{L^2(l,l+2)}\Big]=1,$$ 
 such that 
\begin{equation}\label{LAPL1}
\begin{split}
&\sup_{l\in\Z}\Big[\big\|e^{|k_\ast||v-w_j|}\big(h_j+T_{k_0,\epsilon_j}^{w_j}h_j\big)(v)\big\|_{L^2(l,l+2)}+|k_0|^{-1}\big\|e^{|k_\ast||v-w_j|}\partial_v\big(h_j+T_{k_0,\epsilon_j}^{w_j}h_j\big)(v)\big\|_{L^2(l,l+2)}\Big]\\
&\to 0+,\qquad {\rm as}\,\,j\to\infty.
\end{split}
\end{equation}
We first note, as a consequence of \eqref{LAP11.0'}, that for some sufficiently large $K>1$ and all large $j\gg1$,
\begin{equation}\label{LAPL2}
\limsup_{j\to\infty}\big\|e^{k_\ast|v-w_j|}h_j(v)\big\|_{H^1(-K,K)}\gtrsim1.
\end{equation}
It suffices to consider the case $w_j\to\infty$, as the other case follows from the same argument as in the proof of \eqref{LAP11.5}, case 1.
We write for $j\ge1$
\begin{equation}\label{LAPL3}
h_j=-T_{k_0,\epsilon_j}^{w_j}h_j+r_j,
\end{equation}
where the functions $r_j(v), v\in\R$ satisfy as $j\to\infty$
\begin{equation}\label{LAPL3.3}
\sup_{l\in\Z}\Big[\big\|e^{|k_\ast||v-w_j|}r_j(v)\big\|_{L^2(l,l+2)}+|k_0|^{-1}\big\|e^{|k_\ast||v-w_j|}\partial_vr_j(v)\big\|_{L^2(l,l+2)}\Big]\to0+.
\end{equation}
By \eqref{LAP11.0''}, we have for $v_\ast\in[0,w_j]$ that
\begin{equation}\label{LAPL4}
\Big[\big\|e^{|k_\ast||v-w_j|}T_{k_0,\epsilon_j}^{w_j}h_j(v)\big\|_{L^2(v_\ast,v_\ast+2)}+|k_0|^{-1}\big\|e^{|k_\ast||v-w_j|}\partial_vT_{k_0,\epsilon_j}^{w_j}h_j(v)\big\|_{L^2(v_\ast,v_\ast+2)}\Big]\lesssim e^{-|v_\ast|}.
\end{equation}
Therefore, defining for $v\in\R$,
\begin{equation}\label{LAPL5}
g_j(v):=e^{|k_\ast||w_j|}h_j(v),
\end{equation}
we can conclude that $g_j$ are bounded in $H^1_{\rm loc}(\R)$ uniformly in $j$. Using calculations similar to \eqref{LA11}-\eqref{LA13} to obtain compactness in $H^1_{\rm loc}$, by passing to a subsequence, we can assume $g_j\to g$ in $H^1_{\rm loc}$ as $j\to\infty$, for some 
$g\in H^1_{\rm loc}$ with $g\not\equiv0$ and for $v_\ast\in\R$
\begin{equation}\label{LAP6}
\Big[\big\|g\big\|_{L^2(v_\ast,v_\ast+2)}+|k_0|^{-1}\big\|\partial_vg\big\|_{L^2(v_\ast,v_\ast+2)}\Big]\lesssim e^{-|k_\ast||v_\ast|}\mathbf{1}_{v_\ast<0}+e^{(|k_\ast|-1)|v_\ast|}\mathbf{1}_{v_\ast>0}.
\end{equation}
It follows from \eqref{LAPL3}-\eqref{LAPL3.3} that for $v\in\R$,
\begin{equation}\label{LAP7}
(k_0^2-\partial_v^2)g(v)+\frac{e^{2v}D(v)}{B(v)}g(v)=0.
\end{equation}
By \eqref{LAP6} and the comparison principle, noting that $\frac{e^{2v}D(v)}{B(v)}\to0$ as $v\to\infty$, we conclude that if we choose $R>1$ sufficiently large, then for all $\sigma>0$ and $v\ge R$,
\begin{equation}\label{LAP7.01}
|g(v)|\leq \sigma\, e^{(|k_0|-1/2)v} +|g(R)|e^{-(|k_0|-1/2)(v-R)},
\end{equation}
which together with the bounds \eqref{LAP6} (for $v_\ast<0$) and the equation \eqref{LAP7} imply that for $v\in\R$,
\begin{equation}\label{LAP8}
\big|g(v)\big|+\big|\partial_vg(v)\big|\lesssim_{k_0} e^{-|k_0||v|},
\end{equation}
and we can obtain a contradiction as in the proof \eqref{LAP11.5}, Case 2. The proof of \eqref{LAP11.5'} is now complete.

\end{proof}

\begin{remark}\label{LA25}
We briefly explain the motivation behind the assumption \eqref{LAP11.1} for $|k|=1$.
Setting for $r,r_0\in(0,\infty)$,
\begin{equation}\label{LA26}
g_{k,\epsilon}^{\iota}(r,r_0):=\big(-\partial_r^2-\frac{\partial_r}{r}+k^2r^{-2}\big)\psi_{k,\epsilon}^{\iota}(r,r_0),
\end{equation}
then from \eqref{F7} we have for $r, r_0\in(0,\infty)$,
\begin{equation}\label{LA27}
\big[L_kg_{k,\epsilon}^{\iota}(\cdot,r_0)\big](r)+(i\iota\epsilon-b(r_0))g_{k,\epsilon}^{\iota}(r,r_0)=\omega_0^k(r).
\end{equation}
By the orthogonality of $\omega_0^k$ and $\Omega'(r)$ with respect to the $L^2\big(\R^+, \frac{r^2}{|\Omega'(r)|}dr\big)$ metric, see \eqref{B3},  and the fact that $L_k$ is self adjoint in this metric, we see that $g_{k,\epsilon}^{\iota}(r,r_0)$ is orthogonal to $\Omega'(r)$ in $X_k$, that is
\begin{equation}\label{LA28}
\int_0^\infty g_{k,\epsilon}^{\iota}(r,r_0)r^2\,dr=0,
\end{equation}
which implies that
\begin{equation}\label{LA29}
\psi_{k,\epsilon}^{\iota}(r,r_0)=o\big(1/r\big),\quad{\rm as}\,\,r\to\infty.
\end{equation}
Recall the relation $\Pi_{k,\epsilon}^\iota(v,w)=\psi_{k,\epsilon}^{\iota}(r,r_0)$, we see that 
\begin{equation}\label{30}
\lim_{v\to\infty}\big[e^v\Pi_{k,\epsilon}^\iota(v,w)\big]=0.\end{equation}
Hence we can apply Lemma \ref{LAP10} to obtain bounds on $\Pi_{k,\epsilon}^\iota(v,w)$ for $|k|=1$. 
\end{remark}

\subsection{The limiting absorption principle for $w\leq-10$.} This case is more involved since the lower order term is ``long range" and a non-local contribution has to be extracted in order to consider it perturbative. Define for $w\leq-10$, $k\in\Z\backslash\{0\}$, $k_\ast\in\Z$ with $1\leq|k_\ast|\leq|k|$, and $\epsilon\in[-e^{2w},e^{2w}]\backslash\{0\}$ the space
\begin{equation}\label{LAP22}
Y_{k,k_\ast,w}:=\Big\{h\in L^2(\R):\,\sup_{j\in\Z}\Big[\big\|\varrho_{k_\ast,w}(v) h(v)\big\|_{L^2(j,j+2)}+|k|^{-1}\big\|\varrho_{k_\ast,w}(v)\partial_vh(v)\big\|_{L^2(j,j+2)}\Big]<\infty\Big\},
\end{equation}
and the operator $S_{k,\epsilon}^w:Y_{k,k_\ast,w}\to Y_{k,k_\ast,w}$, for any $h\in Y_{k,k_\ast,w}$,
\begin{equation}\label{LAP23}
S_{k,\epsilon}^wh(v):=\int_\R \mathcal{G}_k^w(v,\rho)\Big[\frac{e^{2\rho} D(\rho)}{B(\rho)-B(w)+i\epsilon}-V_w(\rho)\Big]h(\rho)\,d\rho.
\end{equation} 
In the above we recall that
\begin{equation}\label{LAP24}
\varrho_{k_\ast,w}(v):=\zeta_{k_\ast,w}(v,0)=\frac{1}{\varpi_{k_\ast,w}(v,0)}.
\end{equation}
For $w\leq-10$, $k\in\Z\backslash\{0\}$ and $k_\ast\in\Z$ with $1\leq|k_\ast|\leq|k|$, we also define for $j\in\Z$ and $h\in Y_{k,k_\ast,w}$ the norm 
\begin{equation}\label{LAP24.11}
\|h\|_{Y_{k,k_\ast,w}^j}:=\big\|\varrho_{k_\ast,w}(v) h(v)\big\|_{L^2(j,j+2)}+|k|^{-1}\big\|\varrho_{k_\ast,w}(v)\partial_vh(v)\big\|_{L^2(j,j+2)}.
\end{equation}

For applications below we first establish the following bounds.
\begin{lemma}\label{LAO1}
 Suppose $w\leq-10$, $k\in\Z\backslash\{0\}$, $k_\ast\in\Z$ with $1\leq|k_\ast|\leq|k|$, and $\epsilon\in[-e^{2w},e^{2w}]\backslash\{0\}$. For $\rho, v\in\R$ with $|\rho-w|>1/\sqrt{|k|}$, we have the point-wise inequalities
\begin{equation}\label{LAP107}
\varrho_{k_\ast,w}(v) \mathcal{G}_k^w(v,\rho)\Big|\frac{e^{2\rho}D(\rho)}{B(\rho)-B(w)+i\epsilon}-V_w(\rho)\Big|\lesssim |k|^{-1/2}e^{-2|\rho-w|}\varrho_{k_\ast,w}(\rho),
\end{equation}\begin{equation}\label{LAP107.1}
\varrho_{k_\ast,w}(v) \big|\partial_v\mathcal{G}_k^w(v,\rho)\big|\Big|\frac{e^{2\rho}D(\rho)}{B(\rho)-B(w)+i\epsilon}-V_w(\rho)\Big|\lesssim |k|^{1/2}e^{-2|\rho-w|}\varrho_{k_\ast,w}(\rho),
\end{equation}
and for $\rho, v\in\R$ we have
\begin{equation}\label{LAP107.5}
\varrho_{k_\ast,w}(v)\mathcal{G}_k^w(v,\rho)\lesssim \varrho_{k_\ast,w}(\rho)/|k|, \qquad \varrho_{k_\ast,w}(v)\big|\partial_v\mathcal{G}_k^w(v,\rho)\big|\lesssim \varrho_{k_\ast,w}(\rho).
\end{equation}
We also record the useful estimate for $\rho>w+5$,
\begin{equation}\label{LAPL104}
\Big|\frac{e^{2\rho}D(\rho)}{B(\rho)-B(w)+i\epsilon}-V_w(\rho)\Big|\lesssim \frac{e^{-2\rho}\epsilon}{(1+e^\rho)^4}.
\end{equation}
\end{lemma}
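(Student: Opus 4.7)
The plan is to prove the four inequalities in the order \eqref{LAP107.5}, \eqref{LAPL104}, \eqref{LAP107}, \eqref{LAP107.1}: \eqref{LAP107.5} is a weighted version of the bare Green's function bounds of Lemma \ref{Enh1}; \eqref{LAPL104} is a direct computation using the explicit form of $V_w$ for large $\rho$; and \eqref{LAP107}, \eqref{LAP107.1} then follow by combining these ingredients with a short case analysis in the near-diagonal regime.

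For \eqref{LAP107.5}, I would reduce everything to the single weight transfer inequality
\begin{equation*}
\varrho_{k_\ast,w}(v)\,\varpi_{k,w}(v,\rho) \lesssim \varrho_{k_\ast,w}(\rho), \qquad v,\rho\in\R,
\end{equation*}
since by Lemma \ref{Enh1} we have $|\mathcal{G}_k^w(v,\rho)|\lesssim\varpi_{k,w}(v,\rho)/|k|$ and $|\partial_v\mathcal{G}_k^w(v,\rho)|\lesssim\varpi_{k,w}(v,\rho)$. Taking logs, this inequality reduces to three elementary facts: (a) the subadditivity $d_w(v,0)\leq d_w(v,\rho)+d_w(\rho,0)$, which follows from the set inclusion $[\min(v,0),\max(v,0)]\subseteq[\min(v,\rho),\max(v,\rho)]\cup[\min(\rho,0),\max(\rho,0)]$; (b) the monotonicity $\mu_{k_\ast}-|k_\ast|\geq\mu_k-|k|$ (since $x\mapsto\sqrt{x^2+8}-x$ is decreasing in $|x|$ and $|k_\ast|\leq|k|$); and (c) $d_w(v,\rho)\leq|v-\rho|$. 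A straightforward regrouping leaves a remainder $(\mu_{k_\ast}-\mu_k)|v-\rho|\leq 0$, so the inequality holds.

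For \eqref{LAPL104}, the condition $\rho>w+5>w+3$ forces $[\mathbf{1}_{[w+2,\infty)}\ast\Psi^\dagger](\rho)=1$, so $V_w(\rho)=e^{2\rho}D(\rho)/(B(\rho)-B(w))$ exactly and
\begin{equation*}
\frac{e^{2\rho}D(\rho)}{B(\rho)-B(w)+i\epsilon}-V_w(\rho) = \frac{-i\epsilon\, e^{2\rho}D(\rho)}{(B(\rho)-B(w)+i\epsilon)(B(\rho)-B(w))}.
\end{equation*}
Using $B(v)\approx 1/(1+e^{2v})$ from \eqref{LAP100}, the explicit identity $B(w)-B(\rho)=(e^{2\rho}-e^{2w})/[(1+e^{2w})(1+e^{2\rho})]$ yields $|B(\rho)-B(w)|\gtrsim e^{2\rho}/(1+e^{2\rho})$ for $\rho>w+5$, $w\leq-10$. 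Combined with $|D(\rho)|\lesssim(1+e^{2\rho})^{-4}$ from \eqref{LAP100} and the trivial $|B(\rho)-B(w)+i\epsilon|\geq|B(\rho)-B(w)|$, this gives the desired $|\epsilon|/(e^{2\rho}(1+e^{2\rho})^2)$, which matches $|\epsilon|\,e^{-2\rho}/(1+e^\rho)^4$ since $(1+e^{2\rho})^2$ and $(1+e^\rho)^4$ are comparable as $\rho\to\pm\infty$.

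Finally, in view of \eqref{LAP107.5}, the estimates \eqref{LAP107} and \eqref{LAP107.1} reduce to showing
\begin{equation*}
\Big|\frac{e^{2\rho}D(\rho)}{B(\rho)-B(w)+i\epsilon}-V_w(\rho)\Big|\lesssim|k|^{1/2}e^{-2|\rho-w|},\qquad|\rho-w|>1/\sqrt{|k|}.
\end{equation*}
I would split this into three regions: (i) $\rho<w-1/\sqrt{|k|}$, where $V_w(\rho)=0$ and the Taylor expansion gives $|B(\rho)-B(w)|\gtrsim e^{2w}(1-e^{2(\rho-w)})/2\gtrsim e^{2w}/\sqrt{|k|}$, whence $e^{2\rho}|D(\rho)|/|B(\rho)-B(w)|\lesssim\sqrt{|k|}\,e^{-2|\rho-w|}$; (ii) $\rho\in(w+1/\sqrt{|k|},w+5]$, where both $\rho,w$ are very negative and $e^{-2|\rho-w|}\gtrsim e^{-10}$, so the $i\epsilon$-contribution and the cutoff tail contribution are each controlled by $|B(\rho)-B(w)|\gtrsim (e^{2\rho}-e^{2w})/2\gtrsim e^{2w}/\sqrt{|k|}$; (iii) $\rho>w+5$, handled by \eqref{LAPL104} together with $|\epsilon|\leq e^{2w}$. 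The main obstacle is region (ii): this is where the sharp $\sqrt{|k|}$-loss enters through the proximity $|\rho-w|\sim 1/\sqrt{|k|}$, and this loss precisely dictates the prefactor $|k|^{-1/2}$ in \eqref{LAP107} (combined with $|k|^{-1}$ from the Green's function) and $|k|^{1/2}$ in \eqref{LAP107.1}.
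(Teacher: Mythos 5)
Your proposal is correct and follows essentially the same route as the paper: the same three-region split for the multiplier ($\rho<w-1/\sqrt{|k|}$, $w+1/\sqrt{|k|}<\rho\leq w+5$, $\rho>w+5$, matching the paper's \eqref{LAP102}--\eqref{LAP104}), and the pointwise Green's-function bounds of Lemma \ref{Enh1} combined with a weight-transfer inequality, which you verify carefully where the paper only says ``simple calculations.'' One small slip in your derivation of \eqref{LAPL104}: the formula $B(w)-B(\rho)=(e^{2\rho}-e^{2w})/[(1+e^{2w})(1+e^{2\rho})]$ is not an actual identity, since \eqref{LAP100} only gives $B(v)\approx 1/(1+e^{2v})$; the correct way to get $|B(\rho)-B(w)|\gtrsim e^{2\rho}/(1+e^{2\rho})$ is to integrate the bounds on $\partial_vB$ from \eqref{LAP100}--\eqref{LAP101} over $[w,\rho]$, which gives the same conclusion.
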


\begin{proof}
Recall the bounds \eqref{LAP100}-\eqref{LAP101}. Therefore, we have for $\rho<w-1/\sqrt{k}$,  
\begin{equation}\label{LAP102}
\Big|\frac{e^{2\rho}D(\rho)}{B(\rho)-B(w)+i\epsilon}-V_w(\rho)\Big|\lesssim \sqrt{k}\,e^{-2|w-\rho|};
\end{equation}
for $\rho\in[w+1/\sqrt{k},w+5]$,
\begin{equation}\label{LAP103}
\Big|\frac{e^{2\rho}D(\rho)}{B(\rho)-B(w)+i\epsilon}-V_w(\rho)\Big|\lesssim\sqrt{k};
\end{equation}
for $\rho>w+5$,
\begin{equation}\label{LAP104}
\Big|\frac{e^{2\rho}D(\rho)}{B(\rho)-B(w)+i\epsilon}-V_w(\rho)\Big|\lesssim \frac{e^{-2\rho}\epsilon}{(1+e^\rho)^4}\lesssim \frac{e^{-2|\rho-w|}}{(1+e^\rho)^4}.
\end{equation}
 \eqref{LAP107} then follows from \eqref{LAP102}-\eqref{LAP104} and \eqref{LAP107.5}, which follows from simple calculations, the bounds \eqref{Enh3}-\eqref{Enh3.5} on the Green's function $\mathcal{G}_k^w(v,\rho)$ and the definition \eqref{LAP24} of $\varrho_{k_\ast,w}(v)$ for $v,\rho\in\R$.

\end{proof}

Now we can prove bounds on $S_{k,\epsilon}^w$.
\begin{lemma}\label{LAP25}
Assume that $w\leq -10$, $k\in\Z\backslash\{0\}$, $k_\ast\in\Z$ with $1\leq|k_\ast|\leq|k|$, and $\epsilon\in[-e^{2w}, e^{2w}]\backslash\{0\}$. Then there exist $C\in(0,\infty)$ which can be chosen independent of $w, k, k_\ast$ and $\epsilon$, such that the following statement holds. For $h\in Y_{k,w}$, we have the bounds
\begin{equation}\label{LAP26.0}
\big\|S_{k,\epsilon}^wh\big\|_{Y_{k,k_\ast,w}}\leq C|k|^{-1/5}\sum_{j\in\Z}e^{-|j-w|}\|h\|_{Y^j_{k,k_\ast,w}},
\end{equation}
and similarly
\begin{equation}\label{LAP26.0'}
\begin{split}
&\sup_{j\in\Z}\Big[\big\|\zeta_{k_\ast,w}(v,w) S_{k,\epsilon}^wh(v)\big\|_{L^2(j,j+2)}+|k|^{-1}\big\|\zeta_{k_\ast,w}(v,w)\partial_vS_{k,\epsilon}^wh(v)\big\|_{L^2(j,j+2)}\Big]\\
&\leq C|k|^{-1/5}\sum_{j\in\Z}e^{-|j-w|}\Big[\big\|\zeta_{k_\ast,w}(v,w) h(v)\big\|_{L^2(j,j+2)}+|k|^{-1}\big\|\zeta_{k_\ast,w}(v,w)\partial_vh(v)\big\|_{L^2(j,j+2)}\Big].
\end{split}
\end{equation}
\end{lemma}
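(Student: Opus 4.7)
\textbf{Proof proposal for Lemma \ref{LAP25}.} The strategy closely parallels the proof of Lemma \ref{LAP9.1}: the only substantive changes are that the convolution kernel $(2|k|)^{-1}e^{-|k||v-\rho|}$ is replaced by the more general Green's function $\mathcal{G}_k^w(v,\rho)$, and the potential $e^{2\rho}D(\rho)/(B(\rho)-B(w)+i\epsilon)$ is replaced by its difference with $V_w(\rho)$. The pointwise estimates already assembled in Lemma \ref{LAO1} are precisely what make this substitution work cleanly, so the argument will reduce to two standard pieces: an off-singular Cauchy--Schwarz estimate and a near-singular integration by parts.

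\textbf{Step 1: Splitting.} Fix a smooth cutoff $\varphi\in C_0^\infty(-2,2)$ with $\varphi\equiv1$ on $[-1,1]$, and set $\varphi_k(\rho):=\varphi(\sqrt{|k|}\,\rho)$. Decompose $S_{k,\epsilon}^w h=S_1 h+S_2 h$ where $S_1$ carries the cutoff $\varphi_k(\rho-w)$ (the near-singular piece concentrated in $|\rho-w|\lesssim 1/\sqrt{|k|}$) and $S_2$ carries $1-\varphi_k(\rho-w)$ (the off-singular piece).

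\textbf{Step 2: Off-singular estimate for $S_2$.} On the support of $1-\varphi_k(\rho-w)$ we have $|\rho-w|\geq 1/\sqrt{|k|}$, so the pointwise bounds \eqref{LAP107}, \eqref{LAP107.1} of Lemma \ref{LAO1} apply and give
\[
\varrho_{k_\ast,w}(v)\,\bigl|\mathcal{G}_k^w(v,\rho)\bigr|\Bigl|\tfrac{e^{2\rho}D(\rho)}{B(\rho)-B(w)+i\epsilon}-V_w(\rho)\Bigr|\lesssim |k|^{-1/2}e^{-2|\rho-w|}\varrho_{k_\ast,w}(\rho),
\]
and similarly for $\partial_v\mathcal{G}_k^w$ with factor $|k|^{1/2}$. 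Multiplying by $|h(\rho)|$, integrating, and breaking the $\rho$-integral into unit intervals $[j,j+2]$, Cauchy--Schwarz together with the rapid decay $e^{-2|\rho-w|}$ produces the sum $\sum_{j\in\Z}e^{-|j-w|}\|h\|_{Y^j_{k,k_\ast,w}}$ with a constant of size $|k|^{-1/2}$.

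\textbf{Step 3: Near-singular estimate for $S_1$.} This is the only subtle piece, and it is handled exactly as in the bound for $\mathcal{J}_1$ in the proof of Lemma \ref{LAP9.1}. One writes
\[
\tfrac{1}{B(\rho)-B(w)+i\epsilon}=\tfrac{1}{B'(\rho)}\partial_\rho\log\tfrac{B(\rho)-B(w)+i\epsilon}{B'(w)}
\]
and integrates by parts in $\rho$. Up to a boundary term controlled by $\mathcal{G}_k^w(v,w)|h(w)|$, the derivative lands either on $\mathcal{G}_k^w(v,\rho)\varphi_k(\rho-w)e^{2\rho}D(\rho)/B'(\rho)$ (which costs at most a factor $\sqrt{|k|}$ from the cutoff and is controlled by \eqref{LAP107.5}) or on $h(\rho)$. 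Using the weight transfer $\varrho_{k_\ast,w}(v)\mathcal{G}_k^w(v,\rho)\lesssim \varrho_{k_\ast,w}(\rho)/|k|$ from \eqref{LAP107.5}, and the integrable logarithmic singularity $\langle\log|\rho-w|\rangle$ on the interval $|\rho-w|<2/\sqrt{|k|}$, Cauchy--Schwarz yields the bound by $|k|^{-1/5}\|h\|_{Y^{\lfloor w\rfloor}_{k,k_\ast,w}}$. (The exponent $1/5$, rather than $1/2$, accounts for the logarithmic loss from the $\log|\rho-w|$ factor, as in \eqref{LA8}.)

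\textbf{Step 4: Derivative estimates and the alternate weight.} For $\partial_v S_{k,\epsilon}^w h$ the argument is identical after using \eqref{LAP107.1} instead of \eqref{LAP107}, and for the near-singular piece the extra derivative is moved onto $e^{-|k||v-\rho|}$ exactly as in \eqref{LA7} (giving a harmless boundary term plus the same integration-by-parts gain). For the estimate \eqref{LAP26.0'} with weight $\zeta_{k_\ast,w}(v,w)=e^{|k_\ast||v-w|+(\mu_{k_\ast}-|k_\ast|)d_w(v,w)}$, the proof proceeds verbatim, since the only facts about the weight that are used are the analogues $\zeta_{k_\ast,w}(v,w)\mathcal{G}_k^w(v,\rho)\lesssim \zeta_{k_\ast,w}(\rho,w)/|k|$ and $\zeta_{k_\ast,w}(v,w)|\partial_v\mathcal{G}_k^w(v,\rho)|\lesssim \zeta_{k_\ast,w}(\rho,w)$. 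These follow from $|k_\ast|\leq |k|$ and the sub-multiplicativity of the weight function (cf.\ \eqref{Enh5.06}).

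\textbf{Main obstacle.} The only genuine difficulty is the near-singular integration by parts in Step~3, where one must track carefully that the boundary term at $\rho=w$ and the derivative falling on the cutoff $\varphi_k$ do not destroy the weight transfer. Both are controlled because $\mathcal{G}_k^w(v,w)$ is bounded by $\varpi_{k,w}(v,w)/|k|$ (Lemma \ref{Enh1}) and the cutoff derivative $\sqrt{|k|}$ is absorbed by the $|k|^{-1/2}$ already present in the kernel estimate \eqref{LAP107.5}.
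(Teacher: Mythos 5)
Your decomposition and overall strategy match the paper's: same cutoff $\varphi_k=\varphi(\sqrt{|k|}\,\cdot)$, same splitting into near-singular ($|\rho-w|\lesssim 1/\sqrt{|k|}$) and off-singular pieces, the off-singular piece handled directly via Lemma \ref{LAO1}, and the near-singular singular kernel handled by integrating by parts against the logarithmic primitive. There is, however, a real omission in Step~3. On the near-singular region the integrand is
\[
\Bigl[\frac{e^{2\rho}D(\rho)}{B(\rho)-B(w)+i\epsilon}-V_w(\rho)\Bigr]\varphi_k(\rho-w)\,h(\rho),
\]
and your log-primitive identity only acts on the first summand. You must further split this into the truly singular piece $\frac{e^{2\rho}D(\rho)}{B(\rho)-B(w)+i\epsilon}\varphi_k(\rho-w)$, to which your integration by parts applies, and the bounded piece $-V_w(\rho)\varphi_k(\rho-w)$, which needs a separate (easy) estimate: $V_w$ is uniformly bounded, the integral runs over an interval of length $\lesssim 1/\sqrt{|k|}$, and the weight transfer $\varrho_{k_\ast,w}(v)\mathcal{G}_k^w(v,\rho)\lesssim\varrho_{k_\ast,w}(\rho)/|k|$ from \eqref{LAP107.5} yields a net factor of order $|k|^{-1}$. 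This corresponds to the paper's term $\mathcal{J}_{21}$, bounded in \eqref{LAP117}--\eqref{LAP106.4}, and it is simply missing from your argument.

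Two smaller imprecisions. First, the ``boundary term controlled by $\mathcal{G}_k^w(v,w)|h(w)|$'' you invoke in Step~3 does not occur: the cutoff $\varphi_k(\rho-w)$ vanishes at the endpoints of its support and the logarithm is integrable, so the integration by parts produces no boundary contribution. The extra local-in-$\rho$ term that does appear in the derivative estimate lives at $\rho=v$, not $\rho=w$, and arises from the $\delta(v-\rho)$ contribution in $\partial_{v\rho}\mathcal{G}_k^w$ from \eqref{Enh3.51}; see the first term on the right of \eqref{LAP119}. Second, in Step~4 you speak of moving the extra derivative onto $e^{-|k||v-\rho|}$ ``exactly as in \eqref{LA7}''; that exponential kernel does not appear here (it is replaced by $\mathcal{G}_k^w$), and the relevant facts are instead the pointwise bounds $\eqref{LAP107.1}$, $\eqref{LAP107.5}$ for $\partial_v\mathcal{G}_k^w$ together with \eqref{Enh3.51} for $\partial_{v\rho}\mathcal{G}_k^w$.
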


\begin{proof}
We focus on the proof of \eqref{LAP26.0}, as the proof of \eqref{LAP26.0'} is similar and we will indicate the required changes at the end of the proof. We assume without loss of generality that $k\ge1$ and $1\leq k_\ast\leq k$. Choose smooth cutoff function $\varphi\in C_0^\infty(-2,2)$ with $\varphi\equiv1$ on $[-1,1]$, and set $\varphi_k(v)=\varphi(\sqrt{k}\,v)$ for $v\in\R$. Let 
\begin{equation}\label{LAP106}
\begin{split}
\mathcal{J}_1h(v)&=\int_\R \mathcal{G}_k^w(v,\rho)\Big[\frac{e^{2\rho}D(\rho)}{B(\rho)-B(w)+i\epsilon}-V_w(\rho)\Big](1-\varphi_k(\rho-w))h(\rho)\,d\rho,\\
\mathcal{J}_2h(v)&=\int_\R \mathcal{G}_k^w(v,\rho)\Big[\frac{e^{2\rho}D(\rho)}{B(\rho)-B(w)+i\epsilon}-V_w(\rho)\Big]\varphi_k(\rho-w)h(\rho)\,d\rho.
\end{split}
\end{equation}
In the above, as usual, we suppressed the dependence of $\mathcal{J}_1, \mathcal{J}_2$ on $k,w,\epsilon$, for the simplicity of notations.

Since $S_{k,\epsilon}^wh=\mathcal{J}_1h+\mathcal{J}_2h$, it suffices to bound $\mathcal{J}_1h$ and $\mathcal{J}_2h$.
Using \eqref{LAP107}-\eqref{LAP107.1}, we can bound
\begin{equation}\label{LAP106.1}
\varrho_{k_\ast,w}(v)\big|\mathcal{J}_1h(v)\big|+k^{-1}\varrho_{k_\ast,w}(v)\big|\partial_v\mathcal{J}_1h(v)\big|\lesssim \frac{1}{\sqrt{k}} \int_\R e^{-|\rho-w|}\varrho_{k_\ast,w}(\rho)|h(\rho)|\,d\rho,
\end{equation}
which implies that 
\begin{equation}\label{LAP106.2}
\big\|\mathcal{J}_1h\big\|_{Y_{k,k_\ast,w}}\lesssim Ck^{-1/2}\sum_{j\in\Z}e^{-|j-w|}\|h\|_{Y^j_{k,k_\ast,w}}.
\end{equation}
It suffices to bound $\mathcal{J}_2h(v)$. We write
\begin{equation}\label{LAP106.3}
\begin{split}
\mathcal{J}_2h(v)=\mathcal{J}_{21}h(v)+\mathcal{J}_{22}h(v):=&\int_\R \mathcal{G}_k^w(v,\rho)\big[-V_w(\rho)\big]\varphi_k(\rho-w)h(\rho)\,d\rho\\
&+\int_\R \mathcal{G}_k^w(v,\rho)\Big[\frac{e^{2\rho}D(\rho)}{B(\rho)-B(w)+i\epsilon}\Big]\varphi_k(\rho-w)h(\rho)\,d\rho.
\end{split}
\end{equation}
Using the bound \eqref{LAP107.5}, we can bound 
\begin{equation}\label{LAP117}
\begin{split}
\varrho_{k_\ast,w}(v)\big|\mathcal{J}_{21}h(v)\big|+k^{-1}\varrho_{k_\ast,w}(v)\big|\mathcal{J}_{21}h(v)\big|\lesssim k^{-1}\int_{|\rho-w|\leq2/\sqrt{k}}\varrho_{k_\ast,w}(\rho)|h(\rho)|\,d\rho,
\end{split}
\end{equation}
which implies that 
\begin{equation}\label{LAP106.4}
\big\|\mathcal{J}_{21}h\big\|_{Y_{k,k_\ast,w}}\lesssim C|k|^{-1}\sum_{j\in\Z}e^{-|j-w|}\|h\|_{Y^j_{k,k_\ast,w}}.
\end{equation}
It remains to bound the term $\mathcal{J}_{22}h$. Using \eqref{LAP107}-\eqref{LAP107.1} and \eqref{LAP100}, we obtain
\begin{equation}\label{LAP118}
\begin{split}
&\varrho_{k_\ast,w}(v)\Big|\int_\R \mathcal{G}_k^w(v,\rho)\frac{e^{2\rho}D(\rho)}{B(\rho)-B(w)+i\epsilon}\varphi_k(\rho-w)h(\rho)\,d\rho\Big|\\
&\lesssim\varrho_{k_\ast,w}(v)\Big|\int_\R \partial_\rho\Big[\mathcal{G}_k^w(v,\rho)\frac{e^{2\rho}D(\rho)}{\partial_\rho B(\rho)}\varphi_k(\rho-w)\Big]h(\rho)\log\frac{B(\rho)-B(w)+i\epsilon}{\partial_wB(w)}\,d\rho\Big|\\
&\quad+\varrho_{k_\ast,w}(v)\Big|\int_\R \mathcal{G}_k^w(v,\rho)\frac{e^{2\rho}D(\rho)}{\partial_\rho B(\rho)}\varphi_k(\rho-w)\partial_{\rho}h(\rho)\log\frac{B(\rho)-B(w)+i\epsilon}{\partial_wB(w)}\,d\rho\Big|\\
&\lesssim \int_{|\rho-w|<2/\sqrt{k}}\varrho_{k_\ast,w}(\rho)|h(\rho)|\langle\log{|\rho-w|}\rangle\,d\rho+\frac{1}{k}\int_{|\rho-w|<2/\sqrt{k}}\varrho_{k_\ast,w}(\rho)|\partial_\rho h(\rho)|\langle\log{|\rho-w|}\rangle\,d\rho.
\end{split}
\end{equation}
In addition, in view of \eqref{Enh3.51},
\begin{equation}\label{LAP119}
\begin{split}
&k^{-1}\varrho_{k_\ast,w}(v)\Big|\int_\R \partial_v\mathcal{G}_k^w(v,\rho)\frac{e^{2\rho}D(\rho)}{B(\rho)-B(w)+i\epsilon}\varphi_k(\rho-w)h(\rho)\,d\rho\Big|\\
&\lesssim k^{-1}\varrho_{k_\ast,w}(v)\Big|\int_\R \partial_\rho\Big[\partial_v\mathcal{G}_k^w(v,\rho)\frac{e^{2\rho}D(\rho)}{\partial_\rho B(\rho)}\varphi_k(\rho-w)\Big]h(\rho)\log\frac{B(\rho)-B(w)+i\epsilon}{\partial_wB(w)}\,d\rho\Big|\\
&\quad+k^{-1}\varrho_{k_\ast,w}(v)\Big|\int_\R \partial_v\mathcal{G}_k^w(v,\rho)\frac{e^{2\rho}D(\rho)}{\partial_\rho B(\rho)}\varphi_k(\rho-w)\partial_{\rho} h(\rho)\log\frac{B(\rho)-B(w)+i\epsilon}{\partial_wB(w)}\,d\rho\Big|\\
&\lesssim k^{-1}\varrho_{k_\ast,w}(v)\varphi_k(v-w)|h(v)|\langle\log{|v-w|}\rangle+\int_{|\rho-w|<2/\sqrt{k}}\varrho_{k_\ast,w}(\rho)|h(\rho)|\langle\log{|\rho-w|}\rangle\,d\rho\\
&\quad +k^{-1}\int_{|\rho-w|<2/\sqrt{k}}\varrho_{k_\ast,w}(\rho)|\partial_\rho h(\rho)|\langle\log{|\rho-w|}\rangle\,d\rho.\\
\end{split}
\end{equation}

It follows from \eqref{LAP118} and \eqref{LAP119} that 
\begin{equation}\label{LAP119.1}
\big\|\mathcal{J}_{22}h\big\|_{Y_{k,k_\ast,w}}\lesssim C|k|^{-1/5}\sum_{j\in\Z}e^{-|j-w|}\|h\|_{Y^j_{k,k_\ast,w}}.
\end{equation}
Combining \eqref{LAP106.2}, \eqref{LAP106.4} and \eqref{LAP119.1}, the proof of \eqref{LAP26.0} is then complete.

The proof of \eqref{LAP26.0'} is similar and we only need to use
\begin{equation}\label{LAP119.2}
\zeta_{k,w}(v,w)\mathcal{G}_k^w(v,\rho)\lesssim \zeta_{k,w}(\rho,w)/|k|,\qquad \zeta_{k,w}(v,w)\big|\partial_v\mathcal{G}_k^w(v,\rho)\big|\lesssim \zeta_{k,w}(\rho,w).
\end{equation}
The lemma is now proved.

\end{proof}

We have the following limiting absorption principle for $w\leq-10$.
\begin{lemma}\label{LAP25}
Assume that $w\leq-10$, $k\in\Z\backslash\{0\}$ and $k_\ast\in\Z$ with $1\leq|k_\ast|\leq |k|$. Then there exist $\kappa\in(0,\infty)$ and a sufficiently small $\beta_{\ast}\in(0,\infty)$, which can be chosen independent of $w, k$ and $k_\ast$, such that the following statement holds. For $\epsilon\in[-\beta_\ast e^{2w},\beta_\ast e^{2w}]\backslash\{0\}$ and $h\in Y_{k,k_\ast,w}$, we have
\begin{equation}\label{LAP26.5}
\big\|h+S_{k,\epsilon}^wh\big\|_{Y_{k,k_\ast,w}}\ge \kappa \|h\|_{Y_{k,k_\ast,w}},
\end{equation}
and similarly
\begin{equation}\label{LAP26.5'}
\begin{split}
&\sup_{j\in\Z}\Big[\big\|\zeta_{k_\ast,w}(v,w) (h+S_{k,\epsilon}^wh)(v)\big\|_{L^2(j,j+2)}+|k|^{-1}\big\|\zeta_{k_\ast,w}(v,w)\partial_v(h+S_{k,\epsilon}^wh)(v)\big\|_{L^2(j,j+2)}\Big]\\
&\ge \kappa\sup_{j\in\Z}\Big[\big\|\zeta_{k_\ast,w}(v,w) h(v)\big\|_{L^2(j,j+2)}+|k|^{-1}\big\|\zeta_{k_\ast,w}(v,w)\partial_vh(v)\big\|_{L^2(j,j+2)}\Big].
\end{split}
\end{equation}

\end{lemma}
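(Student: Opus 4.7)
The plan is to mirror the contradiction argument of Lemma \ref{LAP10}, with adaptations for the nonlocal Green's function $\mathcal{G}_k^w$ and the $w$-dependent weight $\varrho_{k_\ast,w}$. First, for $|k|$ large enough that the constant in \eqref{LAP26.0} satisfies $C|k|^{-1/5}\leq 1/2$, the bound $\|S_{k,\epsilon}^w h\|_{Y_{k,k_\ast,w}}\leq (1/2)\|h\|_{Y_{k,k_\ast,w}}$ gives \eqref{LAP26.5} immediately with $\kappa=1/2$; the same argument with \eqref{LAP26.0'} yields \eqref{LAP26.5'}. This reduces the problem to a finite range $|k|\leq L$.

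For bounded $|k|$, I would argue by contradiction: assume there exist sequences $k_j$ with $1\leq|k_j|\leq L$, $w_j\leq-10$, $\epsilon_j\in[-\beta_\ast e^{2w_j},\beta_\ast e^{2w_j}]\setminus\{0\}$, and $h_j\in Y_{k_j,k_\ast,w_j}$ of unit norm such that $\|h_j+S_{k_j,\epsilon_j}^{w_j}h_j\|_{Y_{k_j,k_\ast,w_j}}\to 0$. Passing to subsequences, $k_j=k_0$ is fixed and either (A) $w_j\to w_0\in(-\infty,-10]$ or (B) $w_j\to-\infty$. Using the integral identity $h_j=-S_{k_0,\epsilon_j}^{w_j}h_j+o(1)_Y$, the pointwise bounds \eqref{Enh3}--\eqref{Enh3.5}, and a compactness argument, I would extract a subsequence $h_j\rightharpoonup h$ weakly in $H^1_{\mathrm{loc}}$ and strongly in $L^2_{\mathrm{loc}}$; passing the weighted norm bounds to the limit then gives decay of $h$ at $\pm\infty$. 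Applying $(k_0^2-\partial_v^2+V_{w_j})$ to the integral identity produces the differential form
\begin{equation*}
(k_0^2-\partial_v^2)h_j + \frac{e^{2v}D(v)}{B(v)-B(w_j)+i\epsilon_j}\,h_j = o(1).
\end{equation*}

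In Case A, pairing with $\bar h_j$ and taking imaginary parts forces $h(w_0)=0$; then $h$ solves $(k_0^2-\partial_v^2)h + \frac{e^{2v}D(v)}{B(v)-B(w_0)}h=0$. Setting $\phi(r)=h(\log r)$ and $g(r)=(-\partial_r^2-r^{-1}\partial_r+k_0^2 r^{-2})\phi(r)$, the argument in Case 1 of the proof of Lemma \ref{LAP10} (using $h\in Y_{k_0,k_\ast,w_0}$ and $h(w_0)=0$) yields $g\in X_{k_0}$, so $g$ is an eigenfunction of $L_{k_0}$ at $\lambda=b(e^{w_0})\in(0,b(0))$, contradicting Proposition \ref{SPEC}. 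In Case B, $B(w_j)\to b(0)$ and the limit equation becomes $(k_0^2-\partial_v^2)h + \frac{e^{2v}D(v)}{B(v)-b(0)}h=0$, with strictly positive potential. The asymptotics $\frac{e^{2v}D(v)}{B(v)-b(0)}\to 8$ as $v\to-\infty$ and $\to 0$ as $v\to+\infty$, combined with the passed-to-the-limit $Y$-bounds, force $|h(v)|\lesssim e^{\mu_{k_0}v}$ at $-\infty$ and $|h(v)|\lesssim e^{-|k_0|v}$ at $+\infty$, so $\phi(r)\to 0$ at $r=0$ and $r=\infty$; the maximum-principle argument of Case 3 of the proof of Proposition \ref{SPEC} then forces $\phi\equiv 0$, contradicting nontriviality.

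The proof of \eqref{LAP26.5'} is formally identical with $\varrho_{k_\ast,w}(v)$ replaced by $\zeta_{k_\ast,w}(v,w)=1/\varpi_{k_\ast,w}(v,w)$ and \eqref{LAP26.0'} used in place of \eqref{LAP26.0}. The main obstacle is ensuring nontriviality of the weak limit in Case B, where the moving weight $\varrho_{k_\ast,w_j}$ could a priori allow the $Y$-mass of $h_j$ to escape to $-\infty$ along the singular zone $v\sim w_j$; one controls this by combining the pigeonhole structure of \eqref{LAP26.0} (which localizes the dominant contribution of the $Y$-norm to zones at bounded distance from $v=w_j$) with the imaginary-part identity forcing $|h_j(w_j)|\to 0$ as $\epsilon_j\to 0$, and, if necessary, translating via $\tilde h_j(v)=h_j(v+w_j)$ before extracting the limit, reducing to the same spectral contradiction at the normalized scale.
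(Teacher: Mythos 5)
Your overall strategy matches the paper: reduce to bounded $|k|$ via Lemma \ref{LAO1}, then argue by contradiction, splitting into the cases $w_j\to w_0$ finite and $w_j\to-\infty$. Case~A (and the reduction to finite $|k|$) is essentially what the paper does. The trouble is in Case~B, and it is not a cosmetic gap.

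In Case~B the body of your argument takes the limit of $h_j$ \emph{without} translating, arriving at the equation
$(k_0^2-\partial_v^2)h+\frac{e^{2v}D(v)}{B(v)-b(0)}h=0$ on a fixed window near $v=0$. But the pigeonhole structure you yourself invoke in the last paragraph — the factor $e^{-|j-w|}$ in \eqref{LAP26.0} — says the $Y_{k,k_\ast,w_j}$-mass of $h_j$ concentrates near $v=w_j\to-\infty$, while the weight $\varrho_{k_\ast,w_j}(v)$ there is of size $e^{\mu_{k_\ast}|w_j|}$, so $|h_j|\sim e^{-\mu_{k_\ast}|w_j|}\to 0$ in that region, and in a bounded window near $v=0$ there is no quantitative lower bound at all. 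So the untranslated $H^1_{\mathrm{loc}}$ limit $h$ near $v=0$ can be zero, and the subsequent maximum-principle argument has nothing to work on. You flag this in the last paragraph, but the proposed fix, ``translating $\tilde h_j(v)=h_j(v+w_j)$,'' is not enough: translated but unrenormalized, the sequence tends to zero near $v=0$. The missing ingredient is the renormalization $g_j(v):=e^{\mu_{k_\ast}|w_j|}h_j(w_j+v)$, which absorbs the diverging weight and (together with the localization estimates of the type \eqref{LAP121.5}--\eqref{LAP121.7}) gives a bounded, nontrivial limit $g$ in $H^1_{\mathrm{loc}}$.

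Once one is in the correct, renormalized and translated frame, the limit equation is no longer the one you wrote. Using $\epsilon_j=\beta_j e^{2w_j}$ (with $\beta_j\to 0$ by diagonal extraction over $\beta_\ast\downarrow 0$) and the asymptotics \eqref{LAP100}--\eqref{LAP101}, one finds the explicit, background-independent equation
\begin{equation*}
(k_0^2-\partial_v^2)g(v)+\lim_{j\to\infty}\frac{8e^{2v}}{e^{2v}-1+8i\beta_j}\,g(v)=0 ,
\end{equation*}
and the imaginary part of the potential concentrates at $v=0$ and forces $g(0)=0$. The resulting real equation $(k_0^2-\partial_v^2)g+\frac{8e^{2v}}{e^{2v}-1}g=0$ has a potential that is positive only on $\{v>0\}$ and negative on $\{v<0\}$, so the argument of Case~3 of Proposition~\ref{SPEC} (global positivity of the potential, two-sided decay of $\varphi$) does not apply, and the conclusion does not come from an eigenvalue/spectral contradiction for $L_{k_0}$ as you claim. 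One instead applies the maximum principle only on $\{v\ge 0\}$ (using $g(0)=0$) to get $g\equiv 0$ there, and then a Gronwall/ODE uniqueness argument on $\{v<0\}$ to conclude $g\equiv 0$ globally, contradicting the nontriviality preserved by the renormalization. These are the two steps missing from your Case~B.
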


\begin{proof}
We focus on \eqref{LAP26.5}, as the proof of \eqref{LAP26.5'} is easier. We can assume $k\ge1$ and $1\leq k_\ast\leq k$. Suppose \eqref{LAP26.5} does not hold. Then by Lemma \ref{LAP25}, there exist $k_0\ge1$ and $k_\ast\in\Z$ with $1\leq k_\ast\leq k_0$, $w_j\leq-10$, $\beta_j\neq0, \beta_j\to0$ as $j\to\infty$, $h_j\in Y_{k_0,k_\ast,w_j}$, $\|h_j\|_{Y_{k_0,k_\ast,w_j}}=1$, such that with $\epsilon_j:=\beta_j e^{2w_j}$, as $j\to\infty$ we have
\begin{equation}\label{LAP120}
h_j+S_{k_0,\epsilon_j}^{w_j}h_j\to 0, \qquad {\rm in}\,\, Y_{k_0, k_\ast, w_j}.
\end{equation}
By Lemma \ref{LAP25}, we can find some $\delta>0, K>1$, such that
\begin{equation}\label{LAP121}
\big\|\varrho_{k_\ast,w}(v)h_j(v)\big\|_{L^2(w_j-K,w_j+K)}+\big\|\varrho_{k_\ast,w}(v)\partial_vh_j(v)\big\|_{L^2(w_j-K,w_j+K)}>\delta,
\end{equation}
for sufficiently large $j\ge1$. If $w_j$ remains bounded for a subsequence, then we can obtain a contradiction as in the proof of Lemma \ref{LAP10}, case 1. It suffices to assume that $w_j\to-\infty$. To pass to a nontrivial limit, we need to obtain more refined bounds on $h_j$, $j\ge0$. 

Write for $j\ge1$,
\begin{equation}\label{LAP121.1}
h_j=-S_{k_0,\epsilon_j}^{w_j}h_j+r_j, \qquad{\rm where}\,\,r_j\in Y_{k_0,k_\ast,w_j} \,\,{\rm and}\,\,\lim_{j\to+\infty}\|r_j\|_{Y_{k_0,k_\ast,w_j}}=0.
\end{equation}
Fix $\Phi_{\leq6}\in C_0^\infty(-\infty, 8)$ with $\Phi_{\leq6}\equiv1$ on $(-\infty,6]$. We define
\begin{equation}\label{LAPQ1}
\begin{split}
\mathcal{J}_3h_j(v)&:=\int_\R \mathcal{G}_k^{w_j}(v,\rho) \Big[\frac{e^{2\rho}D(\rho)}{B(\rho)-B(w_j)+i\epsilon_j}-V_{w_j}(\rho)\Big]\big(1-\Phi_{\leq 6}(\rho-w_j)\big) h_j(\rho)\,d\rho,\\
\mathcal{J}_4h_j(v)&:=\int_\R \mathcal{G}_k^{w_j}(v,\rho) \Big[\frac{e^{2\rho}D(\rho)}{B(\rho)-B(w_j)+i\epsilon_j}-V_{w_j}(\rho)\Big]\Phi_{\leq 6}(\rho-w_j) h_j(\rho)\,d\rho,
\end{split}
\end{equation}
and it follows that 
\begin{equation}\label{LAP121.2}
S_{k_0,\epsilon_j}^{w_j}h_j=\mathcal{J}_3h_j(v)+\mathcal{J}_4h_j(v).
\end{equation}
Using the bounds \eqref{LAPL104}, we obtain that
\begin{equation}\label{LAP121.4}
\big\|\mathcal{J}_3h_j(v)\big\|_{Y_{k_0,k_\ast,w_j}}\lesssim_{k_0} |\epsilon_j|e^{-2w_j}\big\|h_j\big\|_{Y_{k_0,k_\ast,w_j}}=|\beta_j|\to0+, \quad{\rm as}\,\,j\to\infty.
\end{equation}
To obtain more accurate bounds on $\mathcal{J}_4h_j$ than those from Lemma \ref{LAP26.0}, that are needed for extracting nontrivial limiting profile from the sequence $h_j$, we fix $\varphi\in C_0^\infty(-20,20)$ with $\varphi\equiv1$ on $[-10,10]$ and note that 
\begin{equation}\label{LAP121.5}
\begin{split}
&\big|\mathcal{J}_4h_j(v)\big|+\big|\partial_v\mathcal{J}_4h_j(v)\big|\lesssim_{k_0}\int_\R \big(\big|\mathcal{G}_{k_0}^{w_j}(v,\rho)\big|+\big|\partial_v\mathcal{G}_{k_0}^{w_j}(v,\rho)\big|\big)\big|V_{w_j}(\rho)\Phi_{\leq6}(\rho-w_j) h_j(\rho)\big|\,d\rho\\
&+\left|\int_\R \partial_{\rho}\Big[\frac{e^{2\rho}D(\rho)}{\partial_\rho B(\rho)}\mathcal{G}_{k_0}^{w_j}(v,\rho)\Phi_{\leq6}(\rho-w_j)\varphi(\rho-w_j) h_j(\rho)\Big]\log{\frac{B(\rho)-B(w_j)+i\epsilon_j}{\partial_wB(w_j)}}\,d\rho\right|\\
&+\left|\int_\R \partial_{\rho}\Big[\frac{e^{2\rho}D(\rho)}{\partial_\rho B(\rho)}\partial_v\mathcal{G}_{k_0}^{w_j}(v,\rho)\Phi_{\leq6}(\rho-w_j) \varphi(\rho-w_j)h_j(\rho)\Big]\log{\frac{B(\rho)-B(w_j)+i\epsilon_j}{\partial_wB(w_j)}}\,d\rho\right|\\
&+\int_{-\infty}^{w_j-1} \big(\big|\mathcal{G}_{k_0}^{w_j}(v,\rho)\big|+\big|\partial_v\mathcal{G}_{k_0}^{w_j}(v,\rho)\big|\big)|1-\varphi(\rho-w_j)|e^{-2|\rho-w_j|} \big|h_j(\rho)\big|\,d\rho.
\end{split}
\end{equation}
We obtain from \eqref{LAP121.5} that for $v_\ast>w_j+9$,
\begin{equation}\label{LAP121.7}
\begin{split}
&\big\|\big|\mathcal{J}_4h_j(v)\big|+\big|\partial_v\mathcal{J}_4h_j(v)\big|\big\|_{L^2(v_\ast,v_\ast+2)}\lesssim \frac{\varpi_{k_\ast,w_j}(v_\ast,w_j)}{\varrho_{k_\ast,w_j}(w_j)}.
\end{split}
\end{equation}
Setting  
\begin{equation}\label{LAP122}
g_j(v)=e^{\mu_{k_\ast}|w_j|}h_j(w_j+v),\qquad v\in\R,
\end{equation}
then combining \eqref{LAP121}-\eqref{LAP121.1}, \eqref{LAP121.4} and \eqref{LAP121.7} we obtain that
\begin{equation}\label{LAP122.1}
\|g_j\|_{H^1(-K,K)}>\delta,\qquad {\rm and}\,\,g_j\to g\,\,{\rm in}\,\, H^1_{{\rm loc}}(-L,L), \,\,{\rm for\,\,all\,\,}L>1.
\end{equation}
We note that the compactness in $H^1_{{\rm loc}}(\R)$ of $g_j$ follows from similar argument as in the case of $w>-20$.
In addition, $g$ satisfies 
\begin{equation}\label{LAP122.2}
\sup_{j\in\Z}\Big[\big\|e^{|v|}g\big\|_{L^2(j,j+2)}+\big\|e^{|v|}\partial_vg\big\|_{L^2(j,j+2)}\Big]<\infty.
\end{equation}
Since in the sense of distributions for $v\in\R$,
\begin{equation}\label{LAP123}
(k_0^2-\partial_v^2+V_{w_j}(w_j+v))g_j(v)+\Big[\frac{e^{2v+2w_j}D(w_j+v)}{B(w_j+v)-B(w_j)+i\epsilon_j}-V_{w_j}(w_j+v)\Big]g_j(v)=0,
\end{equation}
sending $j\to\infty$ and using \eqref{LAP100}-\eqref{LAP101}, we obtain that for $v\in\R$,
\begin{equation}\label{LAP124}
(k^2-\partial_v^2)g(v)+\lim_{j\to\infty}\frac{8e^{2v}}{e^{2v}-1+8i\beta_j}g(v)=0,
\end{equation}
in the sense of distributions. Multiplying $\overline{g}$ to \eqref{LAP124}, integrating over $\R$ and taking the real part, we obtain that 
$g(0)=0$. It follows that for $v\in\R$,
\begin{equation}\label{LAP125}
(k^2-\partial_v^2)g(v)+\frac{8e^{2v}}{e^{2v}-1}g(v)=0,
\end{equation}
in the sense of distributions. Noting that $\frac{8e^{2v}}{e^{2v}-1}>0$ for $v>0$ and that $g(0)=0$, by the maximum principle, we conclude that $g\equiv0$ for $v\ge0$. Then by simple ODE argument and Gronwall type inequalities, we conclude that $g\equiv0$ on $\R$, a contradiction with \eqref{LAP122.1}. The proof of \eqref{LAP26.5} is then complete. The proof of \eqref{LAP26.5'} follows the same line of argument, but is simpler since we no longer need to renormalize the sequence as in \eqref{LAP122}.
\end{proof}

 \section{The explicitly solvable case: $|k|=1$}\label{sec:k=1} We first consider the special case when $|k|=1$, which is, remarkably, explicitly solvable as observed in \cite{Bed2}. Since we focus on the spectral density functions rather than the explicit formula for $f_k(t,v)$ studied in \cite{Bed2}, we provide some details of the calculations. Theorem \ref{MTH2} for the case $|k|=1$ follows from the following explicit formula.
 \begin{proposition}\label{BSD24}
Assume that $k\in\{1,-1\}$. For $v, w\in\R$, we have 
\begin{equation}\label{BSD25}
\begin{split}
\Gamma_k(v,w)&=2  \lim_{\epsilon\to0+}\Im\,\Gamma^+_{k,\epsilon}(v,w)=2\lim_{\epsilon\to0+}\Im\, \Pi_{k,\epsilon}^+(v,w)\\
&=2\pi \frac{B(v)-B(w)}{(\partial_wB(w))^2}e^{v+w}\mathbf{1}_{v<w}\bigg\{f_0^k(w)-e^{-w}\frac{D(w)}{\partial_wB(w)}\int_{-\infty}^wf_0^k(\rho)e^{3\rho}\,d\rho\bigg\}.
\end{split}
\end{equation}
\end{proposition}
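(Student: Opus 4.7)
The plan is to exploit a miraculous feature of the case $|k|=1$: the homogeneous $\epsilon$-regularized equation admits an explicit solution. The first step is to observe, by differentiating the Biot-Savart identity $U'(r)+U(r)/r=\Omega(r)$ and translating to $v=\log r$, the identity $2B'(v)+B''(v)=e^{2v}D(v)$. A direct computation then shows that
\[
\tilde{\Phi}_1(v) := e^v\bigl(B(v)-B(w)+i\epsilon\bigr)
\]
solves $(1-\partial_v^2)\tilde{\Phi}_1+\tfrac{e^{2v}D(v)}{B(v)-B(w)+i\epsilon}\tilde{\Phi}_1=0$. This solution never vanishes (since $\Im\tilde{\Phi}_1=\epsilon e^v\neq 0$), decays like $e^v$ at $-\infty$, and grows like $e^v$ at $+\infty$.

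Second, I will construct an independent solution $\tilde{\Phi}_2(v):=-\tilde{\Phi}_1(v)\int_v^\infty ds/\tilde{\Phi}_1(s)^2$ by reduction of order; the Wronskian equals $1$, and $\tilde{\Phi}_2$ decays like $e^{-v}$ at $+\infty$ and grows like $e^{-v}$ at $-\infty$. Variation of parameters then yields a representation of $\Pi^+_{k,\epsilon}$ in terms of two integrals against $g_\epsilon(s)=e^{2s}f_0^k(s)/(B(s)-B(w)+i\epsilon)$. The key simplification is the exact cancellation
\[
\tilde{\Phi}_1(s)\,g_\epsilon(s)=e^{3s}f_0^k(s),
\]
which removes the singular denominator entirely. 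With $H(u):=\int_{-\infty}^u e^{3s}f_0^k(s)\,ds$, one integration by parts produces a clean cancellation of the $\tilde{\Phi}_2\,H(v)$ pieces and yields the compact representation
\[
\Pi^+_{k,\epsilon}(v,w)=\tilde{\Phi}_1(v)\int_v^\infty \frac{H(u)}{\tilde{\Phi}_1(u)^2}\,du.
\]

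The final step is to extract $\Gamma_k=2\lim_{\epsilon\to 0+}\Im\,\Pi^+_{k,\epsilon}$. Since $H$ is real, only $\Im\bigl[\tilde{\Phi}_1(v)/\tilde{\Phi}_1(u)^2\bigr]$ contributes, and the Plemelj-Sokhotski-type identity $\Im[1/(x+i\epsilon)^2]=-2\epsilon x/(x^2+\epsilon^2)^2\to\pi\delta'(x)$ in $\mathcal{D}'$ governs the limit. For $v>w$ the support of $\delta'(B(u)-B(w))$ at $u=w$ lies outside the integration domain $[v,\infty)$, so $\Im\,\Pi^+\to 0$, producing the factor $\mathbf{1}_{v<w}$. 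For $v<w$, the change of variables $t=B(u)-B(w)$ converts the distributional integral to the identity $\int_v^\infty g(u)\delta'(B(u)-B(w))\,du=\tfrac{1}{B'(w)}\partial_u|_{u=w}\{g(u)/B'(u)\}$; applying this with $g(u)=H(u)e^{v-2u}$ and invoking the identity $2B'(w)+B''(w)=e^{2w}D(w)$ a second time to simplify yields exactly the formula \eqref{BSD25}.

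The main obstacle will be the distributional bookkeeping: tracking signs through the orientation-reversing change of variables $t=B(u)-B(w)$ (since $B'(w)<0$), handling the boundary contributions in the integration by parts that produces the compact formula for $\Pi^+_{k,\epsilon}$, and making the $\delta'$ action explicit with the right powers of $B'(w)$. The elegance of the final formula is not coincidental: it is entirely driven by the identity $2B'+B''=e^{2v}D$, which appears both in the construction of $\tilde{\Phi}_1$ and in the last simplification, reflecting the specific Biot-Savart structure of the background flow.
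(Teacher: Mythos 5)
Your proposal is correct and follows the same strategy as the paper's proof: exploit the explicit homogeneous solution $h_{w,\epsilon}(v)=(B(v)-B(w)+i\epsilon)e^v$ (which the paper introduces in \eqref{BSD12}, checked via the identity $2B'+B''=e^{2v}D$ that you also use), build the second solution by reduction of order with $W_{w,\epsilon}(v)=\int_v^\infty h_{w,\epsilon}^{-2}$, form the Green's function, insert the exact cancellation $h_{w,\epsilon}(\rho)/(B(\rho)-B(w)+i\epsilon)=e^\rho$, and take $\epsilon\to 0^+$ via a Plemelj-type limit. The one mild difference is organizational: you integrate by parts once to collapse the two Green's-function pieces into the single compact formula $\Pi^+_{k,\epsilon}(v,w)=h_{w,\epsilon}(v)\int_v^\infty H(u)\,h_{w,\epsilon}(u)^{-2}\,du$ and then apply $\Im\,(x+i\epsilon)^{-2}\to\pi\delta'(x)$ directly, while the paper keeps the two-term representation \eqref{BSD26} and computes the distributional limit of $\Im\,W_{w,\epsilon}(\rho)$ in \eqref{BSD25.1}--\eqref{BSD25.3} via an integration by parts that reduces $(x+i\epsilon)^{-2}$ to $(x+i\epsilon)^{-1}$; these are the same calculation. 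One point you state loosely --- ``only $\Im[\tilde\Phi_1(v)/\tilde\Phi_1(u)^2]$ contributes'' --- tacitly requires that $\Im h_{w,\epsilon}(v)=\epsilon e^v$ times $\Re\int_v^\infty H(u)h_{w,\epsilon}(u)^{-2}du$ vanish in the limit; this is the content of \eqref{BSD25.2}, and together with the restriction to $v\ne w$ (justified by the $L^2$-type bound \eqref{BSD11}) it is what makes the step rigorous, so it is worth recording explicitly.
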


The rest of the section is devoted to the proof of proposition \ref{BSD24}.

We note first that for $\epsilon\in(0,1/8)$, the function 
\begin{equation}\label{BSD12}
h_{w,\epsilon}(v):=\big(B(v)-B(w)+i\epsilon\big)e^v
\end{equation}
for $v\in\R$ solves 
\begin{equation}\label{BSD13}
(1-\partial_v^2)h_{w,\epsilon}(v)+\frac{e^{2v}D(v)}{B(v)-B(w)+i\epsilon}h_{w,\epsilon}(v)=0, \qquad{\rm on}\,\,\R.
\end{equation}
By the general theory of ODEs, we can find another solution to \eqref{BSD13}
\begin{equation}\label{BSD14}
g_{w,\epsilon}(v):=h_{w,\epsilon}(v)\int_v^\infty \frac{d\sigma}{h^2_{w,\epsilon}(\sigma)}.
\end{equation}
Define for $\epsilon\in(0,1/4), w\in\R, v, \rho\in\R$, 
\begin{equation}\label{BSD15}
G_{w,\epsilon}(v,\rho):=h_{w,\epsilon}(v)g_{w,\epsilon}(\rho)\mathbf{1}_{(-\infty,\rho)}(v)+g_{w,\epsilon}(v)h_{w,\epsilon}(\rho)\mathbf{1}_{[\rho,\infty)}(v).
\end{equation}
Direct calculations show that $G_{w,\epsilon}(v,\rho)$ is the fundamental solution to the differential operator 
\begin{equation}\label{BSD16}
1-\partial_v^2+\frac{e^{2v}D(v)}{B(v)-B(w)+i\epsilon}, \qquad {\rm on}\,\,\R.
\end{equation}
Therefore for $\epsilon\in(0,1/4), w, v, \rho\in\R$, we have
\begin{equation}\label{BSD17}
\Pi_{k,\epsilon}^+(v,w)=\int_\R G_{w,\epsilon}(v,\rho)\frac{e^{2\rho}f_0^k(\rho)}{B(\rho)-B(w)+i\epsilon}\,d\rho.
\end{equation}


Define $\epsilon\in(0,1/4), w\in\R, v\in\R$,
\begin{equation}\label{BSD19}
W_{w,\epsilon}(v):=\int_v^\infty \frac{d\sigma}{h^2_{w,\epsilon}(\sigma)}.
\end{equation}


Simple computation shows
\begin{equation}\label{BSD19.1}
\sup_{\rho\in[w+1,\infty)}\big|W_{w,\epsilon}(\rho)\big|\lesssim_w1, \quad \sup_{\rho\in[w+1,\infty)}\big|\Im W_{w,\epsilon}(\rho)\big|\lesssim \epsilon.
\end{equation}

For $\rho\in(-\infty,w+1]$, we have
\begin{equation}\label{BSD25.1}
\begin{split}
 W_{w,\epsilon}(\rho)&= \int_{\rho}^{\infty} \frac{e^{-2\sigma}}{(B(\sigma)-B(w)+i\epsilon)^2}d\sigma\\
&= \int_{\rho}^{w+1} -\frac{e^{-2\sigma}}{\partial_\sigma B(\sigma)}\partial_\sigma\Big[\frac{1}{B(\sigma)-B(w)+i\epsilon}\Big]d\sigma+ \int_{w+1}^{\infty} \frac{e^{-2\sigma}}{(B(\sigma)-B(w)+i\epsilon)^2}d\sigma\\
&=\frac{e^{-2\rho}}{\partial_\rho B(\rho) (B(\rho)-B(w)+i\epsilon)}-\frac{e^{-2(w+1)}}{\partial_wB(w+1) (B(w+1)-B(w)+i\epsilon)}\\
&\quad+ \int_{\rho}^{w+1} \partial_\sigma\Big[\frac{e^{-2\sigma}}{\partial_\sigma B(\sigma)}\Big]\frac{1}{B(\sigma)-B(w)+i\epsilon}d\sigma+ \int_{w+1}^{\infty} \frac{e^{-2\sigma}}{(B(\sigma)-B(w)+i\epsilon)^2}d\sigma,
\end{split}
\end{equation}
It follows from \eqref{BSD25.1} that
\begin{equation}\label{BSD25.2}
|\epsilon\, W_{w,\epsilon}(\rho)|\lesssim_w e^{4|\rho|},\quad{\rm and}\quad \lim_{\epsilon\to0+} \epsilon\, W_{w,\epsilon}(\rho)=0, \,\,{\rm for}\,\,\rho\in(-\infty,w+1]\backslash\{w\},
\end{equation}
and in the sense of distributions for $\rho\in(-\infty,w+1)$,
\begin{equation}\label{BSD25.3}
\lim_{\epsilon\to0+}\Im\,W_{w,\epsilon}(\rho)= \pi \frac{e^{-2w}}{(\partial_wB(w))^2} \delta(\rho-w)+\pi \frac{1}{\partial_wB(w)}\partial_w\Big[\frac{e^{-2w}}{\partial_wB(w)}\Big]{\bf 1}_{\rho<w}.
\end{equation}
Applying the limiting absorption principle to the equations \eqref{LAP1.1} and \eqref{LAP1.2}, see Lemmas \ref{LAP10} and \ref{LAP25}, we can obtain for $\Gamma_{1,\epsilon}^\iota(v,w)$ for  $\iota\in\{+,-\}, w\in\R$ and $\epsilon\in\R$ with $0<|\epsilon|<\epsilon_\ast e^{-2|w|}$,  the bounds
\begin{equation}\label{BSD11}
\sup_{j\in\Z}\Big[\|e^{|v|}\Gamma_{k,\epsilon}^\iota(v,w)\|_{L^2(j,j+2)}+\|e^{|v|}\partial_v\Gamma_{k,\epsilon}^\iota(v,w)\|_{L^2(j,j+2)}\Big]\lesssim_w1.
\end{equation}
The bounds \eqref{BSD11} imply that to calculate the limit \eqref{BSD25} it suffices to consider the case $v\neq w$.



We can calculate, using \eqref{BSD17}, that for $v, w\in\R$ with $v\neq w$, 
\begin{equation}\label{BSD26}
\begin{split}
  \lim_{\epsilon\to0+}\Im\,\Gamma^+_{k,\epsilon}(v,w)&=\lim_{\epsilon\to0+}\Im\,\Pi^+_{k,\epsilon}(v,w)\\
&=\lim_{\epsilon\to0+}\Im  \int_{v}^{\infty}(B(v)-B(w)-i\epsilon)e^{v+\rho}W_{w,\epsilon}(\rho)f_0^k(\rho)e^{2\rho}\,d\rho \\
&\qquad+\lim_{\epsilon\to0+}\Im  \int_{-\infty}^{v}(B(v)-B(w)+i\epsilon)e^{v+\rho}W_{w,\epsilon}(v)f_0^k(\rho)e^{2\rho}\,d\rho\\
&=\lim_{\epsilon\to0+}\Im  \int_{v}^{\infty}(B(v)-B(w))e^{v+\rho}W_{w,\epsilon}(\rho)f_0^k(\rho)e^{2\rho}\,d\rho\\
&\qquad+\lim_{\epsilon\to0+}\Im  \int_{-\infty}^{v}(B(v)-B(w))e^{v+\rho}W_{w,\epsilon}(v)f_0^k(\rho)e^{2\rho}\,d\rho.
\end{split}
\end{equation}

Therefore by \eqref{BSD19.1} and \eqref{BSD25.2}-\eqref{BSD25.3}, we obtain that 
\begin{equation}
\begin{split}
&  \lim_{\epsilon\to0+}\Im\,\Gamma^+_{k,\epsilon}(v,w)\\
&=\pi \frac{B(v)-B(w)}{(\partial_wB(w))^2}e^{v+w}\mathbf{1}_{v<w}f_0^k(w)+\pi\mathbf{1}_{v<w}\int_{v}^{w} \frac{e^v (B(v)-B(w))}{\partial_wB(w)}\partial_w\Big[\frac{e^{-2w}}{\partial_wB(w)}\Big]f_0^k(\rho)e^{3\rho}\,d\rho\\
&\quad+\pi\mathbf{1}_{v<w}\int_{-\infty}^{v} \frac{e^v (B(v)-B(w))}{\partial_wB(w)}\partial_w\Big[\frac{e^{-2w}}{\partial_wB(w)}\Big]f_0^k(\rho)e^{3\rho}\,d\rho.
\end{split}
\end{equation}
The desired formula \eqref{BSD25} follows from the identity for $w\in\R$,
\begin{equation}\label{BSD27}
2\partial_wB(w)+\partial_w^2B(w)=e^{2w}D(w).
\end{equation}


\section{Bounds on the spectral density function I: preliminary bounds}\label{sec:sdf1}
In this section we obtain important bounds on the spectral density functions $\Pi_{k,\epsilon}^{\iota}(v,w)$ with $k\in\Z\backslash\{0\}, \iota\in\{+,-\}, \epsilon\in[-1/8, 1/8]\backslash\{0\}$ and $v,w\in\R$, with a focus on capturing the optimal decay property. We first establish bounds in low regularity Sobolev spaces, which are then used as a stepping stone to obtain stronger bounds in Gevrey spaces. The main tools are the limiting absorption principle proved in section \ref{sec:lap}, and commutator type arguments.  

For applications below, we fix smooth cutoff functions $\Psi, \Psi^\ast, \Psi^{\ast\ast}:\R\to[0,1]$ with $\Psi\in C^\infty(-2,2)$ satisfying $\Psi\equiv1$ on $[-1,1]$,  $\Psi^\ast\in C^\infty_0(-4,4)$ satisfying $\Psi^\ast\equiv1$ on $[-3,3]$, and $\Psi^{\ast\ast}\in C^\infty_0(-5,5)$ satisfying $\Psi^{\ast\ast}\equiv1$ on $[-9/2,9/2]$. Arrange in addition for $h\in\{\Psi,\Psi^\ast,\Psi^{\ast\ast}\}$, that
$\sup_{\xi\in\R}\big|e^{\langle\xi\rangle^{5/6}}\widehat{\,h\,}(\xi)\big|\lesssim 1$.

\subsection{The bounds for $\Pi_{k,\epsilon}^{\iota}(v,w)$ for $|k|\ge2$}
Recall that the spectral density function $\Pi_{k,\epsilon}^{\iota}(v,w)$ with $k\in\Z\backslash\{0\}, \iota\in\{+,-\}, \epsilon\in[-1/8, 1/8]\backslash\{0\}$ and $v,w\in\R$, satisfy for $v\in\R, w\in\R$,
\begin{equation}\label{BSD1}
(k^2-\partial_v^2)\Pi_{k,\epsilon}^{\iota}(v,w)+\frac{e^{2v}D(v)}{B(v)-B(w)+i\iota\epsilon}\Pi_{k,\epsilon}^{\iota}(v,w)=\frac{e^{2v}f_0^k(v)}{B(v)-B(w)+i\iota \epsilon}.
\end{equation}
Our goal is to use the limiting absorption principle to obtain bounds on the spectral density function $\Pi_{k,\epsilon}^{\iota}(v,w)$. Recall that $\Phi_0(v)\in C_0^\infty(-\infty, -1)$ with $
\Phi_0\equiv 1$ on $(-\infty, -2]$ and $\sup_{\xi\in\R}\big|e^{\langle \xi\rangle^{8/9}}\widehat{\partial_v\Phi_0}(\xi)\big|\lesssim1$, and 
\begin{equation}\label{BSD2}
\Pi_{k,\epsilon}^\iota(v,w)=(\sigma_k/c_\ast)e^{|k|v}\Phi_{0}(v)+\Gamma_{k,\epsilon}^{\iota}(v,w).
\end{equation}
It follows from \eqref{BSD1} that $\Gamma_{k,\epsilon}^{\iota}(v,w)$ satisfies the equation
\begin{equation}\label{BSD3}
\begin{split}
&(k^2-\partial_v^2)\Gamma_{k,\epsilon}^{\iota}(v,w)+\frac{e^{2v}D(v)}{B(v)-B(w)+i\iota\epsilon}\Gamma_{k,\epsilon}^{\iota}(v,w)\\
&=\frac{e^{2v}F_{0k}(v)}{B(v)-B(w)+i\iota \epsilon}+(\sigma_k/c_\ast)\big(2|k|e^{|k|v}\partial_v\Phi_{0}(v)+e^{|k|v}\partial_v^2\Phi_{0}(v)\big).
\end{split}
\end{equation}
\begin{lemma}\label{BSD4}
There exists $\epsilon_\ast\in(0,1)$ sufficiently small such that the following statement holds. Assume that $k\in\Z\backslash\{0\}$ with $|k|\ge2$, $\iota\in\{+,-\}$, $w_\ast\in\R$, and $\epsilon\in\R$ with $0<|\epsilon|<\epsilon_\ast e^{-2|w_\ast|}$. Recall the definition \eqref{LAP24} for $\varrho_{k,w}(v), v\in\R$. Then the spectral density function $\Gamma_{k,\epsilon}^\iota(v,w)$ satisfies for $w_\ast\leq-15$
\begin{equation}\label{BSD5}
\begin{split}
&\Big\|\sup_{j\in\Z}\Big[\big\|\varrho_{\varkappa_k,w_\ast}(v)\Gamma_{k,\epsilon}^\iota(v,w)\Psi(w-w_\ast)\big\|_{L^2(v\in[j,j+2])}\\
&\qquad+|k|^{-1}\big\|\varrho_{\varkappa_k,w_\ast}(v)\partial_v\Gamma_{k,\epsilon}^\iota(v,w)\Psi(w-w_\ast)\big\|_{L^2(v\in[j,j+2])}\Big]\Big\|_{L^2(w\in\R)}\lesssim (M_k^{\dagger}+|\sigma_k|)/|k|;
\end{split}
\end{equation}
and for $w_\ast\ge-15$,
\begin{equation}\label{BSD6}
\begin{split}
&\Big\|\sup_{j\in\Z}\Big[\big\|e^{\varkappa_k|v|}\Gamma_{k,\epsilon}^\iota(v,w)\Psi(w-w_\ast)\big\|_{L^2(v\in[j,j+2])}\\
&\qquad+|k|^{-1}\big\|e^{\varkappa_k|v|}\partial_v\Gamma_{k,\epsilon}^\iota(v,w)\Psi(w-w_\ast)\big\|_{L^2(v\in[j,j+2])}\Big]\Big\|_{L^2(\R)}\lesssim (M_k^{\dagger}+|\sigma_k|)/|k|.
\end{split}
\end{equation}
\end{lemma}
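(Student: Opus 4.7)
The plan is to reformulate \eqref{BSD3} as an integral equation and invoke the limiting absorption principles of Section \ref{sec:lap}. For $w_\ast \geq -15$, convolving \eqref{BSD3} with the elementary Green's function $\frac{1}{2|k|}e^{-|k||v-\rho|}$ of $k^2 - \partial_v^2$ gives $(I + T_{k,\iota\epsilon}^w)\Gamma_{k,\epsilon}^\iota(\cdot, w) = N(\cdot, w)$ with $T_{k,\iota\epsilon}^w$ as in \eqref{LAP9}, after which Lemma \ref{LAP10} is applied (with $k_\ast = \varkappa_k$) at each fixed $w$. For $w_\ast \leq -15$ we use instead the long-range Green's function $\mathcal{G}_k^w$ from Lemma \ref{Enh1} to write $(I + S_{k,\iota\epsilon}^w)\Gamma_{k,\epsilon}^\iota(\cdot, w) = \tilde N(\cdot, w)$ with $S_{k,\iota\epsilon}^w$ from \eqref{LAP23}, and apply Lemma \ref{LAP25}. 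The cutoff $\Psi(w - w_\ast)$ restricts $w$ to $[w_\ast - 2, w_\ast + 2]$, placing us in the regime appropriate for the chosen Green's function; the pointwise-in-$w$ bound on the $Y$-norm of $\Gamma_{k,\epsilon}^\iota$ is then promoted to the $L^2(w)$ bound by Minkowski. The qualitative membership of $\Gamma_{k,\epsilon}^\iota$ in the relevant $Y$-space follows from standard properties of the resolvent $(\lambda - L_k)^{-1}$ for $\Im\lambda \neq 0$, combined with the fact that $|k| \geq 2$ makes the qualitative condition \eqref{LAP11.1} of Lemma \ref{LAP10} vacuous.

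The right-hand side decomposes as $N = N_F + N_\sigma$, where $N_F$ comes from the $\frac{e^{2\rho} F_{0k}(\rho)}{B(\rho) - B(w) + i\iota\epsilon}$ term and $N_\sigma$ from the explicit source $(\sigma_k/c_\ast)(2|k| e^{|k|v}\partial_v\Phi_0 + e^{|k|v}\partial_v^2\Phi_0)$, which is compactly supported in $[-2, -1]$ since $\Phi_0 \equiv 1$ on $(-\infty, -2]$. To control $N_F$ we use Assumption \ref{MARr1} applied to the dyadic pieces $F_{0k}^j$, together with the same singular-factor analysis that drives the proofs of Lemmas \ref{LAP9.1} and \ref{LAP25}: split the kernel into a near-diagonal piece $|\rho - w| \leq 1/\sqrt{|k|}$ and its complement, bound the off-diagonal piece by the pointwise size of $1/|B(\rho) - B(w)|$, and on the near-diagonal piece integrate by parts in $\rho$ against $\partial_\rho \log(B(\rho) - B(w) + i\iota\epsilon)$ to trade the singular factor for a locally $L^2$-integrable logarithmic weight and a derivative falling on $F_{0k}$. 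For $N_\sigma$ the pointwise Green's function estimate (from Lemma \ref{Enh1} in the long-range case, or the explicit exponential otherwise) combined with the compact support of the source leads directly to the desired bound.

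The main obstacle is establishing uniformity in $w_\ast$ for the $N_\sigma$ contribution in the long-range regime $w_\ast \leq -15$. After applying the pointwise Green's function estimate one is left with controlling $\sup_{\rho \in [-2, -1]} \varrho_{\varkappa_k, w_\ast}(v)\, \varpi_{k, w}(v, \rho)$ uniformly in $v \in \R$ and $|w - w_\ast| \leq 2$. Case analysis on the position of $v$ relative to $w_\ast$ and $0$ shows: for $v \in [w_\ast, 0]$ the weight grows as $e^{\mu_{\varkappa_k}|v|}$ and is compensated by the factor $e^{-\mu_k |v - \rho|}$ from the vortex-depletion refinement of the Green's function (using $\mu_{\varkappa_k} \leq \mu_k$); for $v < w_\ast$ the $w_\ast$-dependent factor $e^{(\mu_{\varkappa_k} - |k|)|w_\ast|}$ in the weight is canceled by the contribution $e^{-(\mu_k - |k|)\, d_w(v, \rho)}$ of the Green's function evaluated over the $[w, 0]$ interval, for $w$ close to $w_\ast$. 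This cancellation is precisely the point where the enhanced decay in Lemma \ref{Enh1} is used quantitatively. A secondary technical challenge is the Fubini-type exchange of $L^2(w)$ with the principal-value-style integration by parts in the near-diagonal region for $N_F$, which we intend to handle by a localization argument paralleling the one used in Lemma \ref{LAP9.1}.
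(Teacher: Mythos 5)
Your proposal follows essentially the same route as the paper: reformulate \eqref{BSD3} as an integral equation using the elementary Green's function $\frac{1}{2|k|}e^{-|k||v-\rho|}$ for $w_\ast \ge -15$ and the long-range Green's function $\mathcal{G}_k^w$ for $w_\ast \le -15$, bound the source terms (the paper delegates this to the later, stronger estimates \eqref{GVS10}--\eqref{GVS12'} and \eqref{GVS21.3}--\eqref{GVS21.4}, which use the same logarithmic integration-by-parts near the singular set and the same depletion-weight cancellation you describe), then apply the limiting absorption principle of Lemmas \ref{LAP10} and \ref{LAP25} at each fixed $w$ and square-integrate. Your extra detail on the $N_F/N_\sigma$ decomposition and the case analysis showing $\mu_{\varkappa_k}\le\mu_k$ closes the weight bookkeeping is correct and is exactly what the paper implicitly relies on in the referenced sections.
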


\begin{proof}
We can assume $k\ge2$ and $M_k^{\dagger}+|\sigma_k|=1$ without loss of generality. For $w_\ast\leq-15$, we can reformulate the equation \ref{BSD3} as 
\begin{equation}\label{BSD7}
\begin{split}
&\Gamma_{k,\epsilon}^{\iota}(v,w)+\int_\R \mathcal{G}_k^w(v,\rho)\Big[\frac{e^{2\rho}D(\rho)}{B(\rho)-B(w)+i\iota\epsilon}-V_w(\rho)\Big]\Gamma_{k,\epsilon}^{\iota}(\rho,w)\,d\rho\\
&=\mathcal{R}_1(v,w):=\int_\R \mathcal{G}_k^w(v,\rho)\bigg[\frac{e^{2\rho}F_{0k}(\rho)}{B(\rho)-B(w)+i\iota \epsilon}+\frac{\sigma_k}{c_\ast}\big(ke^{k\rho}\partial_\rho\Phi_{0}(\rho)+e^{k\rho}\partial_\rho^2\Phi_{0}(\rho)\big)\bigg]\,d\rho.
\end{split}
\end{equation}
We have the bounds  (recalling the definitions \eqref{LAP22}-\eqref{LAP23} for $Y_{k,k_\ast,w}$)
\begin{equation}\label{BSD8}
\big\|\|\mathcal{R}_1(v,w)\Psi(w-w_\ast)\|_{Y_{k,\varkappa_k,w_\ast}}\big\|_{L^2(w\in\R)}\lesssim1/|k|.
\end{equation}
The proof of \eqref{BSD8} is standard, using the boundedness of Hilbert transforms, and is also subsumed in the proof of the stronger bounds  \eqref{GVS21.3}-\eqref{GVS21.4} below. We omit the repeated details and refer to the proof of \eqref{GVS21.3}-\eqref{GVS21.4} for details. The desired bounds \eqref{BSD5} then follow from the limiting absorption principle, see Lemma \ref{LAP25}, applied for each $w$ with $|w-w_\ast|\leq 2$ and then square integrate in $w$. 

For $w\ge-15$, we can reformulate the equation  \ref{BSD3} as 
\begin{equation}\label{BSD9}
\begin{split}
&\Gamma_{k,\epsilon}^{\iota}(v,w)+\frac{1}{2|k|}\int_\R e^{-|k(v-\rho)|}\frac{e^{2\rho}D(\rho)}{B(\rho)-B(w)+i\iota\epsilon}\Gamma_{k,\epsilon}^{\iota}(\rho,w)\,d\rho\\
&=\mathcal{R}_2(v,w):=\frac{1}{2|k|}\int_\R e^{-|k(v-\rho)|}\bigg[\frac{e^{2\rho}F_{0k}(\rho)}{B(\rho)-B(w)+i\iota \epsilon}+\frac{\sigma_k}{c_\ast}\big(2ke^{k\rho}\partial_\rho\Phi_0(\rho)+e^{k\rho}\partial_\rho^2\Phi_0(\rho)\big)\bigg]\,d\rho.
\end{split}
\end{equation}
We have the bounds (see \eqref{LAP7}-\eqref{LAP8} for the definition of the space $Y_{k,\varkappa_k}$)
\begin{equation}\label{BSD10}
\Big\|\big\|\mathcal{R}_2(v,w)\Psi(w-w_\ast)\big\|_{Y_{k,\varkappa_k}}\Big\|_{L^2(w\in\R)}\lesssim1/|k|.
\end{equation}
The proof of \eqref{BSD10} follows from standard calculations and the boundedness property of Hilbert transforms, and is subsumed in the proof of the stronger bounds \eqref{GVS10}-\eqref{GVS12'}. We omit the repeated details and refer to the proof of \eqref{GVS10}-\eqref{GVS12'}.

The desired bounds \eqref{BSD6} then follow from the limiting absorption principle, see Lemma \ref{LAP10}, applied for each $w$ with $|w-w_\ast|\leq 2$ and square integrate in $w$. 
\end{proof}

 Assume that $k\in\Z\backslash\{0\}$ with $|k|\ge2$, $\iota\in\{+,-\}$, $w_\ast\in\R$, and $\epsilon\in\R$ with $0<\epsilon<\epsilon_\ast \,e^{-2|w_\ast|}$ for sufficiently small $\epsilon_\ast>0$ from Lemma \ref{BSD4}. We now turn to the Gevrey estimates of $\Gamma_{k,\epsilon}^{\iota}$.

\subsubsection{The case $w\ge-15$} We first consider the the case $w\ge-15$. For fixed $w_\ast\in[-10,\infty)$, we define for $v,w\in\R$,
\begin{equation}\label{GVS1}
\Theta_{k,\epsilon}^{\iota, w_\ast}(v, w):=\Psi(w-w_\ast)\Gamma_{k,\epsilon}^{\iota}(v+w,w).
\end{equation}
 Define the Fourier multiplier operator $A_k$ as
\begin{equation}\label{GVS4}
\widehat{A_kh}(\xi):=e^{\delta_1\langle k,\xi\rangle^{1/2}}\widehat{\,\,h\,\,}(\xi),\qquad {\rm for\,\,any\,\,}\xi\in\R, h\in L^2(\R),
\end{equation}
and the norm for functions $h:\R^2\to \mathbb{C}$ with $h\in L^2(\R^2)$,
\begin{equation}\label{GVSM1}
\begin{split}
\|h\|_Z:=\Big\|\sup_{j\in\Z} \Big[&\big\|e^{\varkappa_k|v+w_\ast|}A_k\big[\Theta_{k,\epsilon}^{\iota, w_\ast}(v, \cdot)\big](w)\big\|_{L^2(v\in[j,j+2])}\\
&+|k|^{-1}\big\|e^{\varkappa_k|v+w_\ast|}\partial_vA_k\big[\Theta_{k,\epsilon}^{\iota, w_\ast}(v, \cdot)\big](w)\big\|_{L^2(v\in[j,j+2])}\Big]\Big\|_{L^2(\R)}.
\end{split}
\end{equation}
For notational conveniences, we also introduce the shifted $Y_k$ norm for $h:\R^2\to\mathbb{C}$,
\begin{equation}\label{GVSM1.1}
\begin{split}
\|h\|_{Y_{\varkappa_k}^{+w_\ast}}:=\Big\|\sup_{j\in\Z} \Big[&\big\|e^{\varkappa_k|v+w_\ast|}h(v,w)\big\|_{L^2(v\in[j,j+2])}\\
&+|k|^{-1}\big\|e^{\varkappa_k|v+w_\ast|}\partial_vh(v,w)\big\|_{L^2(v\in[j,j+2])}\Big]\Big\|_{L^2(w\in\R)}.
\end{split}
\end{equation}
Then
\begin{equation}\label{GVSM1.2}
\big\|h(v,w)\big\|_Z=\big\|A_k\big[h(v,\cdot)\big](w)\big\|_{Y_{\varkappa_k}^{+w_\ast}}.
\end{equation}
We have the following result.
\begin{proposition}\label{GVSM2}
 Assume that $k\in\Z\backslash\{0\}$ with $|k|\ge2$,  $w_\ast\in[-15,\infty)$, $\iota\in\{+,-\}$ and $\epsilon\in\R$ with $0<\epsilon<e^{-2|w_\ast|}\epsilon_\ast$. We have the bounds
\begin{equation}\label{GVSM3}
\begin{split}
&\sup_{j\in\Z}\big\|e^{\varkappa_k|v+w_\ast|}\big[ \big|A_k\big[\Theta_{k,\epsilon}^{\iota, w_\ast}(v, \cdot)\big](w)\big|+|k|^{-1}\big|\partial_vA_k\big[\Theta_{k,\epsilon}^{\iota, w_\ast}(v, \cdot)\big](w)\big|\big]\big\|_{L^2(v\in[j,j+2],w\in\R)}\\
&\lesssim (M_k^{\dagger}+|\sigma_k|)/|k|. 
\end{split}
\end{equation} 
\end{proposition}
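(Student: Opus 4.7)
The strategy is a commutator argument applied to the integral reformulation of the equation for $\Theta_{k,\epsilon}^{\iota,w_\ast}$, using Lemma~\ref{BSD4} as a Sobolev-level seed estimate and the limiting absorption principle of Lemma~\ref{LAP10} to close a fixed-point inequality in the Gevrey norm on $w$. First, translating \eqref{BSD3} by $v \mapsto v+w$ and multiplying by $\Psi(w-w_\ast)$ yields
\begin{equation*}
(k^2-\partial_v^2)\Theta_{k,\epsilon}^{\iota,w_\ast}(v,w) + P_{\epsilon}^\iota(v,w)\,\Theta_{k,\epsilon}^{\iota,w_\ast}(v,w) = \widetilde{\mathcal{R}}(v,w),
\end{equation*}
where $P_{\epsilon}^\iota(v,w) := e^{2(v+w)}D(v+w)/[B(v+w)-B(w)+i\iota\epsilon]$ and $\widetilde{\mathcal{R}}$ is the shifted right-hand side of \eqref{BSD3}. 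Inverting $k^2-\partial_v^2$ against $(2|k|)^{-1}e^{-|k||v-\rho|}$ recasts this as the shifted analog of \eqref{LAP1.1}: $(I + \widetilde{T}_{k,\epsilon})\Theta_{k,\epsilon}^{\iota,w_\ast} = \widetilde{S}$, with $\widetilde{T}_{k,\epsilon}$ the shifted version of $T_{k,\epsilon}^w$. Lemma~\ref{BSD4}, specifically \eqref{BSD6}, provides the baseline $\|\Theta_{k,\epsilon}^{\iota,w_\ast}\|_{Y^{+w_\ast}_{\varkappa_k}} \lesssim (M_k^\dagger+|\sigma_k|)/|k|$.

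Next I would apply $A_k$ in $w$ to both sides and decompose $A_k \widetilde{T}_{k,\epsilon} = \widetilde{T}_{k,\epsilon} A_k + [A_k, \widetilde{T}_{k,\epsilon}]$, giving
\begin{equation*}
(I + \widetilde{T}_{k,\epsilon})\bigl(A_k \Theta_{k,\epsilon}^{\iota,w_\ast}\bigr) = A_k \widetilde{S} - [A_k,\widetilde{T}_{k,\epsilon}]\Theta_{k,\epsilon}^{\iota,w_\ast}.
\end{equation*}
The $F_{0k}$-contribution to $A_k\widetilde{S}$ is controlled by combining Assumption~\ref{MARr1} with Lemma~\ref{gbo3}(i) applied to $(B(v+w)-B(w)+i\iota\epsilon)^{-1}$, which supplies $\delta_0$-Gevrey regularity in $w$ (ample since $\delta_0 \gg \delta_1$); the $\sigma_k$-piece is direct from the smoothness of $\Phi_0$ and $\Psi$. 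For the commutator, after integrating by parts in $\rho$ inside $\widetilde{T}_{k,\epsilon}$ as in \eqref{LA4}--\eqref{LA7} to replace $1/[B(v+w)-B(w)+i\iota\epsilon]$ by a logarithmic kernel, the elementary estimate $|A_k(\xi-\beta)-A_k(\xi)| \lesssim \langle\xi\rangle^{-1/2}A_k(\xi)\langle\beta\rangle^{1/2}$ for $|\beta|\leq\tfrac12|\xi|$ (reflecting the Gevrey exponent $1/2$) furnishes a half-derivative gain. Dividing the $\xi$-frequency at $|\xi|=\gamma^{-1}$ for a small parameter $\gamma$, the low-frequency part of $[A_k,\widetilde{T}_{k,\epsilon}]\Theta_{k,\epsilon}^{\iota,w_\ast}$ is absorbed by \eqref{BSD6}, while the high-frequency part contributes $C\gamma^{1/2}\|A_k\Theta_{k,\epsilon}^{\iota,w_\ast}\|_{Y^{+w_\ast}_{\varkappa_k}}$. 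Applying Lemma~\ref{LAP10} to $A_k\Theta_{k,\epsilon}^{\iota,w_\ast}$ uniformly in $w$ then yields
\begin{equation*}
\|A_k\Theta_{k,\epsilon}^{\iota,w_\ast}\|_{Y^{+w_\ast}_{\varkappa_k}} \lesssim (M_k^\dagger+|\sigma_k|)/|k| + C\gamma^{1/2}\|A_k\Theta_{k,\epsilon}^{\iota,w_\ast}\|_{Y^{+w_\ast}_{\varkappa_k}},
\end{equation*}
and choosing $\gamma$ small absorbs the last term, delivering \eqref{GVSM3}. The qualitative a priori finiteness of the left-hand norm is removed by the truncation $A_{k,\rho}$ from \eqref{P1}--\eqref{P2} followed by $\rho\to\infty$.

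The principal obstacle is controlling the commutator at the near-resonant scale where $v$ is close to $0$ and $B(v+w)-B(w)+i\iota\epsilon$ nearly vanishes. This is handled as in Lemma~\ref{LAP9.1}: integration by parts in $\rho$ transfers the singularity onto $\log|B(\rho)-B(w)+i\iota\epsilon|/\partial_wB(w)$, whose $L^p$ behavior is benign and compatible with the $\langle\xi\rangle^{-1/2}$ half-derivative gain from the Gevrey weight. Finally, verifying that the physical-space weight $e^{\varkappa_k|v+w_\ast|}$ survives the shifted-convolution structure of $\widetilde{T}_{k,\epsilon}$ rests on the pointwise inequalities \eqref{LA7.01}--\eqref{LA7.02}, which remain valid after the shift because $\Psi(\cdot-w_\ast)$ restricts $|w-w_\ast|\leq 2$ so that $w$ and $w_\ast$ are interchangeable up to bounded factors.
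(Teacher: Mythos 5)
Your proposal follows essentially the same architecture as the paper's proof: seed Sobolev estimate from Lemma~\ref{BSD4}, shifted integral reformulation, application of $A_k$ in $w$ with commutator decomposition, half-derivative gain from the Gevrey-$1/2$ multiplier via $|A_k(\xi-\beta)-A_k(\xi)|\lesssim\langle\xi\rangle^{-1/2}A_k(\xi)\langle\beta\rangle^{1/2}$, frequency splitting to absorb the commutator with a small factor $\gamma$, and the limiting absorption principle (Lemma~\ref{LAP10}) applied pointwise in $w$ and then square-integrated, with the qualitative a priori finiteness removed by the $A_{k,\rho}$-truncation. One place where your sketch is slightly imprecise is the treatment of the near-resonant commutator kernel: to run the commutator estimate you need quantitative Gevrey-$\delta_0$ regularity of the kernel (in $w$), not merely benign $L^p$ behavior of the log; after your integration by parts this Gevrey regularity is exactly what Lemma~\ref{gbo3} supplies — the paper bypasses the integration by parts and applies Lemma~\ref{gbo3} directly to $K$ and $K^\ast=\Psi^\ast K$, splitting $\rho$ into the near-singular region $|\rho|\lesssim 1$ (where the $(\rho,w)$-Fourier bound on $K^\ast$ is used) and $|\rho|\gtrsim 1$ (where the $w$-Fourier bound on $K$ with the explicit $|B'(w_\ast)/(B(\rho+w_\ast)-B(w_\ast))|$ factor is used). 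Either route works, but the $\rho$-localization and the identification of the precise Gevrey input on the kernel are the two details worth spelling out.
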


\begin{proof}
We can assume, without loss of generality, $k\ge2$ and $M_k^{\dagger}+|\sigma_k|=1$. By lemma \ref{BSD4} we have
\begin{equation}\label{GVSM2.01}
\big\|\Theta_{k,\epsilon}^{\iota, w_\ast}(v, w)\big\|_{Y_{\varkappa_k}^{+w_\ast}}\lesssim 1/|k|,
\end{equation} 
which we will use to control the low frequency part of the desired bounds \eqref{GVSM3}.

Denote 
$$M:=\sup_{j\in\Z}\big\|e^{\varkappa_k|v+w_\ast|}\big[ \big|A_k\big[\Theta_{k,\epsilon}^{\iota, w_\ast}(v, \cdot)\big](w)\big|+|k|^{-1}\big|\partial_vA_k\big[\Theta_{k,\epsilon}^{\iota, w_\ast}(v, \cdot)\big](w)\big|\big]\big\|_{L^2(v\in[j,j+2],w\in\R)}.$$ We need to prove that $M\lesssim1/|k|$. From the equation \eqref{BSD9}, we obtain that for $v,w\in\R$,
\begin{equation}\label{GVS2}
\begin{split}
&\Theta_{k,\epsilon}^{\iota, w_\ast}(v, w)+\frac{1}{2|k|}\int_\R e^{-|k||v-\rho|}\frac{e^{2\rho+2w}D(\rho+w)}{B(\rho+w)-B(w)+i\iota\epsilon}\Theta_{k,\epsilon}^{\iota, w_\ast}(\rho, w)\,d\rho\\
&=\frac{1}{2|k|}\int_\R e^{-|k||v-\rho|}\frac{e^{2\rho+2w}}{B(\rho+w)-B(w)+i\iota\epsilon}h_{0k}^1(\rho,w)\,d\rho+\frac{1}{2|k|}\int_\R e^{-|k||v-\rho|} h_{0k}^2(\rho,w)\,d\rho\\
&:=H_{1k}(v,w)+H_{2k}(v,w).
\end{split}
\end{equation}
where for $\rho, w\in\R$, (recall that $\sigma_k=0$ for $|k|\ge k^\dagger$)
\begin{equation}\label{GVS3}
\begin{split}
&h_{0k}^1(\rho,w)=F_{0k}(\rho+w)\Psi(w-w_\ast),\\
&h_{0k}^2(\rho,w)=\frac{\sigma_k}{c_\ast}e^{|k|(\rho+w)}\big(2|k|\partial_\rho \Phi_{0}(\rho+w)+\partial_\rho^2\Phi_{0}(\rho+w)\big)\Psi(w-w_\ast). 
\end{split}
\end{equation}
Applying the Fourier multiplier operator $A_k$ (in the variable $w$) to \eqref{GVS2} and multiplying the cutoff function $\Psi^\ast(w-w_\ast)$, we obtain
\begin{equation}\label{GVS5}
\begin{split}
&\Psi^\ast(w-w_\ast)A_k\big[\Theta_{k,\epsilon}^{\iota, w_\ast}(v, \cdot)\big](w)+\int_\R \frac{e^{-|k||v-\rho|}e^{2\rho+2w}D(\rho+w)\Psi^\ast(w-w_\ast)}{2|k|(B(\rho+w)-B(w)+i\iota\epsilon)}A_k\big[\Theta_{k,\epsilon}^{\iota, w_\ast}(\rho, \cdot)\big](w)\,d\rho\\
&=\int_\R \frac{e^{-|k||v-\rho|}}{2|k|}e^{2\rho+2w_\ast}\frac{\Psi^\ast(w-w_\ast)}{B'(w_\ast)}\Big\{K(\rho,w)A_k\big[\Theta_{k,\epsilon}^{\iota, w_\ast}(\rho, \cdot)\big](w)-A_k \big[K(\rho,\cdot)\Theta_{k,\epsilon}^{\iota, w_\ast}(\rho, \cdot)\big](w)\Big\}\,d\rho\\
&\quad+\Psi^\ast(w-w_\ast)A_k\Big[H_{1k}(v,\cdot)\Big](w)+\Psi^\ast(w-w_\ast)A_k\Big[H_{2k}(v,\cdot)\Big](w)\\
&:=\mathcal{C}_k(v,w)+\Psi^\ast(w-w_\ast)A_k\Big[H_{1k}(v,\cdot)\Big](w)+\Psi^\ast(w-w_\ast)A_k\Big[H_{2k}(v,\cdot)\Big](w).
\end{split}
\end{equation}
where we set for $\rho,w\in\R$,
\begin{equation}\label{GVS6}
K(\rho,w):= \frac{e^{2(w-w_\ast)}D(\rho+w)B'(w_\ast)}{B(\rho+w)-B(w)+i\iota\epsilon}\Psi^{\ast\ast}(w-w_\ast),\qquad K^\ast(\rho,w):=\Psi^\ast(\rho)K(\rho,w).
\end{equation}
By the definitions \eqref{GVS3}, \eqref{GVS6} and the assumption \ref{MARr1}, (denoting $\widehat{\,\varphi\,}(\rho,\xi)$ as the Fourier transform of $\varphi(\rho,w)$ in $w$), in view of lemma \ref{gbo3}, we have the following bounds for $\rho\in\R$ and $w_\ast\in[-15,\infty)$,
\begin{equation}\label{GVS7}
\begin{split}
&\big\|e^{\delta_1\langle k,\xi\rangle^{1/2}}\widehat{\,h_{0k}^1}(\rho,\xi)\big\|_{L^2(\xi\in\R)}\lesssim \frac{e^{\mu^\ast_{\varkappa_k}(\rho+w_\ast)}}{1+e^{(\mu^\ast_{\varkappa_k}+\varkappa_k+8)(\rho+w_\ast)}},\\
&\big\|e^{\langle k,\xi\rangle^{4/5}}\widehat{\,h_{0k}^2}(\rho,\xi)\big\|_{L^2(\xi\in\R)}\lesssim \mathbf{1}_{[-w_\ast-8,-w_\ast+4]}(\rho),\\
&\big\|e^{\delta_0\langle \xi\rangle^{1/2}}\widehat{\,K\,}(\rho,\xi)\big\|_{L^2(\xi\in\R)}\lesssim \frac{1}{1+e^{8(\rho+w_\ast)}}\Big|\frac{B'(w_\ast)}{B(\rho+w_\ast)-B(w_\ast)+i\epsilon}\Big|, \quad{\rm for}\,\,|\rho|\ge1/2,\\
&\sup_{\eta\in\R}\big\|e^{\delta_0\langle \xi\rangle^{1/2}}\widehat{\,K^\ast}(\eta,\xi)\big\|_{L^2(\xi\in\R)}\lesssim \frac{1}{1+e^{8w_\ast}}.
\end{split}
\end{equation}
To apply the limiting absorption principle, see lemma \ref{LAP10}, we need to bound the terms in the last line of \eqref{GVS5} and prove that for $h\in\big\{\mathcal{C}_k(v,w), A_k\big[H_{1k}(v,\cdot)\big](w), A_k\big[H_{2k}(v,\cdot)\big](w)\big\}$,
\begin{equation}\label{GVS7.2}
\big\|h\big\|_{Y_k^{+w_\ast}}\lesssim (C_\gamma+\gamma M)/|k|,
\end{equation}
for sufficiently small $\gamma>0$ and suitable $C_\gamma\in(0,\infty)$.
 We first bound the term $A_k\big[H_{1k}(v,\cdot)\big](w)$. Set for $v\in\R$,
\begin{equation}\label{GVS8.9}
F_{0k}^{w_\ast}(v):=\Psi^{\ast\ast}(v-w_\ast)F_{0k}(v),
\end{equation}
and
\begin{equation}\label{GVS9}
\begin{split}
h_{0k}^{1\ast}(\rho,w)&:=\Psi(\rho)\frac{e^{2w-2w_\ast}B'(w_\ast)}{B(\rho+w)-B(w)+i\iota\epsilon}h_{0k}^{1}(\rho,w)\Psi^\ast(w-w_\ast)\\
&=\Psi(\rho)F_{0k}^{w_\ast}(\rho+w)\frac{e^{2w-2w_\ast}B'(w_\ast)}{B(\rho+w)-B(w)+i\iota\epsilon}\Psi^\ast(w-w_\ast).
\end{split}
\end{equation}
it follows from \eqref{GVS3} and lemma \ref{gbo3} that
\begin{equation}\label{GVS9.0'}
\begin{split}
&\Big\|\int_\R e^{-(\delta_0/2)\langle\alpha\rangle^{1/2}} \big|A_k\big[h_{0k}^1(\rho,\cdot)e^{i\alpha\cdot}\big](w)\big|\,d\alpha\Big\|_{L^2(w\in\R)}\\
&\lesssim \frac{e^{\mu^\ast_{\varkappa_k}(\rho+w_\ast)}}{1+e^{(\mu^\ast_{\varkappa_k}+\varkappa_k+8)(\rho+w_\ast)}},
\end{split}
\end{equation}
and for $\eta\in\R$,
\begin{equation}\label{GVS9.0}
\begin{split}
&\Big\|\int_\R A_k\big[h_{0k}^{1\ast}(\rho,\cdot)\big](w) e^{-i\rho \eta}\,d\rho\Big\|_{L^2(w\in\R)}\\
&\lesssim \Big\|\int_{\R}e^{-\delta_0\langle\alpha\rangle^{1/2}}e^{\delta_1\langle k,\xi\rangle^{1/2}}\big|\widehat{\,F_{0k}^{w_\ast}}(\xi-\alpha)\big|\,d\alpha\Big\|_{L^2(\xi\in\R)}\lesssim  \frac{e^{\mu^\ast_{\varkappa_k}w_\ast}}{1+e^{(\mu^\ast_{\varkappa_k}+\varkappa_k+8)w_\ast}}.
\end{split}
\end{equation}
We can then bound $A_k\big[H_{1k}(v,\cdot)\big](w)$ as follows. For $w\in\R$, by lemma \ref{gbo3}, the bounds \eqref{GVS9.0} and Parseval's identity, we have
\begin{equation}\label{GVS10}
\begin{split}
&\sup_{j\in\Z}\Big[e^{\varkappa_k|w_\ast+j|}\big\|\Psi^\ast(v-j)A_k\big[H_{1k}(v,\cdot)\big](w)\big\|_{L^2(v\in\R)}\Big]\\
&\lesssim \sup_{j\in\Z}\Big\|\int_{\R^2}\Psi^\ast(v-j)e^{\varkappa_k|w_\ast+\rho|}\frac{e^{2\rho+2w_\ast}(1-\Psi(\rho)) e^{-\delta_0\langle\alpha\rangle^{1/2}}}{|k|\big|B(\rho+w_\ast)-B(w_\ast)+i\iota\epsilon\big|}\big| A_k\big[h^{1}_{0k}(\rho, \cdot)e^{i\alpha \cdot}\big](w)\big|\,d\rho d\alpha\Big\|_{L^2(v\in\R)}\\
&+\sup_{j\in\Z}\Big\|e^{\varkappa_k|w_\ast+j|}\Psi^\ast(v-j)\frac{1}{2k}\int_{\R}e^{-k|v-\rho|}\frac{e^{2\rho+2w_\ast}\Psi^\ast(\rho)}{B'(w_\ast)}A_k\big[h_{0k}^{1\ast}(\rho,\cdot)\big](w)\,d\rho\Big\|_{L^2(v\in\R)}\\
&\lesssim \int_{\R^2}e^{\varkappa_k|w_\ast+\rho|}\frac{e^{2\rho+2w_\ast}(1-\Psi(\rho)) e^{-\delta_0\langle\alpha\rangle^{1/2}}}{|k|\big|B(\rho+w_\ast)-B(w_\ast)+i\iota\epsilon\big|}\big| A_k\big[h^{1}_{0k}(\rho, \cdot)e^{i\alpha \cdot}\big](w)\big|\,d\rho d\alpha\\
&\qquad+e^{(\varkappa_k+4)w_\ast}\Big\|\int_{\R}\langle k,\eta\rangle^{-2}e^{-\delta_0\langle\eta+\beta\rangle^{3/4}}\Big|\int_{\R}e^{i\beta\rho} A_k\big[h_{0k}^{1\ast}(\rho,\cdot)\big](w)\,d\rho\Big| d\beta\Big\|_{L^2(\eta\in\R)}.
\end{split}
\end{equation}
In view of \eqref{GVS9.0'}-\eqref{GVS9.0}, square integrating \eqref{GVS10} in $w\in\R$, we obtain that
\begin{equation}\label{GVS10.01}
\Big\|\sup_{j\in\Z}\big\|e^{|k||v+w_\ast|}A_k\big[H_{1k}(v,\cdot)\big](w)\big\|_{L^2(v\in[j,j+2])}\Big\|_{L^2(w\in\R)}\lesssim |k|^{-1}.
\end{equation}
In the above, we have used the inequalities for $\alpha,\beta\in\R$,
\begin{equation}\label{GVS13.1}
\begin{split}
\frac{1}{|k|}\Big|\int_{\R^2}e^{-k|v-\rho|}\Psi^\ast(v-j)e^{2\rho}\Psi^\ast(\rho)e^{-iv\alpha-i\rho\beta}\,dv d\rho\Big|\lesssim e^{-\varkappa_k|j|}\langle k,\alpha\rangle^{-2}e^{-\langle\alpha+\beta\rangle^{3/4}}.
\end{split}
\end{equation}
Similarly we have
\begin{equation}\label{GVS11}
|k|^{-1}\Big\|\sup_{j\in\Z}\big\|e^{|k||v+w_\ast|}\partial_vA_k\big[H_{1k}(v,\cdot)\big](w)\big\|_{L^2(v\in[j,j+2])}\Big\|_{L^2(w\in\R)}\lesssim |k|^{-1}.
\end{equation}
\eqref{GVS10.01}-\eqref{GVS11} imply the desired bounds \eqref{GVS7.2} for $h=A_k\big[H_{1k}(v,\cdot)\big](w)$.

The bounds on $A_k\big[H_{2k}(v,\cdot)\big](w)$ is simpler, and using \eqref{GVS7} we have
\begin{equation}\label{GVS12'}
\big\|A_k\big[H_{2k}(v,\cdot)\big](w)\big\|_{Y_{\varkappa_k}^{+w_\ast}}\lesssim |k|^{-1}.
\end{equation}
\eqref{GVS12'} imply \eqref{GVS7.2} for $h=A_k\big[H_{2k}(v,\cdot)\big](w)$.

We now bound the main commutator term $\mathcal{C}_k(v,w)$. Defining for $\rho,w\in\R$,
\begin{equation}\label{GVS12.1}
N(\rho,w):=A_k \big[K(\rho,\cdot)\Theta_{k,\epsilon}^{\iota, w_\ast}(\rho, \cdot)\big](w)-K(\rho,w)A_k\big[\Theta_{k,\epsilon}^{\iota, w_\ast}(\rho, \cdot)\big](w),\quad N^\ast(\rho,w):=\Psi^\ast(\rho)N(\rho,w).
\end{equation}
For the simplicity of notation below we also denote for $\rho\in\R$,
\begin{equation}\label{GVS12.2}
M(\rho):=\Big\|\langle k,\partial_w\rangle^{-1/2}A_k\big[\Theta_{k,\epsilon}^{\iota,w_\ast}(\rho,\cdot)\big](w)\Big\|_{L^2(w\in\R)}+\frac{1}{|k|}\Big\|\langle k,\partial_w\rangle^{-1/2}\partial_\rho A_k\big[\Theta_{k,\epsilon}^{\iota,w_\ast}(\rho,\cdot)\big](w)\Big\|_{L^2(w\in\R)},
\end{equation}
then it follows by dividing the frequency of the variable $w$ into low frequency and high frequency parts, that for any $\gamma\in(0,1)$,
\begin{equation}\label{GVS12.3}
\sup_{j\in\Z}\big\|e^{\varkappa_k|\rho+w_\ast|}M(\rho)\big\|_{L^2(\rho\in[j,j+2])}\lesssim C_\gamma +\gamma M.
\end{equation}
Notice that using \eqref{GVS7}, we have for $|\rho|\ge 1/2$,
\begin{equation}\label{GVS13}
\begin{split}
&\big\|N(\rho,w)\big\|_{L^2(w\in\R)}\lesssim\Big\|\int_\R\widehat{\,K\,}(\rho,\alpha)\Big[e^{\delta_1\langle k,\xi\rangle^{1/2}}-e^{\delta_1\langle k,\xi-\alpha\rangle^{1/2}}\Big]\widehat{\,\,\Theta_{k,\epsilon}^{\iota,w_\ast}}(\rho, \xi-\alpha)\,d\alpha\Big\|_{L^2(\xi\in\R)}\\
&\lesssim \Big\|\int_\R e^{(\delta_0/4)\langle \alpha\rangle^{1/2}}\big|\widehat{\,K\,}(\rho,\alpha)\big|e^{\delta_1\langle k,\xi-\alpha\rangle^{1/2}}\langle k,\xi-\alpha\rangle^{-1/2}\big|\widehat{\,\,\Theta_{k,\epsilon}^{\iota,w_\ast}}(\rho, \xi-\alpha)\big|\,d\alpha\Big\|_{L^2(\xi\in\R)}\\
&\lesssim M(\rho)\frac{1}{1+e^{8(\rho+w_\ast)}}\Big|\frac{B'(w_\ast)}{B(\rho+w_\ast)-B(w_\ast)+i\epsilon}\Big|.
\end{split}
\end{equation}
Denoting $Q(\rho,w):=\Psi^{\ast\ast}(\rho)\Theta_{k,\epsilon}^{\iota, w_\ast}(\rho, w)$ for $\rho,w\in\R$, for the brevity of notations, we have
\begin{equation}\label{GVS13.1}
\begin{split}
&\int_\R \big\|\langle k,\xi\rangle^{-1/2} e^{\delta_1\langle k,\xi\rangle^{1/2}}\widehat{\,Q\,}(\gamma,\xi)\big\|_{L^2(\xi\in\R)}\,d\gamma\\
&\lesssim |k|^{1/2}\big\|\langle\gamma/k\rangle \langle k,\xi\rangle^{-1/2}e^{\delta_1\langle k,\xi\rangle^{1/2}}\widehat{\,Q\,}(\gamma,\xi)\big\|_{L^2(\gamma,\xi\in\R)}\lesssim |k|^{1/2}\big\|\Psi^\ast(\rho)M(\rho)\big\|_{L^2(\rho\in\R)}.
\end{split}
\end{equation}
Therefore, in view of by \eqref{GVS7}, \eqref{GVS12.1} and \eqref{GVS13.1}, we have for $\eta\in\R$,
\begin{equation}\label{GVS13.5}
\begin{split}
&\Big\|\int_\R N^\ast(\rho,w)e^{-i\rho\eta}\,d\rho\Big\|_{L^2(w\in\R)}\\
&\lesssim\Big\|\int_{\R^2} \big|\widehat{\,\,K^\ast}(\gamma,\alpha)\big|\Big|e^{\delta_1\langle k,\xi\rangle^{1/2}}-e^{\delta_1\langle k,\xi-\alpha\rangle^{1/2}}\Big|\big|\widehat{\,Q\,}(\eta-\gamma, \xi-\alpha)\big|\,d\alpha d\gamma\Big\|_{L^2(\xi\in\R)}\\
&\lesssim\Big\|\int_{\R^2} e^{(\delta_0/4)\langle\alpha\rangle^{1/2}}\big|\widehat{\,\,K^\ast}(\gamma,\alpha)\big|e^{\delta_1\langle k,\xi-\alpha\rangle^{1/2}}\langle k,\xi-\alpha\rangle^{-1/2}\big|\widehat{\,Q\,}(\eta-\gamma, \xi-\alpha)\big|\,d\alpha  d\gamma\Big\|_{L^2(\xi\in\R)}\\
&\lesssim |k|^{1/2}\frac{1}{1+e^{8w_\ast}}\Big\|\Psi^{\ast\ast}(\rho)M(\rho)\Big\|_{L^2(\rho\in\R)}.
\end{split}
\end{equation}
Using the definition \eqref{GVS5}, the bounds \eqref{GVS13} for $|\rho|\ge1$, together with \eqref{GVS13.5} for $|\rho|\lesssim 1$, and \eqref{GVS12.3}, we can then bound for $w\in\R$ and any $\gamma\in(0,1)$ and suitable $C_\gamma\in(0,\infty)$,
\begin{equation}\label{GVS13.6}
\begin{split}
&\Big\|\sup_{j\in\Z}\Big[e^{\varkappa_k|j+w_\ast|}\big\|\Psi(v-j)\mathcal{C}_k(v,w)\big\|_{L^2(v\in\R)}\Big]\Big\|_{L^2(w\in\R)}\\
&\lesssim \frac{1}{2|k|}\bigg\|\sup_{j\in\Z}\bigg[e^{\varkappa_k|j+w_\ast|}\Big\|\int_\R e^{-|k||v-\rho|}\Psi(v-j)\frac{e^{2\rho+2w_\ast}}{B'(w_\ast)}\Psi^\ast(w-w_\ast)\\
&\hspace{1.5in} \times\Big[(1-\Psi(\rho))N(\rho,w)+\Psi(\rho)N^\ast(\rho,w)\Big]\,d\rho\Big\|_{L^2(v\in\R)}\bigg]\bigg\|_{L^2(w\in\R)}\\
&\lesssim \frac{1}{2|k|}\int_\R e^{\varkappa_k|\rho+w_\ast|}\frac{e^{2\rho+2w_\ast}}{1+e^{8(\rho+w_\ast)}}\frac{(1-\Psi(\rho))M(\rho)}{|B(\rho+w_\ast)-B(w_\ast)|} \,d\rho\\
& \qquad+e^{(\varkappa_k+4)w_\ast}\Big\|\int_{\R} \langle k,\eta\rangle^{-2}e^{-\langle \eta+\beta\rangle^{3/4}}\Big|\int_{\R}N^\ast(\rho,w)e^{i\beta\rho}\,d\rho\Big| d\beta \Big\|_{L^2(\eta,w\in\R)}\lesssim  |k|^{-1}(C_\gamma+\gamma M).
\end{split}
\end{equation}
Similarly, we have for any $\gamma\in(0,1)$,
\begin{equation}\label{GVS13.7}
\begin{split}
|k|^{-1}\Big\|\sup_{j\in\Z} \big\|e^{\varkappa_k|v+w_\ast|}\partial_v\mathcal{C}_k(v,w)\big\|_{L^2(v\in[j,j+2])}\Big\|_{L^2(w\in\R)}\lesssim  |k|^{-1}(C_\gamma+\gamma M).\end{split}
\end{equation}
\eqref{GVS13.6}-\eqref{GVS13.7}, together with \eqref{GVS10.01}, \eqref{GVS11}-\eqref{GVS12'} complete the proof of \eqref{GVS7.2}.
Therefore, by the limiting absorption principle, see lemma \ref{LAP10} with a translation in $v$ by $w$, we conclude that for $\gamma\in(0,1)$ and suitable $C_\gamma\in(0,\infty)$,
\begin{equation}\label{GVS15}
\begin{split}
&\sup_{j\in\Z}\big\|\Psi^\ast(w-w_\ast)e^{\varkappa_k|v+w_\ast|}\big[\big|A_k\big[\Theta_{k,\epsilon}^{\iota,w_\ast}(v,\cdot)\big](w)\big|+|k|^{-1}\big|\partial_vA_k\big[\Theta_{k,\epsilon}^{\iota,w_\ast}(v,\cdot)\big](w)\big|\big]\big\|_{L^2(v,\in[j,j+2],w\in\R)}\\
&\lesssim (C_\gamma +\gamma M)/|k|.
\end{split}\end{equation}
Another simple commutator argument comparing $\Psi^\ast(w-w_\ast)A_k\big[\Theta_{k,\epsilon}^{\iota,w_\ast}(v,\cdot)\big](w)$ with the nonlocalized $A_k\big[\Theta_{k,\epsilon}^{\iota,w_\ast}(v,\cdot)\big](w)$ shows that for $\gamma\in(0,1)$,
\begin{equation}\label{GVS14.2}
M \lesssim (C_\gamma +\gamma M)/|k|,
\end{equation}
which imply the desired bounds once we choose $\gamma\in(0,1)$ small enough so that the second term on the right hand side can be absorbed by the left hand side.
\end{proof}

\subsubsection{The case $w\leq-15$} We now turn to the case $w\leq-15$. Assume that $w_\ast\leq-15$. Define the norm for functions $h:\R^2\to \mathbb{C}$ with $h\in L^2(\R^2)$,
\begin{equation}\label{GVS16}
\begin{split}
\|h\|_{Z^\ast}:=\Big\|\sup_{j\in\Z} \Big[&\big\|\varrho_{\varkappa_k,w_\ast}(v+w_\ast)A_k\big[h(v,\cdot)\big](w)\big\|_{L^2(v\in[j,j+2])}\\
&+|k|^{-1}\big\|\varrho_{\varkappa_k,w_\ast}(v+w_\ast)\partial_vA_k\big[h(v,\cdot)\big](w)\big\|_{L^2(v\in[j,j+2])}\Big]\Big\|_{L^2(w\in\R)}.
\end{split}
\end{equation}
For notational convenience, we also introduce the translated $Y_{k,w_\ast}$ norm for $h(v,w):\R^2\to\mathbb{C}$,
\begin{equation}\label{GVS16.1}
\begin{split}
\|h(v,w)\|_{Y_{\varkappa_k,w_\ast}^{+w_\ast}}:=\Big\|\sup_{j\in\Z} \Big[&\big\|\varrho_{\varkappa_k,w_\ast}(v+w_\ast)h(v,w)\big\|_{L^2(v\in[j,j+2])}\\
&+|k|^{-1}\big\|\varrho_{\varkappa_k,w_\ast}(v+w_\ast)\partial_vh(v,w)\big\|_{L^2(v\in[j,j+2])}\Big]\Big\|_{L^2(w\in\R)}.
\end{split}
\end{equation}
Then
\begin{equation}\label{GVS16.2}
\big\|h\big\|_{Z^\ast}=\big\|A_k\big[h(v,\cdot)\big](w)\big\|_{Y_{k,w_\ast}^{+w_\ast}}.
\end{equation}
Our main result in this case is the following.
\begin{proposition}\label{GVS17}
Assume that $k\in\Z\backslash\{0\}$ with $|k|\ge2$, $w_\ast\in(-\infty,-15]$, $\iota\in\{+,-\}$, $\epsilon\in\R$ with $0<\epsilon<e^{2w_\ast}\epsilon_{\ast}$. Then we have the bounds
\begin{equation}\label{GVS18}
\begin{split}
&\sup_{j\in\Z}\big\|\varrho_{\varkappa_k,w_\ast}(v+w_\ast)\big[\big|A_k\big[\Theta_{k,\epsilon}^{\iota, w_\ast}(v, \cdot)\big](w)\big|+|k|^{-1}\big|\partial_vA_k\big[\Theta_{k,\epsilon}^{\iota, w_\ast}(v, \cdot)\big](w)\big|\big]\big\|_{L^2(v\in[j,j+2], w\in\R)}\\
&\lesssim (M_k^{\dagger}+|\sigma_k|)/|k|.
\end{split}
\end{equation}
\end{proposition}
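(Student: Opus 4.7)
The plan mirrors the scheme used for Proposition \ref{GVSM2}, but with two crucial changes dictated by the long-range nature of the potential when $w_\ast\le -15$: the free kernel $(2|k|)^{-1}e^{-|k||v-\rho|}$ is replaced by the Green's function $\mathcal{G}_k^w(v,\rho)$ of $k^2-\partial_v^2+V_w$ from Lemma \ref{Enh1}, and the physical-space weight $e^{\varkappa_k|v|}$ is replaced by $\varrho_{\varkappa_k,w_\ast}$, which encodes the vorticity-depletion decay $e^{-(\mu_{\varkappa_k}-\varkappa_k)d_{w_\ast}}$ across the interval $[w_\ast,0]$. The starting point is the integral reformulation \eqref{BSD7} together with the low-regularity Sobolev estimate \eqref{BSD5} from Lemma \ref{BSD4}, which gives the baseline $\|\Theta_{k,\epsilon}^{\iota,w_\ast}\|_{Y_{\varkappa_k,w_\ast}^{+w_\ast}}\lesssim(M_k^\dagger+|\sigma_k|)/|k|$ and controls the low-frequency contribution to \eqref{GVS18}.

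First I would translate \eqref{BSD7} by the shift $v\mapsto v+w$: defining $\mathcal{F}_k(v,\rho,w):=\mathcal{G}_k^w(v+w,\rho+w)$ and the localized effective potential
\[
P(\rho,w):=\bigg[\frac{e^{2\rho+2w}D(\rho+w)}{B(\rho+w)-B(w)+i\iota\epsilon}-V_w(\rho+w)\bigg]\Psi^{\ast\ast}(w-w_\ast),
\]
the function $\Theta_{k,\epsilon}^{\iota,w_\ast}$ satisfies an integral equation of the form $\Theta+\int_\R \mathcal{F}_k(v,\rho,w)\,P(\rho,w)\,\Theta(\rho,w)\,d\rho=\mathcal{S}(v,w)$, where the right-hand side $\mathcal{S}$ is the $v$-shift of $\mathcal{R}_1$ in \eqref{BSD7}. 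I would then apply the Fourier multiplier $A_k$ (in $w$) to this identity, multiply by $\Psi^\ast(w-w_\ast)$, and split as in \eqref{GVS5} into the driving terms $A_k\mathcal{S}$ and the commutator $\mathcal{C}_k(v,w):=\int \mathcal{F}_k(v,\rho,w)\{P(\rho,w)A_k\Theta-A_k[P(\rho,\cdot)\Theta(\rho,\cdot)]\}\,d\rho$ plus the additional commutator produced by the $w$-dependence of $\mathcal{F}_k$, namely $\int \{\mathcal{F}_k(v,\rho,w)A_k[P\Theta]-A_k[\mathcal{F}_k(v,\rho,\cdot)P\Theta]\}\,d\rho$.

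The bulk of the work is to bound each of these pieces in the shifted norm $Y_{\varkappa_k,w_\ast}^{+w_\ast}$ by $(C_\gamma+\gamma M)/|k|$, where $M$ is the quantity on the left-hand side of \eqref{GVS18}. For the driving term, I would use the Gevrey Fourier bound on $F_{0k}$ from Assumption \ref{MARr1} together with Lemma \ref{gbo3} (for the reciprocal $1/(B(\rho+w)-B(w)+i\iota\epsilon)$) and the pointwise bounds \eqref{Enh3}--\eqref{Enh3.5} on $\mathcal{G}_k^w$ to absorb the sharp weight $\varrho_{\varkappa_k,w_\ast}$, using the submultiplicativity of $\varpi_{\varkappa_k,w_\ast}$. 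For $\mathcal{C}_k$, the standard trick of writing $e^{\delta_1\langle k,\xi\rangle^{1/2}}-e^{\delta_1\langle k,\xi-\alpha\rangle^{1/2}}\lesssim e^{\delta_1\langle k,\xi-\alpha\rangle^{1/2}}\langle k,\xi-\alpha\rangle^{-1/2}e^{(\delta_0/4)\langle\alpha\rangle^{1/2}}$ converts Gevrey regularity of $P(\rho,w)$ in $w$ into a gain of $\langle k,\xi\rangle^{-1/2}$, which splitting frequencies at a threshold $\gamma^{-1}$ turns into the desired $C_\gamma+\gamma M$. The Gevrey regularity of $P$ in $w$ is verified exactly as in \eqref{GVS7}, using \eqref{LAP100}--\eqref{LAP101} and the smoothness of $V_w$.

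The genuinely new ingredient, and the step I expect to be the main obstacle, is the second commutator, which is absent in Proposition \ref{GVSM2} because the free kernel is $w$-independent. Here $\mathcal{F}_k$ depends nontrivially on $w$, and one must control $[A_k,\mathcal{F}_k]$ while preserving both the Gevrey-1/2 regularity in $w$ and the sharp spatial weight $\varrho_{\varkappa_k,w_\ast}$. This is precisely the content of Proposition \ref{ENHQ1}: the cutoff function $Q_k^{w_\ast}(v,\rho,w)=\mathcal{F}_k(v,\rho,w)\Psi_1(w-w_\ast)$ satisfies the pointwise-in-$(v,\rho)$ Fourier bound $|e^{2\delta_0\langle\xi\rangle^{1/2}}\widehat{Q_k^{w_\ast}}(v,\rho,\xi)|\lesssim |k|^{-1}\varpi_{\varkappa_k,w_\ast}(v+w_\ast,\rho+w_\ast)$, with analogous bounds for $\partial_v Q_k^{w_\ast}$ and $\partial_{v\rho} Q_k^{w_\ast}$. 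Since the Gevrey exponent $2\delta_0$ comfortably dominates $\delta_1$, applying the commutator trick above to this factor produces a gain of $\langle\xi\rangle^{-1/2}$ while retaining the full spatial weight, giving a bound of the form $C_\gamma/|k|+\gamma M/|k|$. Combining all pieces and invoking the limiting absorption principle Lemma \ref{LAP25} (applied in the shifted variable) yields $M\le(C_\gamma+\gamma M)/|k|$, and choosing $\gamma$ small closes the estimate and produces \eqref{GVS18}.
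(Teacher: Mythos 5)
Your proposal correctly identifies the key ingredients and follows the same overall strategy as the paper: translate \eqref{BSD7} by $v\mapsto v+w$, apply $A_k$ in $w$, localize with $\Psi^\ast(w-w_\ast)$, estimate the driving terms using the Gevrey bound on $F_{0k}$ together with Lemma \ref{gbo3} and the pointwise Green's function bounds, handle the commutator via the frequency-splitting trick to gain $\langle k,\xi\rangle^{-1/2}$, use Proposition \ref{ENHQ1} for the $w$-dependent Green's function $\mathcal{F}_k^{w_\ast}$, and close with Lemma \ref{LAP25}. The only organizational difference is that you decompose the commutator into two pieces --- one for $P$ (the potential) and one for $\mathcal{F}_k$ (the kernel) --- whereas the paper bundles them into a single commutator $\mathcal{C}_k^\ast$ in \eqref{GVS21} acting on the product kernel $\mathcal{F}_k^{w_\ast}(v,\rho,\cdot)L(\rho,\cdot)$, and then decomposes instead according to a localization in $\rho$ (near $\rho=0$ versus away, using $\Phi_k$ and $1-\Phi_k$ in \eqref{GVS21.9}). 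Your split is algebraically equivalent --- the two commutators telescope to the paper's one --- and since each is amenable to the same frequency-cutoff estimate, this is a bookkeeping variation rather than a different route. The only thing underemphasized in your sketch is the $\rho$-localization, which the paper uses to exploit the exponential decay of $L(\rho,w)$ for $|\rho|\ge1$ (see \eqref{GVS19.3}) while treating the singular region $|\rho|\le1$ with $L^\ast$ and the Fourier transform of $\mathcal{F}_k^{w_\ast}\Psi^\ast(\rho)$ in $\rho$ (second line of \eqref{GVS19.01}); this localization is what makes the weight bookkeeping close cleanly. Otherwise the two arguments are the same.
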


\begin{proof}
We assume, without loss of generality, $k\ge2$ and $M_k^{\dagger}+|\sigma_k|=1$. Denote 
$$M:=\sup_{j\in\Z}\big\|\varrho_{\varkappa_k,w_\ast}(v+w_\ast)\big[\big|A_k\big[\Theta_{k,\epsilon}^{\iota, w_\ast}(v, \cdot)\big](w)\big|+|k|^{-1}\big|\partial_vA_k\big[\Theta_{k,\epsilon}^{\iota, w_\ast}(v, \cdot)\big](w)\big|\big]\big\|_{L^2(v\in[j,j+2], w\in\R)}.$$ We need to prove that $M\lesssim1/|k|$. By lemma \ref{BSD4}, we have 
\begin{equation}\label{GVS18.1}
\big\|\Theta_{k,\epsilon}^{\iota, w_\ast}(v, w)\big\|_{Y_{\varkappa_k,w_\ast}^{+w_\ast}}\lesssim 1/|k|,
\end{equation}
which will be used to control the low frequency part of the desired bounds \eqref{GVS18}.

 Using the equation \eqref{BSD7} and recalling the definitions \eqref{GVS3} for $h_{0k}^1, h_{0k}^2$, we obtain that
\begin{equation}\label{GVS19}
\begin{split}
&\Theta_{k,\epsilon}^{\iota, w_\ast}(v, w)+\int_\R \mathcal{G}_k^w(v+w,\rho+w)\Big[\frac{e^{2\rho+2w}D(\rho+w)}{B(\rho+w)-B(w)+i\iota\epsilon}-V_{w}(\rho+w)\Big]\Theta_{k,\epsilon}^{\iota, w_\ast}(\rho, w)\,d\rho\\
&=\int_\R \mathcal{G}_k^w(v+w,\rho+w)\frac{e^{2\rho+2w}h_{0k}^1(\rho,w)}{B(\rho+w)-B(w)+i\iota\epsilon}\,d\rho+\int_\R \mathcal{G}_k^w(v+w,\rho+w) h_{0k}^2(\rho,w)\,d\rho\\
&:=H_{1k}^\ast(v,w)+H_{2k}^\ast(v,w).
\end{split}
\end{equation}
Recall the definition \eqref{ENHQ2} and define for $w_\ast\in\R$,
\begin{equation}\label{GVS19.1}
\mathcal{F}_k^{w_\ast}(v,\rho,w)= \mathcal{G}_k^w(v+w,\rho+w)\Psi^{\ast\ast}(w-w_\ast).
\end{equation}
We shall use the following bounds on  $\mathcal{F}_k^{w_\ast}(v,\rho,w)$, which follow from proposition \ref{ENHQ1},
\begin{equation}\label{GVS19.01}
\begin{split}
&\sup_{\xi\in\R}\big|e^{\delta_0\langle\xi\rangle^{1/2}}\widehat{\,\,\mathcal{F}_k^{w_\ast}}(v,\rho,\xi)\big|\lesssim|k|^{-1}\varpi_{\varkappa_k,w_\ast}(v+w_\ast,\rho+w_\ast),\quad {\rm for}\,\,v,\rho\in\R,\\
&\Big|\int_\R\Psi^\ast(v-j)\Psi^\ast(\rho)\mathcal{F}_k^{w_\ast}(v,\rho,w)e^{-iv\alpha-i\rho\beta-iw\xi}\,dv d\rho dw\Big|\\
&\lesssim \varpi_{\varkappa_k,w_\ast}(j+w_\ast,w_\ast)e^{-\delta_0\langle\xi\rangle^{1/2}}\langle k,\alpha\rangle^{-2} e^{-\delta_0\langle\alpha+\beta\rangle^{1/2}},\quad {\rm for}\,\,j\in\Z, \alpha,\beta,\xi\in\R.
\end{split}
\end{equation}

 Define for $\rho,w\in\R$,
\begin{equation}\label{GVS19.2}
L(\rho,w):=\Big[\frac{e^{2\rho+2w}D(\rho+w)}{B(\rho+w)-B(w)+i\iota\epsilon}-V_{w}(\rho+w)\Big]\Psi^{\ast\ast}(w-w_\ast),\quad L^\ast(\rho,w):=\Psi^\ast(\rho)L(\rho,w).
\end{equation}
By lemma \ref{gbo3}, we have the following bounds for $L$,
\begin{equation}\label{GVS19.3}
\begin{split}
&\sup_{\xi\in\R}e^{\delta_0\langle\xi\rangle^{1/2}}\big|\widehat{\,L\,}(\rho,\xi)\big|\lesssim e^{-2|\rho|},\,\,{\rm for}\,\,|\rho|\ge1,\quad {\rm and}\quad\sup_{\eta,\xi\in\R}e^{\delta_0\langle\xi\rangle^{1/2}}\big|\widehat{\,L^\ast}(\eta,\xi)\big|\lesssim1.
\end{split}
\end{equation}

Applying the Fourier multiplier operator $A_k$ (in the variable $w$) to \eqref{GVS19} and multiplying $\Psi^\ast(w-w_\ast)$, we obtain that
\begin{equation}\label{GVS20}
\begin{split}
&\Psi^\ast(w-w_\ast)A_k\big[\Theta_{k,\epsilon}^{\iota, w_\ast}(v, \cdot)\big](w)\\
&+\int_\R \mathcal{F}_k(v,\rho,w)\Big[\frac{e^{2\rho+2w}D(\rho+w)}{B(\rho+w)-B(w)+i\iota\epsilon}-V_{w}(\rho+w)\Big]\Psi^\ast(w-w_\ast) A_k\Big[\Theta_{k,\epsilon}^{\iota, w_\ast}(\rho, \cdot)\Big](w)\,d\rho\\
&=\Psi^\ast(w-w_\ast)A_k\Big[H_{1k}^\ast(v,\cdot)\Big](w)+\Psi^\ast(w-w_\ast)A_k\big[H_{2k}^\ast(v,\cdot)\Big](w)+\Psi^\ast(w-w_\ast)\mathcal{C}_k^\ast(v,w),
\end{split}
\end{equation}
where 
\begin{equation}\label{GVS21}
\begin{split}
\mathcal{C}_k^\ast(v,w):=&\int_\R  \mathcal{F}_k^{w_\ast}(v,\rho,w)L(\rho,w)A_k\big[\Theta_{k,\epsilon}^{\iota, w_\ast}(\rho, \cdot)\big](w)\,d\rho\\
&-A_k\bigg\{\int_\R  \mathcal{F}_k^{w_\ast}(v,\rho,\cdot)L(\rho,\cdot)\Theta_{k,\epsilon}^{\iota, w_\ast}(\rho, \cdot)\,d\rho\bigg\}(w).
\end{split}
\end{equation}

To apply the limiting absorption principle, see lemma \ref{LAP25}, we need to bound the terms on the right hand side of the last line of \eqref{GVS20}, and prove for $h\in\big\{A_kH_{1k}^\ast(v,w), A_kH_{2k}^\ast(v,w), \mathcal{C}_k^\ast(v,w)\big\}$,
\begin{equation}\label{GVS21.1}
\big\|h(v,w)\big\|_{Y_{k,w_\ast}^{+w_\ast}}\lesssim (C_\gamma+\gamma M)/|k|,
\end{equation}
for sufficiently small $\gamma>0$ and suitable $C_\gamma\in(0,\infty)$.

For $h=A_k\big[H_{1k}^\ast(v,\cdot)\big](w)$,  using \eqref{ENHQ4}-\eqref{ENHQ6}, \eqref{GVS9.0'}-\eqref{GVS9.0} and lemma \ref{gbo3}, we can bound, similar to \eqref{GVS10} that for any $w\in\R$,
\begin{equation}\label{GVS21.2}
\begin{split}
&\sup_{j\in\Z}\Big[\varrho_{\varkappa_k,w_\ast}(j+w_\ast)\big\|\Psi^\ast(v-j)A_k\big[H^{\ast}_{1k}(v,\cdot)\big](w)\big\|_{L^2(v\in\R)}\Big]\\
&\lesssim \int_{\R^2}\varrho_{\varkappa_k,w_\ast}(\rho+w_\ast)\frac{e^{2\rho+2w_\ast}(1-\Psi(\rho)) e^{-\delta_0\langle\alpha\rangle^{1/2}}}{|k|\big|B(\rho+w_\ast)-B(w_\ast)+i\iota\epsilon\big|}\big|A_k\big[h^{1}_{0k}(\rho, \cdot)e^{i\alpha\cdot}\big](w)\big|\,d\rho d\alpha \\
&\qquad+\varrho_{\varkappa_k,w_\ast}(w_\ast)\Big\|\int_{\R}\langle k,\eta\rangle^{-2}e^{-\delta_0\langle\eta+\alpha\rangle^{1/2}}\Big|\int_\R e^{i\alpha \rho}A_k\big[h_{0k}^{1\ast}(\rho,w)\big](w)\,d\rho\Big| d\alpha \Big\|_{L^2(\eta\in\R)}.
\end{split}
\end{equation}
In view of \eqref{GVS9.0'}-\eqref{GVS9.0}, from \eqref{GVS21.2} we conclude that
\begin{equation}\label{GVS21.3}
\Big\|\sup_{j\in\Z}\big\|\varrho_{k,w_\ast}(v+w_\ast)A_k\big[H^{\ast}_{1k}(v,\cdot)\big](w)\big\|_{L^2(v\in[j,j+2])}\Big\|_{L^2(w\in\R)}\lesssim |k|^{-1}.
\end{equation}
Similarly we have
\begin{equation}\label{GVS21.4}
\Big\|\sup_{j\in\Z}|k|^{-1}\big\|\varrho_{k,w_\ast}(v+w_\ast)\partial_vA_k\big[H^{\ast}_{1k}(v,\cdot)\big](w)\big\|_{L^2(v\in[j,j+2])}\Big\|_{L^2(w\in\R)}\lesssim |k|^{-1}.
\end{equation}
\eqref{GVS21.3}-\eqref{GVS21.4} imply the desired bounds \eqref{GVS21.1} for $h=A_k\big[H^\ast_{1k}(v,\cdot)\big](w)$.

The bounds on $A_kH^\ast_{2k}$ is simpler, and we have
\begin{equation}\label{GVS12}
\Big\|A\big[H_{2k}^\ast(v,\cdot)\big](w)\Big\|_{Y_{k,w_\ast}^{+w_\ast}}\lesssim |k|^{-1}.
\end{equation}
\eqref{GVS12} imply \eqref{GVS21.1} for $h=A_k\big[H^\ast_{2k}(v,\cdot)\big](w)$.

We now prove \eqref{GVS21.1} for the main commutator term $h=\mathcal{C}_k^\ast(v,w)$. We can decompose
\begin{equation}\label{GVS21.9}
\begin{split}
&\mathcal{C}_k^\ast(v,w)=\mathcal{C}_{k1}^\ast(v,w)+\mathcal{C}_{k2}^\ast(v,w):=\int_\R \mathcal{F}_k^{w_\ast}(v,\rho,w)\Phi_k(\rho)L^\ast(\rho,w)A\big[\Theta_{k,\epsilon}^{\iota, w_\ast}(\rho, \cdot)\big](w)\,d\rho\\
&-A_k\bigg\{\int_\R \mathcal{F}_k^{w_\ast}(v,\rho,\cdot)\Phi_k(\rho)L^\ast(\rho,w)\Theta_{k,\epsilon}^{\iota, w_\ast}(\rho, \cdot)\,d\rho\bigg\}(w)+\mathcal{C}_{k2}^\ast(v,w).
\end{split}
\end{equation}
We first bound $\mathcal{C}_{k2}^\ast(v,w)$. Note that in this case the singularity in $\rho$ is removed thanks to the function $1-\Phi_k(\rho)$. 
Define for $\rho\in\R$,
\begin{equation}\label{GVS21.92}
M^\ast(\rho):=\Big\|\langle k,\xi\rangle^{-1/2}e^{\delta_1\langle k,\xi\rangle^{1/2}}\Big[\big|\widehat{\,\Theta_{k,\epsilon}^{\iota, w_\ast}}(\rho,\xi)\big|+|k|^{-1}\big|\partial_\rho\widehat{\,\Theta_{k,\epsilon}^{\iota, w_\ast}}(\rho,\xi)\big|\Big]\Big\|_{L^2(\xi\in\R)}.
\end{equation}
It follows that for any $\gamma\in(0,1)$ and suitable $C_\gamma\in(0,\infty)$,
\begin{equation}\label{GVS21.93}
\sup_{j\in\Z}\big\|\varrho_{\varkappa_k,w_\ast}(\rho+w_\ast)M^\ast(\rho)\big\|_{L^2(\rho\in[j,j+2])}\lesssim C_\gamma+\gamma M.
\end{equation}
Define for $\rho, w, \alpha,\beta\in\R$
\begin{equation}\label{GVSM2.00}
\begin{split}
P_{ \alpha,\beta}(\rho,w)&:=A_k\big[\Theta_{k,\epsilon}^{\iota, w_\ast}(\rho,\cdot)\big](w)e^{i(\alpha+\beta)w}-A_k\big[\Theta_{k,\epsilon}^{\iota, w_\ast}(\rho,\cdot)e^{i(\alpha+\beta)\cdot}\big](w),\\
P^\ast_{\alpha,\beta}(\rho,w)&:=\Psi^\ast(\rho)P_{\alpha,\beta}(\rho,w).
\end{split}
\end{equation}
We have the bound
\begin{equation}\label{GVSM2.1}
\begin{split}
&\int_{\R^2}e^{-(\delta_0/2)\langle\alpha\rangle^{1/2}-(\delta_0/2)\langle\beta\rangle^{1/2}}\big\|P_{\alpha,\beta}(\rho,w)\big\|_{L^2(w\in\R)} d\alpha d\beta\\
&\lesssim \Big[\int_{\R^2}e^{-(\delta_0/2)(\langle\alpha\rangle^{1/2}+\langle\beta\rangle^{1/2})} \big|e^{\delta_1\langle k,\xi-\alpha-\beta\rangle^{1/2}}-e^{\delta_1\langle k,\xi\rangle^{1/2}}\big|^2\big|\widehat{\,\,\Theta_{k,\epsilon}^{\iota, w_\ast}}(\rho,\xi-\alpha-\beta)\big|^2\,d\xi d\alpha d\beta\Big]^{1/2}\\
&\lesssim M^\ast(\rho),
\end{split}
\end{equation}
and the bound,
\begin{equation}\label{GVSM2.2}
\begin{split}
&\int_{\R^2}e^{-(\delta_0/2)\langle\alpha\rangle^{1/2}-(\delta_0/2)\langle\beta\rangle^{1/2}}\Big\|\widehat{\,\,P^\ast_{\alpha,\beta}}(\gamma,w)\Big\|_{L^2(w\in\R)}\,d\alpha d\beta d\gamma\\
&\lesssim |k|^{1/2}\bigg[\int_{\R^2}e^{-(\delta_0/2)(\langle\alpha\rangle^{1/2}+\langle\beta\rangle^{1/2})}\langle\gamma/k\rangle^2\big|\widehat{\,\,P^\ast_{\alpha,\beta}}(\gamma,w)\big|^2\,d\alpha d\beta d\gamma dw\bigg]^{1/2}\\
&\lesssim |k|^{1/2}(C_\gamma+\gamma M)\varpi_{\varkappa_k,w_\ast}(w_\ast,0).
\end{split}
\end{equation}
Using \eqref{GVS19.01} and \eqref{GVS19.3}, we can obtain from \eqref{GVS21.93} that,
\begin{equation}\label{GVS24}
\begin{split}
&\Big\|\sup_{j\in\Z}\Big[\varrho_{\varkappa_k,w_\ast}(j+w_\ast)\big\|\Psi^\ast(v-j)\mathcal{C}_{k2}^\ast(v,w)\big\|_{L^2(\R)}\Big]\Big\|_{L^2(w\in\R)}\\
&\lesssim |k|^{-1}\Big\|\int_{\R^2} \varrho_{\varkappa_k,w_\ast}(\rho+w_\ast)(1-\Psi(\rho))e^{-\delta_0\langle\alpha\rangle^{1/2}-\delta_0\langle\beta\rangle^{1/2}}e^{-2|\rho|}\big|P_{\alpha,\beta}(\rho,w)\big|\,d\alpha d\beta d\rho\Big\|_{L^2(w\in\R)}\\
&\lesssim|k|^{-1}\int_{\R^2} \varrho_{\varkappa_k,w_\ast}(\rho+w_\ast)(1-\Psi(\rho))e^{-2|\rho|}M^\ast(\rho)\, d\rho\lesssim \Big[C_\gamma+\gamma M\Big]/|k|.
\end{split}
\end{equation}
Similarly,
\begin{equation}\label{GVS24.1}
|k|^{-1}\Big\|\sup_{j\in\Z}\Big[\varrho_{\varkappa_k,w_\ast}(j+w_\ast)\big\|\Psi^\ast(v-j)\partial_v\mathcal{C}_{k2}^\ast(v,w)\big\|_{L^2(\R)}\Big]\Big\|_{L^2(w\in\R)}\lesssim \Big[C_\gamma+\gamma M\Big]/|k|.
\end{equation}
\eqref{GVS24}-\eqref{GVS24.1} imply that 
\begin{equation}\label{GVS25}
\big\|\mathcal{C}_{k2}^\ast(v,w)\big\|_{Y_{\varkappa_k,w_\ast}^{+w_\ast}}\lesssim (C_\gamma+\gamma M)/|k|.
\end{equation}
In view of the the bounds \eqref{GVS19.01}, the definition \eqref{GVSM2.00} and the bound \eqref{GVSM2.2}, we obtain that
\begin{equation}\label{GVS26}
\begin{split}
&\Big\|\sup_{j\in\Z}\Big[\varrho_{\varkappa_k,w_\ast}(j+w_\ast)\big\|\Psi^\ast(v-j)\mathcal{C}_{k1}^{\ast}(v,w)\big\|_{L^2(v\in\R)}\Big]\Big\|_{L^2(w\in\R)}\\
&\lesssim \varrho_{\varkappa_k,w_\ast}(w_\ast)\Big\|\int_{\R^4}\frac{ e^{-\delta_0(\langle\eta+\alpha_1\rangle^{-1/2}+\langle\beta_1\rangle^{1/2}+\langle\beta_2\rangle^{1/2})} }{\langle k,\eta\rangle^2}\big|\widehat{\,\,P^\ast_{\beta_1,\beta_2}}(\alpha_1+\alpha_2,w)\big|\,d\alpha d\beta\Big\|_{L^2(\eta, w\in\R)}\\
&\lesssim |k|^{-1} \big[C_\gamma+\gamma M\big],
\end{split}
\end{equation}
where we used the notations that $d\alpha=d\alpha_1 d\alpha_2$ and $d\beta=d\beta_1 d\beta_2$.
Similarly,
\begin{equation}\label{GVS26'}
|k|^{-1}\Big\|\sup_{j\in\Z}\Big[\varrho_{\varkappa_k,w_\ast}(j+w_\ast)\big\|\Psi^\ast(v-j)\partial_v\mathcal{C}_{k1}^{\ast}(v,w)\big\|_{L^2(v,w\in\R)}\Big]\Big\|_{L^2(w\in\R)}\lesssim |k|^{-1} \big[C_\gamma+\gamma M\big].\end{equation}
The proof of \eqref{GVS21.1} is then complete. The desired bounds then follow from the limiting absorption principle as in the case $w\ge-15$. We omit the routine details.
\end{proof}

\section{Bounds on the spectral density function II: refined bounds}\label{sec:sdf2}

Assume that $k\in\mathbb{Z}\backslash\{0\}$ and $|k|\ge2$. By the bounds \eqref{BSD5}-\eqref{BSD6}, \eqref{GVSM3} and \eqref{GVS18}, the limit 
\begin{equation}\label{GVSL0}
\Gamma_k(v,w):=(-i)\lim_{\epsilon\to0+}\big[\Gamma_{k,\epsilon}^{+}(v,w)-\Gamma_{k,\epsilon}^{-}(v,w)\big]=2\lim_{\epsilon\to0+}\Im \,\Gamma_{k,\epsilon}^{+}(v,w),
\end{equation}
exists at least along a sequence of $\epsilon\to0+$ in the space $L_{\rm loc}(\R^2)$, and it follows from \eqref{BSD3} that the limit $\Gamma_k(v,w)$ satisfies the equation for $v,w\in\R$,
\begin{equation}\label{GVSL1}
(k^2-\partial_v^2)\Gamma_k(v,w)+{\rm P.V.} \frac{e^{2v}D(v)\Gamma_k(v,w)}{B(v)-B(w)}=-2\pi\frac{e^{2w}\big(D(w)\digamma_k(w)-F_{0k}(w)\big)}{B'(w)}\delta(v-w).
\end{equation}
In the above for $w\in\R$,
\begin{equation}\label{GVSL1.1}
\digamma_k(w):=\lim_{\epsilon\to0+} \Im \,\Gamma^+_{k,\epsilon}(w,w).
\end{equation}
For $w_\ast\in\R$ and denoting $\digamma_k^{w_\ast}(w):=\digamma(w)\Psi^\ast(w-w_\ast)$, in view of \eqref{GVSM3} and \eqref{GVS18}, we have the bounds
\begin{equation}\label{GVS1.2}
\big\|e^{\delta_1\langle k,\xi\rangle^{1/2}}\widehat{\,\,\digamma_k^{w_\ast}}(\xi)\big\|_{L^2(\xi\in\R)}\lesssim \Big[e^{-\mu_{\varkappa_k}|w_\ast|}{\bf 1}_{w_\ast\leq0}+e^{-\varkappa_k|w_\ast|}{\bf 1}_{w_\ast\ge0}\Big] (M_k^{\dagger}+|\sigma_k|).
\end{equation}

We briefly discuss the existence in the limits \eqref{GVSL0} and \eqref{GVSL1.1}. The regularity property of $\Gamma^+_{k,\epsilon}(v,w)$ for sufficiently small $\epsilon$ ensures that we can at least take a sequential limit in $L^2_{\rm loc}(\R^2)$. The fact that the limit is unique follows from general representation theory of self adjoint operators, as a consequence of the existence and uniqueness properties of spectral measures corresponding to the self adjoint operator $L_k$ (with a smoothing in our case since we passed from the vorticity to the stream function). Alternatively, one can prove more refined bounds on $\partial_\epsilon \Gamma^+_{k,\epsilon}(v,w)$ from which the existence of the limit of $\Gamma^+_{k,\epsilon}(v,w)$ as $\epsilon\to0+$ follows easily, as in \cite{JiaL}. In our case, since the existence of a sequential limit is sufficient and our main focus is on the quantitative estimates, we will not go into the details. 

From \eqref{GVSL1} it is clear, heuristically at least, that $\Gamma_k(v,w)$ with $v,w\in\R$ is generated by the source term $-2\pi\frac{e^{2w}\big(D(w)\digamma_k(w)-F_{0k}(w)\big)}{B'(w)}\delta(v-w)$ and should decay when $v$ is away from $w$. (The case $|k|=1$ is again special due to the presence of embedded eigenvalue $\lambda=0$ for $L_k$.) To establish more quantitative bounds in this direction, we use the limiting absorption principle to obtain low regularity bounds, see \eqref{LAP11.5'} and \eqref{LAP26.5'}, and the commutator argument for Gevrey regularity estimates.

Denote for $w\in\R$,
\begin{equation}\label{GVSL12.1}
\digamma_k^\ast(w):=-2\pi\frac{e^{2w}\big(D(w)\digamma_k(w)-F_{0k}(w)\big)}{B'(w)}.
\end{equation}
By the assumption \eqref{MARr1} and the bound \eqref{GVS1.2}, we have for $w_\ast\in\R$ and $\digamma_k^{w_\ast}(w):=\digamma_k^\ast(w)\Psi^\ast(w-w_\ast)$,
\begin{equation}\label{GVSLL1}
\big\|e^{\delta_1\langle k,\xi\rangle^{1/2}}\widehat{\,\,\digamma_k^{w_\ast}}(\xi)\big\|_{L^2(\xi\in\R)}\lesssim \big[e^{-\mu_{\varkappa_k}|w_\ast|}{\bf 1}_{(-\infty,0]}(w_\ast)+e^{-(\varkappa_k+4)|w_\ast|}{\bf 1}_{(0,\infty)}(w_\ast)\big](M_k^{\dagger}+|\sigma_k|).
\end{equation}
We can reformulate \eqref{GVSL1} for small $\epsilon>0$ and $w\ge-15$ as
\begin{equation}\label{GVSL2}
\begin{split}
&\Gamma_k(v,w)+\frac{1}{2|k|}\int_\R e^{-|k||v-\rho|}\frac{e^{2\rho}D(\rho)\Gamma_k(\rho,w)}{B(\rho)-B(w)+i\epsilon}\,d\rho\\
&=\digamma_k^\ast(w)\frac{e^{-|k||v-w|}}{2|k|}+\frac{1}{2|k|}\int_\R e^{-|k||v-\rho|}\Big[\frac{e^{2\rho}D(\rho)\Gamma_k(\rho,w)}{B(\rho)-B(w)+i\epsilon}-{\rm P.V.} \frac{e^{2\rho}D(\rho)\Gamma_k(\rho,w)}{B(\rho)-B(w)}\Big]\,d\rho,
\end{split}
\end{equation}
and for $w\leq-15$ as
\begin{equation}\label{GVSL3}
\begin{split}
&\Gamma_k(v,w)+\int_\R \mathcal{G}_k^w(v,\rho)\Big[\frac{e^{2\rho}D(\rho)}{B(\rho)-B(w)+i\epsilon}-V_{w}(\rho)\Big]\Gamma_k(\rho,w)\,d\rho\\
&=\digamma_k^\ast(w)\mathcal{G}_k^w(v,w)+\int_\R \mathcal{G}_k^w\Big[\frac{e^{2\rho}D(\rho)\Gamma_k(\rho,w)}{B(\rho)-B(w)+i\epsilon}-{\rm P.V.} \frac{e^{2\rho}D(\rho)\Gamma_k(\rho,w)}{B(\rho)-B(w)}\Big]\,d\rho.
\end{split}
\end{equation}

\begin{proposition}\label{GVSL4}
Assume that $k\in\Z\backslash\{0\}$, $|k|\ge2$, and $\Gamma_k(v,w)$ are defined as in \eqref{GVSL0}. For $w_\ast\in\R$, define for $v,w\in\R$ (for the simplicity of notations)
\begin{equation}\label{GVSL5}
Q_k^{w_\ast}(v,w):=\Gamma_k(v+w,w)\Psi(w-w_\ast).
\end{equation}
 The we have the following bounds
 
(i) {\it The case of $w_\ast\leq-15$.} \begin{equation}\label{GVSL6}
\begin{split}
&\sup_{j\in\Z}\,\zeta_{\varkappa_k,w_\ast}(j+w_\ast,w_\ast)\Big\|\big|A_k\big[Q_k^{w_\ast}(v,\cdot)\big](w)\big|+|k|^{-1}\big|\partial_vA_k\big[Q_k^{w_\ast}(v,\cdot)\big](w)\big|\Big\|_{L^2(v\in[j,j+2], w\in\R)}\\
&\lesssim e^{-\mu_{\varkappa_k}|w_\ast|} (M_k^{\dagger}+|\sigma_k|)/|k|;
\end{split}
\end{equation}

(i) {\it The case of $w_\ast\ge-15$.} \begin{equation}\label{GVSL7}\begin{split}
&\sup_{j\in\Z}\,e^{\varkappa_k|j|}\Big\|\big|A_k\big[Q_k^{w_\ast}(v,\cdot)\big](w)\big|+|k|^{-1}\big|\partial_vA_k\big[Q_k^{w_\ast}(v,\cdot)\big](w)\big|\Big\|_{L^2(v\in[j,j+2], w\in\R)}\\
&\lesssim e^{-\varkappa_k|w_\ast|} (M_k^{\dagger}+|\sigma_k|)/|k|.
\end{split}
\end{equation}\end{proposition}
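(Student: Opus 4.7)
The plan is to replay the commutator/limiting absorption strategy of Propositions \ref{GVSM2} and \ref{GVS17}, now for the limit function $\Gamma_k(v,w)$, working from the integral equations \eqref{GVSL2}--\eqref{GVSL3} (which hold for any small $\epsilon>0$). The two new features compared with the $\Gamma_{k,\epsilon}^{\iota}$ case are (i) the explicit source term involving $\digamma_k^\ast(w)$, whose Gevrey norm and physical space decay are already controlled by \eqref{GVSLL1}, and (ii) a ``correction'' integral on the right of \eqref{GVSL2}--\eqref{GVSL3} measuring the discrepancy between the $i\epsilon$-regularized kernel and its principal value. For each $w$ close to $w_\ast$ I would choose $\epsilon=\epsilon(w)>0$ just below the threshold allowed by Lemmas \ref{LAP10} and \ref{LAP25} (for instance $\epsilon=(\beta_\ast/2)e^{2w}$ in the case $w\leq-15$), so that the refined limiting absorption bounds \eqref{LAP11.5'} and \eqref{LAP26.5'} apply uniformly in $w$.

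First I would pass to the $\epsilon\to 0+$ limit in Lemma \ref{BSD4} to obtain a qualitative finiteness of the relevant weighted Gevrey norm for $Q_k^{w_\ast}$ (alternatively, using the truncated multiplier $A_\rho$ of Subsection \ref{sec:FourierTransform}); this validates the forthcoming commutator manipulations. Then, for $w_\ast\leq -15$, after shifting $v\mapsto v+w$ and multiplying \eqref{GVSL3} by $\Psi(w-w_\ast)$, the function $Q_k^{w_\ast}(v,w)$ satisfies an equation of the shape
\begin{equation*}
Q_k^{w_\ast}(v,w)+\int_\R \mathcal{F}_k^{w_\ast}(v,\rho,w)L(\rho,w)Q_k^{w_\ast}(\rho,w)\,d\rho=S_k^{w_\ast}(v,w)+E_k^{w_\ast}(v,w),
\end{equation*}
with $\mathcal{F}_k^{w_\ast},L$ as in \eqref{GVS19.1}--\eqref{GVS19.2}, source $S_k^{w_\ast}(v,w)=\digamma_k^\ast(w)\mathcal{F}_k^{w_\ast}(v,0,w)\Psi(w-w_\ast)$, and $E_k^{w_\ast}$ the $i\epsilon$-versus-principal-value correction. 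Applying $A_k$ in $w$ and multiplying by $\Psi^\ast(w-w_\ast)$ produces exactly the commutator structure of \eqref{GVS20}--\eqref{GVS21}, which is controlled by the same argument as in \eqref{GVS25}--\eqref{GVS26'} using Proposition \ref{ENHQ1} and the Gevrey bound \eqref{GVS19.3} on $L$. The source $A_k[S_k^{w_\ast}(v,\cdot)](w)$ is bounded in the target weighted norm by $|k|^{-1}e^{-\mu_{\varkappa_k}|w_\ast|}\varpi_{\varkappa_k,w_\ast}(v+w_\ast,w_\ast)$ upon combining \eqref{GVSLL1} with Proposition \ref{ENHQ1} evaluated at the second argument $\rho=0$, which is precisely the right-hand side of \eqref{GVSL6}.

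Closing the estimate via the refined limiting absorption principle \eqref{LAP26.5'} and choosing the commutator parameter $\gamma>0$ small enough to absorb the commutator into the left-hand side yields \eqref{GVSL6}. The case $w_\ast\geq-15$ is treated analogously, replacing \eqref{GVSL3} by \eqref{GVSL2}, $\mathcal{F}_k^{w_\ast}$ by the explicit kernel $e^{-|k||v-\rho|}/(2|k|)$, and using \eqref{LAP11.5'} in place of \eqref{LAP26.5'}, following the template of Proposition \ref{GVSM2}. The main technical obstacle I anticipate is the correction term $E_k^{w_\ast}$: the factor $(B(\rho+w)-B(w)+i\epsilon)^{-1}-\mathrm{P.V.}\,(B(\rho+w)-B(w))^{-1}$ concentrates at $\rho=0$ and, by a Plemelj-type identity, contributes in the limit a term proportional to $\mathcal{F}_k^{w_\ast}(v,0,w)\,\Gamma_k(w,w)$. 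In view of the algebraic identity $\digamma_k^\ast(w)=2\pi e^{2w}F_{0k}(w)/B'(w)-2\pi e^{2w}D(w)\digamma_k(w)/B'(w)$ from \eqref{GVSL12.1}, this extra term has the same Gevrey-decay profile as the main source and can be absorbed into the source estimate; rigorously justifying this absorption at fixed $\epsilon=\epsilon(w)$ requires an application of Lemma \ref{gbo3} uniform in $\epsilon$, combined with the qualitative bound from Step 1 to control $\Gamma_k(w,w)$.
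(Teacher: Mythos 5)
Your outline matches the paper's strategy—integral reformulation with the $i\epsilon$-kernel, the $A_k$-commutator argument, and the refined limiting absorption bounds \eqref{LAP11.5'}, \eqref{LAP26.5'}—and your identification of the correction term $E_k^{w_\ast}$ as the delicate point is correct. The paper, however, does not explicitly extract a Plemelj contribution; instead it splits the correction into a part supported away from $\rho=0$, bounded crudely by a constant depending on $k,w_\ast$ times $\epsilon$ (see \eqref{GVSL10}, \eqref{GVSL21}), and a part localized near $\rho=0$, bounded uniformly in $\epsilon$ by invoking the a priori limiting Gevrey bound \eqref{GVSL7.1} on $Q_k^{w_\ast}$ near $v=0$ (inherited from Proposition~\ref{GVSM2} after passing to the limit; see \eqref{GVSL11}, \eqref{GVSL22}). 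This is morally the same as your Plemelj analysis but avoids having to justify distributional convergence.

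One substantive correction: do not fix $\epsilon=(\beta_\ast/2)e^{2w}$ at the threshold. The limiting absorption estimates are uniform over all $0<\epsilon$ below the threshold, and the equations \eqref{GVSL2}--\eqref{GVSL3} hold for every such $\epsilon$, so $\epsilon$ should remain a free small parameter that you send to $0+$ at the end (as the paper does between \eqref{GVSL12} and \eqref{GVSL12.1}, and again after \eqref{GVSL17}). The far-field part of the correction carries a constant $C_{k,w_\ast}$ with no useful decay in $w_\ast$; only the factor $\epsilon\to0+$ kills it. Fixing $\epsilon$ at the threshold would instead leave a term of size $\sim C_{k,w_\ast}\beta_\ast$ which does not have the weight $e^{-\mu_{\varkappa_k}|w_\ast|}$ required to close the estimate. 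With that change your argument agrees with the paper's.
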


\begin{proof}
The proof is similar to the proof of propositions \ref{GVSM2} and \ref{GVS17} using the limiting absorption principle, see \eqref{LAP11.5'} for $w\ge-15$ and \eqref{LAP26.5'} for $w\leq-15$, together with the commutator argument, and we will be somewhat brief. Assume that $k\ge2$ and $M_k^{\dagger}+|\sigma_k|=1$, without loss of generality.  

We consider the case $w_\ast\ge-15$ first. Denote
$$M:=\sup_{j\in\Z}\,e^{\varkappa_k|j-w_\ast|}\Big\|\big|A_k\big[Q_k^{w_\ast}(v,\cdot)\big](w)\big|+|k|^{-1}\big|\partial_vA_k\big[Q_k^{w_\ast}(v,\cdot)\big](w)\big|\Big\|_{L^2(v\in[j,j+2], w\in\R)}.$$
We need to show that $M\lesssim1/|k|$. From \eqref{GVSL2}, it follows that $Q_k^{w_\ast}$ satisfies for $v,w\in\R$ and $0<\epsilon\ll e^{-2|w_\ast|}$,
\begin{equation}\label{GVSL8}
\begin{split}
&Q_k^{w_\ast}(v,w)+\frac{1}{2|k|}\int_\R e^{-|k||v-\rho|} \frac{e^{2\rho+2w}D(\rho+w)}{B(\rho+w)-B(w)+i\epsilon}Q_k^{w_\ast}(\rho,w)\,d\rho\\
&=\digamma_k^\ast(w)\frac{e^{-|k||v|}}{2|k|}+J_{1\epsilon}(v,w)+J_{2\epsilon}(v,w),
\end{split}
\end{equation}
where we have denoted
\begin{equation}\label{GVSL9}
\begin{split}
&J_{1,\epsilon}(v,w):=\frac{1}{2|k|}\int_\R e^{-|k||v-\rho|}\big(1-\Psi(\rho)\big)\Psi^\ast(w-w_\ast)\Big[\frac{e^{2\rho+2w}D(\rho+w)Q_k^{w_\ast}(\rho,w)}{B(\rho+w)-B(w)+i\epsilon}\\
&\hspace{3in}-{\rm P.V.} \frac{e^{2\rho+2w}D(\rho+w)Q_k^{w_\ast}(\rho,w)}{B(\rho+w)-B(w)}\Big]\,d\rho,\\
&J_{2\epsilon}(v,w):=\frac{1}{2|k|}\int_\R e^{-|k||v-\rho|}\Psi(\rho)\Psi^\ast(w-w_\ast)\Big[\frac{e^{2\rho+2w}D(\rho+w)Q_k^{w_\ast}(\rho,w)}{B(\rho+w)-B(w)+i\epsilon}\\
&\hspace{3in}-{\rm P.V.} \frac{e^{2\rho}D(\rho)Q_k^{w_\ast}(\rho,w)}{B(\rho+w)-B(w)}\Big]\,d\rho.
\end{split}
\end{equation}
In the above we suppressed the dependence of $J_{1\epsilon}, J_{2\epsilon}$ on $k, w_\ast$ for the simplicity of notations. It follows from the bounds \eqref{BSD6} that for $w\in\R$,
\begin{equation}\label{GVSL10}
\Big\|\sup_{j\in\Z}\Big\|e^{\varkappa_k|v|}\Big[\big|J_{1\epsilon}(v,w)\big|+|k|^{-1}\big|\partial_vJ_{1\epsilon}(v,w)\big|\Big]\Big\|_{L^2(v\in[j,j+2])}\Big\|_{L^2(w\in\R)}\lesssim_{k,w_\ast}\epsilon,
\end{equation}
where we do not have to be precise on the implied constants which might depend on $k,w_\ast$ since in the end $\epsilon\to0+$.
For $w_\ast\in\R$, we notice that the bounds \eqref{GVSM3} imply that
\begin{equation}\label{GVSL7.1}
\Big\|\Psi^\ast(v)\Big[\big|A_k\big[Q_k^{w_\ast}(v,\cdot)\big](w)\big|+|k|^{-1}\big|\partial_vA_k\big[Q_k^{w_\ast}(v,\cdot)\big](w)\big|\Big]\Big\|_{L^2(v\in\R,w\in\R)}\lesssim e^{-\varkappa_k|w_\ast|}/|k|.
\end{equation}
Using lemma \ref{gbo3}, \eqref{GVS13.1} and \eqref{GVSL7.1}, we have
\begin{equation}\label{GVSL11}
\begin{split}
&\Big\|\sup_{j\in\Z}\Big\|e^{|k||j|}\Psi^\ast(v-j)\Big[\big|J_2(v,w)\big|+|k|^{-1}\big|\partial_vJ_2(v,w)\big|\Big]\Big\|_{L^2(v\in[j,j+2])}\Big\|_{L^2(w\in\R)}\\
&\lesssim|k|^{-1}\Big\|\int_{\R^3}\langle k,\eta\rangle^{-1}e^{-\langle\eta+\alpha\rangle^{3/4}-\delta_0\langle\beta\rangle^{1/2}}\Big|\int_\R e^{i(\alpha+\gamma)\rho}Q_k^{w_\ast}(\rho,w)\,d\rho\Big| d\alpha d\beta d\gamma \Big\|_{L^2(\eta\in\R, w\in\R)}\\
&\lesssim e^{-|\varkappa_k||w_\ast|}/|k|.
\end{split}
\end{equation}
By the limiting absorption principle for each $w\in\R$ with $|w-w_\ast|\leq5$, see \eqref{LAP11.5'} (with a shift in $v\to v+w$), and square integrating in $w$, we conclude from \eqref{GVSL8} and the bounds \eqref{GVSL10}-\eqref{GVSL11},
\begin{equation}\label{GVSL12}
\Big\|\sup_{j\in\Z}\Big\|e^{\varkappa_k|v|}\Big[\big|Q_k^{w_\ast}(v,w)\big|+|k|^{-1}\big|\partial_vQ_k^{w_\ast}(v,w)\big|\Big]\Big\|_{L^2(v\in[j,j+2])}\Big\|_{L^2(w\in\R)}\lesssim C_{k,w_\ast}\epsilon+ |k|^{-1}e^{-\varkappa_k|w_\ast|}.
\end{equation}
Sending $\epsilon\to0+$, we have,
\begin{equation}\label{GVSL12.1}
\Big\|\sup_{j\in\Z}\Big\|e^{\varkappa_k|v|}\Big[\big|Q_k^{w_\ast}(v,w)\big|+|k|^{-1}\big|\partial_vQ_k^{w_\ast}(v,w)\big|\Big]\Big\|_{L^2(v\in[j,j+2])}\Big\|_{L^2(w\in\R)}\lesssim  |k|^{-1}e^{-\varkappa_k|w_\ast|}.
\end{equation}
To prove the higher regularity estimates, we again use the commutator argument, apply the Fourier multiplier $A_k$ to equation \eqref{GVSL8}, multiply with $\Psi^\ast(w-w_\ast)$, and obtain that 
\begin{equation}\label{GVSL12.2}
\begin{split}
&\Psi^\ast(w-w_\ast)A_k\big[Q_k^{w_\ast}(v,\cdot)\big](w)+\int_\R \frac{e^{-|k||v-\rho|}}{2|k|} \frac{e^{2\rho+2w}D(\rho+w)\Psi^\ast(w-w_\ast)}{B(\rho+w)-B(w)+i\epsilon}A_k\big[Q_k^{w_\ast}(\rho,\cdot)\big](w)\,d\rho\\
&=\Psi^\ast(w-w_\ast)\Big\{A_k\big[\digamma^\ast_k(\cdot)\big](w)\frac{e^{-|k||v|}}{2|k|}+A_k\big[J_{1\epsilon}(v,\cdot)\big](w)+A_k\big[J_{2\epsilon}(v,\cdot)\big](w)+\mathcal{C}_Q(v,w)\Big\},
\end{split}
\end{equation}
where the commutator term $\mathcal{C}_Q$ (we have again suppressed the dependence on $k, w_\ast$ for the simplicity of notations) is given by (recall the definitions \eqref{GVS6} and the bound \eqref{GVS7} for the kernel $K$)
\begin{equation}\label{GVSL13}
\begin{split}
\mathcal{C}_Q(v,w):=&\int_\R \frac{e^{-|k||v-\rho|}}{2|k|} e^{2\rho+2w_\ast}\frac{1}{B'(w_\ast)}\Big\{K(\rho,w)A_k\big[Q_k^{w_\ast}(\rho,\cdot)\big](w)-A_k\big[K(\rho,\cdot)Q_k^{w_\ast}(\rho,\cdot)\big](w)\Big\}\,d\rho.
\end{split}
\end{equation}
Straightforward calculations using \eqref{GVSM3} show that 
\begin{equation}\label{GVSL16}
\begin{split}
&\sup_{j\in\Z}\Big\|e^{\varkappa_k|v|}\Big\{\big|A_k\big[\digamma^\ast_k(\cdot)\big](w)\big|+|k|^{-1}\big|\partial_vA_k\big[\digamma^\ast_k(\cdot)\big](w)\big|\Big\}\Big\|_{L^2(v\in[j,j+2]),w\in\R}\lesssim e^{-\varkappa_k|w_\ast|}/|k|,\\
&\sup_{j\in\Z}\Big\|e^{\varkappa_k|v|}\Big[\big|A_k\big[J_1(v,\cdot)\big](w)\big|+|k|^{-1}\big|\partial_vA_k\big[J_1(v,\cdot)\big](w)\big|\Big]\Big\|_{L^2(v\in[j,j+2],w\in\R)}\lesssim_{k, w_\ast} \epsilon.
\end{split}
\end{equation}
Similar calculations as in \eqref{GVS10}, see the bounds \eqref{GVS9.0} on $h_{0k}^{1\ast}$, we have
\begin{equation}\label{GVSL16'}
\begin{split}
&\Big\|\sup_{j\in\Z}\Big\|e^{\varkappa_k|v|}\Big\{\big|A_k\big[J_2(v,\cdot)\big](w)\big|+|k|^{-1}\big|\partial_vA_k\big[J_2(v,\cdot)\big](w)\big|\Big\}\Big\|_{L^2(v\in[j,j+2])}\Big\|_{L^2(w\in\R)}\lesssim e^{-\varkappa_k|w_\ast|}/|k|,
\end{split}
\end{equation}

Additionally, using the same argument as in the bounds for $\mathcal{C}_k(v,w)$, see \eqref{GVS13.6}-\eqref{GVS13.7}, we can bound for any $\gamma\in(0,1)$ and suitable $C_\gamma\in(0,\infty)$,
\begin{equation}\label{GVSL17}
\begin{split}
&\Big\|\sup_{j\in\Z}\Big\|e^{\varkappa_k|v|}\Big[\big|\mathcal{C}_Q(v,w)\big|+|k|^{-1}\big|\partial_v\mathcal{C}_Q(v,w)\big|\Big]\Big\|_{L^2(v\in[j,j+2])}\Big\|_{L^2(w\in\R)}\lesssim (C_{\gamma}e^{-\varkappa_k|w_\ast|}+\gamma M)/|k|.
\end{split}
\end{equation}
Combining the bounds \eqref{GVSL16}-\eqref{GVSL17}, applying the limiting absorption principle (see \eqref{LAP11.5'}) for each $w$ with $|w-w_\ast|\leq5$, square integrating in $w$, and comparing $\Psi^\ast(w-w_\ast)A_k\big[Q_k^{w_\ast}(v,\cdot)\big](w)$ and $A_k\big[Q_k^{w_\ast}(v,\cdot)\big](w)$, we can conclude that
\begin{equation}\label{GVSL18}
M\lesssim (C_{\gamma}e^{-|k||w_\ast|}+\gamma M)/|k|.
\end{equation}
Choosing $\gamma\in(0,1)$ sufficiently small, then the desired bounds \eqref{GVSL7} follow from \eqref{GVSL18}.

We now turn to the proof of \eqref{GVSL6} for the case $w_\ast\in(-\infty,-15]$. Denote
$$M^\ast:=\sup_{j\in\Z}\,\zeta_{\varkappa_k,w_\ast}(j+w_\ast,w_\ast)\Big\|\big|A_k\big[Q_k^{w_\ast}(v,\cdot)\big](w)\big|+|k|^{-1}\big|\partial_vA_k\big[Q_k^{w_\ast}(v,\cdot)\big](w)\big|\Big\|_{L^2(v\in[j,j+2], w\in\R)}.$$
We need to show that $M^\ast\lesssim1/|k|$. Denote for $v,w\in\R$,
\begin{equation}\label{GVSL24}
\digamma_k^\dagger(v,w):=-2\frac{e^{2w}\big(D(w)\digamma_k(w)-F_{0k}(w)\big)}{B'(w)}\mathcal{G}_k^w(v+w,w).
\end{equation}
By the assumption \eqref{MARr1}, the bounds \eqref{GVS1.2} and lemma \ref{gbo3}, we have the bounds for $w_\ast\in(-\infty,-15]$ and $\digamma_{k,w_\ast}^\dagger(v,w):=\digamma_k^\dagger(v,w)\Psi^\ast(w-w_\ast)$,
\begin{equation}\label{GVSLL1}
\Big\|e^{\delta_1\langle k,\xi\rangle^{1/2}}\widehat{\,\,\digamma_{k,w_\ast}^\dagger}(v,\xi)\Big\|_{L^2(\xi\in\R)}\lesssim \varpi_{k,w_\ast}(v+w_\ast,w_\ast) e^{-\mu_{\varkappa_k}|w_\ast|}.
\end{equation}
 It follows from equation \eqref{GVSL3} that $Q_k^{w_\ast}$ satisfies the equation
\begin{equation}\label{GVSL19}
\begin{split}
&Q_k^{w_\ast}(v,w)+\int_\R \mathcal{G}_k^w(v+w,\rho+w) \Big[\frac{e^{2\rho+2w}D(\rho+w)}{B(\rho+w)-B(w)+i\epsilon}-V_{w}(\rho+w)\Big]Q_k^{w_\ast}(\rho,w)\,d\rho\\
&=\digamma_{k,w_\ast}^{\dagger}(v,w)+J^\ast_{1\epsilon}(v,w)+J^\ast_{2\epsilon}(v,w),
\end{split}
\end{equation}
where we have denoted
\begin{equation}\label{GVSL20}
\begin{split}
&J^\ast_{1\epsilon}(v,w):=\int_\R \mathcal{G}_k^w(v+w,\rho+w)\big(1-\Psi(\rho)\big)\Psi^\ast(w-w_\ast)\Big[\frac{e^{2\rho+2w}D(\rho+w)Q_k^{w_\ast}(\rho,w)}{B(\rho+w)-B(w)+i\epsilon}\\
&\hspace{3in}-{\rm P.V.} \frac{e^{2\rho+2w}D(\rho+w)Q_k^{w_\ast}(\rho,w)}{B(\rho+w)-B(w)}\Big]\,d\rho,\\
&J^\ast_{2\epsilon}(v,w):=\int_\R \mathcal{G}_k^w(v+w,\rho+w)\Psi(\rho)\Psi^\ast(w-w_\ast)\Big[\frac{e^{2\rho+2w}D(\rho+w)Q_k^{w_\ast}(\rho,w)}{B(\rho+w)-B(w)+i\epsilon}\\
&\hspace{3in}-{\rm P.V.} \frac{e^{2\rho+2w}D(\rho+w)Q_k^{w_\ast}\rho,w)}{B(\rho+w)-B(w)}\Big]\,d\rho.
\end{split}
\end{equation}
In the above we suppressed the dependence of $J^\ast_{1\epsilon}, J^\ast_{2\epsilon}$ on $k, w_\ast$. Simple calculations (as we do not need precise dependence on $k, w_\ast$) using the bounds \eqref{BSD5} show that 
\begin{equation}\label{GVSL21}
\Big\|\sup_{j\in\Z}\Big\|\zeta_{\varkappa_k,w_\ast}(v+w_\ast,w_\ast)\Big[\big|J^\ast_{1\epsilon}(v,w)\big|+|k|^{-1}\big|\partial_vJ^\ast_{1\epsilon}(v,w)\big|\Big]\Big\|_{L^2(v\in[j,j+2])}\Big\|_{L^2(w\in\R)}\lesssim_{k,w_\ast}\epsilon.
\end{equation}
Calculations similar to \eqref{GVS21.2}-\eqref{GVS21.3}, using \eqref{ENHQ1}, show that
\begin{equation}\label{GVSL22}
\Big\|\sup_{j\in\Z}\Big\|\zeta_{\varkappa_k,w_\ast}(v+w_\ast,w_\ast)\Big[\big|J^\ast_{2\epsilon}(v,w)\big|+|k|^{-1}\big|\partial_vJ^\ast_{2\epsilon}(v,w)\big|\Big]\Big\|_{L^2(v\in[j,j+2])}\Big\|_{L^2(w\in\R)}\lesssim \frac{e^{-\mu_{\varkappa_k}|w_\ast|}}{|k|}.
\end{equation}
By the limiting absorption principle for $w\in\R$ with $|w-w_\ast|\leq5$, see \eqref{LAP26.5'} (with a shift in $v\to v+w_\ast$), and square integrating in $w$, we conclude from \eqref{GVSL8} and the bounds \eqref{GVSL10}-\eqref{GVSL11} that
\begin{equation}\label{GVSL23}
\Big\|\sup_{j\in\Z}\Big\|\zeta_{\varkappa_k,w_\ast}(v+w_\ast,w_\ast)\Big[\big|Q_k^{w_\ast}(v,w)\big|+\frac{1}{|k|}\big|\partial_vQ_k^{w_\ast}(v,w)\big|\Big]\Big\|_{L^2(v\in[j,j+2])}\Big\|_{L^2(w\in\R)}\lesssim C_{w_\ast}\epsilon+ \frac{e^{-\mu_{\varkappa_k}|w_\ast|}}{|k|}.
\end{equation}
Sending $\epsilon\to0+$, we obtain that
\begin{equation}\label{GVSL23.1}
\Big\|\sup_{j\in\Z}\Big\|\zeta_{\varkappa_k,w_\ast}(v+w_\ast,w_\ast)\Big[\big|Q_k^{w_\ast}(v,w)\big|+|k|^{-1}\big|\partial_vQ_k^{w_\ast}(v,w)\big|\Big]\Big\|_{L^2(v\in[j,j+2])}\Big\|_{L^2(w\in\R)}\lesssim e^{-\mu_{\varkappa_k}|w_\ast|}/|k|.
\end{equation}
To prove the higher regularity estimates, we again use the commutator argument, apply the Fourier multiplier $A_k$ to equation \eqref{GVSL8} and multiply with $\Psi^\ast(w-w_\ast)$, and obtain that 
\begin{equation}\label{GVSL25}
\begin{split}
&\Psi^\ast(w-w_\ast)A_k\big[Q_k^{w_\ast}(v,\cdot)\big](w)+\int_\R \mathcal{F}_k(v,\rho,w)\frac{e^{2\rho+2w}D(\rho+w)\Psi^\ast(w-w_\ast)}{B(\rho+w)-B(w)+i\epsilon}A_k\big[Q_k^{w_\ast}(\rho,\cdot)\big](w)\,d\rho\\
&=\Psi^\ast(w-w_\ast)\Big\{A_k\big[\digamma^\dagger_k(v,\cdot)\big](w)+A_k\big[J^\ast_1(v,\cdot)\big](w)+A_k\big[J^\ast_2(v,\cdot)\big](w)+\mathcal{C}^\ast_Q(v,w)\Big\},
\end{split}
\end{equation}
where the commutator term $\mathcal{C}^\ast_Q$ (we have again suppressed the dependence on $k, w_\ast$ for the simplicity of notations) is given by
\begin{equation}\label{GVSL26}
\begin{split}
\mathcal{C}^\ast_Q(v,w):=&\int_\R \mathcal{F}_k^{w_\ast}(v,\rho,w)L(\rho,w)A_k\big[Q_k^{w_\ast}(\rho,\cdot)\big](w)\,d\rho-A_k\Big\{\int_\R \mathcal{F}_k^{w_\ast}(v,\rho,\cdot)L(\rho,\cdot)Q_k^{w_\ast}(\rho,\cdot)\,d\rho\Big\}.
\end{split}
\end{equation}
In the above we used the definitions \eqref{ENHQ2} for $\mathcal{F}_k$, \eqref{GVS19.1} for $\mathcal{F}_k^{w_\ast}$, and recall \eqref{GVS19.2}-\eqref{GVS19.3} for the definitions and bounds of $L$ and $L^\ast$.
Simple calculations show that 
\begin{equation}\label{GVSL29}
\begin{split}
&\Big\|\sup_{j\in\Z}\,\zeta_{\varkappa_k,w_\ast}(j+w_\ast,w_\ast)\Big\|\big|A_k\big[\digamma^\dagger_k(v,\cdot)\big](w)\big|+|k|^{-1}\big|\partial_vA_k\big[\digamma^\dagger_k(v,\cdot)\big](w)\big|\Big\|_{L^2(v\in[j,j+2])}\Big\|_{L^2(w\in\R)}\\
&\lesssim e^{-\mu_{\varkappa_k}|w_\ast|}/|k|,\\
&\Big\|\sup_{j\in\Z}\,\zeta_{\varkappa_k,w_\ast}(j+w_\ast,w_\ast)\Big\|\big|A_k\big[J^\ast_1(v,\cdot)\big](w)\big|+|k|^{-1}\big|\partial_vA_k\big[J^\ast_1(v,\cdot)\big](w)\big|\Big\|_{L^2(v\in[j,j+2])}\Big\|_{L^2(w\in\R)}\\
&\lesssim_{k,w_\ast} \epsilon.
\end{split}
\end{equation}
Calculations similar to \eqref{GVS21.2} show that
\begin{equation}
\begin{split}
&\Big\|\sup_{j\in\Z}\,\zeta_{\varkappa_k,w_\ast}(j+w_\ast,w_\ast)\Big\|\big|A_k\big[J^\ast_2(v,\cdot)\big](w)\big|+|k|^{-1}\big|\partial_vA_k\big[J^\ast_2(v,\cdot)\big](w)\big|\Big\|_{L^2(v\in[j,j+2])}\Big\|_{L^2(w\in\R)}\\
&\lesssim e^{-\mu_{\varkappa_k}|w_\ast|}/|k|.
\end{split}
\end{equation}

Additionally, we can bound for any $\gamma\in(0,1)$, completely analogous to \eqref{GVS21.9}-\eqref{GVS26}, that
\begin{equation}\label{GVSL30}
\begin{split}
&\Big\|\sup_{j\in\Z}\Big\|\zeta_{\varkappa_k,w_\ast}(j+w_\ast,w_\ast)\Big[\big|\mathcal{C}_Q^{\ast}(v,w)\big|+|k|^{-1}\big|\partial_v\mathcal{C}_Q^{\ast}(v,w)\big|\Big]\Big\|_{L^2(v\in[j,j+2])}\Big\|_{L^2(w\in\R)}\\
&\lesssim (C_{\gamma}e^{-\mu_{\varkappa_k}|w_\ast|}+\gamma M^\ast)/|k|.
\end{split}
\end{equation}
Combining \eqref{GVSL30} with the bounds \eqref{GVSL16}-\eqref{GVSL17}, applying the limiting absorption principle, and comparing $\Psi^\ast(w-w_\ast)A_k\big[Q_k^{w_\ast}(v,\cdot)\big](w)$ and $A_k\big[Q_k^{w_\ast}(v,\cdot)\big](w)$, we can conclude that
\begin{equation}\label{GVSL31}
M^\ast\lesssim (C_{\gamma}e^{-\mu_{\varkappa_k}|w_\ast|}+\gamma M^\ast)/|k|.
\end{equation}
Choosing $\gamma\in(0,1)$ sufficiently small, then the desired bounds \eqref{GVSL6} follow from \eqref{GVSL31}.

\end{proof}

\section{Proof of main theorems}\label{sec:mainth}
In this section we give the proof of our main theorems, beginning with the proof of Theorem \ref{MTH2}. 

\subsection{Proof of Theorem \ref{MTH2}} We can normalize and assume that $M_k^{\dagger}+|\sigma_k|=1$. In view of the equation \eqref{GVSL1}, the bounds \eqref{GVS1.2}, and the bounds \eqref{GVSL6}-\eqref{GVSL7}, it remains to prove \eqref{MTH2.3} and \eqref{MTH2.9}. it follows from \eqref{GVSL1} that for $v,w\in\R$ with $|v|\ge\frac{1}{2}$, $\Theta_k(v,w)$ satisfies 
\begin{equation}\label{PMT1}
(k^2-\partial_v^2)\Theta_k(v,w)+\frac{e^{2v+2w}D(v+w)}{B(v+w)-B(w)}\Theta_k(v,w)=0.
\end{equation}
We note that for $|v|\ge1/2$ the coefficient $\frac{e^{2v+2w}D(v+w)}{B(v+w)-B(w)}$ is Gevrey-2 regular in both $v$ and $w$. The desired bounds \eqref{MTH2.3} then follow from the bounds \eqref{GVSL6}-\eqref{GVSL7}, and lemma \ref{EllipReg} applied to \eqref{PMT1} for each $w\in\R$ with $|w-w_\ast|\leq5$ and then square integrated in $w$.

We now turn to the proof of \eqref{MTH2.8}-\eqref{MTH2.9}. Choose a smooth function $\Psi_9\in C^{\infty}_0(-10,10)$ with $\Psi_9\equiv1$ on $[-9,9]$ and $\sup_{\xi\in\R}e^{\langle\xi\rangle^{5/6}}\big|\widehat{\Psi_9}(\xi)\big|\lesssim1$. Fix a small parameter $\sigma\in(0,1)$ to be determined below. 
For notational conveniences, we assume that $\zeta(v)$ is extended to be defined on $\R$, satisfying $|\zeta'(v)|\approx1$ on $\R$ and $\zeta'\in \widetilde{G}_{M'}^{1/2}(\R)$ with a suitable $M'\in(0,\infty)$ depending on $M$. 
Define for $v, \rho, w\in\R$, 
\begin{equation}\label{PMT8}
P_k(v,\rho,w):=\Theta_k(\zeta(v+\rho)-\zeta(\rho),w),\,\, P^\ast_k(v,\rho,w):=P(v,\rho,w)\Psi^\ast(w-w_\ast)\Psi^\ast(v/\sigma)\Psi^\ast(\rho),
\end{equation}
then $P_k^\ast$ satisfies for $v, \rho, w\in\R$,
\begin{equation}\label{PMT9}
\begin{split}
&\Big[k^2-(\zeta'(v+\rho))^{-2}\partial_v^2+\frac{\zeta''(v+\rho)}{(\zeta'(v+\rho))^3}\partial_v\Big]P_k^\ast(v,\rho,w)+{\rm P.V.}\, \frac{a^\ast(v,\rho,w)}{v}P_k^\ast(v,\rho,w)=R_\sigma(v,\rho,w),
\end{split}
\end{equation} 
where in the above 
\begin{equation}\label{PMT10}
\begin{split}
&a^\ast(v,\rho,w):=v\frac{e^{2\zeta(v+\rho)-2\zeta(\rho)+2w}D(\zeta(v+\rho)-\zeta(\rho)+w)}{B\big(\zeta(v+\rho)-\zeta(\rho)+w\big)-B(w)}\Psi_9(v)\Psi_9(\rho)\Psi_9(w-w_\ast),\\
&R_\sigma(v,\rho,w):=(1/\sigma)\frac{\zeta''(v+\rho)}{(\zeta'(v+\rho))^3}\partial_v\Psi^\ast(v/\sigma)P_k(v,\rho,w)\Psi^\ast(\rho)\Psi^\ast(w-w_\ast)\\
&-(\zeta'(v+\rho))^{-2}\Big[(2/\sigma)\partial_vP(v,\rho,w)\partial_v\Psi^\ast(v/\sigma)+(1/\sigma^2)P(v,\rho,w)\partial_v^2\Psi^\ast(v/\sigma)\Big]\Psi^\ast(\rho)\Psi^\ast(w-w_\ast)\\
&-\frac{2\pi e^{2w}(D(w)\digamma_k(w)-F_{0k}(w))}{B'(w)\zeta'(\rho)}\Psi^\ast(\rho)\Psi^\ast(w-w_\ast)\delta(v).
\end{split}
\end{equation}
The main point of the equation \eqref{PMT9} is that $R_\sigma(v,\rho,w)$ is Gevrey regular in both $\rho$ and $w$, and the coefficient $a^\ast(v,\rho,w)$ is Gevrey-2 smooth in both $v, \rho$ and $w$ for $|v|\leq 20, |\rho|\leq 20$ and $|w-w_\ast|\leq 20$. Indeed, we have for some $\delta_0'\in(0,1)$ depending on $M$ and $\delta_0$,
\begin{equation}\label{PMT10.1}
\big\|e^{\delta_0'\langle\alpha,\beta,\gamma\rangle^{1/2}}\widehat{\,a^\ast}(\alpha,\beta,\gamma)\big\|_{L^2(\R^3)}\lesssim1.
\end{equation}
To see \eqref{PMT10.1}, we can rewrite 
\begin{equation}\label{PMT11}
\begin{split}
a^\ast(v,\rho,w):=&e^{2\zeta(v+\rho)-2\zeta(\rho)+2w}D(\zeta(v+\rho)-\zeta(\rho)+w)\frac{v}{\zeta(v+\rho)-\zeta(\rho)}\\
&\times\frac{\zeta(v+\rho)-\zeta(\rho)}{B\big(\zeta(v+\rho)-\zeta(\rho)+w\big)-B(w)}\Psi_9(\rho)\Psi_9(w-w_\ast)\Psi_9(v),
\end{split}
\end{equation}
and the desired bound \eqref{PMT10.1} follows from the simple observation that for $v,\rho,\alpha,w\in\R$,
\begin{equation}\label{PMT12}
\zeta(v+\rho)-\zeta(\rho)=v\int_0^1\zeta'(\rho+sv)\,ds,\quad{\rm and}\quad B(\alpha+w)-B(w):=\alpha\int_0^1B'(w+s\alpha)\,ds.
\end{equation}
Moreover, since $\partial_v\Psi^\ast(v/\sigma)$ and $\partial_v^2\Psi^\ast(v/\sigma)$ are supported away from $v=0$, the right hand side $R$ of \eqref{PMT9} is Gevrey-2 regular in $v, \rho$ and $w$ for $|v|\leq 20, |\rho|\leq 20$ and $|w-w_\ast|\leq 20$. 

Now to prove the desired Gevrey smoothness in $\rho$ for $P^\ast$, we use the same commutator argument as before. It seems at first glance that we are in the same situation as in propositions \ref{GVSM2} and \ref{GVS17}, and would need to apply the limiting absorption principle, see Lemmas \ref{LAP10} and \ref{LAP25}, which involves weights and is quite complicated. However, in our case here, there is a key difference, namely we already know that $P$ is Gevrey smooth whenever $v$ stays away from the origin, and that allows us to localize $v$ to a small region near $v=0$ and the extra small parameter $\sigma$ ensures coercive bounds from \eqref{PMT9}, without the use of the limiting absorption principle.

By the bounds \eqref{MTH2.5}, we have
\begin{equation}\label{PMT12.01}
\big\|(|k|+|\alpha|)e^{\delta_1\langle k,\gamma\rangle^{1/2}}\widehat{\,\,P_k^\ast}(\alpha,\beta,\gamma)\big\|_{L^2(\alpha,\beta,\gamma\in\R)}\lesssim_\sigma \varpi_{\varkappa_k,w_\ast}(w_\ast,0),
\end{equation}
which we will use to control the low frequency part (for the variable $\rho$) of the desired bound \eqref{MTH2.9}.

By the bounds \eqref{PMT12.01}, equation \eqref{PMT1} and lemma \ref{EllipReg}, we can choose $\delta_1'\in(0,1)$ sufficiently small, depending on $\delta_1$ and $M$, such that 
\begin{equation}\label{PMT12.1}
\sup_{\alpha\in\R}\big\|e^{2\delta_1'\langle k,\beta\rangle^{1/2}+2\delta_1'\langle k,\gamma\rangle^{1/2}}\widehat{\,R_\sigma}(\alpha,\beta,\gamma)\big\|_{L^2(\beta,\gamma\in\R)}\lesssim_{\sigma}(M_k^{\dagger}+|\sigma_k|)\varpi_{\varkappa_k,w_\ast}(w_\ast,0).
\end{equation}
Define the Fourier multiplier $A_k^\ast$ such that for any $h\in L^2(\R)$
\begin{equation}\label{PMT12.3}
\widehat{A_k^\ast h}(\beta):=e^{\delta_1'\langle k,\beta\rangle^{1/2}}\widehat{\,h\,}(\beta), \qquad{\rm for}\,\,\beta\in\R.
\end{equation}
Applying the Fourier multiplier $A_k^\ast$ (in the variables $\rho$) to equation \eqref{PMT9}, we obtain that for $v,\rho,w\in\R$,
\begin{equation}\label{PMT12.4}
\begin{split}
&\Big[k^2-(\zeta'(v+\rho))^{-2}\partial_v^2+\frac{\zeta''(v+\rho)}{(\zeta'(v+\rho))^3}\partial_v\Big]A_k^\ast\big[P_k^\ast(v,\cdot,w)\big](\rho)+{\rm P.V.}\, \frac{a^\ast(v,\rho,w)}{v}A_k^\ast\big[P_k^\ast(v,\cdot,w)\big](\rho)\\
&=A_k^\ast\big[R_\sigma(v,\cdot,w)\big](\rho)+C^{11}_\sigma(v,\rho,w)+C^{12}_\sigma(v,\rho,w)+C^2_\sigma(v,\rho,w),
\end{split}
\end{equation}
where we have defined
\begin{equation}\label{PMT12.5}
\begin{split}
&C^{11}_\sigma(v,\rho,w):=\partial_v H(v,\rho,w),\\
&H_\sigma(v,\rho,w):=\Big[-(\zeta'(v+\rho))^{-2}\partial_v\Big]A_k^\ast\big[P_k^\ast(v,\cdot,w)\big](\rho)-A_k^\ast\Big[-(\zeta'(v+\cdot))^{-2}\partial_vP_k^\ast(v,\cdot,w)\Big](\rho),\\
&C^{12}_\sigma(v,\rho,w):=\Big[\partial_v(\zeta'(v+\rho))^{-2}\partial_v+\frac{\zeta''(v+\rho)}{(\zeta'(v+\rho))^3}\partial_v\Big]A_k^\ast\big[P_k^\ast(v,\cdot,w)\big](\rho)\\
&\hspace{1in}-A_k^\ast\Big[\Big(\big(\partial_v(\zeta'(v+\cdot))^{-2}\partial_v\big)+\frac{\zeta''(v+\cdot)}{(\zeta'(v+\cdot))^3}\partial_v\Big)P_k^\ast(v,\cdot,w)\Big](\rho),\\
&C^2_\sigma(v,\rho,w):={\rm P.V.}\, \frac{a^\ast(v,\rho,w)}{v}A^\ast_k\Big[P_k^\ast(v,\cdot,w)\Big](\rho)-A_k^\ast\Big[{\rm P.V.}\, \frac{a^\ast(v,\cdot,w)}{v}P_k^\ast(v,\cdot,w)\Big](\rho).
\end{split}
\end{equation}
Simple computation using the regularity of $\zeta$ shows 
\begin{equation}\label{PMT12.51}
\begin{split}
&\big\|\Psi_9(\rho)C^{12}_\sigma(v,\rho,w)\big\|_{L^2(v,\rho,w\in\R)}+\big\|\Psi_9(\rho)H_\sigma(v,\rho,w)\big\|_{L^2(v,\rho,w\in\R)}\\
&\lesssim \big\|\langle k,\partial_\rho\rangle^{-1/2}\partial_v A_k^\ast\big[P_k^\ast(v,\cdot,w)\big](\rho)\big\|_{L^2(v,\rho,w\in\R)}.
\end{split}
\end{equation}

To treat the term $C^{11}_\sigma$ and $C^2_\sigma$ below, we use the following lemma.
\begin{lemma}\label{PMT12.6}
Assume that $\sigma\in(0,1)$ and $I:=(-20\sigma, -20\sigma)$. Then for any $h_1, h_2\in H^1_0(I)$ we have the bounds
\begin{equation}\label{PMT12.7}
\Big|{\rm P.V.}\,\int_I \frac{h_1(v)}{v}h_2(v)\,dv\Big|\lesssim \sigma^{2/3}\big\|h_1\big\|_{H^1(I)}\big\|h_2\|_{H^1(I)}.
\end{equation}
\end{lemma}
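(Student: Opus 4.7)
My plan is to exploit the boundary vanishing from $H^1_0(I)$ together with the natural cancellation in the principal value to extract a power of $\sigma$. Setting $g(v) := h_1(v) h_2(v) \in H^1_0(I)$, I would first note that the one-dimensional embedding $H^1_0(I) \hookrightarrow C^{1/2}(\overline{I})$ ensures that the principal value is well defined, and that $g(\pm 20\sigma)=0$. The main identity I would establish, via integration by parts using $1/v = (\log|v|)'$ on each component of $I \setminus \{0\}$, is
\begin{equation*}
\mathrm{P.V.}\int_I \frac{g(v)}{v}\,dv = -\int_I g'(v)\,\log|v|\,dv.
\end{equation*}
Here the boundary terms at $v = \pm 20\sigma$ vanish since $g$ does, while the boundary terms at $v = \pm\epsilon$ combine into $(g(-\epsilon)-g(\epsilon))\log\epsilon$, which tends to zero because the one-dimensional Sobolev Hölder estimate yields $|g(\epsilon)-g(-\epsilon)| \lesssim \sqrt{\epsilon}\,\|h_1\|_{H^1}\|h_2\|_{H^1}$.

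Next I would expand $g' = h_1'h_2 + h_1h_2'$ and estimate each term by Cauchy–Schwarz, for instance
\begin{equation*}
\Big|\int_I h_1'(v)h_2(v)\log|v|\,dv\Big| \leq \|h_1'\|_{L^2(I)}\,\|h_2\|_{L^\infty(I)}\,\|\log|v|\|_{L^2(I)}.
\end{equation*}
The Poincaré-type inequality for $H^1_0(I)$ gives $\|h_2\|_{L^\infty(I)} \lesssim \sigma^{1/2}\|h_2'\|_{L^2(I)}$, and a direct computation gives $\|\log|v|\|_{L^2(I)} \lesssim \sigma^{1/2}\langle\log\sigma\rangle$. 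Combining the three factors produces a net bound of size $\sigma\,\langle\log\sigma\rangle\,\|h_1\|_{H^1}\|h_2\|_{H^1}$, which is easily absorbed into $\sigma^{2/3}\|h_1\|_{H^1}\|h_2\|_{H^1}$ by sacrificing the logarithm.

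There is no substantive obstacle; the entire proof is a clean application of integration by parts once one recognizes that the $H^1_0$ condition eliminates every boundary contribution. As an alternative I could antisymmetrize and write the principal value as $\int_0^{20\sigma}(g(v)-g(-v))v^{-1}dv$, controlling the numerator by $|g(v)-g(-v)| \lesssim v^{1/2}\sigma^{1/2}\|h_1\|_{H^1}\|h_2\|_{H^1}$; this actually yields the even better bound $\sigma\|h_1\|_{H^1}\|h_2\|_{H^1}$ without any logarithm, but the integration-by-parts route is conceptually cleaner and suffices for the stated inequality.
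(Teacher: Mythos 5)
Your proof is correct, and it takes a genuinely different route from the paper's. (Both arguments implicitly read the interval as $I=(-20\sigma,20\sigma)$, correcting the typo in the statement.) The paper works on the Fourier side: since $\mathrm{P.V.}\,1/v$ has a bounded Fourier transform, Parseval's identity dominates the principal value by $\|\widehat{h_1}\|_{L^1(\R)}\|\widehat{h_2}\|_{L^1(\R)}$; each $L^1$ norm is controlled by $\|h_j\|_{H^{2/3}(\R)}$ via Cauchy--Schwarz against $\langle\xi\rangle^{-2/3}\in L^2(\R)$, and the interpolation $\|h\|_{H^{2/3}}\lesssim\|h\|_{L^2}^{1/3}\|h\|_{H^1}^{2/3}$ combined with the Poincar\'e inequality $\|h\|_{L^2(I)}\lesssim\sigma\|\partial_vh\|_{L^2(I)}$ then produces a factor $\sigma^{1/3}$ per function. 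Your argument never leaves physical space: observing that $g:=h_1h_2\in H^1_0(I)$ with $\|g'\|_{L^2}\lesssim\sigma^{1/2}\|h_1\|_{H^1}\|h_2\|_{H^1}$ (from the $L^\infty$ embedding), you integrate by parts against the explicit antiderivative $\log|v|$, and your bound $|g(\epsilon)-g(-\epsilon)|\lesssim\epsilon^{1/2}\|g'\|_{L^2}$ is precisely what is needed to kill the inner boundary terms; the resulting integral is estimated by Cauchy--Schwarz together with the computation $\|\log|v|\|_{L^2(I)}\lesssim\sigma^{1/2}\langle\log\sigma\rangle$. Your method is more elementary and in fact produces sharper bounds, $\sigma\langle\log\sigma\rangle$ via the $\log$ route and $\sigma$ via the antisymmetrized variant; the paper's $\sigma^{2/3}$ is simply what its choice of Sobolev exponent yields and is all that is needed for the absorption step that follows.
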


\begin{proof}[Proof of lemma \ref{PMT12.6}]
By Sobolev inequality and interpolation inequality, we have
\begin{equation}\label{PMT12.71}
\big\|h\big\|_{L^2(I)}\lesssim \sigma\|\partial_vh\|_{L^2(I)},\qquad\big\|h\big\|_{H^{2/3}(\R)}\lesssim \sigma^{1/3}\|\partial_vh\|_{L^2(I)},
\end{equation}
for any $h\in H^1_0(I)$. Therefore by Parsevel's identity we obtain
\begin{equation}\label{PMT12.8}
\begin{split}
&\Big|{\rm P.V.}\,\int_I \frac{h_1(v)}{v}h_2(v)\,dv\Big|\lesssim \Big|\int_{\R^2} \big|\widehat{\,h_1}(\xi-\alpha)\big|\big|\widehat{\,h_2}(\xi)\big|\,d\alpha d\xi\Big|\\
&\lesssim \big\|h_1\big\|_{H^{2/3}(\R)}\big\|h_2\big\|_{H^{2/3}(\R)}\lesssim \sigma^{2/3}\big\|h_1\big\|_{H^1(I)}\big\|h_2\|_{H^1(I)}.
\end{split}
\end{equation}
The lemma is then proved. 
\end{proof}

Denote for the simplicity of notations
\begin{equation}\label{PMT12.9}
M:=|k|\big\|A^\ast_k\big[P_k^\ast(v,\cdot,w)\big](\rho)\big\|_{L^2(v,\rho,w\in\R^3)}+\big\|\partial_vA^\ast_k\big[P_k^\ast(v,\cdot,w)\big](\rho)\big\|_{L^2(v,\rho,w\in\R^3)}.
\end{equation}
Now multiplying $(\Psi_9(\rho))^2A^\ast_k\big[P_k^\ast(v,\cdot,w)\big](\rho)$ to \eqref{PMT12.4}, integrating in $v, \rho, w\in\R$, using integration by parts, the bounds \eqref{PMT12.1}, \eqref{PMT12.51} and lemma \ref{PMT12.6}, and noting that the support of $A^\ast_k\big[P_k^\ast(v,\cdot,w)\big](\rho)$ is contained in $[-5\sigma,5\sigma]\times \R\times [-5,5]$, we obtain for some $C_\sigma\in(0,\infty)$ that 
\begin{equation}\label{PMT12.91}
\begin{split}
&\int_{\R^3} \Psi_9(\rho)\big)^2\Big[k^2\big|A_k^\ast\big[P^\ast(v,\cdot,w)\big](\rho)\big|^2+\big|\partial_vA_k^\ast\big[P^\ast(v,\cdot,w)\big](\rho)\big|^2\Big]\,dv d\rho dw\\
&\lesssim \int_{\R^3}\sigma^{3/2}\big( \Psi_9(\rho)\big)^2\big|\partial_vA_k^\ast\big[P^\ast(v,\cdot,w)\big](\rho)\big|^2+\big|\langle k,\partial_\rho\rangle^{-1}\partial_vA_k^\ast\big[P^\ast(v,\cdot,w)\big](\rho)\big|^2\,dv d\rho dw\\
&\qquad+C_\sigma \big(\varpi_{\varkappa_k,w_\ast}(w_\ast,0)\big)^2.
\end{split}
\end{equation}
Noting that $A_k^\ast\big[P^\ast(v,\cdot,w)\big](\rho)=A_k^\ast\big[\Psi_9(\cdot)P^\ast(v,\cdot,w)\big](\rho)$, a simple commutator argument shows that for $m\in\{0,1\}$,
\begin{equation}\label{PMT12.92}
\begin{split}
&\big\|\Psi_9(\rho)\partial^m_vA_k^\ast\big[P^\ast(v,\cdot,w)\big](\rho)-\partial^m_vA_k^\ast\big[P^\ast(v,\cdot,w)\big](\rho)\big\|_{L^2(v,\rho,w\in\R)}\\&\lesssim\big\|\langle k,\partial_\rho\rangle^{-1/2}\partial^m_v A_k^\ast\big[P_k^\ast(v,\cdot,w)\big](\rho)\big\|_{L^2(v,\rho,w\in\R)}. 
\end{split}
\end{equation}

Combining \eqref{PMT12.91} and \eqref{PMT12.92}, in view of \eqref{PMT12.01}, we obtain that for any $\gamma\in(0,1)$ and suitable $C_{\gamma,\sigma}\in(0,\infty)$,
\begin{equation}\label{PMT13}
M\lesssim \sigma^{1/3} M+\gamma M+ \big[e^{-\varkappa_k|w_\ast|}{\bf 1}_{w_\ast>0}+e^{-\mu_{\varkappa_k}|w_\ast|}{\bf 1}_{w_\ast\leq0}\big]C_{\gamma,\sigma} .
\end{equation}
Choosing $\sigma, \gamma\in(0,1)$ sufficiently small, the desired bounds \eqref{MTH2.9} then follow from \eqref{PMT13} and \eqref{PMT12.01}. The proof of theorem \ref{MTH2} is now complete.

\subsection{Proof of Theorem \ref{MTH2'}}
We can now give the proof of Theorem \ref{MTH2'} using Theorem \ref{MTH2}.
Decompose $f_k(t,v)$ as in \eqref{MTH23.1}, then the bound \eqref{MTH23.3} on $F_{k,v_\ast}^2(t,v)$ follows directly from the equation \eqref{MTH2.6} and the bounds \eqref{MTH2.3} by simple integration by parts in $w$. To prove the bound \eqref{MTH23.3} on $F_{k,v_\ast}^1(t,v)$, we observe that for suitable constants $C_0, C_1\in\R$ and all $v\in\R$,
\begin{equation}\label{PMT14}
F_{k,v_\ast}^1(t,v)=C_0\aleph_k(t,v)-C_1\Big[D(v)\digamma_k(v)-F_{0k}(v)\Big]\Phi_k^\ast(v-v_\ast),
\end{equation}
where
\begin{equation}\label{PMT15}
\aleph_k(t,v):=\Phi_k^\ast(v-v_\ast)\int_\R e^{-ik(B(w)-B(v))t}{\rm P.V.}\frac{D(v)\Theta_k(v-w,w)}{B(v)-B(w)}\Phi^\ast(v-w)B'(w)\,dw.
\end{equation}
It suffices to analyze the smoothness of $\aleph_k$ in $v$, we make the change of variables for $|\rho-v_\ast|\leq10$,
\begin{equation}\label{PMT16}
\nu:=\frac{B(\rho)-B(v_\ast)}{B'(v_\ast)}, \qquad \rho:=\zeta(\nu)+v_\ast.
\end{equation}
We note that $\zeta$ (with suitable extensions if necessary) satisfies the assumptions in (iv) of Theorem \ref{MTH2}.
Recalling the identity \eqref{MTH2.4} and setting for $v,w\in\R$ with $|v-v_\ast|\leq 10$ and $|w-v_\ast|\leq10$, 
\begin{equation}\label{PMTM16.1}
\nu_1(v):=\frac{B(v)-B(v_\ast)}{B'(v_\ast)},\qquad \nu_2(w):=\frac{B(w)-B(v_\ast)}{B'(v_\ast)},
\end{equation}
we note that $\nu_1(v)$ is Gevrey-2 regular in $v$. We can write 
\begin{equation}\label{PMTM16.2}
\begin{split}
&\aleph_k(t,v)=\Phi^\ast(v-v_\ast)D(v)\int_\R e^{-ik(B(w)-B(v))t}\,{\rm P.V.}\frac{\Theta_k(v-w,w)}{B(v)-B(w)}\Phi^\ast(v-w)B'(w)\,dw\\
&=\Phi^\ast(v-v_\ast)D(v)\int_\R e^{-ik(\nu_2-\nu_1)B'(v_\ast)t}\,{\rm P.V.}\frac{\Theta_k\big(\zeta(\nu_1)-\zeta(\nu_2),\zeta(\nu_2)+v_\ast\big)}{\nu_1-\nu_2}\Phi^\ast\big(\zeta(\nu_1)-\zeta(\nu_2)\big)\,d\nu_2\\
&=\Phi^\ast(v-v_\ast)D(v)\int_\R e^{-ik\rho B'(v_\ast)t}\,{\rm P.V.}\frac{\Theta_k\big(\zeta(\nu_1)-\zeta(\nu_1+\rho),\zeta(\nu_1+\rho)+v_\ast\big)}{-\rho}\\
&\hspace{2.4in}\times\Phi^\ast\big(\zeta(\nu_1)-\zeta(\nu_1+\rho)\big)\,d\rho.
\end{split}
\end{equation}
Set $P(\rho,v):=\Theta_k\big(\zeta(\nu_1)-\zeta(\nu_1+\rho),\zeta(\nu_1+\rho)+v_\ast\big)\Phi^\ast\big(\zeta(\nu_1)-\zeta(\nu_1+\rho)\big)\Phi^{\ast\ast}(v-v_\ast)$. By \eqref{MTH2.8}-\eqref{MTH2.9}, $P(\rho,v)$ is Gevrey-2 regular in $v$ (recall the relation \eqref{PMTM16.1}) and satisfies for sufficiently small $\delta_1''\in(0,1)$ depending on $\delta_1$,
\begin{equation}\label{PMTM17}
\big\|(|k|+|\alpha|)e^{2\delta_1''\langle k,\beta\rangle^{1/2}}\widehat{\,\,P\,\,}(\alpha,\beta)\big\|_{L^2(\alpha,\beta\in\R)}\lesssim (M_k^{\dagger}+|\sigma_k|)\varpi_{\varkappa_k,v_\ast}(v_\ast).
\end{equation}
Therefore, we obtain from \eqref{PMTM16.2}-\eqref{PMTM17} that
\begin{equation}\label{PMTM18}
\begin{split}
&\big\|e^{\delta_1''\langle k,\xi\rangle^{1/2}}\widehat{\,\,\aleph_k}(t,\xi)\big\|_{L^2(\R)}\lesssim \frac{1}{1+e^{8v_\ast}}\Big\| \int_{\R}e^{\delta_1''\langle k,\xi\rangle^{1/2}}\big|\widehat{\,\,P\,\,}(\alpha,\xi)\big|\,d\alpha\Big\|_{L^2(\R)}\lesssim\frac{M_k^{\dagger}+|\sigma_k|}{1+e^{8v_\ast}}\varpi_{\varkappa_k,v_\ast}(v_\ast).
\end{split}
\end{equation}
The desired conclusion then follows from \eqref{PMTM18}. The theorem is now proved.

\end{document}